\theoremstyle{plain}
\newtheorem{theorem}{Theorem}[subsection]
\newtheorem{lemma}[theorem]{Lemma}
\newtheorem{corollary}[theorem]{Corollary}
\newtheorem{proposition}[theorem]{Proposition}
\theoremstyle{definition}
\newtheorem{definition}[theorem]{Definition}
\theoremstyle{remark}
\newtheorem{remark}[theorem]{Remark}
\def\Z{\mathbf{Z}}
\def\C{\mathbf{C}}
\def\R{\mathbf{R}}
\def\Q{\mathbf{Q}}
\def\F{\mathbf{F}}
\def\P{\mathbf{P}}
\let\ms\mathscr
\let\mf\mathfrak
\let\mc\mathcal
\let\ss\scriptstyle
\let\wt\widetilde
\let\ol\overline
\let\gtimes\ast
\DeclareMathOperator{\Aut}{Aut}
\DeclareMathOperator{\tr}{tr}
\newcommand{\GL}{\mathrm{GL}}
\newcommand{\SL}{\mathrm{SL}}
\newcommand{\PGL}{\mathrm{PGL}}
\newcommand{\PSL}{\mathrm{PSL}}
\newcommand{\spl}{\mathrm{split}}
\def\mat#1#2#3#4{\left( \begin{array}{cc} #1 & #2 \\ #3 & #4 \end{array} \right)}
\def\vec#1#2{\left( \begin{array}{c} #1 \\ #2 \end{array} \right)}
\title{Real components of modular curves}
\author{Andrew Snowden}
\date{August 15, 2011}
\begin{document}

\begin{abstract}
We study the real components of modular curves.  Our main result is an abstract group-theoretic description of
the real components of a modular curve defined by a congruence subgroup of level $N$ in terms of the corresponding
subgroup of $\SL_2(\Z/N\Z)$.  We apply this result to several families of modular curves (such as $X_0(N)$,
$X_1(N)$, etc.) to obtain formulas for the number of real components.  Somewhat surprisingly, the multiplicative
order of 2 modulo $N$ has a strong influence in many cases:  for instance, if $N$ is an odd prime then
the real locus of $X_1(N)$ is connected if and only if $-1$ and 2 generate $(\Z/N\Z)^{\times}$.
\end{abstract}

\maketitle
\tableofcontents

\section{Introduction}

This article is a study of the real components of modular curves.  Our main result is an abstract
group-theoretic description of the real components of a modular curve defined by a congruence subgroup of level $N$
in terms of the corresponding subgroup of $\SL_2(\Z/N\Z)$.  We apply this result to several families of
modular curves to obtain formulas for the number of real components.

\subsection{Description of main result}

We now describe our main result in some detail.  We begin in \S \ref{s:1} and \S \ref{s:2} with a general study of real
components of upper half-plane
quotients.  To this end, let $\Gamma$ be a Fuchsian group and let $X_{\Gamma}=\mf{h}^*/\Gamma$ be the associated
quotient, including cusps.  We assume throughout that $X_{\Gamma}$ is compact.  So far, $X_{\Gamma}$ does not have
any real structure.  To obtain one, we assume that we have an anti-holomorphic involution $c$ of the upper half-plane
(``complex conjugation'') which preserves $\Gamma$.  Thus $c$ descends to $X_{\Gamma}$, and we define $X_{\Gamma}(\R)$
to be its fixed locus.  By the ``real components'' of $X_{\Gamma}$, we mean the connected components of the space
$X_{\Gamma}(\R)$.  Any real component is diffeomorphic to the circle.

Any point on $X_{\Gamma}(\R)$ lifts to a point $z$ in $\mf{h}^*$ satisfying $cz=\gamma z$ for some
$\gamma \in \Gamma$; in fact, this $\gamma$ is ``admissible,'' meaning it satisfies $\gamma^c=\gamma^{-1}$.  For
such $\gamma$, let $C_{\gamma}$ denote the locus in $\mf{h}^*$ defined by the equation $cz=\gamma z$.
If $\Gamma$ acts freely on $\mf{h}$ with compact quotient, it is not difficult to see that mapping $\gamma$ to the
image in $X_{\Gamma}$ of $C_{\gamma}$ defines a bijection between admissible twisted conjugacy classes in $\Gamma$ and
real components of $X_{\Gamma}$.  (We say that $\gamma$ and $\gamma'$ are ``twisted conjugate'' if $\gamma=\delta^c
\gamma' \delta^{-1}$ for some $\delta \in \Gamma$.)  For general $\Gamma$, this result is no longer true:  cusps and
elliptic points interfere.  To remedy this, we define a graph $\Xi_{\Gamma}$ whose vertices are the real cusps and real
elliptic points of even order on $X_{\Gamma}$ and whose edges are the admissible twisted conjugacy classes of
$\Gamma$.  This graph is allowed to have edges which connect to no vertices, which we picture as circles.  We then
show that $\Xi_{\Gamma}$ is homeomorphic to $X_{\Gamma}(\R)$.

In \S \ref{s:3}, we set aside Fuchsian groups and put ourselves in the following abstract situation:  we have a ring
$R$ of finite characteristic (satisfying an additional hypothesis at 2), an involution $C \in \GL_2(R)$ of
determinant $-1$ (``complex conjugation'') and a subgroup $G$ of $\SL_2(R)$ stable under conjugation by $C$ and
containing $-1$.  To this
data, we associate a graph $\Xi_G$.  The vertices of this
graph are of two types:  elliptic and parabolic.  The definition of $\Xi_G$ is somewhat involved in the presence
of elliptic vertices, so for the purposes of the present discussion let us ignore them.  The parabolic vertices of
$\Xi_G$ are the basis vectors in $R^2$ which are eigenvectors of $Cg$ for some admissible $g \in G$, modulo
the action of $G$.  (Admissibility in this setting is defined just as before.)  Two parabolic vertices $x$ and $y$ are
connected if $\langle x, y \rangle$ is $\pm 1$ or $\pm 2$ and there exists $g \in G$ such that
\begin{displaymath}
Cg+1=\left( \frac{2}{\langle x, y \rangle} \right) \langle -, y \rangle x.
\end{displaymath}
The quantity in parentheses is always taken to be $\pm 1$ or $\pm 2$.  Given $x$, $y$ and $g$ as above, one has
$Cgx=x$ and $Cgy=-y$, though the above identity is stronger than these two in general.  We give an explanation for the
pairing condition in \S \ref{ss:role2}.  The main result of \S \ref{s:3} states that the graph $\Xi_G$ is a union of
cycles, i.e., every vertex has valence two and there are no infinitely long paths.

In \S \ref{s:4} we turn to the study of real components for congruence subgroups of $\SL_2(\Z)$.  Thus let $\Gamma$ be
a subgroup of $\SL_2(\Z)$ containing $\Gamma(N)$ for some $N$, and let $c$ be a complex conjugation on $\mf{h}$
preserving $\Gamma$.  We must assume that $c$ also preserves $\Gamma(N)$; without this assumption, the behavior
of the real locus is very different.  Associated to $c$ we have a matrix $C$ in $\GL_2(\R)$, which, in this
setting, we show belongs to $\GL_2(\Z)$.  Let $G$ be the image of $\Gamma$ in $\SL_2(\Z/N\Z)$.  We can then
build two graphs:  $\Xi_{\Gamma}$, constructed in \S \ref{s:2}, and $\Xi_G$, constructed in \S \ref{s:3}.  Our
main theorem is that these two graphs are naturally isomorphic.  This gives a description of $X_{\Gamma}(\R)$
purely in terms of $G$ and $C$.

\subsection{Sample of specific results}
\label{ss:sample}

In \S \ref{s:5} we apply the theory we have developed to several families of modular curves to obtain formulas for
the number of real components.  We state two of those results here.

\begin{proposition}
Let $N$ be a positive integer.  If $N$ is a power of 2 then $X_0(N)$ has one real component.  Otherwise, let $n$ be the
number of distinct odd primes factors of $N$ and let $\epsilon$ be 1 if $N$ is divisible by 8 and 0 otherwise.  Then
$X_0(N)$ has $2^{n+\epsilon-1}$ real components.
\end{proposition}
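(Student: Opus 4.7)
The plan is to invoke the main theorem of \S\ref{s:4}, which identifies $X_0(N)(\R)$ with the finite graph $\Xi_G$ of \S\ref{s:3}, where $G = \Gamma_0(N) \bmod N \subset \SL_2(\Z/N\Z)$ and $C = \mat{1}{0}{0}{-1}$. Since \S\ref{s:3} guarantees that $\Xi_G$ is a disjoint union of cycles, counting real components reduces to counting its connected components.

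First, I would enumerate the parabolic vertices of $\Xi_G$, namely the $G$-orbits of basis vectors $v \in (\Z/N\Z)^2$ that are eigenvectors of $Cg$ for some admissible $g \in G$. Since all cusps of $X_0(N)$ are $\Q$-rational and hence real, this vertex set coincides with the classical cusp set, parametrized by pairs $(d,a)$ with $d \mid N$ and $a \in (\Z/\gcd(d, N/d)\Z)^\times$. I would then also identify the elliptic vertices (real order-$2$ elliptic points of $X_0(N)$), whose existence is controlled by standard congruence conditions on $N$.

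Next I would analyze the edges: an admissible $g \in G$ subject to the pairing condition of \S\ref{s:3} connects a specific pair of vertices. The natural route is the Chinese Remainder Theorem, since $\SL_2(\Z/N\Z) = \prod_p \SL_2(\Z/p^{v_p(N)}\Z)$ and both $G$ and $C$ respect this decomposition. The cycle count should then factor multiplicatively: each odd prime dividing $N$ contributes an independent $\Z/2$-worth of freedom (a factor of $2^n$), the $2$-adic piece contributes an extra factor of $2$ precisely when $8 \mid N$ (the factor $2^\epsilon$), and a single global identification removes one factor of $2$, giving $2^{n+\epsilon-1}$ cycles. The pure $2$-power case $N = 2^k$ is exceptional and must be treated separately: only the $2$-adic factor is present, and the global identification collapses the graph into a single cycle.

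The main obstacle will be the $2$-adic analysis, which splits into the cases $v_2(N) \in \{0,1,2,\ge 3\}$: in each, the pairing condition, the elliptic vertex contribution, and the admissibility constraint demand separate treatment before one can conclude the local factor is $2^\epsilon$. A secondary difficulty is cleanly identifying the global relation responsible for the final division by $2$ in the general case, and for the total collapse in the pure $2$-power case.
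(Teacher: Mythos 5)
Your overall strategy --- invoke Theorem~\ref{xi-ident} to replace $X_0(N)(\R)$ by the graph $\Xi_G$, decompose $G=\prod_p G_{p^{e_p}}$ by CRT, and assemble the answer from local factors at each prime power --- is exactly the route the paper takes in \S\ref{ss:ex4}, using the product $\gtimes$ of modular graphs from \S\ref{ss:modgr} and \S\ref{ss:xiprod}. But there is a concrete error at the start of your vertex enumeration: it is \emph{not} true that all cusps of $X_0(N)$ are real (nor that they are all $\Q$-rational), and the parabolic vertices of $\Xi_G$ are only the \emph{real} cusps. For example, $X_0(9)$ has four cusps (indexed by $d\mid 9$ with $\phi(\gcd(d,9/d))$ cusps over each $d$), but only two of them are real: complex conjugation acts on the cusps over a fixed $d$ through $a\mapsto -a$ on $(\Z/\gcd(d,N/d)\Z)^{\times}$, so no cusp over $d$ is fixed once $\gcd(d,N/d)>2$. (Compare the table: $X_0(9)$ and $X_0(25)$ each have $p=2$ real cusps.) If you feed the full classical cusp set into the graph you will get the wrong vertex count whenever $N$ is not squarefree, and the component count built on it cannot be trusted. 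The correct enumeration, which the paper carries out prime by prime, gives exactly two real cusps ($0$ and $\infty$) for each odd prime power, and $2$, $3$, or $4$ real cusps for $2$, $4$, and $2^r$ with $r\ge 3$ respectively.

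The second weak point is your ``single global identification removes one factor of $2$.'' There is no such global relation; the division by $2$ is an artifact of how component counts behave under the graph product, which is emphatically \emph{not} multiplicative on components. The identity for $\gtimes$ is a single $2$-cycle, each local graph $\Xi_{p^e}$ ($p$ odd) is itself a single cycle $\Xi_{\rm odd}$ with one weight-one and one weight-two edge, and the key computation is $\Xi_{\rm odd}\gtimes\Xi_{\rm odd}\cong\Xi_{\rm odd}\amalg\Xi_{\rm odd}$: the first odd prime contributes one cycle and each further odd prime doubles the count, whence $2^{n-1}$ for odd $N$. Similarly the $2$-adic factor contributes through $\Xi_{\rm odd}\gtimes\Xi_{2^r}$, which is one cycle for $r\le 2$ and splits into two cycles for $r\ge 3$; and for $N=2^r$ the local graph is already a single cycle, with nothing to ``collapse.'' To make your argument rigorous you must track the edge weights (and, for $N$ even, the elliptic vertex of $\Xi_2$) through the product construction and verify these splittings, rather than positing a global factor of $2$; you should also note that the product theorem of \S\ref{ss:xiprod} applies here because each local factor contains $-1$ and all but one have odd characteristic. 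Finally, Proposition~\ref{realell} is what disposes of real even-order elliptic points for $N>2$; ``standard congruence conditions'' control the existence of elliptic points but not their reality.
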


This result was suggested by Frank Calegari based on computations of William Stein \cite{Stein}.  To state our second
formula, let us introduce some notation.  Let $\phi(N)$ denote the cardinality of $(\Z/N\Z)^{\times}$, as usual.  For
an odd integer $N$, let $\psi(N)$ denote the order of the quotient of the group $(\Z/N\Z)^{\times}$ by the subgroup
generated by $-1$ and 2.  We then have:

\begin{proposition}
Let $N$ be a positive integer, and write $N=2^rN'$ with $N'$ odd.  Then the number of real components of $X_1(N)$ is
given by:
\begin{displaymath}
\begin{cases}
\psi(N') & \textrm{if $r \le 1$} \\
\tfrac{1}{4} \phi(N) & \textrm{if $r \ge 2$ and $N \ne 4$} \\
1 & \textrm{if $N=4$.}
\end{cases}
\end{displaymath}
\end{proposition}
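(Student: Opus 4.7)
The plan is to apply the main theorem of \S\ref{s:4} with $\Gamma = \pm\Gamma_1(N)$ and complex conjugation $c\colon z \mapsto -\bar z$, which corresponds to $C = \mat{-1}{0}{0}{1}$. Since $-I$ acts trivially on $\mf{h}^*$, we have $X_1(N) = \mf{h}^*/\Gamma$, and the image $G$ of $\Gamma$ in $\SL_2(\Z/N\Z)$ consists of the matrices $\pm\mat{1}{b}{0}{1}$. A direct check gives $g^C = g^{-1}$ for every $g \in G$, so admissibility is automatic, and the main theorem identifies the real components of $X_1(N)$ with the cycles of the graph $\Xi_G$, which is what we must enumerate.

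For the case $r = 0$ I would proceed as follows. Writing $Cg$ explicitly shows that the basis-vector eigenvectors fall into two families: Type~A vectors $(s, 0)$ with $s \in (\Z/N\Z)^{\times}$, and Type~B vectors $(\ast, 2t)$ with $t \in (\Z/N\Z)^{\times}$. In each case the $G$-orbits are parametrized by $(\Z/N\Z)^{\times}/\{\pm 1\}$, for a total of $\phi(N)$ parabolic vertices; no elliptic vertices arise because $G$ is torsion-free modulo its center. For the edges, the identity $Cg+I = (2/\langle x, y\rangle)\langle -, y\rangle x$, combined with the fact that $Cg+I$ is rank one with image either $(1,0)$ or $(-b, 2)$ depending on the sign of $g$, forces every edge to run between the two types; a short calculation then shows the Type~A vertex $[s]$ has exactly the two neighbors $[1/s]$ and $[1/(2s)]$. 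Traversing a cycle then produces Type~A indices $s, 2s, 4s, \ldots$, closing when $2^k s \equiv \pm s$, so the number of cycles equals $\phi(N)/|\langle -1, 2\rangle| = \psi(N)$.

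The case $r = 1$ reduces to $r = 0$ by the Chinese Remainder Theorem: since $(\Z/2\Z)^{\times}$ is trivial, the factor at $2$ contributes nothing new and the graph $\Xi_G$ for $N = 2N'$ is canonically isomorphic to that for $N'$. The hard case is $r \geq 2$: now $2$ is not a unit in $\Z/N\Z$, so the eigenvector classification, the $G$-orbit count, and above all the edge condition --- in which the scalar $2/\langle x, y\rangle$ requires $\langle x, y\rangle$ to be literally $\pm 1$ or $\pm 2$ as a ring element --- must all be redone $2$-adically. I would use the decomposition $(\Z/N\Z)^{\times} \cong (\Z/2^r\Z)^{\times} \times (\Z/N'\Z)^{\times}$ and analyze the $2$-part and odd part separately before recombining; the expected outcome is that each cycle has length exactly four, yielding $\phi(N)/4$ components. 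This $2$-adic analysis is where I expect the main difficulty: the partial operation ``multiplication by $2$'' at the prime $2$ changes the valence bookkeeping that drove the argument for $r = 0$. The degenerate case $N = 4$ must be verified directly because $G$ is then so small that the generic vertex and edge counts collapse, and one checks by hand that $\Xi_G$ is a single cycle.
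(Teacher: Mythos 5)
Your framework (pass to $G=\pm U\subset\SL_2(\Z/N\Z)$ with $U$ unipotent upper-triangular, note that every element is admissible, and count cycles of $\Xi_G$) is exactly the paper's, and your treatment of the odd case is correct and matches \S\ref{ss:ex3}: the $\phi(N)$ parabolic vertices split into the two families $(s,0)$ and $(0,t)$, the vertex $(s,0)$ is joined to $(0,1/s)$ and $(0,2/s)$, and the cycles are the orbits of $\langle -1,2\rangle$, giving $\psi(N)$ components. The other two cases each contain a genuine gap. For $r=1$ your reduction rests on a false claim: the graph $\Xi_G$ for level $2N'$ is \emph{not} isomorphic to the one for level $N'$. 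Under CRT one gets $\Xi_{2N'}\cong\Xi_{N'}\gtimes\Xi_2$, but $\Xi_2$ (the graph of $X_1(2)$: two parabolic vertices and one elliptic vertex forming a triangle) is not the identity for the product $\gtimes$ of \S\ref{ss:modgr} --- the identity has a single parabolic vertex. Concretely, $X_1(2N')$ has $2\phi(N')$ real cusps, twice as many as $X_1(N')$, falling into the four families $(a,0)$, $(0,a)$, $(1,2a)$, $(a,t)$ with $t$ the $2$-torsion element. The triviality of $(\Z/2\Z)^\times$ does not make the factor at $2$ invisible, and the number of components of a product is not controlled by connectedness of the factors (the paper's computation $\Xi_{\rm odd}\gtimes\Xi_{\rm odd}\cong\Xi_{\rm odd}\amalg\Xi_{\rm odd}$ for $X_0$ shows a product of connected graphs can disconnect). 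The equality of the component counts is true but has to be proved; the paper does so by exhibiting in the enlarged graph a chain of length four joining $(a,0)$ to $(\epsilon a,0)$ with $\epsilon=2+t$, so that the cycles are again indexed by the orbits of $\langle -1,\epsilon\rangle=\langle -1,2\rangle$ on $(\Z/N'\Z)^\times$.

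For $r\ge 2$ you do not carry out the computation, and the structure you predict is not the one that occurs. The real cusps again fall into the four families above, but there are $\tfrac{1}{2}\phi(N)$ of each of the first two and only $\tfrac{1}{4}\phi(N)$ of each of the last two, for a total of $\tfrac{3}{2}\phi(N)$ real cusps --- not $\phi(N)$ --- and each cycle is a hexagon passing through $(a,0)$, $(0,b)$, $(a,t)$, $(0,b+t)$, $(a+t,0)$, $(1,2b)$, not a square. One still obtains $\tfrac{3}{2}\phi(N)/6=\tfrac{1}{4}\phi(N)$ components, but your ``each cycle has length exactly four'' expectation is incompatible with the correct vertex count, so the $2$-adic edge analysis you defer cannot be a routine adjustment of the odd case; as written, this case is asserted rather than proved. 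The direct verification for $N=4$ is fine.
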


Given the current state of Artin's primitive root conjecture, it seems that one knows that $-1$ and 2 generate
$(\Z/N\Z)^{\times}$ for infinitely many prime numbers $N$ only under the assumption of the generalized Riemann
hypothesis.  Thus one can show that the real locus of $X_1(N)$ is connected for infinitely many prime $N$ only under
this assumption as well.

\subsection{Real component groups of modular Jacobians}

Real components of a curve are closely related to real components of its Jacobian:  if $X/\R$ is a smooth projective
curve with a real point, then the set of real components of its Jacobian is an $\F_2$-vector
space whose dimension is one less than the number of real components of $X$.  See \cite{GrossHarris} for a proof.
Thus our theorems on the real components of modular curves can be translated to theorems about the structure of the
real component group of modular Jacobians.  (In fact, the computations of \cite{Stein} are for Jacobians.)

\subsection{The role of the number 2}
\label{ss:role2}

As we have seen, the number 2 plays a special role in the study of real components of congruence groups.  For
instance, in the abstract setting of \S \ref{s:3} we require that $R$ satisfy special hypotheses at 2, and the formulas
of \S \ref{ss:sample} are very sensitive to the power of 2 in $N$.  Let us now explain how it comes about that 2
is so special. To begin with, the occurrences of 2 just mentioned are easily traced to
the appearance of 2 in the definition of the graph $\Xi_G$:  for two parabolic vertices $x$ and $y$ to be connected
we require $\langle x, y \rangle$ to be $\pm 1$ or $\pm 2$.  To explain the source of this condition, let $\Gamma$
be a subgroup of $\SL_2(\Z)$ without elliptic points and stable under the ``standard'' complex conjugation $c_0$ given
by $x+iy \mapsto x-iy$.  Then an element
\begin{displaymath}
\gamma=\mat{a}{b}{c}{d}
\end{displaymath}
of $\Gamma$ is admissible if and only if $a=d$.  Assuming $\gamma$ is admissible (and $c \ne 0$), the curve
$C_{\gamma}$ contains
the two cusps $\frac{-a \pm 1}{c}$.  We thus have an edge in $\Xi_{\Gamma}$ between these two cusps, and so there is
a corresponding edge in $\Xi_G$.  Recall that parabolic vertices of $\Xi_G$ are represented by elements
of $(\Z/N\Z)^2$.  A cusp $p/q \in \Q\P^1$ of $\Gamma$ corresponds to the parabolic vertex represented by $(p, q)$
in $(\Z/N\Z)^2$, assuming the fraction $p/q$ is in lowest terms.  We therefore see that the edge of $\Xi_G$
corresponding to $\gamma$ connects the two parabolic vertices
\begin{displaymath}
\left( \frac{-a+1}{d_1}, \frac{c}{d_1} \right), \qquad
\left( \frac{-a-1}{d_2}, \frac{c}{d_2} \right),
\end{displaymath}
where $d_1=\gcd(-a+1, c)$ and $d_2=\gcd(-a-1, c)$.  The inner product of these two vectors is
\begin{displaymath}
\frac{2c}{d_1d_2},
\end{displaymath}
which is equal to $\pm 1$ or $\pm 2$.  This fact from elementary number theory is therefore responsible for
the role of 2.

\subsection{Some further results and questions}

We now list some further results we establish:
\begin{itemize}
\item Our proof of the structure theorem for $\Xi_G$ can easily be adapted into an efficient algorithm to
compute real components of modular curves corresponding to real congruence groups.
\item If $\Gamma$ is a real congruence subgroup of odd level without real elliptic points of even order, then this
algorithm simplifies into a formula:  the number of real components of $X_{\Gamma}$ is half the number of orbits
of multiplication by 2 on the set of real cusps.  (Here multiplication by 2 takes a cusp $p/q$, in lowest terms,
to $p'/q'$, where $p'$ and $q'$ are coprime integers equivalent to $2p$ and $2q$ modulo the level.)
\item We establish a product formula for the graph $\Xi$.  Namely, if $G_i$ is a subgroup of $\SL_2(R_i)$ stable under
$C_i$, for $i=1,2$, and $G=G_1 \times G_2$ is the resulting subgroup of $\SL_2(R_1 \times R_2)$, then $\Xi_G$ can be
recovered as a sort of product of $\Xi_{G_1}$ and $\Xi_{G_2}$.  In certain circumstances, this allows one to
reduce computation of $\Xi_G$ to the case of prime power level, which is often easier; see the example of $X_0(N)$
in \S \ref{s:5} for a good illustration of this.
\item In \S \ref{s:2} we classify admissible elements of a real Fuchsian group into several types according to their
twisted centralizers.  In \S \ref{s:5} we give examples showing that every type can occur, even for congruence
subgroup of $\SL_2(\Z)$.
\end{itemize}
Here are some questions we have:
\begin{itemize}
\item We construct the graph $\Xi_G$ in great generality:  $G$ can be any ``real'' subgroup of $\SL_2(R)$, for any
ring $R$.  However, these graphs are only directly related to modular curves when $R=\Z/N\Z$.  Do these graphs have
any meaning for other rings?
\item There are certain real modular curves that do not fit into our theory, such as those corresponding to
non-congruence subgroups, or the twisted form of $X_0(N)$ discussed in \S \ref{s:5}.  Can our theory
be generalized to accommodate these cases?
\item In \cite{Shimura2}, Shimura showed that the canonical model of a Shimura curve which is not a modular curve
does not have any real points.  However, twisted forms of such Shimura curves can have real points.  Can our theory be
extended to cover these curves?
\item Assuming 2 is a primitive root modulo infinitely many primes, there is no bound to the number of cusps that
can occur on real components of $X_1(N)$.  However, no real component of any modular curve we have examined contains
more than 18 elliptic points of even order.  How many elliptic points of even order can occur on a real component?
\end{itemize}

\subsection{Notation and terminology}

We freely use terminology and basic results concerning group actions on the upper half plane. See the first chapter of
\cite{Shimura} for background.  For a ring $R$, we write $\PSL_2(R)$ to mean $\SL_2(R)/\{\pm 1\}$, which is
typically not the set of $R$-points of the variety $\PSL_2$.  We use the term ``graph'' in a very general (even
vague) sense; unless otherwise specified, we allow loops (edges from a vertex to itself), multiple edges between
two vertices and even edges which do not contain any vertices (which we picture as circles).  We write $\phi(N)$
for the order of $(\Z/N\Z)^{\times}$ and $\psi(N)$ for the order of the quotient of this group by the subgroup
generated by $-1$ and 2.

\subsection*{Acknowledgments}

I would like to thank Frank Calegari and William Stein for useful comments.

\section{Real Fuchsian groups}
\label{s:1}

In this section, we introduce the notion of a \emph{real Fuchsian group}, which is a pair consisting of a discrete
subgroup of $\PSL_2(\R)$ and a ``complex conjugation'' of the upper-half plane under which the group stable.

\subsection{Complex conjugations}
\label{ss:cc}

Let $\mf{h}$ denote the upper half-plane.  A \emph{complex conjugation} on $\mf{h}$ is an anti-holomorphic involution
$c:\mf{h} \to \mf{h}$.  For example, the map $c_0:\mf{h} \to \mf{h}$ defined by
\begin{displaymath}
c_0(x+iy)=-x+iy
\end{displaymath}
is a complex conjugation, which we call the \emph{standard complex conjugation}.  It is distinguished in two ways.
First, under the isomorphism of $\mf{h}$ with the unit disc given by the Cayley
transformation, $c_0$ corresponds to the usual complex conjugation on the disc.  Second, a modular form $f$ on
$\mf{h}$ has real Fourier coefficients if and only if $f(c_0 z)=\ol{f(z)}$.  This follows easily from the
identity $\ol{q(z)}=q(c_0z)$, where $q(z)=e^{2\pi i z}$.  Despite the appearance of $c_0$ as preferred, it will
be convenient for us to have the flexibility of allowing arbitrary complex conjugations.

Let $c$ be a complex conjugation on $\mf{h}$.  If $\gamma$ is an element of $\PSL_2(\R)$ then $c \gamma c$ is a
holomorphic automorphism of $\mf{h}$, and therefore given by an element $\gamma^c$ of $\PSL_2(\R)$.  The map
$\gamma \mapsto \gamma^c$ is a group automorphism of $\PSL_2(\R)$ of order 2.  We call an element $\gamma$ of
$\PSL_2(\R)$ \emph{admissible} (with respect to $c$) if $\gamma^c=\gamma^{-1}$.  The map $c\gamma$ of $\mf{h}$ is
a complex conjugation if and only if $\gamma$ is admissible with respect to $c$.  If $c'$ is a second complex
conjugation, then $c'=c\gamma$ for some admissible $\gamma$.

Given an element
\begin{displaymath}
g=\mat{a}{b}{c}{d}
\end{displaymath}
of $\PGL_2(\R)$ of negative determinant and an element $z$ of $\mf{h}$, we put
\begin{displaymath}
gz=\frac{a\ol{z}+b}{c\ol{z}+d},
\end{displaymath}
where $\ol{z}$ denotes the usual complex conjugate of the element $z \in \C\P^1$.  The above formula extends the
action of $\PSL_2(\R)$ on $\mf{h}$ to an action of all of $\PGL_2(\R)$.  Every anti-holomorphic automorphism
of $\mf{h}$ is given by an element of $\PGL_2(\R)$ of negative determinant.

It follows from the above discussion that complex conjugations
on $\mf{h}$ correspond to elements of $\PGL_2(\R)$ of negative determinant which square to the identity.  Given
a complex conjugation $c$, there is a unique matrix $C \in \GL_2(\R)$ (up to signs) of trace 0 and determinant $-1$
which induces $c$ on $\mf{h}$; we call either of $\pm C$ the matrix associated to $c$.  For example, the matrix
associated to $c_0$ is given by
\begin{displaymath}
C_0=\mat{1}{}{}{-1}.
\end{displaymath}
Note that $\gamma$ is admissible with respect to $c$ if and only if the matrix $C \gamma$ has trace 0.
If $C$ and $C'$ are two elements of $\GL_2(\R)$ of trace 0 and determinant $-1$ then there exists
$\gamma \in \PSL_2(\R)$ such that $\gamma C \gamma^{-1}=\pm C'$.  It follows that if $c$ and $c'$ are two
complex conjugations then there exists $\gamma \in \PSL_2(\R)$ such that $c'=\gamma c \gamma^{-1}$.  In particular,
every complex conjugation is conjugate to $c_0$, and therefore has a fixes a point (in fact, an entire line) in
$\mf{h}$.

\subsection{Real Fuchsian groups}

A \emph{real Fuchsian group} is a pair $(\Gamma, c)$ consisting of a discrete subgroup $\Gamma$ of $\PSL_2(\R)$ and
a complex conjugation $c$ on $\mf{h}$ such that $\Gamma$ is stable under the involution $\gamma \mapsto \gamma^c$ of
$\PSL_2(\R)$.  Let $(\Gamma, c)$ be a real Fuchsian group.  Let $\mf{h}^*$ be the space obtained by adding the cusps of
$\Gamma$ to $\mf{h}$ and put $X_{\Gamma}=\mf{h}^*/\Gamma$.  \emph{We assume throughout that $X_{\Gamma}$ is compact.}
We typically regard $X_{\Gamma}$ as just a topological space, though sometimes we will use the additional structure
it has (e.g., smooth manifold).  Since $\Gamma$ is stable
under $c$ the set of cusps of $\Gamma$, and thus $\mf{h}^*$, is stable under $c$.  We thus find that $c$ descends to an
automorphism of $X_{\Gamma}$, which we denote by $c$ and still call complex conjugation.

A \emph{real point} of $X_{\Gamma}$ is a point fixed by $c$.  We write $X_{\Gamma}(\R)$ for the set of real points
of $X_{\Gamma}$.  A simple argument shows that $X_{\Gamma}(\R)$ is diffeomorphic to a \emph{non-empty} disjoint union
of circles; it is non-empty since $c$ fixes a point in $\mf{h}$.  By a \emph{real component} of $X_{\Gamma}$, we mean
one of these circles.

A real Fuchsian group $(\Gamma, c)$ gives a complex orbifold $X_{\Gamma}$ with a finite set of distinguished points
(the cusps) together with an anti-holomorphic involution $c$.  There is an obvious notion of isomorphism for such
pairs $(X_{\Gamma}, c)$, and it is natural to ask how this is reflected in terms of $(\Gamma, c)$.  Clearly, if
$\sigma$ is an admissible element of $\Gamma$ then $(\Gamma, c \sigma)$ gives rise to the \emph{same} quotient
$(X_{\Gamma}, c)$.  Also, it is clear that if $\gamma$ is an element of $\PSL_2(\R)$ then $(\gamma \Gamma \gamma^{-1},
\gamma c \gamma^{-1})$ gives rise to an isomorphic quotient.  In fact, this is all that can happen.  Precisely,
we define two real Fuchsian groups $(\Gamma_1, c_1)$ and $(\Gamma_2, c_2)$ to be \emph{equivalent} if
$(\Gamma_1, c_1)=(\gamma \Gamma_2 \gamma^{-1}, \gamma c_2 \sigma \gamma^{-1})$ for some $\gamma \in \PSL_2(\R)$ and
some $\sigma \in \Gamma_2$ admissible with respect to $c_2$.  One can then show that $(X_{\Gamma_1}, c_1)$ and
$(X_{\Gamma_2}, c_2)$ are isomorphic if and only if $(\Gamma_1, c_1)$ and $(\Gamma_2, c_2)$ are equivalent.

As a final remark, suppose that $\Gamma$ is a discrete subgroup of $\PSL_2(\R)$ such that $X_{\Gamma}$ is compact
and $\ol{c}$ is an anti-holomorphic involution of $X_{\Gamma}$ (respecting the cusps and orbifold structure).  It is
natural to ask if $(X_{\Gamma}, \ol{c})$
comes from a real Fuchsian group.  A necessary condition is that $\ol{c}$ has a fixed point on $X_{\Gamma}$.
In fact, this is sufficient:  if $\ol{c}$ has a fixed point then there is a complex conjugation $c$ on $\mf{h}$
lifting $\ol{c}$ such that $\Gamma$ is stable under the involution $\gamma \mapsto \gamma^c$ of $\PSL_2(\R)$.
Then $(\Gamma, c)$ is a real Fuchsian group giving rise to $(X_{\Gamma}, \ol{c})$.

\subsection{Twisted forms}

We say that two real Fuchsian groups $(\Gamma_1, c_1)$ and $(\Gamma_2, c_2)$ are \emph{twisted forms} of each other
if $\Gamma_1$ and $\Gamma_2$ are conjugate.  This is equivalent to asking for an isomorphism $X_{\Gamma_1} \to
X_{\Gamma_2}$ respecting all the structure, except for complex conjugation.

Suppose that $(\Gamma, c)$ is a real Fuchsian group.  We would like to understand its twisted forms, up to equivalence.
A simple argument shows that every twisted form is equivalent to one of the form $(\Gamma, c\sigma)$, where $\sigma$
is an admissible element of $N\Gamma$, the normalizer of $\Gamma$ in $\PSL_2(\R)$.  Furthermore, two twisted forms
$(\Gamma, c\sigma_1)$ and $(\Gamma, c\sigma_2)$ are equivalent if and only if $\sigma_1$ and $\sigma_2$ are
``twisted conjugate'' in $N \Gamma/\Gamma$, i.e., there exists $\gamma \in N\Gamma$ such that $\gamma^c \sigma_1
\gamma^{-1}$ and $\sigma_2$ map to the same element in $N \Gamma/\Gamma$.

The reader familiar with algebraic geometry may expect that twisted forms of $(X_{\Gamma}, c)$ are classified by
$H^1(G, \Aut(\Gamma))$, where $G$ is the two element group generated by $c$.  This is not quite true, since our
twisted forms all have real points.  In fact, the set of equivalence classes of twisted forms (in our sense) is a
subset of $H^1(G, \Aut(\Gamma))$. The set $H^1(G, \Aut(\Gamma))$ is identified with the set of
twisted conjugacy classes of elements $g \in N\Gamma/\Gamma$ satisfying $g^c=g^{-1}$.  The image of the set of
twisted forms, in our sense, consists of those elements $g$ which lift to an element $\gamma \in N \Gamma$
which still satisfies $\gamma^c=\gamma^{-1}$.

\section{Components for real Fuchsian groups}
\label{s:2}

In this section we define a graph $\Xi_{\Gamma}$ associated to a real Fuchsian group $\Gamma$ and show that it is
homeomorphic to $X_{\Gamma}(\R)$.

\subsection{The graph $\Xi_{\Gamma}$ and the main theorem}
\label{ss:xi1}

We fix for the entirety of \S \ref{s:2} a real Fuchsian group $(\Gamma, c)$.  We let $X_{\Gamma}=\mf{h}^*/\Gamma$
be the corresponding quotient, which we assume, as always, to be compact.  We let $\pi:\mf{h}^* \to X_{\Gamma}$ be the
quotient map.  In this section, we define the graph $\Xi_{\Gamma}$ and state the main theorem of \S \ref{s:2}.

Before defining $\Xi_{\Gamma}$, we must introduce some terminology.  We say that a point of $X_{\Gamma}$ is
\emph{special} if it is a real cusp or a real elliptic point of even order.  For an element $\gamma \in
\Gamma$ we let $C_{\gamma}$ denote the locus in $\mf{h}^*$ consisting of points $z$ which satisfy
$\gamma z=cz$.  We say that two elements $\gamma$ and $\gamma'$ of $\Gamma$ are \emph{twisted conjugate} if there
exists $\delta \in \Gamma$ such that $\gamma=\delta^c \gamma' \delta^{-1}$.  Note that $\delta C_{\gamma}
=C_{\delta^c \gamma \delta^{-1}}$, so if $\gamma$ and $\gamma'$ are twisted conjugate then $\pi(C_{\gamma})
=\pi(C_{\gamma'})$.  Furthermore, note that if $\gamma$ is admissible then so is any twisted conjugate of
$\gamma$.

We now define the graph $\Xi_{\Gamma}$.  The vertex set of $\Xi_{\Gamma}$ is the set of special points of
$X_{\Gamma}$.  The edge set of $\Xi_{\Gamma}$ is the set of admissible twisted conjugacy classes.  A special
point $z$ belongs to the edge corresponding to $\gamma$ if $z$ belongs to $\pi(C_{\gamma})$.  Actually, this definition
must be slightly amended, as follows:  the edge corresponding to $\gamma$ forms a loop at the vertex $z$ if
(and only if) $\pi(C_{\gamma})$ contains an open neighborhood of $z$ in $X_{\Gamma}(\R)$.  We will give a
clearer definition of the graph in \S \ref{s2pf}.  Note that $\Xi_{\Gamma}$ can have loops, multiple edges between
vertices and edges without any vertices.

We can now state the main theorem of \S \ref{s:2}.

\begin{theorem}
\label{s2thm}
The graph $\Xi_{\Gamma}$ is naturally homeomorphic to $X_{\Gamma}(\R)$.
\end{theorem}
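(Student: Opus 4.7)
The plan is to decompose $X_{\Gamma}(\R)$ as a union of arcs and circles indexed by admissible twisted conjugacy classes of $\Gamma$ and then to match this decomposition with the CW structure of $\Xi_{\Gamma}$. The starting point is the observation from the introduction: every real point of $X_{\Gamma}$ lifts to some $z \in \mf{h}^*$ satisfying $cz = \gamma z$ for an admissible $\gamma \in \Gamma$. For admissible $\gamma$, the map $\gamma^{-1} c$ is an anti-holomorphic involution of $\mf{h}$ (one uses $\gamma^c = \gamma^{-1}$), whose fixed locus $C_{\gamma}$ in $\mf{h}$ is a geodesic — a vertical line or Euclidean semicircle — which closes up in $\mf{h}^*$ by at most two endpoints, necessarily cusps of $\Gamma$. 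Thus
\begin{displaymath}
X_{\Gamma}(\R) = \bigcup_{[\gamma]} \pi(C_{\gamma}),
\end{displaymath}
the union taken over admissible twisted conjugacy classes.

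Next I would analyze each $\pi(C_{\gamma})$ via the twisted centralizer $Z_{\gamma} = \{\delta \in \Gamma : \delta^c \gamma \delta^{-1} = \gamma\}$, which stabilizes $C_{\gamma}$ setwise. Either $Z_{\gamma}$ contains an element acting on $C_{\gamma}$ by a nontrivial translation, in which case $\pi(C_{\gamma})$ is a topological circle containing no special points (an edge of $\Xi_{\Gamma}$ without vertices), or $C_{\gamma}/Z_{\gamma}$ is a closed arc. In the latter case each endpoint of the arc is either the image of a real cusp lying on $C_{\gamma}$, or the image of a fixed point of some involution in $Z_{\gamma}$ reversing $C_{\gamma}$. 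Working in orbifold coordinates one checks that such fixed points are precisely the real elliptic points of even order on $C_{\gamma}$: a cyclic stabilizer of odd order cannot reverse a geodesic, so odd-order elliptic points on $C_{\gamma}$ contribute no endpoint and the arc passes through them smoothly in the quotient.

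I would then examine the local picture of $X_{\Gamma}(\R)$ near each type of point. Away from cusps and elliptic points, $\pi$ is a local diffeomorphism and each $\pi(C_{\gamma})$ contributes locally a single transverse geodesic segment. At a real cusp, a neighborhood of the point in $X_{\Gamma}(\R)$ is a union of two half-arcs, each lying in a (possibly common, possibly distinct) $\pi(C_{\gamma})$. At a real elliptic point of even order $2m$, the central rotation by $\pi$ in the stabilizer folds a transverse real geodesic onto a half-arc, so each admissible $\gamma$ giving such an arc contributes exactly one end at the vertex. Whether the two ends at $z$ produced by a single class $[\gamma]$ coincide (giving a loop) or are distinct (giving an edge with both endpoints at $z$) is exactly the condition stated in the amended definition of $\Xi_{\Gamma}$. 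Assembling these pieces, the assignment sending each edge $[\gamma]$ to $\pi(C_{\gamma})$ and each vertex to the corresponding special point extends to a continuous map from the geometric realization of $\Xi_{\Gamma}$ onto $X_{\Gamma}(\R)$; locally injective by the analysis above and surjective by the previous display, it is a homeomorphism by compactness.

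The main obstacle is the local analysis at even-order elliptic points and the correct bookkeeping that distinguishes a loop at a vertex from two separate ends meeting at that vertex. One must also verify that distinct admissible representatives of the same twisted conjugacy class produce the same image arc — built in via the identity $\delta C_{\gamma} = C_{\delta^c \gamma \delta^{-1}}$ — and that at each vertex the stabilizer action correctly identifies ends coming from conjugate admissible elements. Once these orbifold-coordinate computations are in hand, the matching of combinatorial incidence data with the local topology of $X_{\Gamma}(\R)$ is essentially forced.
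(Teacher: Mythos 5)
Your overall strategy is the same as the paper's: decompose $X_{\Gamma}(\R)$ into the arcs $\pi(C_{\gamma})$, analyze each via the twisted centralizer $Z_{\gamma}$, identify the endpoints of $C_{\gamma}/Z_{\gamma}$ with special points, and match this with the incidence data of $\Xi_{\Gamma}$. However, there are genuine gaps. The most serious is the final step: ``locally injective, surjective, hence a homeomorphism by compactness'' is not a valid inference (two disjoint circles map onto one circle by a map with all three properties). You need two global injectivity statements that your outline never supplies: (i) $\pi$ is injective on the interior of $C_{\gamma}/Z_{\gamma}$ (a local homeomorphism from an interval to a circle can wrap around), and (ii) distinct admissible twisted conjugacy classes have distinct images in $X_{\Gamma}(\R)\setminus Z$, where $Z$ is the set of special points. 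Both are proved in the paper by the same lifting trick: a point of $\mf{h}$ that is neither elliptic nor cuspidal has trivial stabilizer, so if it lies on both $C_{\gamma}$ and $\tau C_{\delta}=C_{\tau^c\delta\tau^{-1}}$ then $\gamma^{-1}\tau^c\delta\tau^{-1}=1$, forcing $\tau\in Z_{\gamma}$ in case (i) and twisted conjugacy of $\gamma$ and $\delta$ in case (ii). Without these, the graph could be strictly larger than $X_{\Gamma}(\R)$.

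Two further points. First, your dichotomy ``$Z_{\gamma}$ contains a nontrivial translation $\Rightarrow$ $\pi(C_{\gamma})$ is a circle with no special points'' is false: when $Z_{\gamma}$ is infinite dihedral it contains nontrivial translations, yet $C_{\gamma}/Z_{\gamma}$ is a closed arc whose two endpoints are even-order elliptic points (the paper's Type 4). The correct criterion is whether $Z_{\gamma}$ contains an orientation-reversing element of $C_{\gamma}$. Second, you assert that when the quotient is an arc its endpoints are real cusps or elliptic fixed points, but you do not justify that the endpoints of $\ol{C}_{\gamma}$ on $\R\P^1$ actually \emph{are} cusps when $Z_{\gamma}$ is finite --- a priori $C_{\gamma}=\ol{C}_{\gamma}\cap\mf{h}^*$ could be a half-open interval whose image is not closed. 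This requires the compactness of $X_{\Gamma}$ (the paper's Lemma~\ref{lem:cusp}: a path in $\mf{h}$ converging to a point of $\R\P^1$ whose image converges in $X_{\Gamma}$ must converge to a cusp), combined with the observation that infinite $Z_{\gamma}$ forces the endpoints to be fixed by hyperbolic elements and hence non-cuspidal. These are the orbifold-coordinate computations you defer; they are where the real content of the theorem lies.
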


In fact, this homeomorphism is the identity map on the vertex set of $\Xi_{\Gamma}$, and maps the interior of the
edge corresponding to $\gamma$ homeomorphically to the interior of the set $\pi(C_{\gamma})$.  This theorem will
take most of \S \ref{s:2} to prove.

\begin{remark}
It is possible to give a definition of $\Xi_{\Gamma}$ which is more group-theoretic than the one we give above,
making less reference to the space $X_{\Gamma}(\R)$.  For instance, the vertex set can be described in terms of
certain subgroups of $\Gamma$ (the stabilizers of special points).  However, we have not found this alternate
definition to be so clear or useful.  For congruence subgroups, however, we will come to a very useful, but very
different, group-theoretic characterization of $\Xi_{\Gamma}$.
\end{remark}

\subsection{The curves $C_{\gamma}$}

We now prove a few simple results about the loci $C_{\gamma}$ introduced above.  We first define some
variants.  Let $\gamma \in \Gamma$.  We define $\ol{C}_{\gamma}$ to be the locus in $\ol{\mf{h}}$ (the union of
$\mf{h}$ and $\R\P^1$) defined by
the equation $\gamma z=cz$.  The locus $C_{\gamma}$ is then $\ol{C}_{\gamma} \cap \mf{h}^*$.  We also put
$C^{\circ}_{\gamma}=\ol{C}_{\gamma} \cap \mf{h}$.  Clearly, any pre-image in $\mf{h}^*$ of a point in
$X_{\Gamma}(\R)$ lies on one of the curves $C_{\gamma}$.

The most basic fact about the $C_{\gamma}$ is the following:

\begin{proposition}
\label{conj:fix}
If $\gamma$ is admissible then $\ol{C}_{\gamma}$ is a semi-circle meeting $\ol{\mf{h}}$ at two distinct points.  If
$\gamma$ is not admissible then $C_{\gamma}$ is empty.
\end{proposition}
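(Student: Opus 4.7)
The approach is to reinterpret $\ol{C}_\gamma$ as the fixed-point locus in $\ol{\mf{h}}$ of the anti-holomorphic self-map $c\gamma$, represented by the matrix $C\gamma \in \GL_2(\R)$ acting by $z \mapsto (a\ol z + b)/(c\ol z + d)$ on $\mf{h}$ and by the ordinary Möbius formula on $\R\P^1$. Note that $\det(C\gamma) = -1$ and, by \S\ref{ss:cc}, admissibility of $\gamma$ is equivalent to $\tr(C\gamma) = 0$, and also to $c\gamma$ being itself a complex conjugation.

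For admissible $\gamma$, $c\gamma$ is a complex conjugation, so by the conjugacy result in \S\ref{ss:cc} there is $\delta \in \PSL_2(\R)$ with $\delta (c\gamma) \delta^{-1} = c_0$. The fixed locus of $c_0$ in $\ol{\mf{h}}$ is the imaginary axis together with its endpoints $0, \infty \in \R\P^1$, a closed semicircle with two distinct endpoints on $\R\P^1$. Transporting via $\delta^{-1}$ shows $\ol C_\gamma$ has the same form.

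For non-admissible $\gamma$, I would rule out points of $C_\gamma$ in $\mf{h}$ and in the cusp set separately. Writing $z = x + iy$ with $y > 0$, the imaginary part of the fixed-point equation yields $(a+d)y = 0$, forcing $\tr(C\gamma) = 0$ and hence admissibility, a contradiction; thus $C_\gamma \cap \mf{h} = \emptyset$. On $\R\P^1$, the fixed-point equation is the quadratic $cz^2 + (d-a)z - b = 0$, and using $ad - bc = -1$ its discriminant simplifies to $(a+d)^2 + 4 = \tr(C\gamma)^2 + 4 > 0$; so $c\gamma$ has two distinct fixed points on $\R\P^1$. Each of these is fixed by $(c\gamma)^2 = \gamma^c \gamma$, which lies in $\Gamma$ and is non-identity precisely because $\gamma$ is non-admissible; having two distinct fixed points on $\R\P^1$, the element $\gamma^c \gamma$ must be hyperbolic. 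However, since $X_\Gamma$ is compact, $\Gamma$ is of the first kind and the stabilizer in $\Gamma$ of any cusp is infinite cyclic generated by a parabolic element, so no hyperbolic element of $\Gamma$ fixes a cusp. Therefore neither fixed point of $c\gamma$ on $\R\P^1$ is a cusp, and $C_\gamma = \emptyset$.

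The main obstacle is the second half of the non-admissible case: the trace/imaginary-part calculation handles only points in $\mf{h}$, and one must leverage the hyperbolicity of $\gamma^c\gamma$ against the parabolic-cyclic structure of cusp stabilizers to rule out fixed cusps. The remainder reduces to elementary matrix arithmetic together with the results already established in \S\ref{ss:cc}.
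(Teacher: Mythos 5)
Your proof is correct, and it reorganizes the argument in a way worth comparing with the paper's. For the direction ``admissible $\Rightarrow$ semicircle'' you invoke the conjugacy of the complex conjugation $c\gamma$ to $c_0$; the paper explicitly mentions this route and then declines it, preferring to write out the equation $cx^2+cy^2-2ax+b=0$ of $\ol{C}_{\gamma}$ because the explicit endpoints $\tfrac{a\pm 1}{c}$ are reused later (e.g.\ in Proposition~\ref{con-cusp} and in the identification of $\Xi^{\dag}$ with $\Xi$), so your version proves the present statement but forfeits data the paper wants downstream. For the direction ``non-admissible $\Rightarrow$ $C_{\gamma}=\emptyset$'' the paper gives a single unified argument: any $z\in\mf{h}^*$ with $cz=\gamma z$ is stabilized by $(C\gamma)^2=\gamma^c\gamma$, whose trace is $2+(a+d)^2$, and stabilizers of points of $\mf{h}^*$ (elliptic, parabolic, or trivial) have trace of absolute value at most $2$, forcing $a=-d$. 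Your two-case treatment --- the imaginary-part computation $(a+d)y=0$ for interior points, and hyperbolicity of $\gamma^c\gamma$ played against the parabolic structure of cusp stabilizers for boundary points --- rests on the same underlying fact in the cusp case, phrased qualitatively rather than through the trace bound; it correctly registers that $c\gamma$ genuinely has two fixed points on $\R\P^1$ which simply fail to be cusps. One streamlining: rather than computing the discriminant of $cz^2+(d-a)z-b$ (which degenerates when $c=0$, where one fixed point is $\infty$), you can get hyperbolicity of $\gamma^c\gamma$ directly from $\tr\bigl((C\gamma)^2\bigr)=(a+d)^2+2>2$, which also makes the kinship with the paper's argument transparent.
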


\begin{proof}
Let $\gamma \in \Gamma$ be given and write
\begin{displaymath}
C \gamma=\mat{a}{b}{c}{d}.
\end{displaymath}
Suppose $z \in \mf{h}^*$ satisfies $cz=\gamma z$.  Then $(C \gamma)^2=\gamma^c \gamma$ stabilizes $z$, and so
$\vert \tr(\gamma^c \gamma) \vert \le 2$.  A short computation shows that $\gamma^c \gamma$ has trace $2+(a+d)^2$, and
so $a=-d$.  Thus if $C_{\gamma}$ is non-empty then $C \gamma$ has trace 0, and so $\gamma$ is admissible.

Conversely, suppose that $\gamma$ is admissible (i.e., $a=-d$).  It follows from our previous discussion that
$c \gamma$ is a complex conjugation, and thus conjugate to $c_0$, and so fixes a semi-circle.  However, we prefer
to be a bit more explicit.  A short computation shows that $z=x+iy$ belongs to $\ol{C}_{\gamma}$ if and only if
\begin{displaymath}
cx^2+cy^2-2ax+b=0.
\end{displaymath}
When $c=0$, this simply becomes $x=\tfrac{b}{2a}$, which is indeed a semi-circle intersecting $\R\P^1$ at two distinct
points (namely, 0 and $\infty$).  When $c \ne 0$, we can write this equation as
\begin{displaymath}
(x-\tfrac{a}{c})^2+y^2=\tfrac{1}{c^2},
\end{displaymath}
which is the equation of a circle of radius $\tfrac{1}{c}$ centered at the real point $\tfrac{a}{c}$.
It thus meets $\R\P^1$ at two distinct points, namely $\tfrac{a\pm 1}{c}$.
\end{proof}

We note that if $\gamma$ is admissible then $\ol{C}_{\gamma}$ is homeomorphic to a closed interval,
$C^{\circ}_{\gamma}$ to an open interval and $C_{\gamma}$ to an interval, either open, half-open or closed
depending on if the endpoints of $\ol{C}_{\gamma}$ are cusps.  (In fact, we will see that $C_{\gamma}$ is never
half-open.)

The following simple result will be often used, so we state it explicitly:

\begin{proposition}
\label{2-1}
Let $\gamma$ be admissible.  Then any element of $\Gamma$ stabilizing every point of $C_{\gamma}$ is the identity.
\end{proposition}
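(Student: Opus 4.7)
The plan is to invoke Proposition \ref{conj:fix} and then reduce to the classical fact that a nontrivial Möbius transformation has at most two fixed points on $\C\P^1$. Specifically, since $\gamma$ is admissible, Proposition \ref{conj:fix} tells us that $\ol{C}_{\gamma}$ is a full semi-circle meeting $\R\P^1$ in two \emph{distinct} points. Its interior $C^{\circ}_{\gamma}=\ol{C}_{\gamma}\cap\mf{h}$ is therefore a non-empty open arc in $\mf{h}$, and in particular contains infinitely many points. Because $C_{\gamma}=\ol{C}_{\gamma}\cap\mf{h}^*\supseteq C^{\circ}_{\gamma}$, any element $\delta\in\Gamma$ fixing every point of $C_{\gamma}$ in particular fixes every point of this open arc.

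Viewing $\delta$ as an element of $\PSL_2(\R)\subset\Aut(\C\P^1)$, it acts by a Möbius transformation of $\C\P^1$. A non-identity Möbius transformation has at most two fixed points on $\C\P^1$, so an element that fixes an entire open arc of $\mf{h}$ must be the identity. This forces $\delta=1$, which is what we wanted.

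The main (and in fact only) subtlety is to be sure that $C^{\circ}_{\gamma}$ really is non-empty, i.e.\ that the two endpoints of $\ol{C}_{\gamma}$ are distinct and the semi-circle is non-degenerate; this is exactly the content of the ``two distinct points'' clause in Proposition \ref{conj:fix}. I do not anticipate any genuine obstacle: the argument is essentially a one-line corollary of Proposition \ref{conj:fix} combined with the rigidity of Möbius transformations, and the degenerate case where $C_{\gamma}$ might collapse to a single cusp or tangent point is explicitly excluded there.
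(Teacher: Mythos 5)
Your proof is correct, but it takes a slightly different route from the paper's. The paper argues via elliptic points: since the elliptic points of $\Gamma$ form a discrete subset of $\mf{h}$ and $C_{\gamma}$ is a semi-circle, $C_{\gamma}$ must contain a non-elliptic point, and such a point by definition has trivial stabilizer in $\Gamma$ --- so any element fixing all of $C_{\gamma}$ is the identity. You instead observe that $C^{\circ}_{\gamma}$ is a non-empty open arc (hence infinite) and invoke the rigidity of M\"obius transformations: a non-identity element of $\PSL_2(\R)$ fixes at most two points of $\C\P^1$. Both arguments are one-liners resting on Proposition~\ref{conj:fix}; yours is arguably more self-contained, since it needs only the elementary fixed-point count rather than the discreteness of the elliptic locus, while the paper's phrasing fits the surrounding discussion, which is organized around stabilizers of points (elliptic points, cusps) in $\Gamma$. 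Either is acceptable.
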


\begin{proof}
The set of elliptic points of $\Gamma$ is discrete in $\mf{h}$, and so $C_{\gamma}$ contains non-elliptic
points of $\mf{h}$.  Such points, by definition, have trivial stabilizer in $\Gamma$.
\end{proof}

\begin{proposition}
\label{2-2}
If $\gamma$ and $\delta$ are admissible and $C_{\gamma}=C_{\delta}$ then $\gamma=\delta$.
\end{proposition}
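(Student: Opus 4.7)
The plan is to deduce this immediately from Proposition \ref{2-1} applied to the element $\delta^{-1}\gamma \in \Gamma$. First I would observe that if $z$ is any point of the common locus $C_{\gamma}=C_{\delta}$, then $\gamma z = cz = \delta z$, so $\delta^{-1}\gamma$ fixes $z$. Thus $\delta^{-1}\gamma$ stabilizes every point of $C_{\gamma}$.

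To apply Proposition \ref{2-1}, I need $C_{\gamma}$ to be non-empty, but this is guaranteed by Proposition \ref{conj:fix}: since $\gamma$ is admissible, $\ol{C}_{\gamma}$ is a semi-circle, so in particular $C_{\gamma}^{\circ} = \ol{C}_{\gamma} \cap \mf{h}$ is a non-empty open interval in $\mf{h}$, and a fortiori $C_{\gamma} \supseteq C_{\gamma}^{\circ}$ is non-empty. Proposition \ref{2-1} then forces $\delta^{-1}\gamma = 1$, i.e., $\gamma = \delta$.

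There is really no obstacle here; the statement is essentially a one-line corollary of Proposition \ref{2-1}, which already did the nontrivial work of producing a non-elliptic point of $\mf{h}$ on $C_{\gamma}$ with trivial stabilizer. The only thing worth double-checking is that the hypothesis of Proposition \ref{2-1} is really that $\gamma$ (the indexing element) be admissible, not that the fixing element $\delta^{-1}\gamma$ be admissible — which is clear from the statement of \ref{2-1}.
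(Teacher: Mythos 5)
Your proof is correct and is essentially identical to the paper's: both observe that $\delta^{-1}\gamma$ fixes every point of $C_{\gamma}$ and conclude via Proposition~\ref{2-1}. Your added check that $C_{\gamma}$ is non-empty (via Proposition~\ref{conj:fix}) is a reasonable bit of extra care that the paper leaves implicit.
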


\begin{proof}
The transformation $\delta^{-1} \gamma$ stabilizes every element of the semi-circle $C_{\gamma}$, and is thus the
identity.
\end{proof}

\subsection{The space $C_{\gamma}/Z_{\gamma}$}

For $\sigma$ and $\gamma$ in $\Gamma$, we call $\sigma^c \gamma \sigma^{-1}$ the \emph{twisted conjugate} of $\gamma$
by $\sigma$.  Any twisted conjugate of an admissible element is again admissible, and we have $\sigma C_{\gamma}=
C_{\sigma^c \gamma \sigma^{-1}}$.  We let $Z_{\gamma}$ denote the \emph{twisted centralizer} of $\gamma$, i.e., the set
of elements $\sigma \in \Gamma$ such that $\sigma^c \gamma \sigma^{-1}=\gamma$; it is a group.  Fix for the rest of
this section an admissible element $\gamma$ of $\Gamma$.  By Proposition~\ref{2-2}, $Z_{\gamma}$ consists of exactly
those elements $\sigma$ such that $\sigma C_{\gamma}=C_{\gamma}$.  By Proposition~\ref{2-1}, no non-trivial element of
$Z_{\gamma}$ can fix every point of $C_{\gamma}$, and so the map $Z_{\gamma} \to \Aut(C_{\gamma})$ is injective.

\begin{lemma}
\label{lem:z}
The group $Z_{\gamma}$ is either trivial, cyclic of order two, infinite cyclic or infinite dihedral (i.e.,
the semi-direct product $\Z/2\Z \ltimes \Z$).
\end{lemma}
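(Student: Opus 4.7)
The plan is to identify $Z_\gamma$ with a discrete subgroup of the setwise stabilizer in $\PSL_2(\R)$ of the hyperbolic geodesic $C^\circ_\gamma$ and then classify such discrete subgroups. First, every $\sigma \in Z_\gamma$ satisfies $\sigma C_\gamma = C_{\sigma^c \gamma \sigma^{-1}} = C_\gamma$, so $Z_\gamma$ lands in the setwise stabilizer $H \subset \PSL_2(\R)$ of $C^\circ_\gamma$; by Proposition~\ref{2-1} the induced action on $C^\circ_\gamma$ is faithful, so $Z_\gamma \hookrightarrow H$.

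Next I would determine $H$ explicitly. By Proposition~\ref{conj:fix}, $C^\circ_\gamma$ is a Euclidean semi-circle meeting $\R\P^1$ orthogonally (or a vertical line when $c=0$), hence a hyperbolic geodesic. After conjugating $C^\circ_\gamma$ to the imaginary axis, $H$ becomes the group generated by the dilations $z \mapsto \lambda^2 z$ with $\lambda>0$ (hyperbolic elements with axis $C^\circ_\gamma$, acting by translation on $C^\circ_\gamma$) and the involution $z \mapsto -1/z$ (an order-two half-turn about a point of $C^\circ_\gamma$, reversing orientation on $C^\circ_\gamma$). A direct calculation then gives $H \cong \R \rtimes \Z/2$, where the $\Z/2$-factor acts on $\R$ by negation.

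Since $\Gamma$ is discrete in $\PSL_2(\R)$, so is $Z_\gamma \subset H$. The translation part $T = Z_\gamma \cap \R$ is then a discrete subgroup of $\R$, so either trivial or infinite cyclic, and $Z_\gamma / T$ injects into $\Z/2$. The four resulting cases yield respectively the trivial group, $\Z/2$, $\Z$, and $\Z \rtimes \Z/2$ --- the last being infinite dihedral because the $\Z/2$-action on $\R$ inverts the translation generator. The only real obstacle is the explicit identification of $H$ with $\R \rtimes \Z/2$ and the sign of the semidirect action; once that is in hand, the classification of cases is immediate.
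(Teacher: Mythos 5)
Your argument is correct, but it takes a different route from the paper's. The paper disposes of this lemma in one line: $Z_{\gamma}$ acts faithfully (by Proposition~\ref{2-1}) and properly discontinuously on the open interval $C^{\circ}_{\gamma}$, and it then simply quotes the classification of groups admitting such actions --- trivial, $\Z/2\Z$, $\Z$, or infinite dihedral. You instead derive the classification from scratch by computing the setwise stabilizer $H$ of the geodesic inside $\PSL_2(\R)$: after conjugating to the imaginary axis, $H$ is generated by the dilations $z \mapsto \lambda^2 z$ and the half-turns $z \mapsto -\lambda^2/z$, so $H \cong \R \rtimes \Z/2\Z$ with the involution acting by inversion, and a discrete subgroup of this group is read off from its (trivial or infinite cyclic) translation part together with the at-most-index-two orientation-reversing coset. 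The splitting of the extension in the dihedral case is automatic since every orientation-reversing element of $H$ has order two. What your approach buys is self-containedness and a slightly stronger input: you only use that $Z_{\gamma}$ is \emph{discrete} in $\PSL_2(\R)$, rather than invoking the topological classification of properly discontinuous interval actions as a black box; you also do not actually need the faithfulness of the action (Proposition~\ref{2-1}) for the embedding $Z_{\gamma} \subset H$, which is just containment of subgroups of $\PSL_2(\R)$. The paper's version is shorter and also identifies concretely which group elements are elliptic versus hyperbolic only later (Lemmas~\ref{lem:ell} and~\ref{lem:hyper}), facts which your explicit description of $H$ delivers essentially for free.
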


\begin{proof}
These are the only groups which act admit a proper discontinuous actions on the open interval.
\end{proof}

\begin{lemma}
\label{lem:ell}
The even order elliptic points on $C_{\gamma}$ correspond bijectively to the order two elements of $Z_{\gamma}$, with
$z$ corresponding to $\sigma$ if $\sigma$ stabilizes $z$.
\end{lemma}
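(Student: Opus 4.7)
The plan is to set up explicit mutually-inverse maps: to an even-order elliptic point $z\in C_\gamma$ I associate the unique order-two element $\sigma_z$ of the (cyclic, even-order) stabilizer $\Gamma_z$, and to an order-two element $\sigma\in Z_\gamma$ I associate its unique fixed point $z_\sigma$ in $\mf{h}$. The latter is well-defined because any order-two element of $\PSL_2(\R)$ is elliptic (trace-zero) and fixes a single point of $\mf{h}$.

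The main step is to verify $\sigma_z\in Z_\gamma$, which I would do algebraically rather than geometrically. Since $z\in C_\gamma$ we have $cz=\gamma z$, and since $\sigma_z z=z$ we have $c\sigma_z z=cz$. Using the conjugation identity $c\sigma_z=\sigma_z^c c$ this rewrites as $\sigma_z^c(\gamma z)=\gamma z$, so $\gamma^{-1}\sigma_z^c\gamma$ stabilizes $z$ and hence lies in $\Gamma_z$. This element has order two (conjugation preserves orders), so by cyclicity of $\Gamma_z$ it must equal the unique order-two element $\sigma_z$. Rearranging yields $\sigma_z^c\gamma\sigma_z^{-1}=\gamma$, i.e., $\sigma_z\in Z_\gamma$. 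Conversely, given an order-two $\sigma\in Z_\gamma$, the element $\sigma$ preserves $C_\gamma$ and, by Proposition~\ref{2-1}, acts non-trivially on it. Since $C^\circ_\gamma=C_\gamma\cap\mf{h}$ is an open interval and every non-trivial continuous involution of an open interval is orientation-reversing with a unique fixed point, $\sigma$ has a unique fixed point $z_\sigma\in C^\circ_\gamma$; its stabilizer $\Gamma_{z_\sigma}$ contains $\sigma$ and so has even order.

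That the two assignments are mutually inverse is immediate from the uniqueness clauses in their definitions: the unique fixed point in $\mf{h}$ of $\sigma_z$ is $z$, and the unique order-two element of the cyclic group $\Gamma_{z_\sigma}$ is $\sigma$. The main obstacle is the verification $\sigma_z\in Z_\gamma$, where one must recognize $\gamma^{-1}\sigma_z^c\gamma$ and $\sigma_z$ as two a priori distinct order-two elements of $\Gamma_z$ and then invoke cyclicity to force them to coincide; everything else is geometric bookkeeping on the interval $C^\circ_\gamma$.
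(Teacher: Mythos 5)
Your proof is correct and follows essentially the same route as the paper: the forward direction is exactly the paper's observation that $\tau\mapsto\gamma^{-1}\tau^c\gamma$ preserves $\Gamma_z$ and hence fixes its unique order-two element, and the backward direction locates the unique fixed point of an orientation-reversing involution of $C^\circ_\gamma$. The only cosmetic difference is that the paper deduces orientation-reversal from $\sigma$ swapping the two endpoints of $\ol{C}_\gamma$ on $\R\P^1$, whereas you invoke the (equally valid) fact that any non-trivial continuous involution of an open interval is orientation-reversing with a unique fixed point.
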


\begin{proof}
Let $z$ be an even order elliptic point on $C_{\gamma}$ and let $\sigma \in \Gamma_z$ be the unique element of order
two.  One readily verifies that $\tau \mapsto \gamma^{-1} \tau^c \gamma$ defines an automorphism $\Gamma_z \to
\Gamma_z$.  Since $\sigma$ is the unique element of $\Gamma_z$ of order two, it is mapped to itself under this
automorphism.  We thus have $\sigma \in Z_{\gamma}$.

Conversely, let $\sigma \in Z_{\gamma}$ have order two.  Let $x$ and $y$ be the two points of $\ol{C}_{\gamma}$ on
$\R\P^1$.  Since $\sigma$ has finite order, it cannot stabilize elements of $\R\P^1$, and so it must switch
$x$ and $y$.  It follows that $\sigma$ induces an orientation reversing automorphism of $\ol{C}_{\gamma}$.  It
therefore has a unique fixed point $z \in C^{\circ}_{\gamma}$, which, by definition, is elliptic of even order.
\end{proof}

\begin{lemma}
\label{lem:hyper}
If $Z_{\gamma}$ is infinite then $C_{\gamma}$ contains no cusps and is homeomorphic to an open interval.
Any infinite order element of $Z_{\gamma}$ is hyperbolic and induces an orientation preserving map of $C_{\gamma}$.
\end{lemma}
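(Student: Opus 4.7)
The plan is to fix any infinite order element $\sigma \in Z_{\gamma}$ and deduce every clause of the lemma from how $\sigma$ acts on the semi-circle $\ol{C}_{\gamma}$ given by Proposition~\ref{conj:fix}. Since $\sigma$ acts by a M\"obius transformation on $\ol{\mf{h}}$ and preserves $C_{\gamma}$, it also preserves its closure $\ol{C}_{\gamma}$, and hence permutes the two endpoints $x, y \in \R\P^1$ of $\ol{C}_{\gamma}$.

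I would first rule out the possibility that $\sigma$ swaps $x$ and $y$: such a $\sigma$ would induce an orientation-reversing self-homeomorphism of the interval $\ol{C}_{\gamma}$, which by the intermediate value theorem fixes some interior point, necessarily lying in $C^{\circ}_{\gamma} \subset \mf{h}$. Then $\sigma$ would be an infinite order element of $\Gamma$ with a fixed point in $\mf{h}$, contradicting the fact that stabilizers in a Fuchsian group of points in $\mf{h}$ are finite. So $\sigma$ fixes both $x$ and $y$. A self-homeomorphism of a closed interval fixing both endpoints must be orientation preserving (otherwise it would reverse the order of the endpoints), and a nontrivial element of $\PSL_2(\R)$ fixing two distinct points of $\R\P^1$ is hyperbolic; together these give the second sentence of the lemma.

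For the first sentence it then remains to show that neither $x$ nor $y$ is a cusp of $\Gamma$, after which $C_{\gamma}=\ol{C}_{\gamma}\cap\mf{h}^{*}=\ol{C}_{\gamma}\cap\mf{h}=C^{\circ}_{\gamma}$ is an open interval by Proposition~\ref{conj:fix}. I would invoke the standard fact that in a Fuchsian group the stabilizer of any point of $\R\P^1$ is either trivial, infinite cyclic generated by a parabolic element, or infinite cyclic generated by a hyperbolic element, with the last two cases mutually exclusive: conjugates of a parabolic $\tau$ by powers of a hyperbolic $\sigma$ sharing a fixed point would accumulate, violating discreteness. Since the hyperbolic $\sigma$ lies in the stabilizer of $x$, that stabilizer is of the hyperbolic type, so $x$ is not a parabolic fixed point, i.e., not a cusp; the same argument applies to $y$. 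The main obstacle is settling the endpoint-swap dichotomy in the second paragraph; once this is disposed of, everything else follows from Proposition~\ref{conj:fix} and the Fuchsian stabilizer fact just cited.
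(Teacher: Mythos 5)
Your proposal is correct and follows essentially the same route as the paper's proof: rule out an orientation-reversing action via an interior fixed point (which would force finite order), conclude that the infinite-order element fixes both endpoints and is therefore hyperbolic, and then use the fact that a cusp stabilizer contains only parabolic elements to exclude cusps. The only difference is that you spell out the discreteness argument behind that last standard fact, which the paper simply cites.
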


\begin{proof}
Let $\delta$ be an element of $Z_{\gamma}$ of infinite order.  If $\delta$ induced an orientation reversing map of
$C^{\circ}_{\gamma}$ then it would have a fixed point on $C^{\circ}_{\gamma}$, and thus be elliptic, and thus have
finite order. Since this is not the case, $\delta$ must be orientation preserving.  Let $x$ and $y$ be the two
points of $\ol{C}_{\gamma}$ on $\R\P^1$.  As $\delta$ stabilizes each of $x$ and $y$, it is a hyperbolic
transformation.  This shows that $x$ and $y$ cannot be cusps, since the stabilizer of a cusp consists solely of
parabolic elements.
\end{proof}

By a \emph{fundamental domain} for the action of $Z_{\gamma}$ on $C^{\circ}_{\gamma}$ we mean an open subset $\mc{F}
\subset C^{\circ}_{\gamma}$ with the following three properties:  (1) $\mc{F}$ is homeomorphic to an open interval;
(2) no two elements of $\mc{F}$ are equivalent under $Z_{\gamma}$; and (3) every element of $C^{\circ}_{\gamma}$ is
equivalent under $Z_{\gamma}$ to an element of the closure of $\mc{F}$.  One can easily see that a fundamental domain
exists by considering each of the four possibilities for $Z_{\gamma}$ in turn.

\begin{lemma}
\label{lem:inj}
Let $\mc{F} \subset C^{\circ}_{\gamma}$ be a fundamental domain for the action of $Z_{\gamma}$.  Then $\pi$ is
injective on $\mc{F}$.
\end{lemma}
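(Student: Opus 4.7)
The plan is to reduce a $\Gamma$-equivalence between two points of $\mc{F}$ to a $Z_\gamma$-equivalence; condition~(2) of a fundamental domain will then finish the job immediately. So I suppose $\delta z_1 = z_2$ with $z_1, z_2 \in \mc{F}$ and $\delta \in \Gamma$, and try to produce an element $\delta' \in Z_\gamma$ satisfying $\delta' z_1 = z_2$.

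Starting from $cz_i = \gamma z_i$ for $i = 1, 2$ together with $c\delta = \delta^c c$, a short manipulation of $cz_2 = \gamma z_2$ shows that the element $\tau := \delta^{-1}\gamma^{-1}\delta^c\gamma \in \Gamma$ fixes $z_1$; equivalently, $\tau \in \Gamma_{z_1}$, and $\delta$ already lies in $Z_\gamma$ precisely when $\tau = 1$. In particular, if $z_1$ is not an elliptic point of $\Gamma$ then $\Gamma_{z_1} = \{1\}$, so $\tau = 1$, and we may simply take $\delta' = \delta$.

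The main obstacle is the case where $z_1$ is elliptic of order $n \ge 2$. Here I would introduce the map $\phi(\eta) := \gamma^{-1}\eta^c\gamma$ on $\Gamma_{z_1}$, verify it is a well-defined involution of $\Gamma_{z_1}$ (well-definedness uses $cz_1 = \gamma z_1$, and $\phi^2 = \mathrm{id}$ uses admissibility $\gamma^c = \gamma^{-1}$), and observe that its fixed set is exactly $\Gamma_{z_1} \cap Z_\gamma$. The crucial technical claim is that this intersection is trivial when $z_1 \in \mc{F}$: a nontrivial element of $\Gamma_{z_1} \cap Z_\gamma$ would be a finite-order element of $Z_\gamma$ acting faithfully on $C_\gamma$ by Propositions~\ref{2-1} and~\ref{2-2}; since the only nontrivial finite-order homeomorphism of an open interval is an orientation-reversing involution, such an element would swap the two halves of $\mc{F}$ on either side of its fixed point $z_1$, contradicting condition~(2). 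An involution of a cyclic group of order $n$ whose only fixed point is the identity must have the form $\rho \mapsto \rho^{-1}$ with $n$ odd, so this is the only configuration that survives.

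It remains to find $\eta \in \Gamma_{z_1}$ with $\delta\eta \in Z_\gamma$; since $\eta$ fixes $z_1$, condition~(2) will then give $z_1 = z_2$. Unwinding the defining condition $(\delta\eta)^c\gamma(\delta\eta)^{-1} = \gamma$ reduces, via $\eta^c = \gamma\phi(\eta)\gamma^{-1}$, to $\phi(\eta)\eta^{-1} = \tau^{-1}$; in the parametrization $\eta = \rho^a$, $\tau = \rho^b$ this becomes $\rho^{-2a} = \rho^{-b}$, which is solvable because $n$ is odd and $2$ is invertible modulo $n$. With such $\eta$, setting $\delta' := \delta\eta$ yields $\delta' \in Z_\gamma$ with $\delta'z_1 = \delta z_1 = z_2$, completing the reduction.
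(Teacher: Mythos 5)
Your proof is correct, but it takes a genuinely different route from the paper's for the only delicate point, namely elliptic points of odd order lying in $\mc{F}$. The paper argues topologically: it first shows $\mc{F}$ contains no elliptic points of even order (via Lemma~\ref{lem:ell}), deduces that $\pi$ is a local homeomorphism on $\mc{F}$ (odd powers are local homeomorphisms), and then, assuming non-injectivity, perturbs to a pair of \emph{non-elliptic} witnesses $z,z'$ with $\pi(z)=\pi(z')$, for which the trivial-stabilizer argument immediately forces $\tau\in Z_{\gamma}$. You instead confront an elliptic $z_1$ head-on: you identify the obstruction $\tau=\delta^{-1}\gamma^{-1}\delta^c\gamma\in\Gamma_{z_1}$, show that the involution $\phi(\eta)=\gamma^{-1}\eta^c\gamma$ of $\Gamma_{z_1}$ is fixed-point-free away from the identity when $z_1\in\mc{F}$ (your argument here is essentially the even-order exclusion of Lemma~\ref{lem:ell}, rederived), conclude $\phi$ is inversion on a cyclic group of odd order, and solve $\phi(\eta)\eta^{-1}=\tau^{-1}$ using invertibility of $2$ modulo odd $n$ to correct $\delta$ into $Z_{\gamma}$. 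All the computations check out ($\tau z_1=z_1$, the fixed set of $\phi$ being $\Gamma_{z_1}\cap Z_{\gamma}$, and the reduction to $\rho^{-2a}=\rho^{-b}$). What the paper's route buys is brevity and no case analysis on the order of the stabilizer; what yours buys is an explicit, purely group-theoretic correction that makes transparent exactly why odd-order elliptic points are harmless (namely, $2$ is a unit mod $n$) and avoids the mild topological lemma that a non-injective local homeomorphism from an interval to a circle produces disjoint intervals with equal image.
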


\begin{proof}
We first claim that $\pi$ is a local homeomorphism on $\mc{F}$.  To see this, let $x$ be a point in $\mc{F}$.  We can
then pick an open neighborhood of $x$ homeomorphic to $\R$ and an open neighborhood of $\pi(x)$ homeomorphic to $\R$
such that $\pi$ corresponds to the map $\R \to \R$ given by $x \mapsto x^n$, where $n$ is the order of $\Gamma_x$.  It
follows from Lemma~\ref{lem:ell}, or its proof, that the points to the left and right of an even order elliptic point
on $C_{\gamma}$ are equivalent under $Z_{\gamma}$.  Thus $\mc{F}$ contains no elliptic points of even order.
Since odd powers are local homeomorphisms, the claim follows.

Suppose now that $\pi$ is not injective on $\mc{F}$.  We thus have a local homeomorphism from $\mc{F}$, an open
interval, to a connected component of $X_{\Gamma}(\R)$, a circle, which is not injective.  It follows that
there are disjoint open intervals $U$ and $V$ in $\mc{F}$ such that $\pi(U)=\pi(V)$.  Pick $z \in U$ and $z' \in V$
non-elliptic such that $\pi(z)=\pi(z')$.  Let $\tau$ be an element
of $\Gamma$ such that $\tau z'=z$.  Then $z$ belongs to both $C_{\gamma}$ and $\tau C_{\gamma}=C_{\tau^c \gamma
\tau^{-1}}$, and so $\delta=\gamma^{-1} \tau^c \gamma \tau^{-1}$ stabilizes $z$.  Since $z$ is non-elliptic,
$\delta$ must be the identity element, and so $\tau$ belongs to $Z_{\gamma}$.  This contradicts $\mc{F}$ being a
fundamental domain for $Z_{\gamma}$.  We conclude that $\pi$ is injective on $\mc{F}$.
\end{proof}

\begin{lemma}
\label{lem:cusp}
Let $f:(0,1) \to \mf{h}$ be a continuous map such that as $t$ approaches 1, $f(t)$ converges to an element $x$ in
$\R\P^1$ (for the topology on $\C\P^1$) and $\pi(f(t))$ converges in $X_{\Gamma}$.  Then $x$ is a cusp of $\Gamma$.
\end{lemma}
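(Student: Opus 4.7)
The plan is to show the claim by case analysis on the limit $y := \lim_{t \to 1} \pi(f(t)) \in X_{\Gamma}$: either $y$ is the image of an interior point of $\mf h$, or $y$ is a cusp. In either case, the strategy is to find a neighborhood of $y$ whose $\pi$-preimage in $\mf h$ is a disjoint union of well-understood $\Gamma$-translates, and then use connectedness of $(t_0,1)$ to trap the path in a single one of them.

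Suppose first that $y = \pi(\tilde y)$ for some $\tilde y \in \mf h$. Using proper discontinuity of the $\Gamma$-action on $\mf h$, one can choose an open $\Gamma_{\tilde y}$-invariant neighborhood $V$ of $\tilde y$ with compact closure in $\mf h$, small enough that $\gamma V \cap V = \emptyset$ for all $\gamma \notin \Gamma_{\tilde y}$. Then $\pi(V)$ is open in $X_\Gamma$, and $\pi^{-1}(\pi(V)) = \bigsqcup \gamma V$ is a disjoint union of relatively compact open subsets of $\mf h$ indexed by cosets in $\Gamma/\Gamma_{\tilde y}$. Since $\pi(f(t)) \to y$, there exists $t_0 < 1$ such that $\pi(f(t)) \in \pi(V)$ for $t \in (t_0,1)$, and by connectedness $f((t_0,1))$ lies in a single translate $\gamma V$. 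This contradicts $f(t) \to x \in \R\P^1$, since $\gamma V$ has compact closure in $\mf h$. Hence this case cannot occur.

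Therefore $y$ must be a cusp of $X_\Gamma$, represented by some cusp $s \in \R\P^1$ of $\Gamma$. A neighborhood basis at $y$ is obtained from horocyclic discs at $s$: after conjugating $s$ to $\infty$, the standard neighborhood in $\mf h^*$ is $\{z \in \mf h : \on{Im}(z) > M\} \cup \{\infty\}$, which upon conjugating back to $s$ becomes a disc tangent to $\R\P^1$ at $s$ together with $s$ itself. Choose such an $N$ small enough that its $\Gamma$-translates are either identical or disjoint, so that $\pi^{-1}(\pi(N)) \cap \mf h = \bigsqcup \gamma(N \cap \mf h)$ is a disjoint union of horodiscs. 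The same connectedness argument forces $f((t_0,1))$ to lie in a single translate $\gamma(N \cap \mf h)$, a horodisc based at $\gamma s$. The closure of this horodisc in $\C\P^1$ meets $\R\P^1$ only at $\gamma s$, so the limit of $f(t)$ on $\R\P^1$ can only be $\gamma s$, giving $x = \gamma s$, which is a cusp of $\Gamma$.

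The main technical point in both cases is the construction of neighborhoods whose $\pi$-preimages decompose as disjoint unions of translates; this is a standard consequence of proper discontinuity of the $\Gamma$-action on $\mf h$ together with the horocyclic description of neighborhoods of cusps in $\mf h^*$. Given such decompositions, the argument is essentially topological and the geometric fact that a horodisc touches $\R\P^1$ uniquely at its base point is immediate after conjugating the cusp to $\infty$.
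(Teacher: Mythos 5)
Your proof is correct and follows essentially the same route as the paper's: choose a standard neighborhood of the limit in $X_\Gamma$ (a relatively compact $\Gamma_{\tilde y}$-invariant set for an interior point, a horodisc for a cusp), note that its preimage is a disjoint union of translates, and trap $f((t_0,1))$ in a single translate — the paper phrases this last step as local constancy of the element $\gamma(t)$ carrying $f(t)$ into the chosen neighborhood, which is the same connectedness argument. The concluding steps (compact closure in $\mf{h}$ rules out the interior case; a horodisc closure meets $\R\P^1$ only at its base point) match the paper's as well.
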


\begin{proof}
Pick $z \in \mf{h}^*$ such that $\pi(f(t))$ converges to $\pi(z)$ as $t$ approaches 1.  If $z$ is a cusp, let $U$
be the union of $\{z\}$ with an open disc tangent to $\R\P^1$ at $z$ containing no elliptic points and such that
no two of its points are equivalent under $\Gamma$, and put $\Gamma'=1$.
Note that $U$ is open for the topology on $\mf{h}^*$.  If $z$ is elliptic, let $U$ be an open neighborhood of $z$ in
$\mf{h}^*$ stable under $\Gamma'=\Gamma_z$ such that $\pi$ is injective on $U/\Gamma'$ and which contains no elliptic
points other than $z$ and no cusps.  If $z$ is neither elliptic nor cuspidal, let $U$ be an open neighborhood of $z$
containing no elliptic or cuspidal points such that no two of its points are equivalent under $\Gamma$, and put
$\Gamma'=1$.

Now, $\pi(U)$ is an open neighborhood of $\pi(z)$, so for $t$ sufficiently close to 1 we have $\pi(f(t)) \in \pi(U)$.
It follows that for such $t$ we can find $\gamma(t) \in \Gamma/\Gamma'$, necessarily unique, such that $\gamma(t) f(t)$
belongs to $U$.  It is clear that $t \mapsto \gamma(t)$ is locally constant, and therefore constant.  Let $\gamma
\in \Gamma$ be such that $\gamma(t)=\gamma$ for all $t$ close to 1.  Since $\gamma f(t)$ belongs to $U$ for all
$t$ close to 1, it follows that $z$ must be a cusp.  Furthermore, $\gamma f(t)$ must converge to $z$ as $t$
approaches 1, since the closure of $U$ in $\C\P^1$ only intersects $\R\P^1$ at $z$.  It follows that $f(t)$ converges
to $\gamma^{-1} z$ at $t$ approaches 1, and so $x=\gamma^{-1} z$ is a cusp.
\end{proof}

\begin{lemma}
\label{lem:cusp2}
If $Z_{\gamma}$ is finite then $C_{\gamma}$ contains two cusps and is homeomorphic to a closed interval.
\end{lemma}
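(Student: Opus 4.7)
The plan is to apply Lemma~\ref{lem:cusp} at each real endpoint of $\ol{C}_\gamma$, using a fundamental domain for $Z_\gamma$ to secure the injectivity needed to control $\pi$. Since $Z_\gamma$ is finite, Lemma~\ref{lem:z} forces it to be trivial or cyclic of order two. In either case I can pick an explicit fundamental domain $\mc{F} \subset C^{\circ}_\gamma$ whose closure in $\ol{C}_\gamma$ reaches at least one of the two real endpoints $x, y$ of $\ol{C}_\gamma$: for $Z_\gamma$ trivial take $\mc{F} = C^{\circ}_\gamma$, and for $Z_\gamma$ of order two take $\mc{F}$ to be one of the two open half-arcs cut off by the even-order elliptic fixed point produced by Lemma~\ref{lem:ell}.

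I then parametrize $\mc{F}$ by a continuous injection $f \colon (0,1) \to \mf{h}$ with $f(t) \to x$ as $t \to 1$. By Lemma~\ref{lem:inj}, $\pi \circ f$ is a continuous injection into $X_\Gamma(\R)$, lying in a single real component, which is homeomorphic to a circle $S^1$. A continuous injection from an open interval into $S^1$ is automatically an open embedding whose image is an open arc, so $\lim_{t \to 1} \pi(f(t))$ exists in $S^1$, being the corresponding boundary endpoint of the arc (or, when the arc happens to be all of $S^1$ minus a single point, that missing point). Lemma~\ref{lem:cusp} then forces $x$ to be a cusp of $\Gamma$.

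It remains to see that $y$ is also a cusp. If $Z_\gamma$ is trivial, taking $t \to 0$ in the same parametrization gives the analogous statement for $y$. If $Z_\gamma$ has order two, the proof of Lemma~\ref{lem:ell} already showed that the nontrivial involution in $Z_\gamma$ swaps $x$ and $y$, and since $\Gamma$ permutes the cusps, $y$ is again a cusp. Therefore $\ol{C}_\gamma \subset \mf{h}^*$ and $C_\gamma = \ol{C}_\gamma$; by Proposition~\ref{conj:fix} this is a semicircle meeting $\ol{\mf{h}}$ at its two endpoints, and hence homeomorphic to a closed interval. The step I expect to require the most care is the existence of the limit $\lim_{t \to 1} \pi(f(t))$ in $S^1$; this is a standard fact about continuous injections into $1$-manifolds, but it deserves an explicit justification, either by invoking invariance of domain (so that the image is an open arc with well-defined endpoints in the circle) or by lifting $\pi \circ f$ along the universal cover $\R \to S^1$ and exploiting monotonicity of the lift.
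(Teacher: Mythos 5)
Your proof is correct and follows essentially the same route as the paper: parametrize a fundamental domain for $Z_{\gamma}$ acting on $C^{\circ}_{\gamma}$, use the injectivity of $\pi$ there (Lemma~\ref{lem:inj}) to see that $\pi(f(t))$ converges in a circle, and invoke Lemma~\ref{lem:cusp} to identify the real endpoint as a cusp. The only (harmless) variation is in the order-two case, where you obtain the second cusp as the image of the first under the involution $\sigma$, which swaps the two endpoints, rather than by rerunning the argument on the other half-arc $C_2$ as the paper does.
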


\begin{proof}
Assume first that $Z_{\gamma}$ is trivial.  Then $C^{\circ}_{\gamma}$ is a fundamental domain for the action of
$Z_{\gamma}$ on $C^{\circ}_{\gamma}$, and so $\pi$ is injective on $C^{\circ}_{\gamma}$.  Let $f:(0,1) \to
C^{\circ}_{\gamma}$ be a homeomorphism.  Since $\pi \circ f$
is a continuous injection from an interval to a circle, the limit of $\pi(f(t))$ as $t$ tends to 1 exists.  Of course,
as $t$ approaches 1, $f(t)$ converges to an element $x$ of $\R\P^1$.  Lemma~\ref{lem:cusp} shows that $x$ is a cusp.
Since $C_{\gamma}$ is closed in $\mf{h}^*$, it contains $x$.  Looking at the behavior near $t=0$, we find another
cusp on $C_{\gamma}$.

The case where $Z_{\gamma}$ has order 2 is similar.  Let $\sigma$ be the non-trivial element of $Z_{\gamma}$.  By
Lemma~\ref{lem:ell}, $\sigma$ fixes a unique element $z$ on $C^{\circ}_{\gamma}$.  Let $C_1$ and $C_2$ be the two
connected components of $C^{\circ}_{\gamma} \setminus \{z\}$.  Both $C_1$ and $C_2$ are fundamental domains for the
action of $Z_{\gamma}$ on $C^{\circ}_{\gamma}$.  Choose a homeomorphism $f:(0,1) \to C_1$ such that $f(t)$ converges to
$z$ as $t$ approaches 0.  Arguing as in the previous paragraph, the limit of $f(t)$ as $t$ approaches 1 is then a cusp
on $C_{\gamma}$.  Looking at $C_2$, we find a second cusp on $C_{\gamma}$.
\end{proof}

Recall that a point of $X_{\Gamma}$ is \emph{special} if it belongs to $X_{\Gamma}(\R)$ and is a cusp or an elliptic
point of even order; we extend this terminology to points of $\mf{h}^*$ as well.

\begin{lemma}
\label{prop:fund}
Let $\mc{F}$ be a fundamental domain for the action of $Z_{\gamma}$ on $C^{\circ}_{\gamma}$.  Then $\mc{F}$ contains
no special points.  Let $\ol{\mc{F}}$ be the closure of $\mc{F}$ in $C_{\gamma}$.  Then
$\ol{\mc{F}}$ is a closed interval and exactly one of the following is true:  the two boundary points of $\ol{\mc{F}}$
are equivalent under $Z_{\gamma}$; or, each boundary point of $\ol{\mc{F}}$ is special.
\end{lemma}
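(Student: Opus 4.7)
The plan is to proceed by case analysis on the isomorphism type of $Z_{\gamma}$ as classified by Lemma~\ref{lem:z}, which leaves four possibilities: trivial, order two, infinite cyclic, and infinite dihedral. Before breaking into cases, I would first dispose of the ``no special points in $\mc{F}$'' assertion uniformly. For cusps: when $Z_{\gamma}$ is infinite, Lemma~\ref{lem:hyper} says $C_{\gamma}$ contains no cusps at all, and when $Z_{\gamma}$ is finite, Lemma~\ref{lem:cusp2} places the two cusps at the boundary of $C_{\gamma}$ on $\R\P^1$, hence outside $C^{\circ}_{\gamma} \supset \mc{F}$. For even-order elliptic points: by Lemma~\ref{lem:ell} these correspond to fixed points $w$ of order-two elements $\sigma \in Z_{\gamma}$, and by the argument used inside the proof of Lemma~\ref{lem:inj}, the points immediately to the left and right of $w$ on $C^{\circ}_{\gamma}$ are $Z_{\gamma}$-equivalent; if $w$ lay in $\mc{F}$, this would produce two distinct nearby equivalent points inside $\mc{F}$, contradicting axiom (2) for a fundamental domain.

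For the dichotomy on boundary points of $\ol{\mc{F}}$, I would take the four cases in turn. If $Z_{\gamma}$ is trivial, then axioms (2) and (3) force $\mc{F} = C^{\circ}_{\gamma}$, so by Lemma~\ref{lem:cusp2} $\ol{\mc{F}} = C_{\gamma}$ is a closed interval whose two endpoints are cusps, which are special. If $Z_{\gamma} = \{1,\sigma\}$ has order two, Lemma~\ref{lem:ell} gives a unique fixed point $z \in C^{\circ}_{\gamma}$ that is an even-order elliptic real point; together with axiom (1) (which forces $\mc{F}$ to be connected, hence to lie in a single component of $C^{\circ}_{\gamma}\setminus\{z\}$) and axiom (3), this pins $\mc{F}$ down to one of the two components of $C^{\circ}_{\gamma}\setminus\{z\}$, and its closure in $C_{\gamma}$ is a closed interval running from $z$ to one of the two cusps furnished by Lemma~\ref{lem:cusp2}; both endpoints are special.

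The remaining two cases use that, by Lemma~\ref{lem:hyper}, $C^{\circ}_{\gamma}$ is an open interval on which $Z_{\gamma}$ acts by orientation-preserving homeomorphisms coming from hyperbolic translations together, possibly, with reflections from order-two elements. In the infinite cyclic case, a generator $\delta$ is hyperbolic and acts on $C^{\circ}_{\gamma}$ as a fixed-point-free translation of an open interval; axioms (1)--(3) then pin $\mc{F}$ down to an open sub-interval whose two boundary points are of the form $p$ and $\delta p$, hence $Z_{\gamma}$-equivalent. In the infinite dihedral case, writing $Z_{\gamma} = \langle \delta, \sigma\rangle$ with $\delta$ hyperbolic and $\sigma$ of order two, the fixed points of the order-two elements $\delta^n \sigma$ are by Lemma~\ref{lem:ell} precisely the even-order elliptic points in $C^{\circ}_{\gamma}$; they form a $Z_{\gamma}$-invariant discrete set dividing $C^{\circ}_{\gamma}$ into sub-intervals which $Z_{\gamma}$ permutes transitively, and the axioms force $\mc{F}$ to equal one of these sub-intervals, whose two boundary points are consecutive even-order elliptic real points, hence special.

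The main obstacle I expect is not the classification itself but the rigidity arguments in the last two cases: verifying that the three abstract axioms (1)--(3) actually pin $\mc{F}$ down to one full ``period'' of the $Z_{\gamma}$-action on $C^{\circ}_{\gamma}$, ruling out pathological open intervals that are too short, too long, or awkwardly placed relative to the elliptic fixed points. Given the explicit topological model of $Z_{\gamma}$ acting on an open interval by a translation or by a dihedral group of translations and reflections, this rigidity is a direct consequence of (2) (no two points equivalent) combined with (3) (closure covers $C^{\circ}_{\gamma}$ modulo $Z_{\gamma}$), but it is the step that requires the most care.
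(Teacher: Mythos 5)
Your proof is correct and follows essentially the same route as the paper: a case analysis on the four possible twisted centralizers from Lemma~\ref{lem:z}, using Lemmas~\ref{lem:ell}, \ref{lem:hyper} and \ref{lem:cusp2} in the same way, with the ``no special points in $\mc{F}$'' claim and the rigidity of $\mc{F}$ handled by the same observations the paper uses. The only point left implicit is the inequivalence of the two boundary points under $Z_{\gamma}$ in the cases where both are special (needed for the ``exactly one'' assertion), but this is immediate in each of those cases.
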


\begin{proof}
We proceed in cases.  First say that $Z_{\gamma}$ is trivial.  Then $C_{\gamma}$ is a closed interval whose endpoints
are cusps (by Lemma~\ref{lem:cusp2}), while $C^{\circ}_{\gamma}$ contains no cusps (obvious) or elliptic points (by
Lemma~\ref{lem:ell}).  Since $\mc{F}=C^{\circ}_{\gamma}$ and $\ol{\mc{F}}=C_{\gamma}$, the proposition follows in this
case.

Next, say that $Z_{\gamma}$ is cyclic of order two.  Then $C_{\gamma}$ is a closed interval whose endpoints are
cusps (by Lemma~\ref{lem:cusp2}).  The curve $C^{\circ}_{\gamma}$ contains a unique even order elliptic point
$z$ (by Lemma~\ref{lem:ell}).  The space $C^{\circ}_{\gamma} \setminus \{z\}$ contains two connected components, and
$\mc{F}$ is equal to one of them.  Clearly, $\mc{F}$ contains no cusp or even order elliptic point.  The closure
$\ol{\mc{F}}$ is the closed interval between $z$ and one of the two cusps on $C_{\gamma}$.  The two boundary points
are obviously inequivalent under $Z_{\gamma}$.

Now say that $Z_{\gamma}$ is infinite cyclic.  We can then find a generator $\delta$ of $Z_{\gamma}$ and a point
$z \in C^{\circ}_{\gamma}$ such that $\mc{F}$ is the open interval between $z$ and $\delta z$.  It follows that
$\ol{\mc{F}}$ is the closed interval between $z$ and $\delta z$, and so its two endpoints belong to the same orbit of
$Z_{\gamma}$.  The interval $\mc{F}$ contains no elliptic points (by Lemma~\ref{lem:ell}) and no cusps (obvious).  For
the same reasons, the boundary points of $Z_{\gamma}$ are not elliptic points or cusps.

Finally, say that $Z_{\gamma}$ is infinite dihedral.
The order two elements of $Z_{\gamma}$ fall into two conjugacy classes.  It follows from Lemma~\ref{lem:ell} that
there are two $Z_{\gamma}$-orbits of even order elliptic elements on $C^{\circ}_{\gamma}$.  The set $\mc{F}$ is the
open interval between two consecutive even order elliptic elements $x$ and $y$; thus $\mc{F}$ contains no even order
elliptic points (and of course it contains no cusps).  The closure $\ol{\mc{F}}$ is the closed interval between $x$
and $y$.  Necessarily, $x$ and $y$ belong to different orbits under $Z_{\gamma}$.
\end{proof}

\begin{proposition}
\label{prop:quo}
The quotient $C_{\gamma}/Z_{\gamma}$ is compact, and therefore either a closed interval or a circle.  A point on the
boundary of $C_{\gamma}/Z_{\gamma}$ is special, while the interior of $C_{\gamma}/Z_{\gamma}$ contains no special
points.  The map $\pi$ is injective on the interior of $C_{\gamma}/Z_{\gamma}$.
\end{proposition}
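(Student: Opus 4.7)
The plan is to carry out a case analysis according to the four possibilities for $Z_\gamma$ given by Lemma~\ref{lem:z}, using in each case the explicit fundamental domain $\mc{F}$ constructed in Proposition~\ref{prop:fund}. The strategy is to take the closure $\ol{\mc{F}} \subset C_\gamma$ and show it surjects onto $C_\gamma/Z_\gamma$, thereby exhibiting the quotient as a concrete identification space of a compact interval; from that presentation all four assertions of the proposition can be read off.

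In each case $\ol{\mc{F}}$ is a closed interval sitting inside $\mf{h}^*$ (possibly with one or two cusp endpoints), and is compact because the topology on $\mf{h}^*$ near a cusp lets the semicircle $C_\gamma$ approach the cusp through horoball neighborhoods. Every element of $C^\circ_\gamma$ has a $Z_\gamma$-translate in $\ol{\mc{F}}$ by the definition of fundamental domain, so one only has to check the cusps. If $Z_\gamma$ is infinite, Lemma~\ref{lem:hyper} says $C_\gamma$ has no cusps and there is nothing to check. If $Z_\gamma$ is trivial, then $\ol{\mc{F}} = C_\gamma$ already contains both cusps by Lemma~\ref{lem:cusp2}. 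If $Z_\gamma$ has order two, then the non-trivial element induces an orientation-reversing involution of $C_\gamma$ (Lemma~\ref{lem:ell}) which swaps the two cusp endpoints; one cusp lies in $\ol{\mc{F}}$ and the other is its $Z_\gamma$-image. In every case the quotient map $\ol{\mc{F}} \to C_\gamma/Z_\gamma$ is a continuous surjection from a compact space, so $C_\gamma/Z_\gamma$ is compact.

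To see that the quotient is a closed interval or a circle and to identify its boundary, I appeal to the endpoint classification in Proposition~\ref{prop:fund}. In the infinite cyclic case the two endpoints of $\ol{\mc{F}}$ lie in the same $Z_\gamma$-orbit, so they are identified and the quotient is a circle with no boundary; the proposition also asserts that neither endpoint is elliptic or a cusp, so the circle contains no special points. In the remaining three cases the endpoints lie in distinct $Z_\gamma$-orbits and no identification occurs on the interior, so $\ol{\mc{F}} \to C_\gamma/Z_\gamma$ is a continuous bijection between compact Hausdorff spaces, hence a homeomorphism onto a closed interval; the endpoints are (two cusps), (a cusp and an even-order elliptic point), or (two even-order elliptic points) respectively, and all are therefore special. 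Finally, Proposition~\ref{prop:fund} tells us $\mc{F}$ itself contains no special points, and the image of $\mc{F}$ is exactly the interior of $C_\gamma/Z_\gamma$, so the interior contains no special points. Injectivity of $\pi$ on the interior follows directly from Lemma~\ref{lem:inj}, since the interior is in bijection with $\mc{F}$. The only real bookkeeping is in the order-two case, where one must track the cusp swap to confirm the surjectivity step; the other cases are essentially immediate.
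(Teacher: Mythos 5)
Your argument is correct and is essentially the paper's own proof, which consists of the single line ``this follows easily from the preceding lemma and Lemma~\ref{lem:inj}''; you have simply filled in the case analysis that Lemma~\ref{prop:fund} was set up to make routine, using the same supporting facts (Lemmas~\ref{lem:ell}, \ref{lem:hyper}, \ref{lem:cusp2}). The one hair worth splitting is the infinite-cyclic case: there the interior of $C_{\gamma}/Z_{\gamma}$ is the whole circle, which is the image of $\mc{F}$ \emph{plus} the identified endpoint, so injectivity of $\pi$ is not literally the statement of Lemma~\ref{lem:inj} but requires rerunning its stabilizer argument for that one extra (non-special) point --- an omission no larger than the one the paper itself leaves.
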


\begin{proof}
This follows easily from the preceding lemma and Lemma~\ref{lem:inj}.
\end{proof}

\subsection{Classification of admissible elements}
\label{ss:class}

It is useful to classify the admissible elements of $\Gamma$ into four
types, according to their twisted centralizers.  To this end, let $\gamma$ be an admissible element.
\begin{itemize}
\item We say that $\gamma$ is \emph{Type~1} if $Z_{\gamma}$ is trivial.  In this case, $C_{\gamma}/Z_{\gamma}$ is
a closed interval whose boundary points are cusps.  The image $\pi(C_{\gamma})$ is a closed interval if the two cusps
on $C_{\gamma}$ are inequivalent under $\Gamma$, and a circle otherwise.  (We say that $\gamma$ is \emph{Type~1a}
in the first case and \emph{Type~1b} in the second.)
\item We say that $\gamma$ is \emph{Type~2} if $Z_{\gamma}$ is cyclic of order 2.  In this case,
$C_{\gamma}/Z_{\gamma}$ is a closed interval, with one endpoint a cusp and the other an even order elliptic point.
The image $\pi(C_{\gamma})$ is a closed interval.
\item We say that $\gamma$ is \emph{Type~3} if $Z_{\gamma}$ is infinite cyclic.  In this case,
$C_{\gamma}/Z_{\gamma}=\pi(C_{\gamma})$ is a circle containing no special points.
\item We say that $\gamma$ is \emph{Type~4} if $Z_{\gamma}$ is infinite dihedral.  In this case,
$C_{\gamma}/Z_{\gamma}$ is a closed interval whose two endpoints are
even order elliptic elements.  The image $\pi(C_{\gamma})$ is a closed interval if these two elliptic elements are
inequivalent under $\Gamma$, and a circle otherwise.    (We say that $\gamma$ is \emph{Type~4a}
in the first case and \emph{Type~4b} in the second.)
\end{itemize}
If $\gamma$ and $\gamma'$ are twisted conjugate then they are of the same type.  We extend the terminology
of types to the sets $C_{\gamma}$ or $\pi(C_{\gamma})$, e.g., we say $\pi(C_{\gamma})$ is Type~1 if $\gamma$ is.
In \S \ref{s:5}, we will give examples which show that all the above behaviors actually occur.

\subsection{Proof of Theorem~\ref{s2thm}}
\label{s2pf}

We now prove Theorem~\ref{s2thm}.  We first give a clearer definition of the vertex-edge relationships in
$\Xi_{\Gamma}$.  Let $\gamma$ be an admissible element of $\Gamma$.  As we have seen, $C_{\gamma}/Z_{\gamma}$ is
either a circle or a closed interval.  In the first case, the edge of $\Xi_{\Gamma}$ corresponding to $\gamma$
contains no vertices.  In the second case, the two boundary points of $C_{\gamma}/Z_{\gamma}$ are special points,
and their images in $X_{\Gamma}$ are the two special points on the edge corresponding to $\gamma$ in $\Xi_{\Gamma}$.
It is perfectly possible that these two special points of $X_{\Gamma}$ coincide, and in this case the edge
corresponding to $\gamma$ forms a loop.

Given the above description of $\Xi_{\Gamma}$, Theorem~\ref{s2thm} is an immediate consequence of the following
result.

\begin{proposition}
Mapping $\gamma$ to the image in $X_{\Gamma}$ of the interior of $C_{\gamma}/Z_{\gamma}$ induces a bijection
\begin{displaymath}
\{ \textrm{admissible twisted conjugacy classes of $\Gamma$} \} \to
\pi_0(X_{\Gamma}(\R) \setminus Z),
\end{displaymath}
where $Z \subset X_{\Gamma}(\R)$ is the set of special points.
\end{proposition}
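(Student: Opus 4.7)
The plan is to verify four things: that the assignment is well-defined on twisted conjugacy classes; that each image is an entire connected component of $X_\Gamma(\R) \setminus Z$; injectivity; and surjectivity. Well-definedness is immediate: if $\gamma' = \delta^c \gamma \delta^{-1}$ then $\delta C_\gamma = C_{\gamma'}$ and $\delta Z_\gamma \delta^{-1} = Z_{\gamma'}$, so the interiors of $C_\gamma/Z_\gamma$ and $C_{\gamma'}/Z_{\gamma'}$ have the same image under $\pi$.

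The main obstacle is showing that $U_\gamma := \pi(\mathrm{int}(C_\gamma/Z_\gamma))$ is a \emph{full} connected component of $X_\Gamma(\R) \setminus Z$, not merely a subset of one. By Proposition~\ref{prop:quo} the interior of $C_\gamma/Z_\gamma$ is either an open interval or a circle, contains no special points, and injects into $X_\Gamma$. Thus $U_\gamma$ is a connected subset of $X_\Gamma(\R) \setminus Z$, and it is open in $X_\Gamma(\R)$ because $\pi$ is a local homeomorphism on a fundamental domain by Lemma~\ref{lem:inj}. The closure of $U_\gamma$ in $X_\Gamma(\R)$ is $\pi(C_\gamma/Z_\gamma)$, which differs from $U_\gamma$ by at most two special points (again by Proposition~\ref{prop:quo}). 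Since $X_\Gamma(\R)$ is a disjoint union of circles and $Z$ is finite, $X_\Gamma(\R) \setminus Z$ decomposes into circles and open arcs whose topological boundaries in $X_\Gamma(\R)$ lie in $Z$; any nonempty connected open subset whose boundary lies in $Z$ is forced to be one of these pieces, so $U_\gamma$ is such a component. The two inputs here — connectedness together with a special-free interior, and closure differing by points in $Z$ — are precisely the content of Proposition~\ref{prop:quo}, which in turn rests on the four-case analysis of $Z_\gamma$ from \S\ref{ss:class}.

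For injectivity, if $U_{\gamma_1} = U_{\gamma_2}$, choose $p$ in the common image that is neither cuspidal nor elliptic, which is possible because the real elliptic points are discrete in $X_\Gamma$. Lift $p$ to a non-elliptic $z_1 \in C_{\gamma_1}$ and choose $\delta \in \Gamma$ with $\delta z_1 \in C_{\gamma_2}$; then $z_1$ lies on both $C_{\gamma_1}$ and $\delta^{-1} C_{\gamma_2} = C_{(\delta^c)^{-1} \gamma_2 \delta}$. Both admissible elements send $z_1$ to $c z_1$, so one's inverse times the other stabilizes the non-elliptic point $z_1$ and is therefore the identity, yielding $\gamma_2 = \delta^c \gamma_1 \delta^{-1}$. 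For surjectivity, let $U$ be a component of $X_\Gamma(\R) \setminus Z$, pick any non-elliptic $p \in U$ (possible by discreteness, and automatically non-cuspidal since cusps are in $Z$), and lift to a non-elliptic $z \in \mf{h}$. From $cp = p$ we get $\gamma z = c z$ for some $\gamma \in \Gamma$, and then $\gamma^c \gamma$ fixes the non-elliptic $z$, forcing $\gamma^c \gamma = 1$, so $\gamma$ is admissible. Hence $p \in U_\gamma$, and by the previous paragraph $U_\gamma = U$.
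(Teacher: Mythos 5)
Your proof is correct and follows essentially the same route as the paper: well-definedness via $\delta C_\gamma = C_{\delta^c\gamma\delta^{-1}}$, the "full component" step via Proposition~\ref{prop:quo}, and injectivity/surjectivity by lifting a non-elliptic, non-cuspidal point and using triviality of its stabilizer. The only differences are cosmetic — you spell out the topological argument that an open, special-point-free connected set with boundary in $Z$ must be a whole component, and you verify admissibility of $\gamma$ directly from $\gamma^c\gamma$ fixing a non-elliptic point rather than citing Proposition~\ref{conj:fix}.
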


\begin{proof}
Let $\gamma$ be an admissible element of $\Gamma$.  Then $C_{\gamma}/Z_{\gamma}$ is either a circle or a closed
interval whose endpoints are special points.  The interior of $C_{\gamma}/Z_{\gamma}$ contains no special points,
so it maps into $X_{\gamma}(\R) \setminus Z$.  If $C_{\gamma}/Z_{\gamma}$ is a circle then $\pi(C_{\gamma})$ is
necessarily a component of $X_{\Gamma}(\R) \setminus Z$.  If $C_{\gamma}/Z_{\gamma}$ is a closed interval, then
its endpoints map into $Z$, and so the image of its interior is a connected component of $X_{\Gamma}(\R) \setminus Z$.
Thus, in all cases, the image of the interior of $C_{\gamma}/Z_{\gamma}$ is mapped to a full connected component of
$X_{\Gamma}(R) \setminus Z$.

If $\gamma'=\sigma^c \gamma \sigma^{-1}$ is a twisted conjugate of $\gamma$, then $C_{\gamma'}=\sigma C_{\gamma}$ and
so the interiors of $C_{\gamma}/Z_{\gamma}$ and $C_{\gamma'}/Z_{\gamma'}$
have the same image in $X_{\Gamma}(\R) \setminus Z$.  This shows that mapping $\gamma$ to the image of the interior
of $C_{\gamma}/Z_{\gamma}$ induces a well-defined map
\begin{displaymath}
\Phi : \{ \textrm{admissible twisted conjugacy classes of $\Gamma$} \} \to
\pi_0(X_{\Gamma}(\R) \setminus Z).
\end{displaymath}
We now show that $\Phi$ is a bijection.

We first show that $\Phi$ is surjective.  Let $x$ be an element of $X_{\Gamma}(\R) \setminus Z$.  Let $z$ be a lift
of $x$ to $\mf{h}^*$.  Then $z$ belongs to $C_{\gamma}$ for some $\gamma$.  Since $z$ is not special, it maps to
the interior of $C_{\gamma}/Z_{\gamma}$.  Thus $x$ belongs to the image of the interior of $C_{\gamma}/Z_{\gamma}$,
and so $\Phi$ is surjective.

We now that $\Phi$ is injective.  Let $\gamma$ and $\delta$ be admissible elements of $\Gamma$ and suppose that the
interiors of $C_{\gamma}/Z_{\gamma}$ and $C_{\delta}/Z_{\delta}$ map to the same component of $X_{\Gamma}(\R)
\setminus Z$.  Let $x$ be an element of this component which is not an odd order elliptic element (and thus not
elliptic or cuspidal).  Lift $x$ to an element $z$ of $C_{\gamma}$ and
$z'$ of $C_{\delta}$.  Pick $\tau$ in $\Gamma$ so that $z=\tau z'$.  Then $z$ belongs to both $C_{\gamma}$ and
$\tau C_{\delta}=C_{\tau^c \delta \tau^{-1}}$, and so $\gamma^{-1} \tau^c \delta \tau^{-1}$ stabilizes $z$.  Since
$z$ is neither elliptic nor cuspidal it has trivial stabilizer, and so $\gamma^{-1} \tau^c \delta \tau^{-1}=1$.  This
shows that $\gamma$ is a twisted conjugate of $\delta$, and so $\Phi$ is injective.  This completes the proof.
\end{proof}

We note the following corollary of the theorem.

\begin{corollary}
\label{cor:count}
The number of special points is equal to the number of admissible twisted conjugacy classes of type 1, 2 and 4.
\end{corollary}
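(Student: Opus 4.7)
The plan is to deduce the corollary from Theorem~\ref{s2thm} by a double count of vertex-edge incidences in $\Xi_{\Gamma}$.

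First I would observe that, since $X_{\Gamma}(\R)$ is a non-empty disjoint union of circles (i.e., a compact one-manifold), the homeomorphism provided by Theorem~\ref{s2thm} forces every vertex of $\Xi_{\Gamma}$ to have a neighborhood homeomorphic to an open interval inside the graph. This means that exactly two half-edges meet at each vertex; equivalently, every vertex of $\Xi_{\Gamma}$ has valence $2$, with the convention that a loop at $v$ contributes $2$ to the valence of $v$. Summing over the vertex set therefore yields
\[
\sum_{v \in \on{Vert}(\Xi_{\Gamma})} \on{val}(v) \;=\; 2 \cdot \#\{\text{special points of } X_{\Gamma}\}.
\]

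Next I would compute the same sum edge by edge, invoking the classification of \S\ref{ss:class}. An edge of Type~1a, Type~2, or Type~4a corresponds to an interval $C_{\gamma}/Z_{\gamma}$ whose two special endpoints map to distinct points of $X_{\Gamma}$, and so contributes $1+1=2$ to the valence sum. An edge of Type~1b or Type~4b corresponds to an interval whose two endpoints are $\Gamma$-equivalent, and thus forms a loop attached at a single vertex, contributing $2$ there. An edge of Type~3 is a free circle with no vertices attached and contributes $0$. Consequently every admissible twisted conjugacy class of Type~1, 2, or 4 contributes exactly $2$ to $\sum_v \on{val}(v)$, while Type~3 classes contribute nothing, and we get
\[
\sum_{v} \on{val}(v) \;=\; 2 \cdot \#\{\text{admissible twisted conjugacy classes of Type 1, 2, or 4}\}.
\]

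Equating the two expressions for $\sum_v \on{val}(v)$ and dividing by $2$ yields the corollary. The argument is essentially bookkeeping; the only mild subtlety is to remember that in the loop cases (Types~1b and 4b) the unique endpoint picks up valence $2$ rather than $1$, so that Type~1b and Type~4b edges still contribute the same total valence as Type~1a, 2, 4a edges. Once this is noted there is no real obstacle.
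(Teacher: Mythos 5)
Your proof is correct, and it is essentially the argument the paper intends: the corollary is stated without proof as an immediate consequence of the homeomorphism $\Xi_{\Gamma}\cong X_{\Gamma}(\R)$, and the valence-two/double-counting bookkeeping you carry out (with loops of Types 1b and 4b contributing $2$ at their single vertex and Type 3 circles contributing nothing) is exactly the counting that justifies it.
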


\subsection{Local behavior at a special point}

Let $x$ be a special point in $\mf{h}^*$.  Let $S_x$ denote the set of curves $C_{\gamma}$ which
contain $x$.  Given such a curve $C_{\gamma}$, a small neighborhood of $x$ in $C_{\gamma}$ maps under $\pi$ to
one side of $\pi(x)$.  We can thus define an equivalence relation $\sim$ on $S_x$ by declaring $C_{\gamma}$ and
$C_{\delta}$ equivalent if small neighborhoods of $x$ in them map to the same side of $\pi(x)$.  There are
clearly two equivalence classes.  The group $\Gamma_x$ acts on $S_x$, the element $\sigma \in \Gamma_x$ taking
$C_{\gamma}$ to $\sigma C_{\gamma}$.  We have the following result:

\begin{proposition}
The equivalence classes for $\sim$ on $S_x$ are exactly the orbits of $\Gamma_x$.
\end{proposition}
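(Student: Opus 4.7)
The plan is to verify both inclusions between $\Gamma_x$-orbits and $\sim$-equivalence classes on $S_x$. Since both partitions have at most two blocks, this suffices.

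First I would handle the easy direction, showing that if $C_\delta$ and $C_\gamma$ lie in the same $\Gamma_x$-orbit then they are $\sim$-equivalent. If $\sigma \in \Gamma_x$ and $C_\delta = \sigma C_\gamma$, then $\sigma$ is a self-homeomorphism of $\mf{h}^*$ fixing $x$, so it carries a small neighborhood $N$ of $x$ in $C_\gamma$ onto a small neighborhood $\sigma N$ of $x$ in $C_\delta$; since $\pi \circ \sigma = \pi$, both $N$ and $\sigma N$ map under $\pi$ to the same local arc emanating from $\pi(x)$, so they lie on the same side of $\pi(x)$.

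For the converse, I would invoke the standard local description of $X_\Gamma$ at $\pi(x)$: there is an open $\Gamma_x$-stable neighborhood $W$ of $x$ in $\mf{h}^*$ containing no other elliptic points and no other cusps, such that the induced map $W/\Gamma_x \to X_\Gamma$ is an open embedding; in particular, if $\sigma \in \Gamma$ and $\sigma W \cap W \ne \emptyset$ then $\sigma \in \Gamma_x$. Now suppose $C_\gamma \sim C_\delta$. Pick a non-elliptic, non-cuspidal point $z \in C_\gamma \cap W$ sufficiently close to $x$ that $\pi(z)$ lies in the common-side local arc around $\pi(x)$ traced out by both $\pi(C_\gamma \cap W)$ and $\pi(C_\delta \cap W)$; consequently there exists $z' \in C_\delta \cap W$ with $\pi(z') = \pi(z)$. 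Writing $z = \sigma z'$ with $\sigma \in \Gamma$, the defining property of $W$ forces $\sigma \in \Gamma_x$. Then $C_\gamma$ and $\sigma C_\delta = C_{\sigma^c \delta \sigma^{-1}}$ both contain $z$, and since $\pi$ is a local homeomorphism at the non-special point $z$ while the two curves have the same local image, they must coincide on a neighborhood of $z$. Two admissible semicircles which agree on an open subset are equal (both are semicircles in $\ol{\mf{h}}$ determined by any arc they contain, by Proposition~\ref{conj:fix}), so $\sigma C_\delta = C_\gamma$, exhibiting the desired $\Gamma_x$-orbit relation.

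The main obstacle is justifying that $\pi(C_\gamma \cap W)$ and $\pi(C_\delta \cap W)$ really fill out the same local arc of $X_\Gamma(\R)$ once they are known to lie on the same side of $\pi(x)$, so that the matching point $z'$ exists. This requires the local model of $\pi$ near $\pi(x)$: at a cusp, the usual $q$-parameter chart, and at an even-order elliptic point, a disc quotient by the cyclic group $\Gamma_x$, where one must carefully account for the fact that the order-2 element $\tau$ of $\Gamma_x$ (cf.\ Lemma~\ref{lem:ell}) identifies the two sides of $C_\gamma$ at $x$ so that their joint image is still a single side-arc at $\pi(x)$. Granting this local picture, each germ of a curve $C_\gamma$ through $x$ descends to a continuous monotone parametrization of one of the two half-arcs at $\pi(x)$ in the 1-manifold $X_\Gamma(\R)$, and the argument above produces the required $\sigma \in \Gamma_x$.
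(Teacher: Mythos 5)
Your proof is correct, but the non-trivial inclusion is handled by a genuinely different mechanism than the paper's. Both arguments begin the same way: produce nearby non-elliptic points $z \in C_{\gamma}$ and $z' \in C_{\delta}$ with $\pi(z)=\pi(z')$ and a $\sigma \in \Gamma$ with $\sigma z'=z$, whence $\sigma C_{\delta}=C_{\gamma}$. The paper takes $\sigma$ arbitrary and then must repair the fact that $\sigma$ need not fix $x$: it observes that $\sigma$ induces a homeomorphism of the closed intervals $C_{\delta}/Z_{\delta} \to C_{\gamma}/Z_{\gamma}$ (Proposition~\ref{prop:quo}) carrying the boundary point represented by $x$ to the boundary point represented by $x$, so that $\sigma x \in Z_{\gamma}x$, and then replaces $\sigma$ by an element of $Z_{\gamma}\sigma$. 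You instead force $\sigma \in \Gamma_x$ from the outset by working inside a precisely invariant neighborhood $W$ of $x$ (one with $\sigma W \cap W \ne \emptyset \Rightarrow \sigma \in \Gamma_x$), which eliminates the correction step entirely. The trade-off is that your route leans on the standard existence of such $W$ --- at a cusp this is the horoball lemma, at an elliptic point proper discontinuity --- whereas the paper's route recycles the $C_{\gamma}/Z_{\gamma}$ machinery it has already built; both are legitimate, and yours is arguably the more transparent local argument. Two small remarks: (i) the overlap of the two local arcs (hence the existence of $z'$) does need the observation you flag, namely that each image is a connected subset of the half-arc containing an initial segment, and that at an even-order elliptic $x$ the order-two element of $Z_{\gamma}$ (Lemma~\ref{lem:ell}) folds the two branches of $C_{\gamma}$ onto a single side --- you have this right; (ii) your deduction that $\sigma C_{\delta}=C_{\gamma}$ via ``the curves agree on an open set'' is more roundabout than necessary: since the non-elliptic point $z$ lies on both $C_{\gamma}$ and $C_{\sigma^c \delta \sigma^{-1}}$, the element $\gamma^{-1}\sigma^c\delta\sigma^{-1}$ fixes $z$ and is therefore trivial (the argument of Proposition~\ref{2-1}, used repeatedly in the paper), which gives the equality of curves from the single common point without any discussion of local images.
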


\begin{proof}
It is clear that
any two elements in an orbit of $\Gamma_x$ belong to the same equivalence class.  We now prove the other direction.
Thus let $C_{\gamma}$ and $C_{\delta}$ be two elements of $S_x$ which are equivalent under $\sim$.  We can then find
non-elliptic points $z \in C^{\circ}_{\gamma}$ and $z' \in C^{\circ}_{\delta}$ which are in small neighborhoods of $x$
and equivalent under $\Gamma$.  Pick $\tau$ so that $\tau z'=z$.  Then $\tau  C_{\delta}=C_{\gamma}$, and so $\tau$
induces a homeomorphism $C_{\delta}/Z_{\delta} \to C_{\gamma}/Z_{\gamma}$.  Each of these spaces is a closed interval,
with one endpoint being represented by $x$.  Since $z$ and $z'$ are both close to $x$, and $\tau$ maps $z'$ to $z$,
it follows that $\tau$ takes $x \in C_{\delta}/Z_{\delta}$ to $x \in C_{\gamma}/Z_{\gamma}$.  We thus find that
$\tau(x)$ belongs to $Z_{\gamma} x$.  Therefore, by replacing $\tau$ with an element of $Z_{\gamma} \tau$, we find
$\tau C_{\delta}=C_{\gamma}$ and $\tau x=x$.  This shows that $C_{\gamma}$ and $C_{\delta}$ are equivalent under
$\Gamma_x$.
\end{proof}

\begin{lemma}
Let $z$ be a point on $C_{\gamma}$ and let $\sigma$ be any element of $\Gamma_z$.  Then $\gamma^{-1} \sigma^c
\gamma=\sigma^{-1}$.
\end{lemma}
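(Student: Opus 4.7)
The plan is to reinterpret the quantity $\gamma^{-1}\sigma^c\gamma$ as conjugation of $\sigma$ by the anti-holomorphic involution $\iota := c\gamma$ of $\mathfrak h$, and then invoke the standard fact that a reflection through a point inverts every rotation about that point.

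First I would observe that $\gamma$ must be admissible: since $z\in C_\gamma$ the locus $C_\gamma$ is non-empty, so Proposition~\ref{conj:fix} gives $\gamma^c = \gamma^{-1}$, i.e.\ $c\gamma c = \gamma^{-1}$, or equivalently $\gamma^{-1}c = c\gamma$ as self-maps of $\mathfrak h$. Setting $\iota := c\gamma$, this relation immediately gives $\iota^2 = (c\gamma)(\gamma^{-1}c)\cdot 1 = 1$, so $\iota$ is an anti-holomorphic involution of $\mathfrak h$. Moreover, $\iota(z) = c(\gamma z) = c(cz) = z$, so $\iota$ fixes $z$.

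Next I would rewrite $\gamma^{-1}\sigma^c\gamma = \gamma^{-1}(c\sigma c)\gamma$ and push the $\gamma^{\pm 1}$'s through the $c$'s via admissibility to get
\begin{displaymath}
\gamma^{-1}\sigma^c\gamma \;=\; (\gamma^{-1}c)\,\sigma\,(c\gamma) \;=\; (c\gamma)\,\sigma\,(c\gamma) \;=\; \iota\,\sigma\,\iota^{-1}.
\end{displaymath}
Although $\iota$ is not itself in $\PSL_2(\R)$, conjugation by $\iota$ preserves $\PSL_2(\R)$ (it coincides with $\gamma\mapsto\gamma^c$ followed by conjugation by $\gamma$), so the right-hand side lies in $\PSL_2(\R)$. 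The lemma therefore reduces to the claim $\iota\sigma\iota^{-1} = \sigma^{-1}$.

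To verify this claim, I would work locally. Choose a holomorphic disc coordinate $w$ centred at $z$ (for example via the Cayley transform sending $z$ to the origin of the unit disc). In this coordinate $\sigma$ is a holomorphic automorphism fixing the origin, hence of the form $w\mapsto e^{i\theta}w$, while $\iota$ is an anti-holomorphic involution fixing the origin, hence of the form $w\mapsto e^{i\phi}\overline w$. A one-line computation yields
\begin{displaymath}
\iota\sigma\iota(w) \;=\; \iota\bigl(e^{i\theta}e^{i\phi}\overline w\bigr) \;=\; e^{i\phi}\,\overline{e^{i\theta}e^{i\phi}\overline w} \;=\; e^{-i\theta}w \;=\; \sigma^{-1}(w).
\end{displaymath}
Since two elements of $\PSL_2(\R)$ that agree on an open subset of $\mathfrak h$ are equal, we conclude $\iota\sigma\iota^{-1}=\sigma^{-1}$, which completes the proof.

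The only potential pitfall is bookkeeping the two-fold nature of $c$ and $\iota$: they act on $\mathfrak h$ as anti-holomorphic maps (so they are not elements of $\PSL_2(\R)$), yet conjugation by them carries $\PSL_2(\R)$ into itself. Once the identity $\gamma^{-1}c = c\gamma$ coming from admissibility is in hand, the rest is a routine identification, and the geometric content---that a reflection through a point inverts each rotation about that point---requires only the local coordinate picture.
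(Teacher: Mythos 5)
Your reduction to the identity $\iota\sigma\iota^{-1}=\sigma^{-1}$ with $\iota=c\gamma$ is correct as far as it goes, but the verification of that identity has a gap: you prove it only when $z$ lies in $\mf{h}$, by choosing a holomorphic disc coordinate centred at $z$ and writing $\sigma$ as a rotation. The lemma, however, is stated for any point $z$ of $C_{\gamma}\subset\mf{h}^*$, and in this section it is applied at \emph{special} points, which include real cusps. When $z$ is a cusp there is no disc coordinate centred at $z$, $\sigma$ is parabolic rather than elliptic, and the rotation computation does not apply. The gap is easily repaired --- normalize so that $z=\infty$, write $\sigma\colon w\mapsto w+h$ and $\iota\colon w\mapsto -\ol{w}+a$ with $a,h$ real, and check $\iota\sigma\iota=\sigma^{-1}$ directly --- but as written the cuspidal case is simply absent.

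For comparison, the paper's proof is purely algebraic and handles all cases at once: since $\sigma z=z$, the point $z$ lies on $C_{\gamma\sigma}$ as well, so by Proposition~\ref{conj:fix} both $\gamma$ and $\gamma\sigma$ are admissible, and then
\begin{displaymath}
\gamma^{-1}\sigma^c=\gamma^c\sigma^c=(\gamma\sigma)^c=(\gamma\sigma)^{-1}=\sigma^{-1}\gamma^{-1},
\end{displaymath}
which is the assertion. This route never distinguishes between elliptic points, cusps, and ordinary points, because the non-emptiness criterion of Proposition~\ref{conj:fix} already applies uniformly on $\mf{h}^*$. Your geometric argument buys a clear picture (a reflection through a fixed point inverts the rotations about it), but at the cost of a case analysis that you did not complete; the paper's argument buys uniformity and brevity at the cost of that picture.
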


\begin{proof}
Clearly, $z$ belongs to $C_{\gamma \sigma}$, and so $\gamma \sigma$ is admissible.  We therefore have
\begin{displaymath}
\gamma^{-1} \sigma^c=(\gamma \sigma)^c=(\gamma \sigma)^{-1}=\sigma^{-1} \gamma^{-1},
\end{displaymath}
which proves the lemma.
\end{proof}

\begin{proposition}
Let $C_{\gamma}$ contain $x$ and let $\sigma$ belong to $\Gamma_x$.  Then $C_{\gamma} \sim C_{\gamma \sigma}$ if and
only if $\sigma$ belongs to $2 \Gamma_x$.
\end{proposition}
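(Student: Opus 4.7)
The plan is to reduce the claim, via the immediately preceding proposition, to a group-theoretic identity, and then to collapse that identity using the lemma just above. By that proposition, $C_\gamma \sim C_{\gamma\sigma}$ holds exactly when $C_\gamma$ and $C_{\gamma\sigma}$ lie in the same $\Gamma_x$-orbit on $S_x$, i.e., when there exists $\tau \in \Gamma_x$ with $\tau C_\gamma = C_{\gamma\sigma}$. So the entire content of the proposition is to recognize when such a $\tau$ exists.

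First I would confirm that $C_{\gamma\sigma}$ really is an element of $S_x$: since $\sigma$ fixes $x$ and $x \in C_\gamma$ gives $\gamma x = cx$, we have $\gamma\sigma x = cx$, so $x \in C_{\gamma\sigma}$. Proposition~\ref{conj:fix} then implies $\gamma\sigma$ is admissible. Next, using the identity $\tau C_\gamma = C_{\tau^c \gamma \tau^{-1}}$ together with Proposition~\ref{2-2} (two admissible elements defining the same curve are equal), the orbit condition becomes the algebraic identity $\tau^c \gamma \tau^{-1} = \gamma \sigma$, i.e., $\sigma = \gamma^{-1} \tau^c \gamma \cdot \tau^{-1}$.

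The key step is to invoke the preceding lemma with $z = x$ and the element $\tau \in \Gamma_x = \Gamma_z$ (this is where the hypothesis $x \in C_\gamma$ really gets used) to rewrite $\gamma^{-1}\tau^c\gamma$ as $\tau^{-1}$. Substituting collapses the expression to $\sigma = \tau^{-2}$, so the condition on $\sigma$ is precisely that it lies in the subgroup $\{\tau^{-2} : \tau \in \Gamma_x\} = 2\Gamma_x$ of squares in $\Gamma_x$. The converse direction is the same calculation read in reverse: given $\sigma = \tau^{-2}$, set $\tau^c\gamma\tau^{-1} = \gamma\tau^{-2} = \gamma\sigma$ using the lemma, which yields $\tau C_\gamma = C_{\gamma\sigma}$.

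I do not anticipate any serious obstacle, as the preceding lemma feeds almost mechanically into the computation. The one point requiring care is verifying $x \in C_{\gamma\sigma}$, since Propositions~\ref{conj:fix} and~\ref{2-2} both require admissibility of the element $\gamma\sigma$ before they can be applied.
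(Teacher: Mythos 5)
Your proof is correct and follows essentially the same route as the paper's: reduce via the preceding proposition to the existence of $\tau \in \Gamma_x$ with $\tau C_{\gamma}=C_{\gamma\sigma}$, use $\tau C_{\gamma}=C_{\tau^c\gamma\tau^{-1}}$ together with Proposition~\ref{2-2}, and collapse $\tau^c\gamma\tau^{-1}$ to $\gamma\tau^{-2}$ by the lemma. Your extra check that $\gamma\sigma$ is admissible (so that Proposition~\ref{2-2} applies) is a point the paper leaves implicit, and is a welcome addition.
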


\begin{proof}
For $\tau \in \Gamma_x$, we have $\tau C_{\gamma}=C_{\tau^c \gamma \tau^{-1}}=C_{\gamma \tau^{-2}}$.  Thus
$C_{\gamma \sigma}=\tau C_{\gamma}$ if and only if $\sigma=\tau^{-2}$.  It follows that $C_{\gamma \sigma} \sim
C_{\gamma}$ if and only if $\sigma$ is the square of an element of $\Gamma_x$.
\end{proof}

\begin{corollary}
Let $C_{\gamma}$ contain $x$, and let $\sigma$ be a generator of $\Gamma_x$.  Then $C_{\gamma}$ and $C_{\gamma \sigma}$
are inequivalent under $\sim$.
\end{corollary}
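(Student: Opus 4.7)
The plan is to reduce this immediately to the preceding proposition, which says that $C_\gamma \sim C_{\gamma\sigma}$ if and only if $\sigma$ lies in $2\Gamma_x$, the subgroup of squares of $\Gamma_x$. Thus the corollary amounts to the purely group-theoretic statement that a generator of $\Gamma_x$ is never a square, and the work is simply to verify this for the two possible shapes of $\Gamma_x$ when $x$ is special.

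First I would recall that since $x$ is a special point, $\Gamma_x$ is either infinite cyclic (when $x$ is a cusp) or cyclic of even order (when $x$ is an elliptic point of even order). In the cusp case, $\Gamma_x \cong \Z$, and the subgroup of squares is $2\Z$, which has index $2$; the generators $\pm 1$ are not squares. In the elliptic case, $\Gamma_x \cong \Z/n\Z$ with $n$ even, and the subgroup of squares is $2\Z/n\Z$, again of index $2$; any generator, being a unit modulo $n$, is odd and hence not a square. In either case, $\sigma \notin 2\Gamma_x$, so the preceding proposition gives $C_\gamma \not\sim C_{\gamma\sigma}$.

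There is essentially no obstacle here — the corollary is a one-line deduction once the previous proposition is in hand. The only content worth emphasizing, and the only thing to double-check, is that for \emph{both} types of special points the squaring map on $\Gamma_x$ has image of index $2$, so that a generator cannot accidentally be a square; this uses in a crucial way the evenness of the elliptic order (which is part of the definition of ``special''), since for $\Gamma_x$ cyclic of odd order the squaring map would be surjective and the statement would fail.
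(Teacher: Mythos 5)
Your proof is correct and is exactly the deduction the paper intends: the corollary is stated without proof as an immediate consequence of the preceding proposition, and your verification that a generator of $\Gamma_x$ (infinite cyclic for a cusp, cyclic of even order for a special elliptic point) is never a square supplies the one missing line. Your remark that the evenness of the elliptic order is what makes the squaring map non-surjective correctly identifies the only point of substance.
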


This corollary is useful when computing the graph $\Xi$, for if one has found an edge $\pi(C_{\gamma})$ containing
the special point $x$ then the other edge containing $x$ is given by $\pi(C_{\gamma \sigma})$ where $\sigma$ generates
$\Gamma_x$.  Note that it is possible for $\pi(C_{\gamma})$ and $\pi(C_{\gamma \sigma})$ to coincide; when this
happens, $\pi(C_{\gamma})$ forms a loop at $x$.

\section{The graph associated to a real subgroup of $\SL_2(R)$}
\label{s:3}

In this section, we associate a graph to a ``real'' subgroup of $\SL_2(R)$, where $R$ is a finite characteristic ring,
with some additional hypotheses at 2.  The main result we prove about this graph is that it is a union of cycles.
We also prove a result describing how the construction behaves under direct product and inverse image.

\subsection{The graph $\Xi_G$}
\label{ss:xi}

Let $R$ be a ring, let $U$ be a free rank two
$R$-module with a non-degenerate symplectic pairing $\langle, \rangle$ and let $C$ be an $R$-linear involution of $U$
of determinant $-1$.  We write $g \mapsto g^c$ for the involution of $\SL(U)$ induced by conjugation by by $C$.  Let
$G$ be a subgroup of $\SL(U)$ containing $-1$ and stable under $c$ (this is what we mean by a real subgroup).
We aim to define a graph $\Xi_G$ associated to this data.  This graph will depend on $C$, despite its absence from
the notation.

We say that an element of $U$ is a \emph{basis vector} if the $R$-submodule it generates is a summand.  We call
an element $g$ of $G$ \emph{admissible} if $g^c=g^{-1}$.  Note that $g$ is admissible if and only if $(Cg)^2=1$.
Let $\wt{V}_p$ denote the subset of $U/\{\pm 1\}$
consisting of elements which are represented by some basis vector $x$ satisfying $Cgx=x$ for some admissible
$g \in G$.  We call elements of $\wt{V}_p$ (and often the elements of $U$ representing them) \emph{parabolic
vertices}.  We represent parabolic vertices graphically with a solid dot.

Let $\ms{T}$ denote the set of triples $[x, y; z]$ of basis vectors of $U$ satisfying the following two conditions:
\begin{enumerate}
\item We have $\langle x, z \rangle=\langle z, y \rangle=1$.
\item We have $x+y=wz$ for some $w \in \{1, 2\}$.
\end{enumerate}
We refer to $w$ as the \emph{weight} of the triple $[x, y; z]$.  Note that $w=\langle x, y \rangle$.  We define
the \emph{complementary weight} $w'$ to be 2 or 1 depending on if $w$ is 1 or 2.  Note that if $w=1$ or if $R$ has
no non-zero 2-torsion then $z$ is uniquely determined from $x$ and $y$.

Given $[x, y; z]$ in $\ms{T}$ put $\rho([x,y;z])=[z,z-w'x; y]$.  One readily verifies that $\rho$ maps $\ms{T}$ to
itself.  The map $\rho$ interchanges the weight and complementary weight, i.e., $w\rho=w'$ and $w'\rho=w$.  A short
computation shows that $\rho$ has order 8; in fact, $\rho^4([x,y;z])=[-x,-y;-z]$.  The natural action of $G$ on
$\ms{T}$ commutes with $\rho$.

For an element $[x,y;z]$ of $\ms{T}$, consider the following condition:
\begin{enumerate}
\setcounter{enumi}{2}
\item There exists an element $g$ of $G$ such that
\begin{displaymath}
Cg-1=w' \langle -, x \rangle y, \qquad
Cg+1=w' \langle -, y \rangle x.
\end{displaymath}
\end{enumerate}
Of course, the element $g$ is uniquely determined by $x$ and $y$.  In fact, (c) holds if and only if the endomorphism
of $U$ defined by $u \mapsto Cu+w'\langle u, x \rangle Cy$ belongs to $G$.  If condition (c) is satisfied then the
element $g$ is necessarily admissible and we have $Cgx=x$ and $Cgy=-y$, showing that $x$ and $y$ are parabolic
vertices.  One readily verifies that $[x,y;z]$ satisfies (c) if and only if $\rho^2([x,y;z])$ does.

A \emph{geodesic} is an element of $\ms{T}/\langle \rho^2 \rangle$ satisfying condition (c).  Thus, a geodesic
is represented by a triple $[x,y;z]$ satisfying conditions (a)--(c), and the triples $[x,y;z]$ and $\rho^2([x,y;z])$
represent the same geodesic.  The weight of a geodesic is well-defined.  We denote geodesics graphically by either a
single or double line, depending on if the weight is one or two.

The map $\rho$ descends to an involution of $\ms{T}/\langle \rho^2 \rangle$.  However, it need not take geodesics to
geodesics.  We say that two geodesics \emph{intersect} if they form an orbit of $\rho$.  By definition, intersecting
geodesics have complementary weights.  We have the following observation:

\begin{lemma}
\label{intersect}
Let $[x, y; z]$ be a geodesic.  Then $\rho([x,y;z])$ is a geodesic if and only if $G$ contains the
map $\sigma$ defined by $\sigma(x)=-y$ and $\sigma(z)=z-w'y$.  (Note:  $\sigma(y)=x$.)
\end{lemma}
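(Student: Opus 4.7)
The strategy is to identify, with respect to the given geodesic $[x,y;z]$, the explicit endomorphism of $U$ whose membership in $G$ is equivalent to $\rho([x,y;z])$ being a geodesic, and then to show that this endomorphism factors through $\sigma$.

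First, I would recall from the preceding discussion that condition (c) for a triple $[x',y';z']$ of weight $w''$ with complementary weight $(w'')'$ is equivalent to the specific endomorphism $u \mapsto Cu + (w'')' \langle u, x' \rangle Cy'$ lying in $G$. Applied to $[x,y;z]$ this gives the element $g(u) = Cu + w'\langle u, x\rangle Cy$, which by hypothesis lies in $G$. Applied to $\rho([x,y;z]) = [z,\, z-w'x;\, y]$, which has weight $w'$ and complementary weight $w$, it gives the endomorphism
\begin{displaymath}
g'(u) = Cu + w\langle u, z\rangle C(z-w'x).
\end{displaymath}
Thus $\rho([x,y;z])$ is a geodesic iff $g' \in G$.

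Next, I would check that $\sigma$ is well-defined as an element of $\SL(U)$. Since $\langle x,z\rangle=1$, the pair $\{x,z\}$ is a basis of $U$; the prescribed values $\sigma(x)=-y$ and $\sigma(z)=z-w'y$ therefore determine $\sigma$ uniquely. The identity $y = wz - x$ then gives $\sigma(y) = x$, and a direct determinant computation in the basis $\{x,z\}$, using $ww'=2$, shows $\det\sigma = 1$.

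The key step is then the identity $g' = g\sigma$. Using the relations $\langle x,z\rangle=1$, $\langle z,y\rangle=1$, $\langle x,y\rangle=w$, $ww'=2$ and $x+y=wz$, one verifies:
\begin{displaymath}
g'(x) = Cx + wC(z-w'x) = Cx + wCz - 2Cx = C(wz-x) = Cy,
\end{displaymath}
while on the other side $g(y) = Cy + w'\langle y,x\rangle Cy = Cy - 2Cy = -Cy$, so $g(\sigma(x)) = g(-y) = Cy$. A parallel computation gives $g'(z) = Cz$ (since $\langle z,z\rangle=0$) and $g(\sigma(z)) = g(z-w'y) = (Cz - w'Cy) - w'(-Cy) = Cz$. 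Since $\{x,z\}$ is a basis, $g' = g\sigma$.

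Given $g \in G$, the relation $g' = g\sigma$ immediately yields $g' \in G \iff \sigma \in G$, which combined with the first paragraph is the desired equivalence. The main obstacle is purely computational: checking $g\sigma = g'$ without sign or weight errors, which the relation $ww'=2$ makes clean.
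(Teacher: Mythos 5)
Your proof is correct and follows essentially the same route as the paper: the paper defines $h$ by $Ch-1=w\langle -,z\rangle(z-w'x)$ (your $g'$) and observes $CgCh=g^{-1}h=\sigma$, which is exactly your identity $g'=g\sigma$, yielding the same conclusion. Your version just carries out the ``short computation'' explicitly on the basis $\{x,z\}$, and the checks are accurate.
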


\begin{proof}
Let $g$ and $h$ be defined by
\begin{displaymath}
Cg-1=w'\langle -, x \rangle y, \qquad Ch-1=w\langle -, z \rangle (z-w'x).
\end{displaymath}
A short computation shows that $Cg Ch=g^{-1} h=\sigma$.  Since $g$ belongs to $G$, we find that $h$ belongs to $G$ if
and only if $\sigma$ does.
\end{proof}

An \emph{elliptic vertex} is an unordered pair of intersecting geodesics.  We think of the elliptic vertex as
the intersection of the two geodesics, and use corresponding terminology (e.g., we say that the geodesics contain
the elliptic vertex).  By definition, a geodesic contains at most one elliptic point.  We write $\wt{V}_e$ for the
set of elliptic points.  We represented elliptic vertices graphically with a hollow dot.

We now define a graph $\wt{\Xi}$.  The vertex set of $\wt{\Xi}$ is the disjoint union of $\wt{V}_p$ and $\wt{V}_e$.
The edges of $\wt{\Xi}$ come from geodesics, as follows.  If $[x, y; z]$ is a geodesic containing no elliptic points
then it contributes an edge between $x$ and $y$ in $\wt{\Xi}$.  If $[x, y; z]$ is a geodesic containing an elliptic
point $p$, then it contributes an edge between $x$ and $p$, as well as one between $p$ and $y$.  The edges of
$\wt{\Xi}$ are undirected.  We assign each edge weight 1 or 2 according to the weight of the geodesic giving rise to
it.  Note that it is possible
that there is more than one edge between two vertices of $\wt{\Xi}$.  However, $\wt{\Xi}$ contains no loops.

One easily verifies that $G$ acts on $\wt{\Xi}$.  With this in mind, we can make our main definition:

\begin{definition}
The graph $\Xi=\Xi_G$ is the quotient of $\wt{\Xi}$ by $G$.
\end{definition}

For the sake of clarity, let us elaborate on the definition slightly.  The vertex set of $\Xi$ is the
quotient of the vertex set of $\wt{\Xi}$ by $G$.  The action of $G$ on $\wt{\Xi}$ takes parabolic vertices to
parabolic vertices and elliptic vertices to elliptic vertices, so there is a notion of ``elliptic'' and ``parabolic''
for vertices $\Xi$.  The edge set of $\Xi$ is the quotient of the edge set of $\wt{\Xi}$ by the action of $G$.
The action of $G$ on the edges of $\wt{\Xi}$ respects weight, and so the edges of $\Xi$ have a weight.   We note
that each geodesic of $\wt{\Xi}$ maps to a single edge of $\Xi$.

\subsection{Invariance under inverse image}

Let $R \to R_0$ be a surjection of rings.  We assume that 3 and 4 are non-zero in $R_0$, and thus in $R$.  Let $U$ be a
free $R$-module of rank 2 with complex conjugation $C$, let
$U_0=U \otimes_R R_0$ and let $C_0$ be the induced complex conjugation on $U_0$.  Let $G_0$ be a subgroup of $\SL(U_0)$
stable under $C_0$ and let $G$ be its inverse image in $\SL(U)$.  Clearly, $G$ is stable under $C$.  One therefore has
graphs $\Xi=\Xi_G$ and $\Xi_0=\Xi_{G_0}$.  The purpose of this section is to prove the following theorem:

\begin{theorem}
\label{invimg}
Assume that the map $\SL_2(R) \to \SL_2(R_0)$ is surjective.  Then $\Xi$ and $\Xi_0$ are isomorphic.
\end{theorem}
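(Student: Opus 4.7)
The plan is to construct a natural graph morphism $\Phi\colon \Xi \to \Xi_0$ induced by reduction modulo the ideal $I := \ker(R\to R_0)$, and to verify it is an isomorphism by checking bijectivity separately on parabolic vertices, elliptic vertices, and edges. Every ingredient in the construction of $\Xi_G$---basis vectors of $U$, admissible elements of $G$, the symplectic pairing, and triples $[x,y;z]$ satisfying (a)--(c)---reduces compatibly to the corresponding ingredient for $\Xi_{G_0}$; the $\rho$-action and the notion of intersecting geodesics also commute with reduction. Moreover $G$ surjects onto $G_0$, since $G$ is the full preimage of $G_0$ under the assumed surjective map $\SL_2(R)\to \SL_2(R_0)$, so orbits behave well and $\Phi$ is well defined.

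For surjectivity I would prove two lifting lemmas. \emph{Lifting parabolic vertices:} given an admissible $\bar g_0\in G_0$ and a basis vector $\bar x\in U_0$ with $C_0\bar g_0\bar x = \bar x$, lift $\bar x$ to a basis vector $x\in U$ and complete to a basis $\{x,y\}$ with $\langle x,y\rangle = 1$; in that basis, any involution of determinant $-1$ fixing $x$ has the form $\mat{1}{\alpha}{0}{-1}$, so choosing $\alpha$ to lift the corresponding entry $\bar\alpha$ of $C_0\bar g_0$ produces $Cg$ and hence $g\in \SL(U)$, which automatically lies in $G$ because $\bar g = \bar g_0$. \emph{Lifting geodesics:} given a geodesic $[\bar x,\bar y;\bar z]$ of weight $w$, lift $\bar x$ to a basis vector $x$, then lift $\bar z$ to $z\in U$ with $\langle x,z\rangle = 1$, and set $y := wz - x$; the endomorphism $\phi(u) := u + w'\langle u,x\rangle y$ satisfies the matrix identities in (c) by direct calculation, and its reduction equals $C_0\bar g_0$, so $g := C^{-1}\phi$ lies in $G$. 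Elliptic vertices, being unordered pairs of intersecting geodesics, then lift as a consequence.

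For injectivity the essential observation is that the congruence kernel $K := \ker(G\to G_0) = \ker(\SL_2(R)\to \SL_2(R_0))$ acts transitively on the relevant reduction fibers. Given two parabolic-vertex lifts $x,x'$ with the same image, after applying a lift to $G$ of a connecting $G_0$-element I may assume $\bar x = \bar x'$, so that $x' = (1+a)x + bz$ with $a,b\in I$ in some basis $\{x,z\}$ with $\langle x,z\rangle = 1$; unimodularity of $(1+a,b)$ (since $x'$ is a basis vector) produces $e,f\in R$ with $(1+a)f - be = 1$, which can be adjusted by the elementary operation $(e,f)\mapsto (e+t(1+a), f+tb)$ so that $e\in I$ and $f\in 1+I$, yielding an element of $K$ sending $x$ to $x'$. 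For geodesic lifts, the same reduction lets me assume $\bar x = \bar x'$ and $\bar z = \bar z'$; since both $\{x,z\}$ and $\{x',z'\}$ are symplectic bases of $U$ and $\SL_2 = \Sp_2$ on a rank-two module, the change-of-basis matrix lies in $\SL_2(R)$, reduces to the identity, hence lies in $K$, and carries the whole triple $[x,y;z]$ to $[x',y';z']$ (using $y = wz-x$).

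The main obstacle I anticipate is the weight-$2$ case, where the third entry $z$ of a triple is only determined up to a $2$-torsion element of $U$ and where the $\rho^2$-identification must be tracked carefully; this is precisely where the hypothesis that $3$ and $4$ are non-zero in $R_0$ should enter, ensuring that the set $\ms{T}$, the $\rho$-orbits, and the intersecting-geodesic structure reduce without degeneration and that the construction of $\phi$ remains genuinely invertible. Once this bookkeeping is in place, all three claims---well-definedness of $\Phi$, surjectivity, and injectivity---reduce to direct verifications using the explicit matrix formulas set up in \S\ref{ss:xi}.
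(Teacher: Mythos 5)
Your proposal is correct and follows essentially the same route as the paper: a reduction-induced map, lifting lemmas for basis vectors, parabolic vertices, and geodesics (defining $y=wz-x$ from a lifted symplectic pair to sidestep the weight-two ambiguity in $z$), and injectivity via explicit elements of the congruence kernel, with intersections handled through the criterion of Lemma~\ref{intersect}. The only differences are cosmetic choices of the explicit $\SL_2$ matrices used to realize equivalences.
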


The hypothesis implies that every element of $G_0$ lifts to one in $G$.  We note that the hypothesis is automatic
if the kernel of $R \to R_0$ is nilpotent, since $\SL_2$ is a smooth group scheme.  To prove the theorem we proceed in
a series of lemmas.  For $x \in U$ we write $\ol{x}$ for its image in $U_0$.

\begin{lemma}
\label{invimg-1}
Any basis vector of $U_0$ can be lifted to a basis vector of $U$.
\end{lemma}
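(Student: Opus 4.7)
The plan is to reduce lifting a single basis vector to lifting a full symplectic basis, which is exactly what the surjectivity hypothesis $\SL_2(R) \twoheadrightarrow \SL_2(R_0)$ supplies. The bridge is the following elementary observation, which holds because $U$ has rank two and the symplectic pairing is unimodular: an element $x \in U$ is a basis vector in the sense of \S\ref{ss:xi} if and only if there exists $y \in U$ with $\langle x, y \rangle = 1$, in which case $(x, y)$ is automatically an $R$-basis of $U$; the same equivalence holds over $R_0$.

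Given a basis vector $\ol{x} \in U_0$, I would first choose $\ol{y} \in U_0$ with $\langle \ol{x}, \ol{y} \rangle = 1$. Fix once and for all a symplectic basis $(e_1, e_2)$ of $U$ with $\langle e_1, e_2 \rangle = 1$, and let $(\ol{e}_1, \ol{e}_2)$ be its reduction. In these coordinates the pair $(\ol{x}, \ol{y})$ is encoded by a $2 \times 2$ matrix $M_0$ over $R_0$ with $\det M_0 = \langle \ol{x}, \ol{y} \rangle = 1$, so $M_0 \in \SL_2(R_0)$. Using the hypothesis of Theorem~\ref{invimg}, lift $M_0$ to $M \in \SL_2(R)$. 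The first column of $M$, read off with respect to $(e_1, e_2)$, defines an element $x \in U$ reducing to $\ol{x}$ whose symplectic pairing with the second column equals $1$; hence $x$ is a basis vector of $U$.

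I expect no serious obstacle. The one ingredient that is not a one-line appeal to the hypothesis is the equivalence between ``$Rx$ is a summand of $U$'' and ``$x$ completes to a symplectic pair of value $1$'', which follows from unimodularity of $\langle,\rangle$ on a rank-two module: if $(x, y')$ is any $R$-basis, then $\langle x, y' \rangle$ is automatically a unit $u \in R^\times$, and replacing $y'$ by $u^{-1} y'$ yields the desired $y$; conversely, $\langle x, y \rangle = 1$ forces the coordinate matrix of $(x, y)$ to have determinant $1$ relative to any fixed symplectic basis, so $(x, y)$ is a basis. Beyond this purely linear-algebraic bookkeeping, the entire content of the lemma is the invocation of the theorem's surjectivity hypothesis.
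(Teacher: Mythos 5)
Your proof is correct and is essentially the paper's own argument: the paper likewise completes $\ol{x}$ to a pair with $\langle \ol{x}, \ol{y} \rangle = 1$, observes that this pair is the image of a fixed symplectic basis under a unique $\ol{g} \in \SL(U_0)$, lifts $\ol{g}$ using the surjectivity hypothesis, and takes the image of the first basis vector --- which is exactly your ``lift $M_0$ and read off its first column.'' The only difference is cosmetic (you phrase it in coordinates and spell out the equivalence between being a basis vector and pairing to $1$ with something, which the paper takes for granted).
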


\begin{proof}
Let $\ol{u}$ be a basis vector of $U_0$.  Pick $\ol{v}$ in $U_0$ with $\langle \ol{u}, \ol{v} \rangle=1$ and pick
$x$ and $y$ in $U$ with $\langle x, y \rangle=1$.  Then there exists a unique element $\ol{g} \in \SL(U_0)$ such
that $\ol{u}=\ol{g} \ol{x}$ and $\ol{v}=\ol{g} \ol{y}$.  Since $\SL(U) \to \SL(U_0)$ is surjective, we can lift
$\ol{g}$ to an element $g$ of $\SL(U)$.  We can then take $u=gx$.
\end{proof}

\begin{lemma}
\label{invimg-2}
Let $\ol{u}$ and $\ol{v}$ be elements of $U_0$ satisfying $\langle \ol{u}, \ol{v} \rangle=1$ and let $u$ be a lift
of $\ol{u}$ to a basis vector of $U$.  Then there exists a lift $v$ of $\ol{v}$ with $\langle u, v \rangle=1$.
\end{lemma}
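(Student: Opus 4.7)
The plan is to produce $v$ by starting from an arbitrary lift of $\ol{v}$ and correcting it along a direction that pairs nontrivially with $u$. Let $I$ denote the kernel of $R \to R_0$.

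First I would choose any element $v_0 \in U$ lifting $\ol{v}$ (for instance, by fixing an $R$-basis of $U$ and lifting coordinates). Because the pairing is compatible with reduction modulo $I$, we have $\langle u, v_0 \rangle \equiv \langle \ol{u}, \ol{v} \rangle = 1$ in $R_0$, so $a := \langle u, v_0 \rangle - 1$ lies in $I$.

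Next I would produce a companion $u' \in U$ with $\langle u, u' \rangle = 1$. Since $u$ is a basis vector, the submodule $Ru$ is a direct summand, so there is a complementary rank-one free summand $Ru''$; the pairing matrix of $U$ in the basis $\{u, u''\}$ is $\bigl(\begin{smallmatrix} 0 & \lambda \\ -\lambda & 0 \end{smallmatrix}\bigr)$ with $\lambda = \langle u, u'' \rangle$, and non-degeneracy of $\langle \,,\rangle$ forces $\lambda \in R^{\times}$. Setting $u' = \lambda^{-1} u''$ then gives $\langle u, u' \rangle = 1$.

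Finally I would define $v := v_0 - a u'$. By construction $a \in I$, so $v$ reduces to $\ol{v}_0 = \ol{v}$ in $U_0$, and $\langle u, v \rangle = \langle u, v_0 \rangle - a \langle u, u' \rangle = (1 + a) - a = 1$, as required. There is no real obstacle here; the only substantive input is the equivalence, for a basis vector $u$ in a free rank-two module with non-degenerate symplectic form, between being a basis vector and admitting a partner $u'$ with $\langle u, u' \rangle = 1$, which is exactly what makes the linear correction available.
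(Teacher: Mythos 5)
Your proof is correct and follows essentially the same route as the paper: take an arbitrary lift $v_0$, note that $\langle u, v_0\rangle = 1+\alpha$ with $\alpha$ in the kernel of $R \to R_0$, and correct by subtracting $\alpha z$ for some $z$ with $\langle u, z\rangle = 1$. The only difference is that you spell out why such a $z$ exists (the paper simply asserts it), which is a reasonable addition.
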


\begin{proof}
Let $v_0$ be any lift of $v$ and let $z \in U$ be such that $\langle u, z \rangle=1$.  We have $\langle u, v_0 \rangle
=1+\alpha$, where $\alpha$ belongs to the kernel of $R \to R_0$.  We can take $v=v_0-\alpha z$.
\end{proof}

\begin{lemma}
\label{invimg-3}
Every parabolic vertex of $\wt{\Xi}_0$ lifts to one of $\wt{\Xi}$.
\end{lemma}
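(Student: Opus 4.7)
The plan is to lift the data defining the parabolic vertex in three steps. A parabolic vertex of $\wt{\Xi}_0$ is represented by a basis vector $\ol{x}$ of $U_0$ together with an admissible $\ol{g}_0 \in G_0$ satisfying $\ol{C}\ol{g}_0 \ol{x} = \ol{x}$; my goal is to produce a basis vector $x \in U$ lifting $\ol{x}$ and an admissible $g \in G$ with $Cgx = x$. First I would apply Lemma~\ref{invimg-1} to lift $\ol{x}$ to a basis vector $x$ of $U$. Second, I would choose any $\ol{z} \in U_0$ with $\langle \ol{x}, \ol{z}\rangle = 1$ and invoke Lemma~\ref{invimg-2} to lift it to $z \in U$ satisfying $\langle x, z\rangle = 1$, giving an $R$-basis $(x,z)$ of $U$ that reduces to $(\ol{x},\ol{z})$.

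For the third step, I analyze $\ol{C}\ol{g}_0$ in the basis $(\ol{x},\ol{z})$. Because it fixes $\ol{x}$ and has determinant $-1$, its matrix is forced to be of the form $\mat{1}{\ol{a}}{0}{-1}$ for a uniquely determined $\ol{a} \in R_0$; note that every matrix of this shape automatically squares to the identity, so the admissibility of $\ol{g}_0$ imposes no further condition on $\ol{a}$. I then lift $\ol{a}$ to some $a \in R$, let $M$ be the endomorphism of $U$ whose matrix in the basis $(x,z)$ is $\mat{1}{a}{0}{-1}$, and set $g = C^{-1}M$. Then $\det g = 1$ so $g \in \SL(U)$; the equality $Cg = M$ together with $M^2 = 1$ shows that $g$ is admissible; and $Cgx = x$ by construction. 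Moreover $g$ reduces modulo the kernel of $R \to R_0$ to $\ol{g}_0 \in G_0$, so $g$ lies in $G$ since $G$ is the preimage of $G_0$ in $\SL(U)$. Hence $x$ represents a parabolic vertex of $\wt{\Xi}$ lifting $\ol{x}$.

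The one place where things could in principle go wrong is preserving admissibility under lifting: a random $\SL(U)$-lift of $\ol{g}_0$ need not satisfy $(Cg)^2 = 1$. I sidestep this by constructing $g$ directly from the matrix of $Cg$ rather than lifting $\ol{g}_0$ itself, exploiting the fact that the two conditions ``$Cg$ fixes $x$'' and ``$\det Cg = -1$'' already pin $Cg$ down to an upper-triangular form that is automatically an involution. As a consequence the argument never needs $2$ to be invertible; the surjectivity hypothesis on $\SL_2(R) \to \SL_2(R_0)$ is used only implicitly, through Lemmas~\ref{invimg-1} and~\ref{invimg-2} and the definition of $G$ as the preimage of $G_0$.
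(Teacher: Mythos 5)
Your proof is correct and follows essentially the same route as the paper's: lift $x$ via Lemma~\ref{invimg-1}, lift $z$ via Lemma~\ref{invimg-2}, and then define $g$ by prescribing the matrix of $Cg$ in the basis $(x,z)$ to be $\left(\begin{smallmatrix}1 & a\\ 0 & -1\end{smallmatrix}\right)$ for a lift $a$ of $\ol{a}$. Your explicit remark that this upper-triangular form is automatically an involution (so admissibility is preserved for free) is a point the paper leaves implicit, but the construction is identical.
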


\begin{proof}
Let $\ol{x}$ be a parabolic vertex of $\wt{\Xi}_0$ and let $x$ be a lift of $\ol{x}$ to a basis vector of $U$
(possible by Lemma~\ref{invimg-1}).
Let $\ol{g} \in G_0$ be such that $C_0 \ol{g} \ol{x}=\ol{x}$.  Pick $\ol{z} \in U_0$ with $\langle \ol{x}, \ol{z}
\rangle=1$, so that $C_0 \ol{g} \ol{z}=-\ol{z}+\ol{a} \ol{x}$ for some $\ol{a} \in R_0$.  Let $z$ be a lift of $\ol{z}$
satisfying $\langle x, z \rangle=1$ (possible by Lemma~\ref{invimg-2}) and let $a$ be a lift of $\ol{a}$.  Let $g$ be
the endomorphism of $U$ defined by $gx=Cx$ and $gz=C(-z+ax)$.  Then $g$ has determinant 1 and reduces to $\ol{g}$, and
thus belongs to $G$.  Thus $x$ is parabolic, completing the proof.
\end{proof}

\begin{lemma}
\label{invimg-4}
Two parabolic vertices in $U$ are equivalent if and only if their images in $U_0$ are.
\end{lemma}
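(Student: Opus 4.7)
The forward implication is immediate: if $x' = g x$ in $U/\{\pm 1\}$ for some $g \in G$, then applying the reduction $U \to U_0$ and using that $g$ maps to an element of $G_0$ gives the corresponding equivalence in $U_0/\{\pm 1\}$. The interesting content is the converse, and my plan is to produce, after an initial ``straightening'' move, an equivalence coming from $\ker(\SL(U) \to \SL(U_0))$, which automatically lies in $G$ because $1 \in G_0$.

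So suppose $x, x' \in U$ are parabolic vertices whose images are equivalent in $U_0/\{\pm 1\}$, say $\ol{g}_0\, \ol{x} = \pm \ol{x}'$ for some $\ol{g}_0 \in G_0$. By the surjectivity hypothesis, I lift $\ol{g}_0$ to $g \in G$. Replacing $x$ by $g x$ (which does not change its $G$-orbit) and replacing $x'$ by $-x'$ if necessary (which does not change it in $U/\{\pm 1\}$), I reduce to the case $\ol{x} = \ol{x}'$ in $U_0$. It now suffices to produce $h \in G$ with $h x = x'$.

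To construct $h$, I use the two lifting lemmas. Since $x$ is a basis vector, choose $z \in U$ with $\langle x, z \rangle = 1$. Reducing gives $\langle \ol{x}', \ol{z} \rangle = \langle \ol{x}, \ol{z} \rangle = 1$, so Lemma~\ref{invimg-2} (applied to the basis vector $x'$ lifting $\ol{x}' = \ol{x}$ and to the element $\ol{z}$) yields a lift $z'$ of $\ol{z}$ with $\langle x', z' \rangle = 1$. Define $h \in \End(U)$ by $h x = x'$ and $h z = z'$. Then $\det h = \langle h x, h z \rangle / \langle x, z \rangle = 1$, so $h \in \SL(U)$, and $\ol{h}$ fixes both $\ol{x}$ and $\ol{z}$, which span $U_0$, so $\ol{h} = 1 \in G_0$. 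Therefore $h$ lies in $\ker(\SL(U) \to \SL(U_0)) \subseteq G$ and sends $x$ to $x'$, as required.

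There is no real obstacle here once Lemmas~\ref{invimg-1} and~\ref{invimg-2} are available: the only potential confusion is bookkeeping around the quotient by $\{\pm 1\}$ and making sure one uses the surjectivity of $\SL_2(R) \to \SL_2(R_0)$ only twice, once to lift $\ol{g}_0$ and once implicitly through Lemma~\ref{invimg-2}. Notice that the argument does not actually use that $x, x'$ are parabolic; it shows more generally that the fiber over any basis-vector orbit in $U_0/\{\pm 1\}$ is a single $G$-orbit in $U/\{\pm 1\}$.
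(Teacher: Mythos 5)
Your proof is correct, and its overall two-step shape --- lift $\ol{g}_0$ to $G$ to reduce to the case $\ol{x}=\ol{x}'$, then exhibit an element of $\ker(\SL(U)\to\SL(U_0))\subseteq G$ carrying $x$ to $x'$ --- is exactly the paper's. The difference lies in how that kernel element is produced. The paper writes $x'=ax+bz$ in a basis $\{x,z\}$ of $U$, observes that $a\mapsto 1$ and $b\mapsto 0$ in $R_0$ and that $a$ and $b$ generate the unit ideal (because $x'$ is a basis vector), picks a Bezout relation $pa-qb=1$, and writes down an explicit determinant-one matrix in the basis $\{x,z\}$ that visibly reduces to the identity and sends $x$ to $x'$. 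You instead invoke Lemma~\ref{invimg-2} to complete $x'$ to a symplectic pair $(x',z')$ lifting $(\ol{x},\ol{z})$, and define $h$ on the basis $\{x,z\}$ by $hx=x'$, $hz=z'$; this is precisely the device the paper uses in the proof of Lemma~\ref{invimg-6}, so your argument unifies the two proofs and replaces the Bezout computation with a second application of the lifting lemmas. Both arguments are complete; yours is marginally cleaner, and your closing observation is accurate --- parabolicity is never used, so the statement really is that the fiber of the map on basis-vector orbits is a single $G$-orbit (the same is true of the paper's version).
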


\begin{proof}
Let $x$ and $y$ be parabolic vertices in $U_0$ and let $\ol{x}=\ol{h} \ol{y}$ for some $\ol{h} \in G_0$.
Let $h \in G$ be a lift of $\ol{h}$.  Replacing $y$ by $hy$, we may
assume that $x$ and $y$ have the same image in $U_0$.  Let $\{x,z\}$ be a basis of $U$ and write $y=ax+bz$.  Note
that $a$ and $b$ map to 1 and 0 in $R_0$.  Since $y$ is a basis vector, $a$ and $b$ generate the unit ideal of $R$,
and so we have an expression $pa-qb=1$ for some $p,q \in R$.  Let $g$ be the endomorphism of $U$ given by the
matrix
\begin{displaymath}
\mat{a}{(1-a)q}{b}{1+p(1-a)}
\end{displaymath}
in the basis $\{x, z\}$.  Then $g$ has determinant 1 and induces the identity map of $U_0$, and thus belongs to $G$.
Since $gx=y$, we see that $x$ and $y$ are equivalent.
\end{proof}

\begin{lemma}
\label{invimg-5}
Every geodesic in $\wt{\Xi}_0$ lifts to one in $\wt{\Xi}$.
\end{lemma}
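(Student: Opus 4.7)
The plan is to lift a representative triple $[\ol{x}, \ol{y}; \ol{z}]$ of the given geodesic coordinate-by-coordinate, using the previous two lemmas to handle the basis-vector/pairing conditions, and then write down by hand the endomorphism witnessing condition (c). The central structural point is that $G$ is \emph{by definition} the full inverse image of $G_0$ inside $\SL(U)$, so once an explicit lift $g$ is shown to have determinant $1$, membership in $G$ is automatic.

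Concretely, I would first choose $w \in \{1, 2\} \subset R$ lifting the weight $\ol{w}$, use Lemma~\ref{invimg-1} to lift $\ol{x}$ to a basis vector $x \in U$, and then use Lemma~\ref{invimg-2} to lift $\ol{z}$ to an element $z \in U$ with $\langle x, z \rangle = 1$. Setting $y := wz - x$, one has $\langle z, y \rangle = -\langle z, x \rangle = 1$, which both forces $y$ to be a basis vector of $U$ and gives conditions (a) and (b) of the definition of a geodesic; moreover $y$ manifestly reduces to $\ol{y}$.

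For condition (c), let $w'$ be the complementary weight and define $g \in \End(U)$ by $g(u) = Cu + w' \langle u, x \rangle Cy$; equivalently $Cg - 1 = w' \langle -, x \rangle y$. By construction $g$ reduces to the unique $\ol{g} \in G_0$ witnessing (c) for $[\ol{x}, \ol{y}; \ol{z}]$. Working in the symplectic basis $\{x, z\}$ and using $y = wz - x$, $ww' = 2$, $\langle x, x \rangle = 0$, and $\det(C) = -1$, a direct $2 \times 2$ computation shows $\det(g) = 1$ on the nose. Hence $g \in \SL(U)$, and since $G$ is the full inverse image of $G_0$, we conclude $g \in G$. The companion identity $Cg + 1 = w' \langle -, y \rangle x$ then follows formally from (a) and (b) via the identity $w'(\langle u, y \rangle x - \langle u, x \rangle y) = 2u$ for all $u \in U$, so condition (c) is verified in its stated form.

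The main obstacle is the determinant calculation: one must check that the ``obvious'' lift $g$ has determinant exactly $1$ in $R$, not merely $1$ modulo the kernel of $R \to R_0$, because otherwise $g$ would fail to lie in $\SL(U)$ and the hypothesis that $G$ is the inverse image of $G_0$ would not apply. The cancellation $ww' = 2$ against $\det(C) = -1$ is exactly what makes this succeed; the rest of the argument is bookkeeping with Lemmas~\ref{invimg-1}--\ref{invimg-2}.
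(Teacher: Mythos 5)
Your proof is correct and takes essentially the same route as the paper's: lift $\ol{x}$ and $\ol{z}$ to $x, z$ with $\langle x, z\rangle = 1$ via Lemmas~\ref{invimg-1} and~\ref{invimg-2}, set $y = wz - x$, and check that the explicit endomorphism $C(1 + w'\langle -, x\rangle y)$ has determinant $1$ and reduces to the witness in $G_0$, hence lies in $G$ because $G$ is the full inverse image. The extra detail you supply (the determinant computation and the verification that $y$ is a basis vector) is exactly what the paper leaves implicit.
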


\begin{proof}
Let $[\ol{x}, \ol{y}; \ol{z}]$ be a geodesic in $\wt{\Xi}_0$ of weight $w$.  Let $x$ and $z$ be basis vectors of $U$
lifting $\ol{x}$ and $\ol{z}$ and satisfying $\langle x, z \rangle=1$ (possible by Lemmas~\ref{invimg-1}
and~\ref{invimg-2}).  Put $y=wz-x$.  The endomorphism
$C(1+w'\langle -, x \rangle y)$ of $U$ has determinant 1 and lifts the endomorphism $C_0(1+w'\langle -, \ol{x} \rangle
\ol{y})$ of $U_0$.  By hypothesis, the latter belongs to $G_0$, and so we find that the former belongs to $G$.  This
shows that $[x, y; z]$ is a geodesic of $\wt{\Xi}$, which completes the proof.
\end{proof}

\begin{lemma}
\label{invimg-6}
Two geodesics in $\wt{\Xi}$ are equivalent if and only if their images in $\wt{\Xi}_0$ are.
\end{lemma}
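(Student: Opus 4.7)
The plan is to prove both directions by direct construction, with all the real content living in the reverse direction.

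The forward direction is immediate: if $g \in G$ carries one geodesic to another, then the image $\ol{g} \in G_0$ carries the reductions in the same way, so the reductions are equivalent in $\wt{\Xi}_0$. For the reverse, suppose geodesics $[x,y;z]$ and $[x',y';z']$ in $\wt{\Xi}$ of weight $w$ have images that are $G_0$-equivalent. Pick $\ol{h} \in G_0$ realizing this equivalence, and use the surjectivity of $\SL(U) \to \SL(U_0)$ (which forces the surjectivity of $G \to G_0$, as noted before Lemma~\ref{invimg-1}) to lift $\ol{h}$ to some $h \in G$. Replacing $[x,y;z]$ by $h \cdot [x,y;z]$, we may assume the two geodesics have \emph{equal} images as elements of $\ms{T}_0/\langle \rho^2 \rangle$. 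Lifting that equality to $\ms{T}_0$ itself, the two reductions are either equal or differ by $\rho^2$, which sends $[a,b;c]$ to $[-a,-b;-c]$. Since $-1 \in G$ and $-1$ acts on $\ms{T}$ exactly as $\rho^2$, we may apply $-1$ to one triple if necessary to arrange that $x \equiv x'$, $y \equiv y'$, and $z \equiv z' \pmod{I}$, where $I = \ker(R \to R_0)$.

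At this point I would define $g$ to be the $R$-linear endomorphism of $U$ determined by $gx = x'$ and $gz = z'$; this is well-posed because $\{x,z\}$ is an $R$-basis of $U$ (from $\langle x,z\rangle = 1$ and non-degeneracy). Then $\det g = \langle gx, gz\rangle = \langle x',z'\rangle = 1$, so $g \in \SL(U)$, and the reduction $\ol{g}$ fixes the basis $\{\ol{x},\ol{z}\}$ of $U_0$, hence $\ol{g} = 1 \in G_0$. Therefore $g$ lies in the preimage $G$ of $G_0$. Since $y = wz - x$ in $\ms{T}$, and the weights coincide (the weight is $\langle x,y\rangle \in \{1,2\}$ and is preserved by reduction because $1 \ne 2$ in $R_0$ by the standing hypothesis that $3$ and $4$ are nonzero in $R_0$), we compute $gy = w gz - gx = wz' - x' = y'$. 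Thus $g$ sends $[x,y;z]$ to $[x',y';z']$, establishing the equivalence in $\wt{\Xi}$.

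The only real obstacle is the $\rho^2$-ambiguity inherent in the definition of a geodesic as an element of $\ms{T}/\langle \rho^2\rangle$: a priori the two reductions need only agree modulo $\rho^2$, so one has to match signs across all three coordinates simultaneously. This is handled in one stroke by the fact that $-1 \in G$ realizes $\rho^2$, after which the construction of $g$ reduces to elementary linear algebra and requires no approximation argument in the kernel $I$. The weight compatibility also merits a brief sanity check, but it is free from the non-vanishing of $2$ in $R_0$.
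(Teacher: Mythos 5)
Your overall strategy is the paper's: lift the $G_0$-equivalence to reduce to the case where the two triples have congruent entries, then define $g$ by $gx=x'$, $gz=z'$, check $\det g=1$ and $\ol{g}=1$ so that $g\in G$, and deduce $gy=y'$ from $y=wz-x$ with matching weights. That part is correct and is exactly the paper's argument. But there is a genuine error in your handling of the $\rho^2$-ambiguity. You assert that $\rho^2$ sends $[a,b;c]$ to $[-a,-b;-c]$ and that $-1\in G$ realizes $\rho^2$. Neither is true: from $\rho([x,y;z])=[z,z-w'x;y]$ one computes $\rho^2([x,y;z])=[y,-x;z-w'x]$, and it is $\rho^4$, not $\rho^2$, that equals $[-x,-y;-z]$ (the paper states this explicitly: $\rho$ has order $8$ and $\rho^4$ is negation). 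Consequently, in the case where the two reductions differ by $\rho^2$ itself (rather than by $\rho^4$ or the identity) in $\ms{T}_0$, applying $-1$ to one triple does not align the entries: you would be trying to match $\ol{x}'$ with $-\ol{x}$ when in fact $\ol{x}'=\ol{y}$, and the construction of $g$ from $gx=x'$, $gz=z'$ would then produce an element reducing to something other than the identity.

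The fix is immediate and is what the paper does: since a geodesic is by definition an element of $\ms{T}/\langle\rho^2\rangle$, you may replace the representing triple $[x,y;z]$ by $\rho^2([x,y;z])$ \emph{upstairs} without changing the geodesic it represents, and after doing so (if necessary) the two reductions agree on the nose in $\ms{T}_0$. No appeal to $-1\in G$ is needed at this step. With that one correction, your argument coincides with the paper's proof; your side remarks on the weight being preserved under reduction are fine.
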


\begin{proof}
Let $[x_1, y_1; z_1]$ and $[x_2, y_2; z_2]$ be two geodesics of $\wt{\Xi}$ whose images in $\wt{\Xi}_0$ are
equivalent.  After possibly apply some power of $\rho^2$, we have an element $\ol{g} \in G_0$ such that
$[\ol{x}_1, \ol{y}_1; \ol{z}_1]=\ol{g} [\ol{x}_2, \ol{y}_2; \ol{z}_2]$ holds in $\ms{T}_0$.  Letting $g$ be a lift
of $\ol{g}$ to $G$
and replacing $[x_2, y_2; z_2]$ with $g[x_2, y_2; z_2]$, we can assume that $x_1$ and $x_2$ reduce to the same
element of $U$, as do $z_1$ and $z_2$.  Now, let $h$ be the endomorphism of $U$ defined by $h(x_1)=x_2$ and
$h(z_1)=z_2$.  Then $h$ induces the identity map of $U_0$, and thus belongs to $G$.  Since $y_1=w_1z_1-x_1$ and
$y_2=w_2z_2-x_2$ and $w_1=w_2$, we find that $h(y_1)=y_2$.  Thus $h[x_1,y_1;z_1]=[x_2,y_2;z_2]$, which shows
that the two geodesics are equivalent.
\end{proof}

\begin{lemma}
\label{invimg-7}
A geodesic in $\wt{\Xi}$ intersects another geodesic if and only if its image in $\wt{\Xi}_0$ does.
\end{lemma}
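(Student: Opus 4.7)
The plan is to reduce both intersection conditions to the membership of a single explicit element $\sigma \in \End(U)$ via Lemma~\ref{intersect}, and then exploit the defining property that $G$ is the full preimage of $G_0$ to conclude.

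Concretely, fix a geodesic $[x,y;z]$ of $\wt{\Xi}$ of weight $w$ (so $w' = 2/w \in \{1,2\}$ is the complementary weight and $y = wz - x$). Define $\sigma \in \End(U)$ as the unique $R$-linear map with $\sigma(x) = -y$ and $\sigma(z) = z - w'y$; this is well defined since $\langle x,z \rangle = 1$ forces $\{x,z\}$ to be an $R$-basis of $U$. By Lemma~\ref{intersect}, the geodesic $[x,y;z]$ intersects another geodesic if and only if $\sigma \in G$. The image $[\ol x, \ol y; \ol z]$ is a geodesic of $\wt{\Xi}_0$ (since the defining element $Cg$ reduces appropriately), and by the same lemma applied in $\wt{\Xi}_0$, it intersects another geodesic if and only if $\ol\sigma \in G_0$.

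The remaining step is routine: in the basis $\{x,z\}$, a short computation using $y = wz - x$ and $ww' = 2$ shows that $\sigma$ has matrix $\mat{1}{w'}{-w}{-1}$, which has determinant $-1 + ww' = 1$. Thus $\sigma \in \SL(U)$ regardless, and since by hypothesis $G$ is precisely the preimage of $G_0$ under $\SL(U) \to \SL(U_0)$, we obtain $\sigma \in G$ if and only if $\ol\sigma \in G_0$. Combining this with the two applications of Lemma~\ref{intersect} completes the proof.

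There is no real obstacle here; the only subtle point is ensuring that the specific $\sigma$ produced by Lemma~\ref{intersect} lies in $\SL(U)$ so that the preimage characterization of $G$ can be invoked, and this is handled by the direct determinant calculation above.
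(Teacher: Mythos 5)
Your proof is correct and follows essentially the same route as the paper: both reduce the intersection condition on each side to membership of the explicit map $\sigma$ (via Lemma~\ref{intersect}) and then use that $G$ is the full preimage of $G_0$. The explicit determinant check that $\sigma\in\SL(U)$ is a small extra verification the paper leaves implicit, and it is carried out correctly.
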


\begin{proof}
Let $[x,y;z]$ be a geodesic of $\wt{\Xi}$ of weight $w$.  By Lemma~\ref{intersect}, $[x,y;z]$ intersects another
geodesic if
and only if the map $\sigma$ of $U$ defined by $\sigma(x)=-y$ and $\sigma(z)=z-w'y$ belongs to $G$.  Similarly,
$[\ol{x},\ol{y};\ol{z}]$ intersects another geodesic if and only if the map $\ol{\sigma}$ of $U_0$ defined by
$\ol{\sigma}(\ol{x})=-\ol{y}$ and $\ol{\sigma}(\ol{y})=\ol{z}-w'\ol{y}$ belongs to $G_0$.  As $\sigma$ reduces to
$\ol{\sigma}$, it follows that $\sigma$ belongs to $G$ if and only if $\ol{\sigma}$ belongs to $G_0$.  This
proves the lemma.
\end{proof}

We can now prove the theorem:

\begin{proof}[Proof of Theorem~\ref{invimg}]
By Lemmas~\ref{invimg-3} and~\ref{invimg-4}, the equivalence classes of parabolic vertices of $\wt{\Xi}$ and
$\wt{\Xi}_0$ are in bijection.  By Lemmas~\ref{invimg-5} and~\ref{invimg-6}, the equivalence classes of geodesics
in $\wt{\Xi}$ and $\wt{\Xi}_0$ are in bijection.  From this and Lemma~\ref{invimg-7}, the equivalences classes of
elliptic points in $\wt{\Xi}$ and $\wt{\Xi}_0$ are in bijection.  Since these bijections are obviously compatible
with how edges are constructed, we find that $\Xi$ and $\Xi_0$ are isomorphic.
\end{proof}

\subsection{The main theorem}

A \emph{cycle} is a connected graph on finitely many vertices in which each vertex has valence 2.  We consider a
single vertex with a self-edge (i.e, a loop) to be a cycle of length 1.  Our main result about $\Xi$ is the
following theorem.  For our ultimate applications, this theorem is in fact not logically necessary; nonetheless,
we feel it is worth including.

\begin{theorem}
\label{xithm}
Suppose $R=\Z/2^r\Z \times R_1$ where $r$ is a non-negative integer and $R_1$ is a ring of odd characteristic.  Then
the graph $\Xi$ is a union of cycles.
\end{theorem}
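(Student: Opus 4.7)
The plan is to verify the two defining properties of a union of cycles: every vertex of $\Xi$ has valence exactly $2$, and every connected component is finite.

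Valence $2$ at elliptic vertices is built into the construction. An elliptic vertex $[p]$ is by definition an unordered pair $\{L,\rho L\}$ of intersecting geodesics. Each of these geodesics through $p$ contributes two edges to $p$ in $\wt{\Xi}$, one to each of its parabolic endpoints; but Lemma~\ref{intersect} supplies an element $\sigma\in G$ that fixes $p$, preserves the geodesic, and swaps the two parabolic endpoints, so after passing to $\Xi=\wt{\Xi}/G$ these two edges coincide. Each of the two geodesics in the pair thus contributes one edge at $[p]$, giving valence $2$.

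For valence $2$ at a parabolic $[x]$, pick a complementary vector $z$ with $\langle x,z\rangle=1$. Every admissible $g\in\SL(U)$ with $Cgx=x$ has $Cg=\left(\begin{smallmatrix}1&\alpha\\0&-1\end{smallmatrix}\right)$ in the basis $\{x,z\}$, so such elements are parametrized by $\alpha\in R$. A direct computation yields the relation $g_\alpha\cdot h_{\alpha'-\alpha}=g_{\alpha'}$, where $h_\beta$ is the ``$z$-translation'' $x\mapsto x,\,z\mapsto z+\beta x$, and where $g_\alpha$ denotes the admissible element with parameter $\alpha$. Consequently the set $A_x$ of admissible $g\in G$ fixing $x$ is a coset of the translation subgroup $T=\{\beta\in R:h_\beta\in G\}\subset R$, and twisted conjugation by $h_\beta\in G_x$ sends $g_\alpha$ to $g_{\alpha-2\beta}$. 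Unpacking condition~(c) shows that the $-1$-eigenvector of $Cg_\alpha$ can be taken to be $(-\alpha/2,1)$ when $\alpha$ is even, and $(-\alpha,2)$ when $\alpha$ is odd, giving respectively a weight-$1$ or weight-$2$ geodesic through $x$. Thus the weight $w\in\{1,2\}$ is determined by the parity of $\alpha$ modulo $2R$.

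The hypothesis $R=\Z/2^r\Z\times R_1$ then controls the valence. In the $R_1$ factor, $2$ is a unit, so a single $g_\alpha$ yields \emph{both} a weight-$1$ and a weight-$2$ geodesic (different choices of representative eigenvector), giving valence $2$ directly, with uniqueness within each weight following by a short lifting argument in the style of Lemma~\ref{invimg-4}. In the $\Z/2^r\Z$ factor, each admissible element gives exactly one weight, and one must exhibit admissible elements of both parities of $\alpha$ in $A_x$. The existence of such a partner for a given $g_0$ is the main technical obstacle: the natural candidate is $g_0\cdot h$ for some $h\in G_x$ of odd translation parameter, which is automatically admissible by a direct matrix computation, so the task reduces to showing that $T$ is nontrivial in the $\Z/2^r\Z$ component. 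I expect this to follow from the closure of $G$ under $c$-conjugation together with $-1\in G$ and the parabolicity of $[x]$, mirroring the Fuchsian phenomenon that the stabilizer of a cusp generates both parities of translation through its admissible iterates.

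Finiteness of components is automatic in the intended applications, where $R$ is finite and so $\Xi$ itself is finite. In general, once valence $2$ is established, bi-infinite paths can be excluded by a standard finiteness-of-orbits argument on the parabolic vertex set.
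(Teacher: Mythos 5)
There is a genuine gap, and it is concentrated in your treatment of parabolic vertices. Your strategy for ``valence at least two'' reduces to exhibiting admissible elements of \emph{both} parities of $\alpha$ in $A_x$ (equivalently, showing $T$ has nontrivial image in $\Z/2^r\Z$), and you leave even that reduction as something you ``expect to follow.'' But the reduction itself rests on a false premise: when $r\ge 1$ a parabolic vertex need not carry one edge of each weight. In $X^-(N)$ with $N$ even (\S \ref{ss:ex2}) every vertex lies on two weight-two edges and no weight-one edge, and in $X_{\spl}(2^r)$ the vertices $u_a, u_b$ are joined by two weight-two edges while $u_{1+t}$ carries a weight-two loop. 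In these examples $A_x$ contains only one parity, so the partner element you are looking for does not exist, yet the theorem holds. What you have missed is that a single admissible $g$ with even parameter determines its weight-one eigenvector $y$ only up to the $2$-torsion element $t$, so it already yields \emph{two} candidate geodesics $[x,y;z]$ and $[x,y+tx;z+tx]$ of the same weight; the actual argument (Lemma~\ref{xithm-2}) runs an induction on the minimal $i$ for which $[x,y+t_ix;z+t_ix]$ is a geodesic, and either finds an inequivalent geodesic of the same weight or bottoms out and manufactures one of the complementary weight. Your coset/translation picture is a reasonable start but does not see this phenomenon.

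Two further points. The ``at most two'' direction is essentially absent from your proposal for the $2$-part of $R$: in the paper this is Lemma~\ref{xithm-3} (any three geodesics through $x$, two are $G$-equivalent), a four-case computation that is the technical heart of the proof, together with Lemma~\ref{xithm-4} to handle loops; a ``lifting argument in the style of Lemma~\ref{invimg-4}'' only addresses the odd part. Finally, finiteness of components is not automatic: $R_1$ may be infinite (only its characteristic is assumed odd), so the parabolic vertex set of $\Xi$ can be infinite and a ``finiteness-of-orbits'' argument has nothing to act on. The paper instead observes that every edge neighboring $\pi([x,y;z])$ is represented by a geodesic with components in the $\Z$-submodule $U'$ generated by $x,y,z$, which is finite because $R$ has finite characteristic. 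Your handling of elliptic vertices, and the observation that in odd characteristic one admissible element produces geodesics of both weights, do agree with the paper.
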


By Theorem~\ref{invimg}, it suffices to prove the theorem when $r \ge 1$.  We thus make this assumption, to
streamline some arguments.  In the following section, we give a shorter proof when $r=0$ that yields a stronger
result.  We let $t_i$ be the element $(2^i, 0)$ of $R$, and put $t=t_{r-1}$.  Thus $t$ is the unique non-zero
2-torsion element of $R$.  We let $p:R \to \Z/2^r\Z$ and $\pi:\wt{\Xi} \to \Xi$ be the projection maps.  We proceed
with several lemmas.

\begin{lemma}
\label{xithm-1}
Every parabolic vertex belongs to a geodesic.
\end{lemma}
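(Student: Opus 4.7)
The plan is to produce the geodesic containing $x$ explicitly, using the very element $g_0 \in G$ that witnesses parabolicity of $x$ (so $g_0$ is admissible and $Cg_0 x = x$); no new element of $G$ needs to be constructed.

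First, I choose an auxiliary basis vector $p \in U$ with $\langle x, p \rangle = 1$, so that $\{x, p\}$ is a symplectic basis. Since $g_0$ is admissible, $(Cg_0)^2 = 1$; combined with $\det(Cg_0) = -1$ this forces $\tr(Cg_0) = 0$, and together with $Cg_0 x = x$ the matrix of $Cg_0$ in the basis $\{x, p\}$ takes the form
\begin{displaymath}
Cg_0 = \mat{1}{\alpha}{0}{-1}
\end{displaymath}
for some $\alpha \in R$. Replacing $p$ by $p + cx$ preserves $\langle x, p \rangle = 1$ and alters $\alpha$ to $\alpha + 2c$, so $\alpha$ is well-defined modulo $2R$. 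The hypothesis $R = \Z/2^r\Z \times R_1$ with $R_1$ of odd characteristic gives $R/2R \cong \Z/2\Z$, so after adjusting $p$ I may assume $\alpha \in \{0,1\}$.

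Next I exhibit the geodesic in each case. When $\alpha = 1$, I take $[x,y;z] := [x,\, 2p - x;\, p]$, of weight $w = 2$ and complementary weight $w' = 1$. When $\alpha = 0$, I take $[x,y;z] := [x,\, p;\, x + p]$, of weight $w = 1$ and complementary weight $w' = 2$. Conditions (a) and (b) of \S\ref{ss:xi} are immediate from $\langle x, p \rangle = 1$ and the defining formulas, and the third coordinate $z$ is manifestly a basis vector. For condition (c), I verify directly that $g = g_0$ itself satisfies the two identities $Cg - 1 = w' \langle -, x \rangle y$ and $Cg + 1 = w' \langle -, y \rangle x$; this amounts to four computations, evaluating both sides on $x$ and on $p$, each of which drops out from the value of $\alpha$ and the symplectic pairing.

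The one point requiring thought is recognizing that $g_0$ itself certifies both candidate geodesics, so there is no existence question for a new group element to resolve. The rest is bookkeeping, made possible by the hypothesis that forces $R/2R = \Z/2\Z$ and hence only two cases to handle.
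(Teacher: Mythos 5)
Your proof is correct and follows essentially the same route as the paper's: both pick an auxiliary vector pairing to $1$ with $x$, read off the off-diagonal entry of $Cg_0$ modulo $2R$, and then write down a weight-one or weight-two geodesic certified by $g_0$ itself (the paper simply folds your normalization of $\alpha$ into its formulas $y=u-bx$, resp.\ $y=2u-ax$, rather than adjusting the auxiliary vector first). The only cosmetic caveat is that $R/2R\cong\Z/2\Z$ requires $r\ge 1$, which is the standing assumption in that section of the paper.
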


\begin{proof}
Let $x \in U$ be a basis vector and let $g \in G$ be an admissible element such that $Cgx=x$.  Let $u \in U$ be
such that $\langle x, u \rangle=1$ and write $Cgu=-u+ax$ for some $a \in R$.  First suppose that $a$ belongs to $2R$,
and write $a=2b$.  Put $y=u-bx$ and $z=x+y$.  Then $[x,y;z]$ is a weight one geodesic.  Now suppose that $a$ belongs
to $1+2R$, and write $a=2b+1$.  Put $y=2u-ax$ and $z=u-bx$.  Then $[x,y;z]$ is a weight two geodesic.
\end{proof}

\begin{lemma}
\label{xithm-2}
Every parabolic vertex of $\Xi$ has valence at least two.
\end{lemma}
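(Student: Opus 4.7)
Given the parabolic vertex $x$, Lemma~\ref{xithm-1} yields a geodesic $[x, y; z]$ of some weight $w$ through $x$, with an associated admissible element $g \in G$ satisfying $Cg - 1 = w'\langle -, x\rangle y$. The plan is to produce a second geodesic through $x$ lying in a different $G$-orbit, thereby witnessing valence at least two at the class of $x$ in $\Xi$.

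The main tool is the unique non-zero $2$-torsion element $t = (2^{r-1}, 0) \in R$, which exists because $r \ge 1$ and which satisfies $2t = 0$. In the weight-one case ($w' = 2$), I consider the triple $[x, y + tx; z + tx]$; a direct check of conditions (a) and (b) is routine, and the formula defining condition (c) is preserved under $y \mapsto y + tx$ since the extra term $2t\langle -, x\rangle x$ vanishes. Thus the same witness $g \in G$ produces a second geodesic. In the weight-two case ($w' = 1$), the natural analogue is the triple with $y' = (1+t) y$: this remains a basis vector because $1+t$ is a unit in $R$ (for $r\ge 2$; the case $r=1$ requires a separate argument involving the $R_1$-factor), it satisfies $\langle x, y'\rangle = (1+t) \cdot 2 = 2$, and the corresponding witness $g'$ differs from $g$ by an explicit transvection fixing $x$, which one shows lies in $G$ by combining $g$ with suitable admissible elements of $G$ that fix $x$.

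With two geodesics in hand, the final step is to verify they represent distinct edges in $\Xi$, i.e.\ that they lie in different $G$-orbits modulo $\rho^2$. A candidate $\gamma \in G$ identifying them would have to fix $x$ and send $y$ to $y + tx$ (or, via $\rho^2$, send $y \mapsto x$ and $x \mapsto -y - tx$), forcing the transvection $\bigl(\begin{smallmatrix}1 & t \\ 0 & 1\end{smallmatrix}\bigr)$ (or a closely related element) to lie in $G$ when expressed in the basis $\{x, y\}$. If no such element of $G$ exists, the two geodesics are genuinely inequivalent and we obtain the desired second edge. If such an element does exist, then $G$ contains an admissible element fixing $x$ whose parameter has parity opposite to that of $g$, so by the proof of Lemma~\ref{xithm-1} we instead obtain a geodesic through $x$ of the complementary weight $w'$; since weight is a $G$-invariant of geodesics, this yields a distinct edge and again valence at least two.

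The main obstacle is the non-equivalence verification together with the weight-two case, where the associated witness $g'$ must be controlled; these require a careful analysis of the possible $G$-elements that can simultaneously fix $x$ and move $y$, organized by how the stabilizer of $x$ in $G$ meets $2R$. In the borderline cases where every candidate identification actually succeeds, the original geodesic closes up into a loop at $x$, which by definition contributes valence two.
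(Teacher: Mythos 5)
Your overall skeleton --- perturb the given geodesic by the $2$-torsion element $t$, then either the perturbed geodesic is inequivalent to the original (done) or the identifying element forces a geodesic of the other weight --- is the right family of ideas, and your weight-one candidate $[x,y+tx;z+tx]$ with the same witness $g$ is exactly the paper's starting point. But two steps fail as stated. First, the weight-two base case: with $y'=(1+t)y$ the required witness $g'$ satisfies $Cg'-1=(1+t)(Cg-1)$, and one computes $g^{-1}g'=1-t\langle -,x\rangle y$, i.e.\ the transvection fixing $x$ and sending $z\mapsto z-tx$. There is no reason for this element to lie in $G$, and your remark that ``one shows [it] lies in $G$'' is precisely the missing content; in general it is false. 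The paper avoids the issue entirely by perturbing $z$ rather than $y$: the triple $[x,y;z+tx]$ satisfies (a)--(c) with the \emph{same} witness $g$, since condition (c) involves only $x$ and $y$.

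Second, and more seriously, the second branch of your dichotomy is wrong for $r\ge 2$. If $h\in G$ fixes $x$ and sends $y$ to $y+tx$, then composing $h$ with the witness changes the parameter $a$ (in the sense of the proof of Lemma~\ref{xithm-1}, where $Cgu=-u+ax$) by $t=2^{r-1}$, which is \emph{even} once $r\ge 2$: the parity does not flip, and the resulting new geodesic (of the form $[x,\,y+t_{r-2}x;\,\cdot\,]$) has the \emph{same} weight as the old one and is not obviously inequivalent to it. The weight only flips when one reaches the transvection by $t_0=(1,0)$, which is odd. Closing this gap requires the descent the paper runs: choose $i$ minimal so that $[x,\,y+t_ix;\,\cdot\,]$ is a geodesic, show that an identification of it with the original geodesic would produce a geodesic at level $i-1$ (contradicting minimality) unless $i=0$, and only at $i=0$ extract the complementary-weight geodesic. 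Without this induction your argument terminates after one step holding a geodesic you cannot distinguish from the original. (You also only parenthetically acknowledge that the identifying element might swap $x$ and $y+t_ix$ rather than fix $x$; the paper disposes of this at the outset by reducing to the case where either $x\not\sim y$, or $\pi(x)$ lies on a loop, or the two are swapped by a specific $\sigma$ with $\sigma(x)=-y$, $\sigma(y)=x$, which can then be composed away.)
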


\begin{proof}
Let $x$ be a parabolic vertex of $\wt{\Xi}$, and let $[x,y;z]$ be a geodesic to which $x$ belongs (which exists
by Lemma~\ref{xithm-1}).  If $x$ is
equivalent to $y$ and $[x,y;z]$ does not contain an elliptic vertex, then $\pi(x)$ is contained in a loop and thus
has valence at least two.  We may thus assume that either $x$ and $y$ are inequivalent, or that they are equivalent by
an element $\sigma$ satisfying $\sigma(x)=-y$ and $\sigma(y)=x$.  We must produce a geodesic inequivalent to
$[x,y;z]$ which contains $x$.

{\bf Case 1:} $[x,y;z]$ has weight one.  Observe that $[x,y+tx;z+tx]$ is a geodesic.  We can therefore pick $i \ge 0$
minimal so that $[x,y+t_ix;z+t_ix]$ is a geodesic.  Now, if $[x,y+t_ix;z+t_ix]$ is inequivalent to $[x,y;z]$ then
we are done.  Thus assume the two are equivalent, and let $h \in G$ be such that $h[x,y;z]=[x,y+t_ix;z+t_ix]$.
Now, either $hx=\pm x$ and $hy=\pm(y+t_ix)$ or else $hx=\pm(y+t_i x)$ and $hy=x$.  In the latter case, $\sigma$ is
available, and replacing $h$ with $h \sigma$ moves us to the first case.  We may thus assume we are in the first
case.  Replacing $h$ with $-h$, if necessary, we can assume that $hx=x$ and $hy=y+t_ix$.  Let $g \in G$ be such
that $Cgx=x$ and $Cg(y+t_ix)=-(y+t_ix)$.  We have $Cghx=x$ and $Cghy=-y-t_ix$.  If $i>0$ then we have
$Cgh(y+t_{i-1}x)=-(y+t_{i-1}x)$, which shows that $[x,y+t_{i-1}x;z+t_{i-1}x]$ is a geodesic, contradicting the
minimality of $i$.  Thus we have $i=0$.  Put $y'=2y+t_0x$ and $z'=y+ax$, where $2a=1+t_0$.  Then $[x,y';z']$ forms
a geodesic of weight two, and is thus inequivalent to $[x,y;z]$.

{\bf Case 2:} $[x,y;z]$ has weight two.  Observe that $[x,y;z+tx]$ is a geodesic.  We can therefore pick $i \ge 1$
minimal so that $[x,y+t_ix;z+t_{i-1}x]$ is a geodesic (the base case corresponding to $i=r$; note that $t_r=0$).  If
$[x,y+t_ix;z+t_{i-1}x]$ is inequivalent to $[x,y;z]$ we are done.  Thus assume the two are equivalent and let
$h \in G$ be such that $h[x,y;z]=[x,y+t_ix;z+t_{i-1}x]$.  As in the previous case, after possibly replacing $h$
with $\pm h$ or $\pm h \sigma$, we can assume that $hx=x$ and $hy=y+t_ix$.  Of course, $hz=z+t_{i-1}x$.  Let $g \in G$
be such that $Cg-1=\langle -, x \rangle (y+t_ix)$.  Suppose $i \ge 2$.  Then $Cgh-1=\langle -, x \rangle (y+t_{i-1}x)$,
which shows that $[x,y+t_{i-1} x; z+t_{i-2} x]$ is a geodesic, contradicting the minimality of $i$.  Now suppose that
$i=1$.  Let $a \in R$ be such that $2a=1-t_0$, and put $y'=z-ax$.  Then $[x,y';z+y']$ is a weight one geodesic (note
that $Cgh-1=2 \langle -, x \rangle y'$), and thus inequivalent to $[x,y;z]$.
\end{proof}

\begin{lemma}
\label{xithm-3}
Let $[x,y_1;z_1]$, $[x,y_2;z_2]$ and $[x,y_3;z_3]$ be three geodesics in $\wt{\Xi}$.  Then there exists $g \in G$
such that $gx=x$ and $gy_i=y_j$ and $gz_i=z_j$ for some $i \ne j$.
\end{lemma}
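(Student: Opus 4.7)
The plan is to strengthen the conclusion to the statement that there are at most two $G_x$-orbits of geodesics through $x$; the lemma then follows by pigeonhole on the three given geodesics. I would begin by choosing a basis $\{x, z_*\}$ of $U$ with $\langle x, z_* \rangle = 1$, so that each geodesic $[x, y; z]$ through $x$ is parameterized by its weight $w \in \{1, 2\}$ and the coefficient $a \in R$ with $z = z_* + ax$. A direct computation in this basis gives $Cg_{w,a} = \mat{1}{w'-2a}{0}{-1}$, where $g_{w,a}$ is the corresponding admissible element. The stabilizer $G_x$ consists of the elements $h_b = \mat{1}{b}{0}{1}$ for $b$ in the additive subgroup $B := \{b \in R : h_b \in G\} \subseteq R$, and these act on parameters by $(w, a) \mapsto (w, a+b)$.

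Next I would study $A_w := \{a \in R : g_{w,a} \in G\}$. The identity $g_{w, a+b} = g_{w, a} \cdot h_{-2b}$ (immediate from the matrix formula above) shows that $A_w$ is either empty or a single coset of $B' := \{b \in R : 2b \in B\}$. Decomposing $B = B_2 \oplus B_1$ under the splitting $R = \Z/2^r\Z \oplus R_1$, which is available since $2$ is a unit in $R_1$, one checks that $B_1' = B_1$ and $|B'/B| = |B_2'/B_2| \in \{1, 2\}$ because $B_2$ is a subgroup of a cyclic $2$-group. Hence $|A_w/B| \in \{0, |B'/B|\}$.

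The decisive observation is that if both $A_1$ and $A_2$ are nonempty then $|B'/B| = 1$. For $a_1 \in A_1$ and $a_2 \in A_2$, multiplying the two $Cg$-matrices gives $g_{1, a_1}^{-1} g_{2, a_2} = (Cg_{1, a_1})(Cg_{2, a_2}) = h_b$ with $b = -1 + 2(a_1 - a_2)$. Since $b \in B$ we have $-1 \in 2R + B$; projecting to $R_2$ yields $1 \in 2R_2 + B_2$, forcing $B_2$ to contain an odd element of $\Z/2^r\Z$. Therefore $B_2 = R_2$, which gives $B_2' = B_2$, and so $|B'/B| = 1$.

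Putting this together, the total number of $G_x$-orbits of geodesics through $x$, namely $\sum_w |A_w/B|$, is at most $2$ in every case: either only one weight is represented (giving $\le |B'/B| \le 2$) or both are represented and $|B'/B| = 1$ (giving $1 + 1 = 2$). Any three geodesics through $x$ therefore contain two in the same $G_x$-orbit, which yields the desired $g$. The main technical hurdle is really just the explicit matrix computation of $Cg_{w,a}$ and of the relevant products; once these formulas are in hand, the structural pigeonhole argument is immediate. The degenerate case $2x = 0$, which forces $R = \F_2$ given the basis-vector hypothesis on $x$, introduces further identifications of triples via $\rho^4$ that only shrink the orbit count, so the bound persists.
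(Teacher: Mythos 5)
Your argument is correct, and it takes a genuinely different route from the paper's. The paper splits into four cases according to how many of the three geodesics have weight two, writes everything in coordinates adapted to the first geodesic, and in each case exhibits the required $g$ explicitly as a product $(g_1^{-1}g_2)^{-n}(g_1^{-1}g_3)^{-m}$ by solving a linear congruence such as $2na+2mb=a-b$ in $\Z/2^r\Z$ (using elementary facts like: among three elements of $\Z/2^r\Z$ summing to zero, one is a multiple of twice another). You instead prove the uniform structural statement that the geodesics with first component $x$ fall into at most two orbits under $G_x=\{h_b:b\in B\}$: each nonempty parameter set $A_w$ is a single coset of $B'=\{b:2b\in B\}$, the index $[B':B]$ is $1$ or $2$ because it is computed entirely in the $2$-part of $R$, and the presence of both weights forces $B$ to contain an element with odd $2$-component, whence $[B':B]=1$. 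I checked the key computations ($Cg_{w,a}$, the relation $g_{w,a+b}=g_{w,a}h_{-2b}$, and $g_{1,a_1}^{-1}g_{2,a_2}=h_{-1+2(a_1-a_2)}$) and they are right; note also that the identification of $G_x$ with $\{h_b\}$ and the parameterization $z=z_*+ax$ are both forced by $\langle x,-\rangle$ being nondegenerate on $U/Rx$. Your route is cleaner, explains where the paper's congruence manipulations come from (they are all instances of pigeonhole on $B'/B$), and directly yields the sharper statement, invoked later in the proof of Theorem~\ref{xithm}, that a parabolic vertex lies on at most two classes of geodesics; the paper's version has the mild advantage of producing $g$ explicitly. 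Two cosmetic points, neither affecting correctness: the count $\sum_w|A_w/B|$ is only an upper bound for the number of orbits of actual geodesics, since the geodesic parameters form a $B$-stable (hence coset-respecting) subset of $A_w$ cut out by the extra condition that $y$ and $z$ be basis vectors; and the closing aside about $2x=0$ is unnecessary, as nothing in your main argument breaks when $R=\F_2$ --- your decisive observation already covers the only problematic situation there, namely $B=0$ with both weights present.
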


\begin{proof}
Let $w_i=\langle x, y_i \rangle$ and let $k$ be the number of indices $i \in \{1,2,3\}$ for which $w_i=2$.  We
permute the $y_i$ and $z_i$ so that the weight one geodesics appear first.  We proceed
in four cases, depending on the value of $k$.  We let $g_i \in G$ be such that $Cg_ix-1=w_i \langle -, x \rangle y_i$.

\textbf{Case 0:}
$k=0$.  With respect to the basis $\{x,y_1\}$ of $U$ we have
\begin{displaymath}
x=\vec{1}{0}, \quad y_1=\vec{0}{1}, \quad y_2=\vec{a}{1}, \quad y_3=\vec{b}{1}
\end{displaymath}
for some $a$ and $b$ in $R$, and
\begin{displaymath}
g_1^{-1}g_2=\mat{1}{-2a}{}{1}, \qquad g_1^{-1} g_3=\mat{1}{-2b}{}{1}.
\end{displaymath}
Note that $z_i=x+y_i$ for each $i$.  We must show that one of the matrices
\begin{equation}
\label{matlist}
\mat{1}{a}{}{1}, \qquad \mat{1}{b}{}{1}, \qquad \mat{1}{a-b}{}{1}
\end{equation}
belongs to $G$.  If $p(a)=0$ then there is an integer $n$ such that $na=\tfrac{1}{2} a$;
the matrix $(g_1^{-1} g_2)^{-n}$ is then the first one in \eqref{matlist}.  A similar argument works if $p(b)=0$.
We may thus assume that both $p(a)$ and $p(b)$ are non-zero.  We now proceed in two cases depending on how $p(a)$
and $p(b)$ compare.

First, suppose that $p(a)$ and $p(b)$ generate the same ideal of $\Z/2^r\Z$.  We can find $n, m \in \Z$ such that
$2na+2mb=a-b$.  To do this, first solve in $R/2^rR=\Z/2^r\Z$ using the fact that $p(a)-p(b)$ belongs to the ideal
generated by $2p(a)$, then solve in $R_1$ using the fact that 2 is invertible, and finally use the Chinese remainder
theorem.  The matrix $(g_1^{-1} g_2)^{-n} (g_1^{-1} g_3)^{-m}$ is then the third in \eqref{matlist}.

Now suppose that $p(a)$ and $p(b)$ generate different ideals of $\Z/2^r \Z$ --- say $p(b)$ belongs to the one generated
by $2p(a)$.  We can then find $n, m \in \Z$ such that $2na+2mb=-b$, again using the Chinese remainder theorem.
The matrix $(g_1^{-1} g_2)^{-n} (g_1^{-1} g_3)^{-m}$ is then the second in
\eqref{matlist}.

\textbf{Case 1:}
$k=1$.  With respect to the basis $\{x,y_1\}$ of $U$ we have
\begin{displaymath}
x=\vec{1}{0}, \quad y_1=\vec{0}{1}, \quad y_2=\vec{a}{1}, \quad y_3=\vec{b}{2}
\end{displaymath}
for some $a$ and $b$ in $R$, and
\begin{displaymath}
g_1^{-1}g_2=\mat{1}{-2a}{}{1}, \qquad g_1^{-1} g_3=\mat{1}{-b}{}{1}.
\end{displaymath}
Note that $z_i=x+y_i$ for $i \ne 3$.  We must show that the matrix
\begin{displaymath}
\mat{1}{a}{}{1}
\end{displaymath}
belongs to $G$.  Since $y_3$ is a basis vector, we find that $p(b)$ is a unit of $\Z/2^r\Z$; in particular,
$p(a)$ is a multiple of $p(b)$.  We can therefore find $n, m \in \Z$ such that $2na+mb=a$, and so
$(g_1^{-1} g_2)^{-n} (g_1^{-1} g_3)^{-m}$ is the required matrix.

\textbf{Case 2:}
$k=2$.  With respect to the basis $\{x,y_1\}$ of $U$ we have
\begin{displaymath}
y_2=\vec{a}{2}, \qquad z_2=\vec{a'}{1}, \quad y_3=\vec{b}{2}, \qquad z_3=\vec{b'}{1}
\end{displaymath}
for some $a$, $a'$, $b$ and $b'$ in $R$ with $2a'=a+1$ and $2b'=b+1$, and
\begin{displaymath}
g_1^{-1}g_2=\mat{1}{-a}{}{1}, \qquad
g_1^{-1}g_3=\mat{1}{-b}{}{1}.
\end{displaymath}
We must show that
\begin{displaymath}
\mat{1}{a'-b'}{}{1}
\end{displaymath}
belongs to $G$.  Since $y_2$ is a basis vector, $p(a)$ is a unit of $\Z/2^r\Z$, and so $p(a'-b')$ is a multiple of
$p(a)$.  We can therefore find $n,m \in \Z$ with $na-mb=a'-b'$.  The matrix $(g_1^{-1} g_2)^{-n} (g_1^{-1} g_3)^m$
then works.

\textbf{Case 3:}
$k=3$.  With respect to the basis $\{x,z_1\}$ of $U$ we have
\begin{displaymath}
x=\vec{1}{0}, \quad y_i=\vec{a_i}{2}, \qquad z_i=\vec{a'_i}{1}
\end{displaymath}
for $a_i$ and $a'_i$ in $R$ satisfying $2a'_i=a_i+1$ (and, of course, $a'_1=0$), and
\begin{displaymath}
Cg_i=\mat{1}{-a_i}{}{-1}, \qquad g_i^{-1} g_j=\mat{1}{a_i-a_j}{}{1}.
\end{displaymath}
It suffices to show that $G$ contains some matrix of the form
\begin{displaymath}
\mat{1}{a'_i-a'_j}{}{1},
\end{displaymath}
with $i \ne j$.  Now, if $c_1$, $c_2$ and $c_3$ are three elements of $\Z/2^r\Z$ which sum to 0 then $c_i$ is
a multiple of $2c_j$ for some $i \ne j$.  It follows that, after possibly relabeling the indices,
$p(a'_1)-p(a'_3)$ is a multiple of $2(p(a'_1)-p(a'_2))=p(a_1)-p(a_2)$.  We can therefore find $n, m \in \Z$ such
that $n(a_1-a_2)+m(a_1-a_3)=a'_1-a'_3$.  The matrix $(g_1^{-1} g_2)^n (g_1^{-1} g_3)^m$ then works.
\end{proof}

\begin{lemma}
\label{xithm-4}
If $[x, y; z]$ and $[x, y_1; z_1]$ are geodesics of $\wt{\Xi}$, and $x$ and $y$ are equivalent under $G$, then either
$[x, y; z]$ contains an elliptic point or $[x, y; z]$ and $[x, y_1; z_1]$ are equivalent.
\end{lemma}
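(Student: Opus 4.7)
The plan is to proceed by contradiction. Assume that $[x,y;z]$ contains no elliptic point and that the geodesics $[x,y;z]$ and $[x,y_1;z_1]$ are inequivalent under $G$; the goal is to produce the involution $\sigma$ of Lemma~\ref{intersect} (satisfying $\sigma x=-y$, $\sigma y=x$, $\sigma z=z-w'y$) as an element of $G$, which would force $[x,y;z]$ to contain an elliptic point and so contradict our assumption.

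Since $x$ and $y$ are equivalent under $G$, fix $h\in G$ with $hy=x$. Following the construction used in the proof of Lemma~\ref{xithm-2}, introduce a third geodesic at $x$ by a $2$-torsion twist: $[x,y+tx;z+tx]$ in the weight-one case and $[x,y;z+tx]$ in the weight-two case. Each of these triples is directly verified to be a geodesic associated to the same element $g\in G$ as $[x,y;z]$, using only the relation $2t=0$.

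Apply Lemma~\ref{xithm-3} to the three geodesics $[x,y;z]$, $[x,y_1;z_1]$, and the third geodesic just constructed. This produces an element $k\in G$ with $kx=x$ that equates two of the three. If $k$ equates $[x,y;z]$ and $[x,y_1;z_1]$ directly, the assumed inequivalence is contradicted. In the remaining cases, $k$ relates the third geodesic to one of the other two and is therefore a non-trivial element of the stabilizer $G_x$ whose action is pinned down by the $t$-shift.

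To conclude, combine $h$ with $k$ (and with the central element $-1 \in G$) to produce $\sigma$. In a symplectic basis adapted to the weight of $[x,y;z]$---the basis $\{x,y\}$ in weight one, $\{x,z\}$ in weight two---the element $h$ takes a restricted matrix form (for instance $\mat{a}{1}{-1}{0}$ in weight one), whose free parameter can be shifted by multiples of $t$ through suitable products with $k$. A direct matrix computation then places $\sigma$ itself in the subgroup generated by $h$, $k$, and $-1$, and hence in $G$. The main obstacle is precisely this last verification: ensuring that the combinations of $h$, $k$, and $-1$ land exactly on $\sigma$ (and not merely nearby) requires a further case analysis on the weight of $[x,y;z]$ and on the $2$-adic structure of the parameter in $h$, paralleling and extending the computations of Lemmas~\ref{xithm-2} and~\ref{xithm-3}.
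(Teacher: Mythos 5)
Your overall strategy---use the element $h$ with $hy=x$ to manufacture a third geodesic at $x$, feed the three geodesics into Lemma~\ref{xithm-3}, and in the residual case extract the involution of Lemma~\ref{intersect}---is the right shape, but the proposal has two genuine gaps, and they are linked to a single wrong choice: the third geodesic. The paper takes the third geodesic to be $h\cdot[x,y;z]$ itself, which (rewritten via $\rho^2$ as a triple with first component $x$) is \emph{by construction equivalent to} $[x,y;z]$. Consequently, whichever pair Lemma~\ref{xithm-3} identifies, one either gets $[x,y;z]\sim[x,y_1;z_1]$ outright, or one gets an element $\sigma=h'h$ with $\sigma y=x$ that preserves the geodesic $[x,y;z]$ in $\ms{T}/\langle\rho^2\rangle$. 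You instead take the third geodesic to be the $2$-torsion twist ($[x,y+tx;z+tx]$ or $[x,y;z+tx]$). That twist is indeed a geodesic attached to the same $g$, but it is \emph{not} known to be equivalent to $[x,y;z]$ --- the entire point of Lemma~\ref{xithm-2} is that it often is not. So in the case where Lemma~\ref{xithm-3} identifies the twist with $[x,y_1;z_1]$, you learn only that $[x,y_1;z_1]$ is equivalent to the twist; this neither contradicts the assumed inequivalence with $[x,y;z]$ nor pins down $k$ ``by the $t$-shift'' (here $ky_1=y+tx$ with $y_1$ arbitrary). Your trichotomy therefore does not close.

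The second gap is the final assembly of $\sigma$ from $h$, $k$ and $-1$, which you leave as a ``direct matrix computation'' and concede is the main obstacle. It is not routine. Writing $hx=\alpha x-y$ (forced by $\langle hx,hy\rangle=w$), the element $h^{-1}\sigma$ you need is the unipotent fixing $y$ and sending $x\mapsto x-\alpha y$; producing it in $G$ requires controlling the arbitrary parameter $\alpha$ modulo the unipotent subgroup of $G$ at hand, and the single translation-by-$t$ element $k$ gives no such control. The paper never performs this computation: in the residual case, the two facts that $\sigma y=x$ and that $\sigma$ fixes the class of $[x,y;z]$ in $\ms{T}/\langle\rho^2\rangle$ are compared in $\ms{T}$ itself via the formula $\rho^2\sigma[x,y;z]=[x,\,x-w\sigma(z);\,\sigma(z-w'x)]$, which immediately forces $\sigma x=-y$ and $\sigma z=z-w'y$, so Lemma~\ref{intersect} applies and $[x,y;z]$ contains an elliptic point. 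Replacing your twisted triple by $h\cdot[x,y;z]$ and running this $\rho^2$ comparison repairs the argument and makes the unfinished matrix verification unnecessary.
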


\begin{proof}
Let $h \in G$ be such that $hy=x$.  Then $[x, y; z]$, $h[x,y; z]=[x, hx; hz]$ and $[x, y_1; z_1]$ are three geodesics
containing $x$.  By the Lemma~\ref{xithm-3}, we can find $h' \in G$ fixing $x$ and carrying
one of these three geodesics to another.  Now, if $h'[x,y;z]=[x, y_1;z_1]$ or $h'h[x, y; z]=[x, y_1; z_1]$ then
$[x, y; z]$ and $[x, y_1; z_1]$ are equivalent and we are done.  Thus assume that $h'h[x, y; z]=[x, y; z]$.  We must
show that $[x, y; z]$ contains an elliptic point.

Put $\sigma=h'h$, so that $\sigma(y)=x$.  The identity $\sigma[x,y;z]=[x,y;z]$ only holds in
$\ms{T}/\langle \rho^2 \rangle$.  In $\ms{T}$ itself, we have $\sigma[x,y;z]=[\sigma(x),x;\sigma(z)]$,
and so $\rho^2 \sigma[x,y;z]=[x,x-w\sigma(z);\sigma(z-w'x)]$, where $w=\langle x, y \rangle$.  Clearly, this must equal
$[x,y;z]$ in $\ms{T}$ itself.  We thus have $x-w\sigma(z)=y$, which yields $\sigma(x)=-y$.  We also have
$\sigma(z-w'x)=z$, which yields $\sigma(z)=z-w'y$.  We conclude from Lemma~\ref{intersect} that $[x,y;z]$ contains an
elliptic point.
\end{proof}

We now complete the proof of the theorem.

\begin{proof}[Proof of Theorem~\ref{xithm}]
Let $x$ be a parabolic vertex of $\wt{\Xi}$.  By Lemma~\ref{xithm-4}, if $\pi(x)$ belongs to a loop (i.e., there
is a geodesic $[x, y; z]$ not containing an elliptic point but with $\pi(x)=\pi(y)$) then any two geodesics containing
$x$ are equivalent.  Thus $\pi(x)$ has valence two.  Suppose then that $\pi(x)$ does not belong to a loop.  By
Lemmas~\ref{xithm-2} and~\ref{xithm-3}, there are exactly two equivalence classes of geodesics containing $x$.
Each contributes one to the valence of $\pi(x)$, and so $\pi(x)$ has valence two.  Thus all parabolic vertices of $\Xi$
have valence two.  By definition, each elliptic vertex of $\wt{\Xi}$ has valence four, being contained in two edges of
each weight.  The edges of equal weight are equivalent (by Lemma~\ref{intersect}), and so each elliptic vertex of
$\Xi$ has valence two.  We have thus shown that all vertices of $\Xi$ have valence two.

It remains to show that the components of $\Xi$ are finite.  Let $[x,y;z]$ be a geodesic of $\wt{\Xi}$ and let $U'$ be
the $\Z$-submodule of $U$ generated by $x$, $y$ and $z$.  Let $e=\pi([x,y;z])$ be the edge of $\Xi$ corresponding
to $[x,y;z]$.  We claim that any edge of $\Xi$ neighboring $e$ is represented by a geodesic whose components belong
to $U'$.  Thus let $e'$ be an edge neighboring $e$.  First suppose that $e$ and $e'$ meet at $\pi(x)$.  Examining the
proof of Lemma~\ref{xithm-2}, we see that we can find a geodesic of $\wt{\Xi}$ which is inequivalent to $[x,y;z]$
but whose components belong to $U'$.  This geodesic must map under $\pi$ to $e'$, which proves the claim in this
case.  If $e$ and $e'$ meet at $\pi(y)$ the argument is similar.  Now suppose that $e$ and $e'$ meet at an elliptic
vertex.  Then $e'$ is the image of the geodesic $\rho[x,y;z]$, whose components do indeed belong to $U'$.  This
completes the proof of the claim.  It now follows from induction that any parabolic vertex of $\Xi$ in the same
component as $x$ is represented by an element of $U'$.  Since $U'$ is a finite set, it follows that there are only
finitely many parabolic vertices in the component containing $x$, and therefore that this component is finite (since
elliptic vertices only connect to parabolic vertices).  This completes the proof.
\end{proof}

\subsection{Theorem~\ref{xithm} in odd characteristic}

We now establish the following strengthening of Theorem~\ref{xithm} in odd characteristic:

\begin{proposition}
\label{xithm-odd}
Suppose that $R$ has odd characteristic.  Then every vertex of $\Xi$ belongs to exactly one edge of each weight
(and to no loops).
\end{proposition}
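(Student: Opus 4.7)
The plan is to leverage the invertibility of $2$ in $R$, which both sets up a bijection between weight-1 and weight-2 geodesics at each parabolic vertex and lets us ``undo'' the factors of $2$ that appear throughout the geometry of $\Xi$. Fix a parabolic vertex $x$ and an admissible $g \in G$ with $Cg \cdot x = x$. Pick $u \in U$ with $\langle x, u \rangle = 1$; if $Cg \cdot u = -u + ax$, replacing $u$ by $u - (a/2)x$ (legal in odd characteristic) arranges $Cg = \operatorname{diag}(1,-1)$ in the basis $\{x,u\}$. The same $g$ then witnesses condition (c) for both $[x, u; x+u]$ and $[x, 2u; x/2 + u]$ (using $2\langle -, x\rangle u = \langle -, x\rangle(2u)$), establishing existence of both weights through $x$ and the bijection $y \leftrightarrow 2y$ between weight-1 and weight-2 geodesics at $x$.

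For uniqueness at $x$, given a second weight-1 geodesic $[x, y_1; z_1]$ with admissible $g_1$ and $z_1 = \alpha x + u$, a direct computation shows that $(Cg)(Cg_1) = g^c g_1 \in G$ is the upper unipotent $\begin{pmatrix} 1 & -2(\alpha-1) \\ 0 & 1 \end{pmatrix}$ in $\{x,u\}$. Raising to the $n$-th power with $2n \equiv 1$ in $R$ gives $\begin{pmatrix} 1 & -(\alpha-1) \\ 0 & 1 \end{pmatrix} \in G$, whose inverse realizes the equivalence of the two geodesics through the stabilizer of $x$. Combined with the weight bijection, this yields exactly one edge of each weight through any parabolic vertex.

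The hard part is the ``no loops'' claim at parabolic vertices. Suppose for contradiction that $[x, u; x+u]$ has no elliptic vertex but $u = hx$ for some $h \in G$; in $\{x,u\}$ this forces $h = \begin{pmatrix} 0 & -1 \\ 1 & \gamma \end{pmatrix}$ for some $\gamma \in R$. The conjugate $\sigma' := Cg \cdot h \cdot Cg = (gh)^c g$ lies in $G$ and computes to $\begin{pmatrix} 0 & 1 \\ -1 & \gamma \end{pmatrix}$. To kill the $\gamma$, I transport the uniqueness machinery to the vertex $u$: the element $g' := h^c g h^{-1}$ is admissible, lies in $G$, and satisfies $Cg' \cdot u = u$ (because $h^{-1} u = x$), with $Cg' = h (Cg) h^{-1} = \begin{pmatrix} -1 & 0 \\ 2\gamma & 1 \end{pmatrix}$. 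Comparing $g'$ to the admissible element $-g$ at $u$, the product $C(-g) \cdot Cg' = \begin{pmatrix} 1 & 0 \\ 2\gamma & 1 \end{pmatrix}$ lies in $G$, and raising to the $n$-th power with $2n \equiv 1$ yields $\begin{pmatrix} 1 & 0 \\ \gamma & 1 \end{pmatrix} \in G$. Left-multiplying $\sigma'$ by the inverse lower unipotent produces the involution $\sigma = \begin{pmatrix} 0 & 1 \\ -1 & 0 \end{pmatrix} \in G$, which by Lemma~\ref{intersect} forces an intersecting geodesic on $[x, u; x+u]$, contradicting the assumption. The weight-2 analogue (with $hx = 2u$ and $h = \begin{pmatrix} 0 & -1/2 \\ 2 & \delta \end{pmatrix}$) runs identically, using $4n \equiv 1$ to clear a factor of $4$ instead of $2$.

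Finally, at an elliptic vertex $p = \{G_1, G_2\}$ the intersecting element $\sigma$ from Lemma~\ref{intersect} already lies in $G$ by hypothesis; a short check using $ww' = 2$ shows $\sigma^2 = -I$ and $\sigma G_i = \rho^6 G_i$ for each $i$, so $\sigma$ fixes $p$ and sends the parabolic endpoint $x$ to $-y$. Hence the two halves of each $G_i$ meeting $p$ are identified in $\Xi$, yielding exactly one edge per weight incident to $[p]$, and no self-edge at $p$ since its other neighbors are parabolic vertices of a different type.
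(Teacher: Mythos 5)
Your proof is correct, and its two main lemmas coincide with the two lemmas the paper uses for this proposition: the normalization $y=u-\tfrac{1}{2}ax$ producing one geodesic of each weight through $x$ (existence), and the observation that for two geodesics of equal weight through $x$ the product $g^{c}g_{1}$ is unipotent with entry divisible by $2$, so that an integer power congruent to $\tfrac{1}{2}$ realizes the equivalence (uniqueness). Where you genuinely add something is the no-loops step. The paper's official route to that point is to invoke Theorem~\ref{xithm} (valence two everywhere, which together with at-least-one-edge-of-each-weight forces exactly one of each and excludes loops); its sketched direct alternative merely asserts that ``one can give a similar direct argument to deal with loops'' without supplying it. Your computation --- showing that $u=hx$ pins $h$ down to $\left(\begin{smallmatrix}0&-1\\ 1&\gamma\end{smallmatrix}\right)$ in the basis $\{x,u\}$, that the twisted conjugate $g'=h^{c}gh^{-1}$ yields a lower unipotent with entry $2\gamma$ whose half-power clears the $\gamma$, and hence that the involution $\sigma$ of Lemma~\ref{intersect} lies in $G$, so the would-be loop actually passes through an elliptic vertex --- is exactly the missing argument, and it makes the proposition fully independent of the long proof of Theorem~\ref{xithm}; the checks are correct (I verified the matrix identities, including $Cg'=\left(\begin{smallmatrix}-1&0\\ 2\gamma&1\end{smallmatrix}\right)$ and $\sigma=\rho^{6}$ on triples, which uses $ww'=2$). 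The elliptic-vertex step coincides with the corresponding paragraph in the proof of Theorem~\ref{xithm}. The only caveat is cosmetic: in the weight-two loop case the unipotent entry to be cleared is $4\delta\mapsto -2\delta$ rather than $4\delta\mapsto\delta$, but since $2$ is a unit either normalization gives the required element of $G$.
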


Note that in any geodesic $[x,y;z]$, the element $z$ is uniquely determined from $x$ and $y$.  Therefore, we drop
the third component of geodesics from notation in this section.  If $x$ and $y$ are basis vectors then $[x,y]$ forms
a geodesic if and only if there exists $g \in G$ with $Cgx=x$ and $Cgy=-y$.  The following lemma, combined with
Theorem~\ref{xithm}, establishes the proposition.

\begin{lemma}
Every parabolic vertex of $\Xi$ belongs to at least one edge of each weight.
\end{lemma}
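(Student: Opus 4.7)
The plan is to imitate the construction in Lemma~\ref{xithm-1}, using that $2$ is a unit in $R$. In the proof of that lemma the argument bifurcates according to whether a certain scalar $a \in R$ lies in $2R$ or in $1+2R$: the first branch yields a weight-$1$ geodesic and the second a weight-$2$ geodesic. In odd characteristic both conditions hold for every $a$, so the two branches apply simultaneously and yield geodesics of each weight through the given parabolic vertex.

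More concretely, I would take an admissible $g \in G$ with $Cgx = x$, pick $u \in U$ satisfying $\langle x, u \rangle = 1$, and define $a \in R$ by $Cgu = -u + ax$. The two branches then produce the triples
\begin{displaymath}
\bigl[x,\; u - \tfrac{a}{2}\, x;\; x + u - \tfrac{a}{2}\, x \bigr] \quad \text{and} \quad \bigl[x,\; 2u - ax;\; u - \tfrac{a-1}{2}\, x \bigr],
\end{displaymath}
of weights $1$ and $2$ respectively. Conditions (a) and (b) in the definition of $\ms{T}$ are immediate consequences of $\langle x, u \rangle = 1$, and condition (c) is witnessed in each case by the same element $g$: one checks on the basis $\{x,u\}$ that $Cg-1$ and $Cg+1$ coincide with the prescribed rank-one endomorphisms $w'\langle -, x\rangle y$ and $w'\langle -, y\rangle x$.

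The only role of the odd-characteristic hypothesis is in dividing by $2$ and in ensuring that the second entries $u - (a/2)x$ and $2u - ax$ remain basis vectors; the latter reduces to $\langle x, y_i\rangle \in \{1,2\}$ being a unit, which holds automatically. I do not anticipate any real obstacle: the argument is essentially Lemma~\ref{xithm-1} run without its parity case split, and the two resulting geodesics descend to edges of weights $1$ and $2$ at the vertex represented by $x$ in $\Xi$.
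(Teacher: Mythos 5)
Your construction is exactly the paper's: take $y=u-\tfrac{1}{2}ax$, so that $Cgy=-y$ gives the weight-one geodesic $[x,y]$ and $Cg(2y)=-(2y)$ gives the weight-two geodesic $[x,2y]$, both witnessed by the same $g$. The proposal is correct and follows essentially the same route as the paper's proof.
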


\begin{proof}
Let $x$ be a parabolic vertex of $\wt{\Xi}$ and pick $g \in G$ admissible so that $Cgx=x$.  Let $u \in U$ be such
that $\langle x, u \rangle=1$, and write $Cgu=-u+ax$ with $a \in R$.  Put $y=u-\tfrac{1}{2} ax$.  Then $Cgy=-y$,
and so $[x, y]$ forms a geodesic of $\wt{\Xi}$, of weight 1.  As $Cg (2y)=-(2y)$, we find that $[x, 2y]$ forms
a geodesic of weight 2.  This completes the proof.
\end{proof}

As the proof of Theorem~\ref{xithm} is quite involved, we offer the following lemma which, together with the above
lemma, directly establishes the proposition --- except for the statement about loops.  (One can give a similar direct
argument to deal with loops.)

\begin{lemma}
Every parabolic vertex of $\Xi$ belongs to at most one edge of each weight.
\end{lemma}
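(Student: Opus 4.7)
The plan is to show that if $[x,y_1;z_1]$ and $[x,y_2;z_2]$ are two geodesics of $\wt{\Xi}$ of the same weight $w$ through a common basis vector $x$, then they lie in the same $G$-orbit, and hence descend to the same edge of $\Xi$.

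Let $g_1, g_2 \in G$ be the admissible elements supplied by condition (c) for the two geodesics, so that $Cg_i - 1 = w' \langle -, x \rangle y_i$. Since $Cg_i x = x$ (part of condition (c)), the product $(Cg_1)(Cg_2) = g_1^{-1} g_2$ lies in $G$ and fixes $x$. In any basis $\{x, u\}$ of $U$ with $\langle x, u \rangle = 1$, such an element is upper-triangular unipotent and determined by a single scalar $\beta \in R$ via $u \mapsto u + \beta x$. The first step is to compute $\beta$ in each weight case. In the weight-one case I work in the basis $\{x, y_1\}$ and write $y_2 = ax + y_1$ (forced by $\langle x, y_2 \rangle = 1$), obtaining $\beta = -2a$ after a short matrix calculation. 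In the weight-two case I work in the basis $\{x, z_1\}$ and write $y_2 = ax + 2z_1$; the relation $2z_2 = x + y_2$ then forces $z_2 = a'x + z_1$ with $2a' = a+1$ (well-defined since $2$ is invertible in odd characteristic), and the analogous calculation yields $\beta = -(a+1) = -2a'$. In each case $\beta$ is divisible by $2$ in $R$.

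In each case I want $h \in G$ with $hx = x$ and $hy_1 = y_2$; such an $h$ will automatically send $z_1$ to $z_2$, because in odd characteristic $z_i$ is uniquely determined from $x$ and $y_i$ by the relation $w z_i = x + y_i$. In the chosen basis, $h$ corresponds to the scalar $a$ (weight one) or $a'$ (weight two), that is, to $-\beta/2$. Since $R$ has odd characteristic $m$, the integer $2$ is a unit modulo $m$, so I can choose $n \in \Z$ with $2n \equiv -1 \pmod{m}$; then $(g_1^{-1} g_2)^n$ has associated scalar $n\beta = -\beta/2$, which is the desired $h$. This element of $G$ carries $[x, y_1; z_1]$ to $[x, y_2; z_2]$ in $\ms{T}$, proving the lemma.

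The main computational obstacle is the explicit form of $(Cg_1)(Cg_2)$ in each of the two weight cases, which is a short matrix calculation in the same spirit as Case 0 and Case 3 of the proof of Lemma~\ref{xithm-3}. Beyond this the argument is entirely formal, with odd characteristic used only to invert $2$ at the very end.
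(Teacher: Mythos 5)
Your proof is correct and follows essentially the same route as the paper's: form $g_1^{-1}g_2=(Cg_1)(Cg_2)$, observe it fixes $x$ and translates by a multiple of $x$ that is twice the discrepancy between the two geodesics, and raise it to a power congruent to $\pm\tfrac{1}{2}$ in $R$. The paper handles both weights in one stroke by working in the basis $\{x,y\}$ and writing $y'=y+ax$, whereas you split into weight cases, but the underlying computation is the same.
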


\begin{proof}
It suffices to show that if $[x,y]$ and $[x,y']$ are geodesics of $\wt{\Xi}$ of equal weight then there exists
$h \in G$ such that $hx=x$ and $hy'=y$.  Let $g \in G$ be such that $Cgx=x$, $Cgy=-y$ and
similarly define $g'$.  Since $\{x, y\}$ forms a basis of $U$, we can write $y'=y+ax$ for some
$a \in R$.  We have $CgCg'=g^{-1} g'$, and so $g^{-1}g'x=x$ and $g^{-1}g'y'=y'-2ax$.  Let $n$ be an integer which is
equal to $-\tfrac{1}{2}$ in $R$.  Then $(g^{-1}g')^nx=x$ while $(g^{-1}g')^ny'=y$.  This completes the proof.
\end{proof}

\subsection{Modular graphs}
\label{ss:modgr}

It will be convenient in what follows to formalize some notions.
A \emph{modular graph} is an undirected graph which is a disjoint union of cycles, and such that each vertex is
assigned one of two types (elliptic or parabolic) and each edge is assigned a weight in $\{ 1, 2 \}$.  We furthermore
require that every edge contain a parabolic vertex.  We allow multiple edges between vertices.  We say that a modular
graph is \emph{regular} if every vertex belongs to exactly one edge of each weight.
Of course, $\Xi_G$ is a modular graph by Theorem~\ref{xithm}, and regular when $R$ has odd characteristic by
Proposition~\ref{xithm-odd}.

For a modular graph $\Xi$, write $V_p(\Xi)$ for the set of parabolic vertices, $V_e(\Xi)$ for the set of elliptic
vertices and $V(\Xi)$ for $V_p(\Xi) \amalg V_e(\Xi)$.  Write $E(\Xi) \subset V(\Xi) \times V(\Xi)$ for the set of
edges, and let $s$ and $t$ (source and target) be the two maps $E(\Xi) \to V(\Xi)$.  We encode the fact that $\Xi$ is
undirected in an involution $\tau:E(\Xi) \to E(\Xi)$ satisfying $t(\tau(e))=s(e)$.  We picture $e$ and $\tau(e)$
as the same edge.

Let $\Xi_1$ and $\Xi_2$ be two modular graphs.  We define a new graph $\Xi$ as follows.  We define $V_p(\Xi)$ to be
$V_p(\Xi_1) \times V_p(\Xi_2)$ and we define $V_e(\Xi)$ to be $V_e(\Xi_1) \times V_e(\Xi_2)$.  The set $E(\Xi)$ is the
subset of $E(\Xi_1) \times E(\Xi_2)$ consisting of those pairs $(e_1, e_2)$ of equal weight which satisfy one of
the following conditions:
\begin{itemize}
\item The vertices $s(e_1)$ and $s(e_2)$ are of the same type, as are $t(e_1)$ and $t(e_2)$.
\item The vertices $s(e_1)$ and $s(e_2)$ are parabolic, while one of $t(e_1)$ and $t(e_2)$ is parabolic and the
other is elliptic.
\end{itemize}
Let $(e_1, e_2)$ be an element of $E(\Xi)$.  In the first case above, we define $\tau(e_1, e_2)$ to be
$(\tau(e_1), \tau(e_2))$, we define $s(e_1, e_2)$ to be $(s(e_1), s(e_2))$ and we define $t(e_1, e_2)$ to be
$(t(e_1), t(e_2))$.  Suppose now we are in the second case, with $t(e_1)$ is parabolic and $t(e_2)$ elliptic.
We define $\tau(e_1, e_2)$ to be $(\tau(e_1), e_2)$, we define $s(e_1, e_2)$ to be $(s(e_1), s(e_2))$ and we
define define $t(e_1, e_2)$ to be $(t(e_1), s(e_2))$.  Of course, we define the weight of $(e_1, e_2)$ to be
the weight of $e_1$, which agrees with the weight of $e_2$.  We call $\Xi$ the \emph{product} of $\Xi_1$ and $\Xi_2$
and denote it by $\Xi_1 \gtimes \Xi_2$.  The product is commutative (up to isomorphism) and distributes over disjoint
union.

\begin{proposition}
\label{modprod}
Let $\Xi_1$ and $\Xi_2$ be modular graphs, one of which is regular.  Then $\Xi_1 \ast \Xi_2$ is a modular graph.
\end{proposition}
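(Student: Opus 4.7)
The plan is to verify the three conditions in the definition of a modular graph for $\Xi := \Xi_1 \ast \Xi_2$: (1) every vertex has valence two; (2) every connected component is finite; (3) every edge contains a parabolic vertex. Vertex types and edge weights are built into the construction of $\Xi$, so no further checking is required on those points.

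Without loss of generality I would take $\Xi_1$ to be regular, so every vertex of $\Xi_1$ is the source of exactly one outgoing edge of each weight. For step (1), I would fix a vertex $(v_1, v_2)$ of $\Xi$; by the definition of $V_p(\Xi)$ and $V_e(\Xi)$, the components $v_1$ and $v_2$ necessarily have the same type. Let $n_w(v)$ denote the number of outgoing edges of weight $w$ at a vertex $v$. The plan is a case analysis on the types of $t(d_1), t(d_2)$ for pairs $(d_1, d_2)$ of equal-weight outgoing edges with $s(d_i) = v_i$: when $t(d_1)$ and $t(d_2)$ have the same type the pair gives a product edge via rule (I), and when they have opposite types rule (II) produces a product edge provided the sources are parabolic. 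When $v_1, v_2$ are both elliptic the modular-graph condition on each $\Xi_i$ forces $t(d_i)$ parabolic, so rule (II) never arises. In every scenario the correspondence is one-to-one, so the number of outgoing edges at $(v_1, v_2)$ equals $\sum_w n_w(v_1) n_w(v_2)$. Regularity of $\Xi_1$ gives $n_1(v_1) = n_2(v_1) = 1$, and the remaining sum $n_1(v_2) + n_2(v_2)$ is the valence of $v_2$ in $\Xi_2$, namely $2$.

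For (3), an edge of type (II) has parabolic source by construction; for an edge $(e_1, e_2)$ of type (I) with elliptic source, the modular-graph condition applied to each $e_i \in E(\Xi_i)$ forces $t(e_i)$ parabolic, so the target of the product edge is parabolic. For (2), any walk in $\Xi$ projects to walks in $\Xi_1$ and $\Xi_2$, so the connected component of $(v_1, v_2)$ is contained in the product of the cycles through $v_1$ in $\Xi_1$ and $v_2$ in $\Xi_2$, a finite set. Combined with (1), this forces each component of $\Xi$ to be a cycle.

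The only subtle point is the bookkeeping in step (1): rule (II) has two subcases depending on which of $t(e_1), t(e_2)$ is parabolic, and one must confirm that each pair $(d_1, d_2)$ with mixed target types produces exactly one product edge rather than two. This, and all the other verifications, are routine.
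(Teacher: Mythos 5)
Your proof is correct and takes essentially the same approach as the paper: fix a vertex of the product, use regularity of one factor to match equal-weight outgoing edges bijectively with those at the corresponding vertex of the other factor (giving valence two, since the case analysis shows every such equal-weight pair with sources of matching type is indeed an edge of $\Xi_1 \gtimes \Xi_2$), and deduce finiteness of components from the projections to the factors. The only difference is that you also explicitly verify that every edge of the product contains a parabolic vertex, a point the paper leaves implicit; your worry about double-counting in rule (II) is moot since $E(\Xi_1 \gtimes \Xi_2)$ is by definition a subset of $E(\Xi_1) \times E(\Xi_2)$, so each pair contributes at most one edge.
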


\begin{proof}
Assume that $\Xi_2$ is regular.  Let $x_1$ be a vertex of $\Xi_1$ and $x_2$ a vertex of $\Xi_2$ of the same type
as $x_1$.  Let $e_1$ be an edge of $\Xi_1$ with $s(e_1)=x_1$.  Then there exists a unique edge $e_2$ of $\Xi_2$
with $s(e_2)=x_2$ and having the same weight as $e_1$.  Then $(e_1, e_2)$ is an edge of $\Xi$ with source $(x_1, x_2)$.
Furthermore, if $(e_1, e_2)$ is an edge of $\Xi$ with source $(x_1, x_2)$ then $s(e_1)=x_1$.  This shows that there is
a bijection between edges of $\Xi$ sourced at $(x_1, x_2)$ and edges of $\Xi_1$ sourced at $x_1$.  Since every vertex
of $\Xi_1$ has valence two, it follows that every vertex of $\Xi$ has valence two as well.  (Note that $\Xi$ is
undirected.)  Furthermore, it is clear that if $(x_1, x_2)$ and $(y_1, y_2)$ belong to the same component of $\Xi$
then $x_1$ and $y_1$ belong to the same component of $\Xi_1$ and $x_2$ and $y_2$ belong to the same component of
$\Xi_2$.  Since the components of $\Xi_1$ and $\Xi_2$ are finite, this shows that the components of $\Xi$ are finite
as well.  This completes the proof.
\end{proof}

The modular graph
\begin{displaymath}
\begin{xy}
(-7, 0)*{\bullet}; (7, 0)*{\circ};
{\ar@/^2.5ex/@{=}; (-7, 0); (7, 0)};
{\ar@/_2.5ex/@{-}; (-7, 0); (7, 0)};
\end{xy}
\end{displaymath}
is the unique one with one parabolic vertex, one elliptic vertex, one weight one edge and one weight two edge.
It is the identity for the product $\gtimes$.  In fact, $\gtimes$ is best thought of as a sort of fiber product
over the above graph.

\subsection{Behavior of $\Xi$ under direct products}
\label{ss:xiprod}

Let $R$, $U$, etc., be as in the first paragraph of \S \ref{ss:xi}, and let $R'$, $U'$, etc., be defined similarly.
Suppose that $R'$ has odd characteristic.  Let $\Xi=\Xi_G$ and $\Xi'=\Xi_{G'}$, so that $\Xi$ and
$\Xi'$ are modular graphs, with $\Xi'$ regular.  Let $\Xi''=\Xi_{G \times G'}$ be the
graph associated to $G \times G'$, a subgroup of $\SL(U \times U')$.  We then have the following result:

\begin{theorem}
The graph $\Xi''$ is isomorphic to the product $\Xi \gtimes \Xi'$.
\end{theorem}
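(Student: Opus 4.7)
The plan is to construct an explicit isomorphism $\Phi\colon \Xi \gtimes \Xi' \to \Xi''$, identifying vertices and then edges. A basis vector of $U \times U'$ is automatically a pair $(u, u')$ of basis vectors of $U$ and $U'$, and the parabolicity condition---the existence of an admissible $(g, g') \in G \times G'$ with $(C, C')(g, g')(u, u') = (u, u')$---factors component-wise. Hence $V_p(\Xi'') = V_p(\Xi) \times V_p(\Xi')$ after passing to $(G \times G')$-orbits. Applying Lemma~\ref{intersect} component-wise to $G \times G'$, an elliptic vertex of $\wt{\Xi}''$---an unordered pair of intersecting geodesics---is a pair of elliptic vertices, so $V_e(\Xi'') = V_e(\Xi) \times V_e(\Xi')$.

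For edges I first decompose geodesics: a triple in $U \times U'$ satisfies conditions (a)--(c) of \S\ref{ss:xi} iff its two coordinate projections are same-weight geodesics $\gamma_1, \gamma_2$ of $\wt{\Xi}, \wt{\Xi}'$, with both the $(G \times G')$- and $\rho^2_{\mathrm{prod}}$-actions acting diagonally. For a fixed pair of classes $([\gamma_1], [\gamma_2])$ in $\Xi \times \Xi'$ of common weight, I then count $(G \times G')$-orbits of lifts to $\wt{\Xi}''$ modulo $\rho^2_{\mathrm{prod}}$. Parametrizing by $(\rho^{2k}\gamma_1, \rho^{2\ell}\gamma_2)$ with $(k, \ell) \in \Z/4 \times \Z/4$, the shift $(1, 1)$ comes from $\rho^2_{\mathrm{prod}}$ and the shift $(2, 0)$ comes from $(-1, 1) \in G \times G'$, using that $-1 \in G$ acts on geodesics as $\rho^4$. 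Together these generate an index-two subgroup, leaving exactly two orbits, represented by $(\gamma_1, \gamma_2)$ and $(\rho^2 \gamma_1, \gamma_2)$. A short computation using $ww' = 2$ shows that if $\gamma_1 = [x_1, y_1; z_1]$ has an elliptic point with associated element $\sigma \in G$ (Lemma~\ref{intersect}), then $-\sigma \cdot \gamma_1 = \rho^2 \gamma_1$ already in $\ms{T}$; hence $(-\sigma, 1) \in G \times G'$ collapses the two orbits into one, and similarly if $\gamma_2$ does.

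I then match this count to product edges. For a pair $(e_1, e_2) \in E(\Xi) \times E(\Xi')$ of equal weight corresponding to $([\gamma_1], [\gamma_2])$, the product definition produces two distinct undirected product edges $(e_1, e_2)$ and $(e_1, \tau e_2)$---both valid under case 1---when both $e_i$ run between parabolic vertices, matching the two uncollapsed $\Xi''$-orbits; in every other sub-case the source-type requirements of cases 1 and 2 rule out $(e_1, \tau e_2)$, leaving exactly one product edge that matches the single collapsed orbit. Endpoints agree because an elliptic point in $\gamma = [x, y; z]$ forces $[x] = [y]$ in $\Xi$ (the element $-\sigma \in G$ sends $x$ to $y$): for instance in case 2 the product edge $\{([x_1], [x_2]), ([y_1], [x_2])\}$ matches the $\Xi''$-edge $\{([x_1], [x_2]), ([y_1], [y_2])\}$ because $[x_2] = [y_2]$. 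In the both-elliptic sub-case the product $\Xi''$-geodesic has elliptic point $(p_1, p_2)$, and its two half-edges in $\wt{\Xi}''$ are identified by $(-\sigma_1, -\sigma_2) \in G \times G'$, again producing one edge.

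The main obstacle will be the orbit count of the second paragraph: carefully tracking the interplay of $\rho^2_{\mathrm{prod}}$, the $\{\pm 1\}$-identification in $V_p$, and the non-diagonal element $(-1, 1) \in G \times G'$, and isolating the identity $-\sigma \gamma = \rho^2 \gamma$ that collapses the two orbits whenever an elliptic point is present. Given these, the sub-case matching is routine, and compatibility of $\Phi$ with sources, targets, weights, and the involution $\tau$ follows from the explicit component-wise description.
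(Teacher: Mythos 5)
Your proposal is correct in outline --- the identity $-\sigma\gamma=\rho^2\gamma$ does hold in $\ms{T}$ (it follows from $w'x+w'y=2z$), and the index-two computation in $(\Z/4)^2$ for the subgroup generated by $(1,1)$ and $(2,0)$ is right --- but it takes a genuinely different and considerably more laborious route than the paper. The paper never counts orbits of lifts. It first observes that $\Xi''$ is a modular graph by Theorem~\ref{xithm} and that $\Xi \gtimes \Xi'$ is one by Proposition~\ref{modprod}: both are disjoint unions of cycles, so every vertex has valence exactly two. Since the parabolic vertex sets are canonically identified, it then suffices to prove a single implication --- if two parabolic vertices of $\Xi''$ are joined by an edge (resp.\ joined through a common elliptic vertex), then the corresponding vertices of $\Xi \gtimes \Xi'$ are as well --- and 2-regularity forces the edge sets to coincide, with no multiplicity bookkeeping. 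This is exactly where your ``main obstacle'' evaporates. What your approach buys in exchange is logical independence from the structure theorem (you build the isomorphism directly, without invoking Theorem~\ref{xithm} or Proposition~\ref{modprod}), plus an explicit explanation of why there are two product edges over a pair of all-parabolic edges: they join the distinct vertex pairs $\{(x_1,x_2),(y_1,y_2)\}$ and $\{(x_1,y_2),(y_1,x_2)\}$, matching the orbits of $(\gamma_1,\gamma_2)$ and $(\rho^2\gamma_1,\gamma_2)$. If you carry the plan out, note one point you currently gloss over: the identification $V_e(\Xi'')=V_e(\Xi)\times V_e(\Xi')$ is not purely formal --- the product lifts of a pair of elliptic vertices form a priori several $\langle\rho_{\mathrm{prod}}\rangle$-orbits, and you need the elements $\pm\sigma_i$ supplied by Lemma~\ref{intersect} (which are always available precisely because the geodesics involved carry elliptic points) to collapse them into a single $G\times G'$-class.
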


\begin{proof}
The graph $\Xi''$ is a modular graph by Theorem~\ref{xithm}, while $\Xi \gtimes \Xi'$ is a modular graph by
Proposition~\ref{modprod}.  Furthermore, it is clear that there is a natural bijection between $V_p(\Xi'')$ and
$V_p(\Xi \gtimes \Xi')$.  Thus, to demonstrate the proposition it suffices to show that the following two
statements:  (1) if two parabolic vertices of $\Xi''$ are connected then the corresponding parabolic vertices of
$\Xi \ast \Xi'$ are as well; (2) if two parabolic vertices of $\Xi''$ are connected to a common elliptic vertex then
the corresponding vertices of $\Xi \ast \Xi'$ connect to a common elliptic vertex as well.

We now prove statement (1).  Thus suppose that $x$ and $y$ are parabolic vertices of $\wt{\Xi}$ and $x'$ and $y'$
are parabolic vertices of $\wt{\Xi}'$ such that $(x, x')$ and $(y, y')$ are connected in $\Xi''$.  It follows that,
after replacing these vertices with equivalent ones, we have a geodesic $[(x, x'), (y, y'); (z, z')]$ in $\wt{\Xi}''$
which does not contain an elliptic point.  It is clear then that $[x, y; z]$ and $[x', y'; z']$ are geodesics in
$\wt{\Xi}$ and $\wt{\Xi}'$ of equal weights.  If both contained elliptic points, then a short argument shows that
$[(x, x'), (y, y'); (z, z')]$ would contain an elliptic point; we conclude that one of the two geodesics does not
contain an elliptic point.  Suppose that $[x, y; z]$ does does not contain an elliptic point, so that there is an edge
between $x$ and $y$ in $\Xi$.  If $[x', y'; z']$ also does not contain an elliptic point, then there is an edge between
$x'$ and $y'$ in $\Xi'$, and thus an edge between $(x, x')$ and $(y, y')$ in $\Xi \gtimes \Xi'$.  If $[x', y']$
contains an elliptic point, then we get an edge in $\Xi \gtimes \Xi'$ between $(x, x')$ and $(y, x')=(y, y')$.  Thus,
in all cases, $(x, x')$ and $(y, y')$ are connected in $\Xi \gtimes \Xi'$.

We now prove statement (2).  Thus suppose that $[(x_1, x_1'), (y_1, y_1'); (z_1, z_1')]$ and $[(x_2, x_2'),
(y_2, y_2'); \allowbreak{}(z_2, z_2')]$ are intersecting geodesics in $\wt{\Xi}''$.  Then $[x_1, y_1; z_1]$ and
$[x_2, y_2; z_2]$ are intersecting geodesics in $\wt{\Xi}$, and $[x_1', y_1'; z_1']$ and $[x_2', y_2'; z_2']$ are
intersecting geodesics in $\wt{\Xi}'$.  It follows that $(x_1, x_1')$ and $(x_2, x_2')$ connected to a common elliptic
point in $\Xi \ast \Xi'$, as was to be shown.  This completes the proof.
\end{proof}

\begin{remark}
In many cases, one has a group of the form $\pm(G \times G')$ where $G$ and $G'$ do not contain $-1$.  The above
theorem does not apply to describe the graph associated to this group in terms of $G$ and $G'$.  However, there is
an analogous, though more complicated, result.  In fact, to a group $G$ which does not contain $-1$ one can
associate a graph that is a sort of cover of $\Xi_{\pm G}$, and there is a product theorem for such graphs.
\end{remark}

\section{Components for real congruence groups}
\label{s:4}

The purpose of this section is to describe the real components of $X_{\Gamma}$ when $\Gamma$ is a congruence
subgroup of $\PSL_2(\Z)$ with an appropriate real structure, in terms of the group theory of the corresponding
subgroup of $\SL_2(\Z/N \Z)$.

\subsection{Real congruence groups}

A \emph{real congruence group} is a real Fuchsian group $(\Gamma, c)$ where $\Gamma$ is a subgroup of $\PSL_2(\Z)$ for
which there exists an integer $N \ge 1$ such that $\Gamma$ contains $\Gamma(N)$ and $\Gamma(N)$ is
stable by $c$.  (Recall that $\Gamma(N)$ is the subgroup of $\PSL_2(\Z)$ consisting of matrices which are congruent
to the identity modulo $N$.)

\begin{proposition}
Let $(\Gamma, c)$ be a real congruence group.  Then $c=c_0 \sigma$ for some element $\sigma \in \PSL_2(\Z)$.
\end{proposition}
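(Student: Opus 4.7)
The plan is to show that the matrix $C\in\GL_2(\R)$ associated to $c$ actually has integer entries. Once this is done, $\sigma:=C_0C\in\GL_2(\Z)$ has determinant $+1$, hence lies in $\SL_2(\Z)$, and the description of the $\PGL_2(\R)$-action on $\mf{h}$ from \S\ref{ss:cc} yields $c=c_0\sigma$ in $\PSL_2(\Z)$. So the entire content is the integrality of $C$.

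Using $\tr C=0$ and $\det C=-1$, write $C=\mat{a}{b}{c}{-a}$ with $a^2+bc=1$; in particular $C^{-1}=C$. The hypothesis that $c$ preserves $\Gamma(N)$ translates to $C\gamma C\in\Gamma(N)$ for every $\gamma\in\Gamma(N)$. I would apply this to the two simplest elements $T^N=\mat{1}{N}{0}{1}$ and $U^N=\mat{1}{0}{N}{1}$ of $\Gamma(N)$; a direct computation gives
\begin{displaymath}
CT^NC=\mat{1+acN}{-a^2N}{c^2N}{1-acN},\qquad CU^NC=\mat{1+abN}{b^2N}{-a^2N}{1-abN}.
\end{displaymath}
Requiring that these matrices lie in $\Gamma(N)$ (integer entries, with off-diagonals divisible by $N$ and diagonals congruent to $1$ modulo $N$) forces $a^2,b^2,c^2,ab,ac\in\Z$, and then $bc=1-a^2\in\Z$ as well.

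The remaining step is a small diophantine claim: if $a,b,c\in\R$ have all pairwise products (including squares) in $\Z$ and satisfy $a^2+bc=1$, then $a,b,c\in\Z$. The cases where one of $a,b,c$ vanishes follow immediately from the constraints, so assume $a,b,c\ne 0$ and write $a^2=s^2m_0$ with $m_0$ positive and squarefree. Since $(ab)^2=a^2b^2$ is an integer square, the squarefree part of $b^2$ is also $m_0$, so $b^2=m_0t^2$; likewise $c^2=m_0u^2$. Then $bc=\pm m_0tu$, and the relation $a^2+bc=1$ becomes $m_0(s^2\pm tu)=1$, forcing $m_0=1$ and hence $a,b,c\in\Z$.

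The main obstacle is really just this last diophantine step; the slogan is that the identity $a^2+bc=1$ detects the common squarefree part of the three squares and forces it to equal $1$. An alternative would be to invoke the classical commensurator fact that the normalizer of any congruence subgroup of $\PSL_2(\Z)$ in $\PSL_2(\R)$ lies inside $\PGL_2(\Q)^+$, but the direct computation above is shorter and entirely self-contained.
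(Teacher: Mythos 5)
Your proof is correct, but it runs in the opposite logical direction from the paper's. The paper writes $c=c_0\sigma$ with $\sigma\in\PSL_2(\R)$ a priori, observes that stability of $\Gamma(N)$ under both $\gamma\mapsto\gamma^c$ and $\gamma\mapsto\gamma^{c_0}$ forces $\sigma$ to normalize $\Gamma(N)$, and then quotes the classical fact that $\PSL_2(\Z)$ is the \emph{full} normalizer of $\Gamma(N)$ in $\PSL_2(\R)$; the integrality of $C$ is then extracted as a corollary. You instead prove the integrality of $C$ first --- by conjugating the two unipotent elements $T^N$ and $U^N$ and then running an elementary argument on the common squarefree part of $a^2,b^2,c^2$ --- and deduce the proposition from that. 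Your route is longer but entirely self-contained, whereas the paper's one-line argument leans on the normalizer statement, which is itself nontrivial (it is usually proved via discreteness of the normalizer together with maximality of $\PSL_2(\Z)$ among discrete subgroups); your computations and the diophantine lemma all check out. One small point to tidy: since $\Gamma(N)$ is a subgroup of $\PSL_2(\Z)$, the hypothesis only gives that $\pm CT^NC$ is an integral matrix congruent to the identity mod $N$ for \emph{one} choice of sign. Comparing traces shows the minus sign can occur only when $N$ divides $4$, and in those cases your congruences still yield $a^2,c^2\in\Z$ while the relation $(ac)^2=a^2c^2\in\Z$ rules out $ac\notin\Z$, so the conclusion $a^2,b^2,c^2,ab,ac\in\Z$ survives --- but this deserves a sentence in the write-up.
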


\begin{proof}
Of course, $c=c_0 \sigma$ for some $\sigma$ in $\PSL_2(\R)$.  Since $\Gamma(N)$ is closed under both
$\gamma \mapsto \gamma^c$ and $\gamma \mapsto \gamma^{c_0}$, it follows that $\sigma$ normalizes $\Gamma(N)$.
As $\PSL_2(\Z)$ is the full normalizer of $\Gamma(N)$ in $\PSL_2(\R)$, we find that $\sigma$ belongs to $\PSL_2(\Z)$,
as was to be shown.
\end{proof}

\begin{corollary}
The matrix $C$ associated to $c$ (see \S \ref{ss:cc}) belongs to $\GL_2(\Z)$.
\end{corollary}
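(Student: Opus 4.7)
The plan is to exhibit an explicit integral matrix inducing $c$ that satisfies the normalization from \S\ref{ss:cc}, and then appeal to uniqueness. By the preceding proposition, write $c=c_0\sigma$ for some $\sigma\in\PSL_2(\Z)$, and lift $\sigma$ to a matrix $S\in\SL_2(\Z)$. The candidate for $C$ (up to sign) is the product $C_0 S$, where $C_0=\on{diag}(1,-1)$ is the matrix associated to $c_0$.

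First I would check that $C_0 S$, viewed as an element of $\GL_2(\R)$ of negative determinant, induces the map $c_0\sigma=c$ on $\mf{h}$. This is a short direct computation using the formula $gz=(a\bar z+b)/(c\bar z+d)$ for matrices of negative determinant together with the usual M\"obius action for $\SL_2(\R)$: one verifies that matrix multiplication corresponds to composition of the induced self-maps of $\mf{h}$, regardless of the order in which positive- and negative-determinant matrices appear. Applied to $C_0$ and $S$, this gives $(C_0 S)z = c_0(\sigma z)$ for all $z\in\mf{h}$.

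Next I would verify the normalization. The determinant is $\det(C_0)\det(S)=-1$ immediately. For the trace, use the fact that $c$ is an involution: the identity $c^2=(c_0\sigma)(c_0\sigma)=\id$ rearranges to $c_0\sigma c_0=\sigma^{-1}$, i.e.\ $\sigma^{c_0}=\sigma^{-1}$, so $\sigma$ is admissible with respect to $c_0$. By the criterion recalled in \S\ref{ss:cc}, this admissibility is equivalent to $\tr(C_0 S)=0$. Thus $C_0 S\in\GL_2(\Z)$ has trace $0$, determinant $-1$, and induces $c$. The uniqueness clause of \S\ref{ss:cc} — that the matrix associated to $c$ is determined up to sign by these three properties — therefore gives $C=\pm C_0 S\in\GL_2(\Z)$.

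There is no real obstacle here; the argument is entirely bookkeeping once the previous proposition is in hand. The only point that requires any care is the compatibility of matrix multiplication with composition when one of the factors has negative determinant, and that is just a direct check from the defining formulas.
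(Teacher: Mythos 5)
Your proposal is correct and follows the same route as the paper: write $c=c_0\sigma$ via the preceding proposition and observe that $C=\pm C_0\sigma$ is a product of integral matrices. The paper simply asserts ``the matrix $C$ is given by $C_0\sigma$,'' whereas you verify the compatibility of matrix multiplication with composition and the trace/determinant normalization explicitly; this is just filling in details the paper takes for granted.
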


\begin{proof}
Write $c=c_0\sigma$ with $\sigma \in \SL_2(\Z)$, per the proposition.  As we showed, the matrix $C$ is given by $C_0
\sigma$.  Since both $C_0$ and $\sigma$ belong to $\GL_2(\Z)$, the result follows.
\end{proof}

\begin{proposition}
\label{con-cusp}
Let $(\Gamma, c)$ be a real congruence group and let $\gamma \in \Gamma$ be admissible.  Then the curve $C_{\gamma}$
contains two cusps, and is therefore homeomorphic to a closed interval.
\end{proposition}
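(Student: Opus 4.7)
The plan is to use the explicit description of the endpoints of $\overline{C}_\gamma$ already obtained in the proof of Proposition~\ref{conj:fix}, combined with the integrality of $C$ just established. Since $\Gamma$ has finite index in $\PSL_2(\Z)$, the cusps of $\Gamma$ are exactly the points of $\Q\P^1$, so it is enough to show that both endpoints of the semicircle $\overline{C}_\gamma$ lie in $\Q\P^1$; then $C_\gamma = \overline{C}_\gamma \cap \mf{h}^*$ contains them and is homeomorphic to a closed interval.

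First I would write $C\gamma = \mat{a}{b}{c}{d}$. By the preceding corollary $C \in \GL_2(\Z)$, and since $\gamma$ lifts to $\SL_2(\Z)$, the entries $a,b,c,d$ are integers. Admissibility means $C\gamma$ has trace zero, so $d=-a$, and $\det(C\gamma)=-1$ gives $a^2+bc=1$.

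Next, the explicit computation in the proof of Proposition~\ref{conj:fix} shows that when $c\ne 0$ the endpoints of $\overline{C}_\gamma$ on $\R\P^1$ are $\tfrac{a\pm 1}{c}$, which are rational and hence cusps of $\Gamma$. When $c=0$ the relation $a^2+bc=1$ forces $a=\pm 1$, and the locus $\overline{C}_\gamma$ degenerates to the vertical line $x=\tfrac{b}{2a}$, whose endpoints in $\R\P^1$ are the rational number $\tfrac{b}{2a}$ and $\infty$, both again cusps. In every case $\overline{C}_\gamma$ meets $\R\P^1$ in two cusps of $\Gamma$.

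Both endpoints therefore lie in $\mf{h}^*$, so $C_\gamma = \overline{C}_\gamma$ is a closed interval containing exactly these two cusps, as claimed. The only place any subtlety arises is the degenerate case $c=0$, but the determinant relation pins that down immediately, so I do not expect a serious obstacle in the argument.
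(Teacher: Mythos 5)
Your proof is correct and is essentially the paper's own argument: write $C\gamma$ as an integer matrix of trace zero and read off from the proof of Proposition~\ref{conj:fix} that the endpoints of $\ol{C}_{\gamma}$ are rational, hence cusps. You spell out the degenerate case $c=0$ a bit more carefully than the paper does, but the route is the same.
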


\begin{proof}
Let $\gamma \in \Gamma$ be admissible with respect to $c$ and write
\begin{displaymath}
C\gamma = \mat{a}{b}{c}{-a},
\end{displaymath}
with $a$, $b$ and $c$ integers.  By Proposition~\ref{conj:fix} (or, rather, its proof), $\ol{C}_{\gamma}$ intersects
$\R\P^1$ at the two points $\tfrac{a \pm 1}{c}$, both of which belong to $\Q\P^1$ and are thus cusps of $\Gamma$.
\end{proof}

\begin{corollary}
\label{cor:cusp}
Every component of $X_{\Gamma}(\R)$ contains a cusp.
\end{corollary}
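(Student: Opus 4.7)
The plan is to combine Theorem~\ref{s2thm}, which identifies $X_{\Gamma}(\R)$ with the graph $\Xi_{\Gamma}$, with Proposition~\ref{con-cusp}. Under this identification, a connected component of $X_{\Gamma}(\R)$ is a circle, hence corresponds to a cycle in $\Xi_{\Gamma}$, and in particular contains at least one edge. So it suffices to show that every edge of $\Xi_{\Gamma}$ has at least one cusp as a vertex.

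First I would invoke Proposition~\ref{con-cusp}: for every admissible $\gamma \in \Gamma$, the curve $C_{\gamma}$ contains two cusps. On the other hand, Lemma~\ref{lem:hyper} tells us that whenever $Z_{\gamma}$ is infinite, $C_{\gamma}$ contains no cusps at all. Combining these, the twisted centralizer $Z_{\gamma}$ of every admissible element of a real congruence group must be finite.

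Consulting the classification in \S\ref{ss:class}, this means every admissible element is of Type~1 or Type~2 (the Type~3 and Type~4 cases being excluded, since they correspond precisely to $Z_{\gamma}$ infinite cyclic and infinite dihedral, respectively). For a Type~1 admissible $\gamma$, the edge of $\Xi_{\Gamma}$ arising from $\gamma$ has both endpoints at cusps; for Type~2, one endpoint is a cusp and the other is an even-order elliptic point. In either case, the edge carries at least one cusp vertex, and so every cycle of $\Xi_{\Gamma}$ passes through a cusp.

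There is no real obstacle here: the entire content is the exclusion of Types~3 and 4 via Proposition~\ref{con-cusp}, after which the conclusion follows immediately from the definitions. The only step requiring even minor care is checking that the description of $\Xi_{\Gamma}$ given in \S\ref{s2pf} really does place a cusp vertex on each edge of Types~1 and 2, including the loop subcase (Type~1b), which is handled because the single vertex at which the loop is based is still a cusp.
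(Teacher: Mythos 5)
Your proposal is correct and follows the paper's intended route: the corollary is stated without proof as an immediate consequence of Proposition~\ref{con-cusp}, and your chain (Proposition~\ref{con-cusp} plus Lemma~\ref{lem:hyper} forces $Z_{\gamma}$ finite, hence only Types~1 and 2 occur, hence every edge of $\Xi_{\Gamma}$ meets a cusp vertex) is exactly the reasoning the paper packages into this corollary and the one following it. The only remark is that the detour through the type classification is slightly more than needed --- one can argue directly that every real point lies on some $\pi(C_{\gamma})$, which is connected and contains the image of a cusp by Proposition~\ref{con-cusp} --- but your version is equally valid and correctly handles the loop subcases.
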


\begin{corollary}
Every admissible element of $\Gamma$ is Type 1 or Type 2.
\end{corollary}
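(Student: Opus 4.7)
The plan is to combine the preceding Proposition~\ref{con-cusp} with Lemma~\ref{lem:hyper} from \S\ref{s:2}. Recall from the classification in \S\ref{ss:class} that the type of an admissible element $\gamma$ is determined entirely by the group $Z_\gamma$: it is Type 1 or Type 2 exactly when $Z_\gamma$ is finite (trivial, or cyclic of order 2), and Type 3 or Type 4 exactly when $Z_\gamma$ is infinite.

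So it suffices to show that for a real congruence group, $Z_\gamma$ cannot be infinite. First, I would invoke Proposition~\ref{con-cusp} just established, which tells us that $C_\gamma$ contains two cusps (at the rational points $\tfrac{a\pm 1}{c}$). Second, I would invoke Lemma~\ref{lem:hyper}, which says that if $Z_\gamma$ is infinite, then the endpoints of $\ol{C}_\gamma$ on $\R\P^1$ cannot be cusps: any infinite-order element of $Z_\gamma$ must be hyperbolic and fix those two endpoints, whereas cusps of $\Gamma$ are fixed only by parabolic elements.

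Putting these two facts together directly yields a contradiction if $Z_\gamma$ is infinite, so $Z_\gamma$ must be finite, and hence $\gamma$ is Type 1 or Type 2. There is no real obstacle here; the corollary is a two-line consequence of results already in hand, and the actual work was done in proving Proposition~\ref{con-cusp} (which in turn relied on the integrality of the entries of $C\gamma$ coming from $C \in \GL_2(\Z)$).
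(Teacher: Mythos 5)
Your proposal is correct and is exactly the argument the paper intends (the corollary is stated without proof as an immediate consequence of Proposition~\ref{con-cusp}): since the type of $\gamma$ is determined by whether $Z_\gamma$ is finite, and Lemma~\ref{lem:hyper} shows an infinite $Z_\gamma$ forces $C_\gamma$ to contain no cusps, the presence of two cusps on $C_\gamma$ rules out Types 3 and 4.
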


Combining the above with Corollary~\ref{cor:count}, we obtain the following:

\begin{corollary}
The number of admissible twisted conjugacy classes in $\Gamma$ is equal to the number of special points on $X_{\Gamma}$.
\end{corollary}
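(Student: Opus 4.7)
The proof is essentially a one-line synthesis of the results already in hand, so my plan is simply to assemble them in the right order. The statement we wish to prove says that the total count of admissible twisted conjugacy classes in $\Gamma$ equals the number of special points on $X_{\Gamma}$. Corollary~\ref{cor:count} gives a related equality: the number of special points equals the number of admissible twisted conjugacy classes of Type~1, Type~2, and Type~4. So the entire task is to show that the Type~3 and Type~4 contributions to that count vanish.

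This vanishing is exactly what the preceding corollary provides. It asserts that every admissible element of $\Gamma$ is of Type~1 or Type~2, and this in turn rests on Proposition~\ref{con-cusp} together with Lemma~\ref{lem:hyper}: the former shows that for a real congruence group, every $C_{\gamma}$ with $\gamma$ admissible contains two cusps, while the latter says that if $Z_{\gamma}$ is infinite (which is the defining feature of Types~3 and~4) then $C_{\gamma}$ cannot contain any cusp. Thus $Z_{\gamma}$ must be finite, ruling out Types~3 and~4 entirely.

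Accordingly, the proof I would write is: by the previous corollary, the sets of Type~3 and Type~4 admissible twisted conjugacy classes are empty, so the total number of admissible twisted conjugacy classes equals the number of Type~1, Type~2, and Type~4 classes; by Corollary~\ref{cor:count}, this number is the number of special points. There is no real obstacle here — the work was done in Proposition~\ref{con-cusp} and Lemma~\ref{lem:hyper} — so the proof is effectively a pointer to those prior results.
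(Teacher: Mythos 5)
Your proof is correct and is exactly the paper's argument: the paper derives this corollary by combining the preceding corollary (every admissible element is Type~1 or Type~2, via Proposition~\ref{con-cusp} and Lemma~\ref{lem:hyper}) with Corollary~\ref{cor:count}. Nothing further is needed.
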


Corollary~\ref{cor:cusp} is
extremely useful, because the set of cusps is easy to understand, and so to determine $\Xi_{\Gamma}$ we just need
to determine how the cusps (and elliptic points) are connected.  We remark that the above results do not hold if we
drop the assumption that $\Gamma(N)$ is stable by $c$:  there exist real Fuchsian groups $(\Gamma, c)$ with $\Gamma$
a congruence subgroup of $\PSL_2(\Z)$ which have Type 3 elements (see \S \ref{ss:ex7}).

\subsection{The main theorem}

Let $(\Gamma, c)$ be a real congruence group.  Let $\Xi^{\dag}$ denote the graph associated to $\Gamma$ in
\S \ref{ss:xi1}.  Let $\Xi$ denote the graph associated to $\Gamma$ in \S \ref{ss:xi} (taking $R=\Z$, $U=\Z^2$,
$\langle, \rangle$ the standard symplectic pairing on $U$, $C$ the matrix associated to $c$ and $G=\Gamma$).  The
following is the main result of \S \ref{s:4}.

\begin{theorem}
\label{xi-ident}
Let $(\Gamma, c)$ be a real congruence group.  Then $\Xi^{\dag}=\Xi$.
\end{theorem}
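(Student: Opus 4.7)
The plan is to build an explicit isomorphism $\Xi^{\dag} \to \Xi$ by matching their defining data directly: real cusps to parabolic vertices, even-order real elliptic points to elliptic vertices, and admissible twisted conjugacy classes to $\Gamma$-equivalence classes of geodesics. Everything rests on one linear-algebraic calculation with the involution $C\gamma$ attached to an admissible $\gamma$.

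First I would handle parabolic vertices. A cusp of $X_{\Gamma}$ is a $\Gamma$-orbit on $\Q\P^1$, which via $p/q \leftrightarrow \pm(p,q)$ is the same as a $\Gamma$-orbit of primitive vectors in $\Z^2$ modulo sign. A cusp $p/q$ is real precisely when $c(p/q) = \gamma(p/q)$ for some admissible $\gamma \in \Gamma$; translating through the matrix $C$ associated to $c$, this says that the primitive vector $(p,q)$ is an eigenvector of $C\gamma$, which (after absorbing a sign into $\gamma$ if necessary) is the defining condition for a parabolic vertex of $\Xi$. This gives a canonical bijection $V_p(\Xi^{\dag}) \to V_p(\Xi)$.

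Next I would match edges. Given an admissible $\gamma$, write $C\gamma = \mat{a}{b}{c}{-a}$ with $a^2+bc=1$, so that $C\gamma$ is an involution with eigenvalues $\pm 1$. Let $x, y \in \Z^2$ be primitive eigenvectors with eigenvalues $+1, -1$ respectively, explicitly $x = \tfrac{1}{d_1}(a+1, c)$ and $y = \tfrac{1}{d_2}(a-1, c)$ when $c \ne 0$ (with an obvious modification otherwise). The elementary computation sketched in \S\ref{ss:role2} gives $\langle x, y \rangle = \pm 2c/(d_1d_2) \in \{\pm 1, \pm 2\}$, using the factorization $(a+1)(a-1) = -bc$. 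Setting $w = |\langle x, y\rangle|$ and $z = (x+y)/w$, one verifies by direct matrix computation that $[x,y;z]$ satisfies conditions (a)--(c) of \S\ref{ss:xi} with witness $g = \gamma$, so it is a geodesic of $\wt{\Xi}$. Twisted conjugation $\gamma \mapsto \delta^{c}\gamma\delta^{-1}$ acts as $(x,y,z) \mapsto (\delta x, \delta y, \delta z)$, so the assignment descends to a map on equivalence classes; the inverse map sends a geodesic with witness $g$ to the admissible element $g$. The two endpoints of this edge in $\Xi$ correspond via the first paragraph to the two cusps of $\ol{C}_{\gamma}$, which are the endpoints of the corresponding edge in $\Xi^{\dag}$.

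For elliptic vertices the correspondence is more subtle. A real elliptic point $z$ of $X_{\Gamma}$ of even order has cyclic stabilizer $\Gamma_z$ with a unique involution $\sigma$. By Proposition~\ref{con-cusp} and the classification in \S\ref{ss:class}, any admissible $\gamma$ with $z \in C_{\gamma}$ is of Type~2, and the two edges at $z$ in $\Xi^{\dag}$ are represented by $\gamma$ and $\gamma\sigma$. On the $\Xi$ side, elliptic vertices are unordered pairs of intersecting geodesics, and Lemma~\ref{intersect} characterizes intersection by the membership in $G$ of the explicit reflection attached to the geodesic. I would verify that this reflection coincides, modulo the centre $\pm 1$, with $\sigma$, so that Type~2 admissible elements map exactly to geodesics that intersect a partner, giving a bijection $V_e(\Xi^{\dag}) \to V_e(\Xi)$ compatible with incidence (the two edges at $z$ in $\Xi^{\dag}$ being the pair of intersecting geodesics together with their shared endpoints).

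The main obstacle will be bookkeeping the elliptic case and keeping the sign conventions straight throughout, especially where the graph forms a loop (which happens in $\Xi^{\dag}$ exactly when the two cusps at the ends of $\ol{C}_{\gamma}$, or the cusp and elliptic point in Type~2, become $\Gamma$-equivalent, and in $\Xi$ exactly when the two parabolic endpoints of the geodesic lie in the same $\Gamma$-orbit). Once the pairing $\langle x, y\rangle \in \{\pm 1, \pm 2\}$ is established and the elliptic correspondence is pinned down via Lemma~\ref{intersect}, bijectivity and edge-incidence follow from the structural classifications already in the paper, in particular Corollary~\ref{cor:count} on the Fuchsian side matching parabolic plus elliptic vertices to Type~1/2/4 conjugacy classes, which for real congruence groups collapses to just Types~1 and~2 and so matches perfectly with the edge combinatorics of $\Xi$.
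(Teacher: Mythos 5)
Your proposal is correct and follows essentially the same route as the paper's proof: the identification of real cusps with $C$-invariant classes of primitive vectors, the explicit computation realizing the two cusps of $\ol{C}_{\gamma}$ as the $\pm 1$-eigenvectors of $C\gamma$ with pairing $\pm 2c/(d_1d_2) \in \{\pm 1, \pm 2\}$, and the use of Lemma~\ref{intersect} to match even-order real elliptic points with intersecting pairs of geodesics. The only cosmetic differences are your labeling of which eigenvector gets which sign (handled, as in the paper, by replacing $\gamma$ with $-\gamma$) and your closing appeal to Corollary~\ref{cor:count}, which the paper does not need since the incidence data is matched directly.
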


Before proceeding, let us note the following corollary.  Let $G_0$ be the image of $\Gamma$ in $\SL_2(\Z/N\Z)$, let
$U_0=U/NU$ and let $\Xi_0$ be the graph associated to this data in \S \ref{ss:xi}.  By Theorem~\ref{invimg}, we
have $\Xi=\Xi_0$.  (Note:  the surjectivity of the map $\SL_2(\Z) \to \SL_2(\Z/N\Z)$ is proved in
\cite[Lem.~1.38]{Shimura}.)   We therefore obtain the following corollary, which computes the real locus of the curve
$X_{\Gamma}$ in terms of the subgroup of $\SL_2(\Z/N\Z)$ to which $\Gamma$ corresponds.

\begin{corollary}
We have $\Xi^{\dag}=\Xi_0$.
\end{corollary}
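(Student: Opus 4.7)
The plan is a short two-step packaging of results already in place in the paper. First I would invoke Theorem~\ref{xi-ident} to obtain the equality $\Xi^{\dag}=\Xi$, where $\Xi$ is the graph built in \S\ref{ss:xi} from the data $R=\Z$, $U=\Z^2$ with its standard symplectic pairing, the matrix $C\in\GL_2(\Z)$ associated to $c$ (whose integrality is the content of the corollary following the first proposition of this section), and $G=\Gamma\subset\SL_2(\Z)$. Nothing extra needs to be checked at this stage; the correspondence of data is set up exactly to match the definition at the start of \S\ref{ss:xi}.

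Second, I would apply Theorem~\ref{invimg} to the quotient map $R=\Z\to R_0=\Z/N\Z$. Since $\Gamma$ contains $\Gamma(N)$, which is by definition the kernel of $\SL_2(\Z)\to\SL_2(\Z/N\Z)$, the group $\Gamma$ is precisely the inverse image of its mod-$N$ reduction $G_0$, so the hypotheses of Theorem~\ref{invimg} are met verbatim with this $G_0$. The single remaining hypothesis, surjectivity of $\SL_2(\Z)\to\SL_2(\Z/N\Z)$, is a classical fact already cited in the paper as \cite[Lem.~1.38]{Shimura}. Theorem~\ref{invimg} then yields an isomorphism $\Xi\cong\Xi_0$.

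Chaining the two identifications gives $\Xi^{\dag}=\Xi_0$, which is the corollary. There is genuinely no obstacle here: the substantive content lies entirely in Theorem~\ref{xi-ident} and Theorem~\ref{invimg}, and the corollary is best understood as a repackaging step. Its value is that it re-expresses the real locus $X_{\Gamma}(\R)$ (via Theorem~\ref{s2thm} and $\Xi^{\dag}$) in terms of \emph{finite} data, namely the finite subgroup $G_0\subset\SL_2(\Z/N\Z)$ together with the reduction of $C$ modulo $N$, which is precisely the form needed for the explicit family-by-family computations carried out in \S\ref{s:5}.
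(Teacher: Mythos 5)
Your proposal is correct and matches the paper's own argument exactly: the paper likewise obtains $\Xi^{\dag}=\Xi$ from Theorem~\ref{xi-ident} and then applies Theorem~\ref{invimg} to the reduction $\Z\to\Z/N\Z$, citing \cite[Lem.~1.38]{Shimura} for surjectivity. Your explicit remark that $\Gamma$ is the full inverse image of $G_0$ because it contains $\Gamma(N)$ is a hypothesis of Theorem~\ref{invimg} that the paper leaves implicit, so it is a welcome addition.
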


We now begin on the proof of the theorem.  Let $U'$ denote the set of basis vectors in $U$.  There is a bijection
\begin{displaymath}
i:\Q\P^1 \to U'/\{\pm 1\},
\end{displaymath}
defined by mapping a point $[p:q]$ in $\Q\P^1$ with $p$ and $q$ coprime to the vector $(p, q) \in U$.  This map is
$\Gamma$-equivariant, and transforms the action of $c$ on the left to that of $C$ on the right.  It follows that the
cusps of $X_{\Gamma}$ (i.e., the orbits of $\Gamma$ on $\Q\P^1$) are identified via this map with $U'/\Gamma$.
Furthermore, a cusp belongs to $X_{\Gamma}(\R)$ if and only if the corresponding element of $U'/\Gamma$ is invariant
under $C$.  This shows that the parabolic vertices of $\Xi^{\dag}$ and $\Xi$ are in natural bijection.

We now prove two lemmas, from which the theorem easily follows.

\begin{lemma}
Let $\gamma \in \Gamma$ be admissible and let $\ol{x}$ and $\ol{y}$ be the two cusps on $C_{\gamma}$.  Then there
exists a unique geodesic $[x, y; z]$ of $\wt{\Xi}$ such that the images of $x$ and $y$ in $U'/\{\pm 1\}$ coincide with
$i(\ol{x})$ and $i(\ol{y})$.  This geodesic is characterized by the identity $C\gamma-1=w' \langle -, x \rangle y$.
\end{lemma}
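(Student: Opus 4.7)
My plan is to produce $[x,y;z]$ explicitly from the matrix $C\gamma$ and then check uniqueness. Write $C\gamma = \mat{a}{b}{c}{-a}$, which has integer entries (since $C$ and $\gamma$ both lie in $\GL_2(\Z)$), trace zero by admissibility, and determinant $-1$; so $(C\gamma)^2 = 1$ and $a^2 + bc = 1$. I would first observe that a rational point $p/q \in \Q\P^1$ is fixed by $c\gamma$ precisely when $(p,q)^T$ is an eigenvector of $C\gamma$, so the two endpoints of $\ol{C}_\gamma$ (Proposition~\ref{conj:fix}) correspond to the $\pm 1$-eigenlines of $C\gamma$ in $U \otimes \Q$. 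Let $x_0$ be the primitive integer generator of the $+1$-eigenline and $y_0$ that of the $-1$-eigenline; these are unique up to sign and represent $i(\ol{x}), i(\ol{y})$ in $U'/\{\pm 1\}$. Explicitly, for $c \ne 0$ one has $x_0 = \tfrac{1}{d_+}(a+1,c)$ and $y_0 = \tfrac{1}{d_-}(a-1,c)$ with $d_\pm = \gcd(a\pm 1, c)$.

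Next I would verify $\langle x_0, y_0\rangle \in \{\pm 1, \pm 2\}$: for $c \ne 0$ the pairing equals $2c/(d_+ d_-)$, and one checks from $a^2 + bc = 1$ together with $\gcd(d_+, d_-) \mid 2$ that $d_+ d_-$ divides $2c$, yielding the stated values as sketched in \S \ref{ss:role2}; the case $c = 0$ gives $a = \pm 1$ and is handled directly. I would then choose the signs of $x_0, y_0$ so that $w := \langle x_0, y_0\rangle \in \{1, 2\}$, rename them $x, y$, and set $z := (x+y)/w$. When $w = 2$, a parity argument from $a^2 + bc = 1$ shows $x + y \in 2U$, so $z$ lies in $U$; the relations $\langle x, z\rangle = \langle z, y\rangle = 1$ then force $z$ to be a basis vector, so $[x,y;z]$ satisfies conditions (a) and (b).

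To verify (c), I take $\gamma$ itself as the group element $g$. The identity $C\gamma - 1 = w'\langle -, x\rangle y$ can be checked on the basis $\{x, z\}$ of $U$: on $x$ both sides vanish (using $C\gamma x = x$ on the left and $\langle x, x\rangle = 0$ on the right), and on $z$ one computes $(C\gamma - 1)z = (x - y)/w - (x + y)/w = -w'y$ (using $C\gamma x = x$ and $C\gamma y = -y$), which equals $w'\langle z, x\rangle y = -w'y$. Since $\gamma \in G$, this shows $[x, y; z]$ is a geodesic of $\wt{\Xi}$ satisfying the characterization formula.

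For uniqueness, any geodesic $[x', y'; z']$ lifting $\{i(\ol{x}), i(\ol{y})\}$ must have $\{x', y'\} \subseteq \{\pm x, \pm y\}$, and condition (c) imposes $Cg'x' = x'$ and $Cg'y' = -y'$, which together with the distinct eigenvalues of $C\gamma$ forces the labeling $(x', y') \in \{(x,y), (-x,-y)\}$; these two triples are $\rho^2$-equivalent and so represent the same geodesic. The main obstacle I anticipate is the uniform bookkeeping for the pairing computation $\langle x_0, y_0\rangle \in \{\pm 1, \pm 2\}$ and the integrality of $(x+y)/w$ when $w = 2$ --- but both reduce to elementary parity considerations from the determinant identity $a^2 + bc = 1$.
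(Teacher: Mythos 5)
Your construction is essentially the paper's: you identify the endpoints of $\ol{C}_{\gamma}$ with the $\pm 1$-eigenlines of $C\gamma$ (the paper writes them as $\tfrac{a\pm 1}{c}$, which are the vectors $(a\pm 1, c)$ up to scaling), compute the pairing as $2c/(d_+d_-)$ and invoke the same elementary fact that this lies in $\{\pm 1,\pm 2\}$, normalize signs so the weight is positive, and verify condition (c) with $g=\gamma$. Two details you supply are genuinely worth having and are glossed over in the paper: the integrality of $z=(x+y)/2$ in the weight-two case (your parity argument works; equivalently, two nonzero vectors of $\F_2^2$ that pair to $0$ must coincide, so $x\equiv y \pmod 2$), and the clean verification of $C\gamma-1=w'\langle -,x\rangle y$ on the basis $\{x,z\}$. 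One caveat concerns uniqueness: the appeal to ``the distinct eigenvalues of $C\gamma$'' does not by itself force $(x',y')\in\{(x,y),(-x,-y)\}$, because the element $g'$ produced by condition (c) for a candidate triple need not be $\gamma$. Indeed, for the swapped labeling $(x',y')=(y,-x)$ one has $\langle y,-x\rangle=w>0$ and $g'=-\gamma\in G$, so $[y,-x;(y-x)/w]$ genuinely satisfies (a)--(c). This does not threaten the lemma, since that triple equals $\rho^2([x,y;z])$ and hence represents the same geodesic; but the swapped case must be dispatched by this $\rho^2$-identification (or by reading the labeling of $\ol{x},\ol{y}$ as ordered, as the paper implicitly does), not by an eigenvalue argument.
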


\begin{proof}
Write
\begin{displaymath}
C \gamma=\mat{a}{b}{c}{-a}.
\end{displaymath}
Suppose $c \ne 0$.  Then, after possibly switching $\ol{x}$ and $\ol{y}$,
\begin{displaymath}
\ol{x}=\frac{a-1}{c}, \qquad \ol{y}=\frac{a+1}{c}.
\end{displaymath}
(See the proof of Proposition~\ref{con-cusp}.)  Let $d_1=\gcd(a-1,c)$ and $d_2=\gcd(a+1,c)$.  Then, by definition,
\begin{displaymath}
i(\ol{x})=\pm \left( \frac{a-1}{d_1}, \frac{c}{d_1} \right), \qquad
i(\ol{y})=\pm \left( \frac{a+1}{d_2}, \frac{c}{d_2} \right),
\end{displaymath}
and so
\begin{displaymath}
\langle i(\ol{x}), i(\ol{y}) \rangle=\pm \frac{2c}{d_1d_2}.
\end{displaymath}
An elementary argument shows that $d_1d_2$ is equal to either $\pm c$ or $\pm 2c$.  We can thus pick lifts $x$ and
$y$ of $i(\ol{x})$ and $i(\ol{y})$ to $U$ so that $w=\langle x, y \rangle$ is 1 or 2.  From the identities
$\gamma \ol{x}=c\ol{x}$ and $\gamma \ol{y}=c\ol{y}$, we conclude that $C \gamma x=\pm x$ and $C \gamma y=\mp y$.
Replacing $\gamma$ with $-\gamma$ if necessary, we can assume that $C\gamma x=x$ and $C\gamma y=-y$, and so $C\gamma-1
=w'\langle -, x \rangle y$.  Let $z$ be the unique element of $U$ satisfying $x+y=wz$.  Then $[x,y;z]$ is a geodesic of
$\wt{\Xi}$.  If $[x',y';z']$ is another geodesic such that $x'$ lifts $i(\ol{x})$ and $y'$ lifts $i(\ol{y})$, then
necessarily $x'=\pm x$ and $y'=\pm y$, and so $[x',y';z']=[x,y;z]$.  If $b \ne 0$ the proof is similar.  When
$b=c=0$ one can argue directly.  This completes the proof.
\end{proof}

\begin{lemma}
Let $\gamma_1 \ne \gamma_2$ be admissible.  Then the curves $C_{\gamma_1}$ and $C_{\gamma_2}$ intersect at an even order
elliptic point if and only if the geodesics of $\wt{\Xi}$ corresponding to $\gamma_1$ and $\gamma_2$ intersect (in the
sense of \S \ref{ss:xi}).
\end{lemma}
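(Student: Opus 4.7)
The plan is to show that both geometric and combinatorial intersections are witnessed by the same algebraic object: an element $\sigma \in \Gamma$ of order two in $\PSL_2$ lying simultaneously in $Z_{\gamma_1}$ and $Z_{\gamma_2}$. On the curve side this $\sigma$ will be the order-two rotation at the common even-order elliptic point (the membership in $Z_{\gamma_i}$ coming from Lemma~\ref{lem:ell}); on the geodesic side it will be the element produced by Lemma~\ref{intersect}. Writing $M_i := C\gamma_i$, the previous lemma identifies $x_i$ and $y_i$ as primitive vectors spanning the $\pm 1$-eigenspaces of $M_i$.

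For the reverse direction, suppose the geodesics form a $\rho$-orbit. Lemma~\ref{intersect} supplies $\sigma \in G$ with $\sigma x_1=-y_1$, $\sigma y_1=x_1$, and $\sigma z_1 = z_1 - w_1'y_1$. A direct computation on the basis $\{x_1,y_1\}$ gives $\sigma^2=-1$, so $\sigma$ represents an order-two element of $\PSL_2(\Z)$, which is elliptic with a unique fixed point $z\in\mf{h}$. The same formulas yield $\sigma M_1\sigma^{-1}=-M_1$, i.e.\ $\sigma\in Z_{\gamma_1}$ in $\PSL_2(\Z)$; then by Lemma~\ref{lem:ell} the point $z$ lies on $C_{\gamma_1}$ and is an even-order elliptic point. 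Because $[x_2,y_2;z_2]=\rho[x_1,y_1;z_1]=[z_1,z_1-w_1'x_1;y_1]$ expresses $x_2,y_2$ explicitly in terms of the first triple (up to sign and $\rho^2$), a parallel check shows $\sigma M_2\sigma^{-1}=-M_2$, so $\sigma\in Z_{\gamma_2}$ and by Lemma~\ref{lem:ell} again $z\in C_{\gamma_2}$ as an even-order elliptic point.

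For the forward direction, let $z\in C_{\gamma_1}\cap C_{\gamma_2}$ be an even-order elliptic point and let $\sigma\in\Gamma_z$ be its order-two element. By Lemma~\ref{lem:ell}, $\sigma\in Z_{\gamma_1}\cap Z_{\gamma_2}$. Lifting to $\SL_2(\Z)$ we have $\sigma^2=-1$, and the $\PSL$-relation $\sigma\gamma_i\sigma^{-1}=\gamma_i^c$ lifts up to a sign; the commuting option $\sigma M_i=M_i\sigma$ is excluded because $\sigma$ has no real eigenvector while the $\pm 1$-eigenspaces of $M_i$ are real, so the anticommuting option $\sigma M_i=-M_i\sigma$ must hold. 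This swaps the eigenspaces of $M_1$, so $\sigma x_1=\pm y_1$; after adjusting signs we obtain $\sigma x_1=-y_1$, $\sigma y_1=x_1$, and hence $\sigma z_1 = z_1-w_1'y_1$. Lemma~\ref{intersect} now shows that $[x_1,y_1;z_1]$ intersects the geodesic $\rho[x_1,y_1;z_1]=[z_1,z_1-w_1'x_1;y_1]$.

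The main obstacle is the final identification of this second geodesic with $[x_2,y_2;z_2]$ in $\ms{T}/\langle\rho^2\rangle$. By the previous lemma, $[x_2,y_2;z_2]$ is determined by the primitive vectors in the $\pm 1$-eigenspaces of $M_2$, so one must show these spaces are $\R z_1$ and $\R(x_1-y_1)$, which an elementary calculation shows to be the eigenspaces of $\sigma M_1$. The plan is to pin down $M_2=\pm\sigma M_1$ via the product $M_2M_1=\gamma_2^c\gamma_1$, which is a nontrivial element of $\Gamma\cap\Gamma_z$ (nontrivial since $\gamma_1\ne\gamma_2$) that commutes with $\sigma$ in $\SL_2(\Z)$ (since both $M_i$ anticommute with $\sigma$). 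A tangent-direction argument at $z$ — the curves $C_{\gamma_1}$ and $C_{\gamma_2}$ meet transversally at $z$ and $\sigma$ rotates by $\pi$, interchanging them — forces $M_2M_1$ to act as the order-two rotation, giving $M_2M_1=\pm\sigma$ and hence $M_2=\pm\sigma M_1$. This matches the eigenspaces and completes the proof.
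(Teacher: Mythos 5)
Your proof is correct and follows essentially the same route as the paper: both directions hinge on the order-two element $\sigma$ together with Lemma~\ref{intersect}, the reverse direction via the unique fixed point of $\sigma$ (your appeal to Lemma~\ref{lem:ell} is exactly the paper's orientation-reversal argument) and the forward direction via the fact that $\gamma_2^{-1}\gamma_1$ is the nontrivial element of $\Gamma_z$. The only divergence is the final identification, where you compute $M_2=\pm\sigma M_1$ and match eigenspaces while the paper instead applies the already-proved reverse direction to the element $\gamma'$ attached to $\rho[x_1,y_1;z_1]$ to conclude $\gamma'=\gamma_1\sigma=\gamma_2$; note also that your ``tangent-direction argument'' is unnecessary, since $\Gamma_z$ has order exactly two (an even-order elliptic stabilizer inside $\PSL_2(\Z)$), which already forces $M_2M_1=\pm\sigma$.
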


\begin{proof}
Let $\ol{x}_i$ and $\ol{y}_i$ be the cusps on $C_{\gamma_i}$ and let $[x_i,y_i;z_i]$ be the corresponding geodesic of
$\wt{\Xi}$.  If $[x_1,y_1;z_1]$ and $[x_2,y_2;z_2]$ intersect, then by Lemma~\ref{intersect} there exists $\sigma \in
\Gamma$ such that $\sigma(\ol{x}_i)=\ol{y}_i$ and $\sigma(\ol{y}_i)=\ol{x}_i$ and $\sigma^2=-1$.  Thus $\sigma$ induces
an orientation-reversing homeomorphism of $C_{\gamma_i}$, and so $C_{\gamma_i}$ contains a fixed point of $\sigma$.
As $\sigma$ has only one fixed point in the upper half-plane, an elliptic point of even order, this point belongs to
$C_{\gamma_1} \cap C_{\gamma_2}$.

Now suppose that $C_{\gamma_1}$ and $C_{\gamma_2}$ intersect at an elliptic point, and let $\sigma \in \Gamma$ generate
the stabilizer of this elliptic point.  Then $\sigma(\ol{x}_i)=\ol{y}_i$ and $\sigma(\ol{y}_i)=\ol{x}_i$ and
$\gamma_2=\gamma_1 \sigma$.  Replacing
$\sigma$ with $-\sigma$ if necessary, we have $\sigma(x_i)=-y_i$ and $\sigma(y_i)=x_i$.  It follows that
$\sigma(z_i)=z_i-w_i' y_i$.  Thus by Lemma~\ref{intersect}, $[x_3,y_3;z_3]=\rho([x_1,y_1;z_1])$ is a geodesic.  We
claim that it coincides with $[x_2,y_2;z_2]$.  To see this, let $\gamma'$ be defined by $C\gamma'-1=w_3' \langle -, x_3
\rangle y_3$.  Then, by the first paragraph, $C_{\gamma'}$ meets $C_{\gamma_1}$ at an even order elliptic point.
Thus $\gamma'=\gamma_1 \sigma$, which shows that $\gamma'=\gamma_2$.  Since $[x_2,y_2;z_2]$ and $[x_3,y_3;z_3]$ are
both associated to $\gamma_2$, they must coincide.
\end{proof}

We now complete the proof of the theorem.

\begin{proof}[Proof of Theorem~\ref{xi-ident}]
We have shown that the parabolic vertices of $\wt{\Xi}$ are in bijection with the real cusps of $\Gamma$ and that
the geodesics of $\wt{\Xi}$ are in bijection with the curves $C_{\gamma}$ with $\gamma$ admissible.  Furthermore,
two geodesics intersect if and only if the corresponding curves intersect at an even order elliptic point.  It now
follows simply from the constructions of $\Xi^{\dag}$ and $\Xi$ that they are isomorphic.
\end{proof}

\subsection{A result on real elliptic points}

The following result rules out even order real elliptic points in many examples.

\begin{proposition}
\label{realell}
Let $(\Gamma, c)$ be a real congruence group with $c=c_0$ and such that $\Gamma$ is contained in $\Gamma_0(N)$ for some
$N>2$.  Then $\Gamma$ has no real elliptic point of even order.
\end{proposition}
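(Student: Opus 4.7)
I proceed by contradiction. Assume $z \in \mf{h}$ lifts a real elliptic point of even order. The stabilizer $\Gamma_z \subseteq \Gamma$ then contains an element of order two in $\PSL_2$, which I lift to a matrix $\sigma \in \SL_2(\Z)$ satisfying $\sigma^2=-1$. Writing $\sigma=\mat{a}{b}{c}{-a}$, the relation $\sigma^2=-1$ reads $a^2+bc=-1$, and $\sigma \in \Gamma \subseteq \Gamma_0(N)$ forces $N \mid c$, so $a^2 \equiv -1 \pmod{N}$ and in particular $\gcd(a,N)=1$.

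By realness there exists $\gamma \in \Gamma$ with $\gamma z = c_0 z$. Both $\gamma\sigma\gamma^{-1}$ and $\sigma^{c_0}=C_0\sigma C_0^{-1}$ are order-two elements of $\PSL_2(\R)$ fixing $c_0 z$; since the stabilizer of any upper-half-plane point in $\PSL_2(\R)$ is a circle with a unique involution, the two agree in $\PSL_2$, and hence $\gamma\sigma\gamma^{-1}=\pm\sigma^{c_0}$ in $\SL_2(\Z)$.

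Next I would pin down the sign by a modular computation. Expanding $\gamma\sigma\gamma^{-1}$ with $\gamma=\mat{p}{q}{r}{s}$ and using $ps-qr=1$, the $(1,1)$-entry works out to $a+2aqr+sqc-rpb$; since $N \mid r$ and $N \mid c$, this is $\equiv a \pmod{N}$. The $(1,1)$-entries of $\pm\sigma^{c_0}$ are $\pm a$, and the minus sign would force $2a \equiv 0 \pmod{N}$, which is impossible for $N>2$ because $\gcd(a,N)=1$. Therefore $\gamma\sigma\gamma^{-1}=\sigma^{c_0}$, equivalently $C_0^{-1}\gamma$ lies in the centralizer of $\sigma$ in $\GL_2(\Q)$.

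The final step is a norm obstruction. The minimal polynomial of $\sigma$ over $\Q$ is $t^2+1$, so the centralizer of $\sigma$ in $M_2(\Q)$ is the subfield $\Q[\sigma] \cong \Q(i)$; a direct computation using $a^2+bc=-1$ yields $\det(a_0+a_1\sigma)=a_0^2+a_1^2 \ge 0$ for all $a_0,a_1 \in \Q$. Every invertible element of the centralizer therefore has positive determinant, contradicting $\det(C_0^{-1}\gamma)=\det(C_0)^{-1}\cdot \det(\gamma)=-1$. I expect the sign analysis above to be the only real obstacle: it is precisely where the hypothesis $N>2$ enters, and indeed the proposition fails for $\Gamma_0(2)$ (the point $(1+i)/2$ is a real elliptic point of order two on $X_0(2)$), with the sign being $-$ there so that the norm argument no longer applies.
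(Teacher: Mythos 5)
Your proof is correct, and it takes a genuinely different route from the paper's. The paper exploits the explicit uniformization of even-order elliptic points of $\PSL_2(\Z)$: it writes $z=\tau(i)$ with $\tau\in\PSL_2(\Z)$, observes that the only $\delta\in\PSL_2(\Z)$ with $i\in C_\delta$ are $1$ and $\sigma_0=\bigl(\begin{smallmatrix} & 1 \\ -1 & \end{smallmatrix}\bigr)$, concludes that both $\tau\sigma_0^{-1}\tau^{-1}$ and $\tau^{c}\tau^{-1}$ must lie in $\Gamma\subseteq\Gamma_0(N)$, and then reads off their lower-left entries $-(c^2+d^2)$ and $-2cd$; since $\gcd(c,d)=1$ these cannot both be divisible by $N$ unless $N\le 2$. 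You avoid the uniformizer entirely and argue structurally: the congruence $a^2\equiv -1\pmod N$ together with the mod-$N$ comparison of $(1,1)$-entries pins the sign in $\gamma\sigma\gamma^{-1}=\pm\sigma^{c_0}$ (this is where $N>2$ enters for you; in the paper it enters only at the final coprimality step), after which $C_0^{-1}\gamma$ lies in the commutant $\Q[\sigma]\cong\Q(i)$, whose norm form $a_0^2+a_1^2$ is positive definite, contradicting $\det(C_0^{-1}\gamma)=-1$. Your route is a bit longer but more conceptual --- it isolates the obstruction as the fact that $-1$ is not a norm from $\Q(i)$, and your remark that the sign flips for $N=2$ (where $X_0(2)$ really does have a real elliptic point) shows the hypothesis is used sharply --- whereas the paper's argument is a two-line matrix computation once the reduction to $\tau$ is made. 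The auxiliary facts you rely on (an order-two element of $\PSL_2(\Z)$ lifts to $\sigma\in\SL_2(\Z)$ with $\sigma^2=-1$ and trace $0$; the stabilizer of a point of $\mf{h}$ in $\PSL_2(\R)$ contains a unique involution; the centralizer of $\sigma$ in $M_2(\Q)$ is exactly $\Q[\sigma]$) all hold, so there is no gap.
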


\begin{proof}
Let $z \in \mf{h}$ be a real even order elliptic point for $\Gamma$.  Let $\sigma$ be a non-trivial order two element
of $\Gamma$ stabilizing it.  Since every even order elliptic point for $\PSL_2(\Z)$ belongs to the orbit of $i$,
we can write $z=\tau(i)$, for some $\tau \in \PSL_2(\Z)$.  We then have $\sigma=\tau \sigma_0^{-1} \tau^{-1}$,
where
\begin{displaymath}
\sigma_0=\mat{}{1}{-1}{}.
\end{displaymath}
Since $z$ is real, it belongs to $C_{\gamma}$ for some $\gamma \in \Gamma$.  It follows that $i$ is contained on
the curve $\tau^{-1} C_{\gamma}=C_{(\tau^c)^{-1} \gamma \tau}$.  However, one can verify that if $\delta$ is an
element of $\PSL_2(\Z)$ such that $C_{\delta}$ contains $i$, then $\delta$ is either 1 or $\sigma_0$.  We thus find
that $\gamma$ is either $\tau^c \tau^{-1}$ or $\tau^c \sigma_0 \tau^{-1}$; clearly, $\gamma \sigma$ is the other.

We have thus shown that if $\Gamma$ has a real even order elliptic element, then there exists $\tau \in \PSL_2(\Z)$
such that $\tau \sigma_0^{-1} \tau^{-1}$ and $\tau^c \tau^{-1}$ both belong to $\Gamma$.  Writing
\begin{displaymath}
\tau=\mat{a}{b}{c}{d},
\end{displaymath}
we find that the bottom left entry of $\tau \sigma_0^{-1} \tau^{-1}$ is $-(c^2+d^2)$, while the bottom left
entry of $\tau^c \tau^{-1}$ is $-2cd$.  Thus both $c^2+d^2$ and $2cd$ are divisible by $N$.  On the other hand,
$c$ and $d$ are coprime.  This cannot happen unless $N$ is either 1 or 2.
\end{proof}

\section{Examples}
\label{s:5}

We now give some examples.  Throughout this section, $N$ denotes a positive integer.  We write $N=2^r N'$ where
$N'$ is odd.  When $N$ is even, we let $t=2^{r-1} N'$ be the unique non-zero 2-torsion element of $\Z/N\Z$.  We
mostly work with congruence subgroups $\Gamma$ of level $N$.  We always denote by $G$ the corresponding subgroup of
$\SL_2(\Z/N\Z)$.

\subsection{The full modular group}

We begin with the simplest example, namely $\Gamma=\SL_2(\Z)$ and $c=c_0$.  We could apply our theory to this
example, but we find it clearer to reason directly.  The curve $X_{\Gamma}$ has genus 0 and a real point and
is therefore isomorphic to $\P^1$ over $\R$.  It follows that $X_{\Gamma}(\R)$ is a circle.  Put
\begin{displaymath}
\sigma=\mat{}{1}{-1}{}.
\end{displaymath}
Then $C_1$ is the positive imaginary axis, while $C_{\sigma}$ is the unit circle in the upper half-plane.  The
two curves intersect at the elliptic point $i$.  From the standard description of the fundamental domain of
$\Gamma$, we see that the loci $C_1$ and $C_{\sigma}$ are inequivalent, and that $\pi(C_1)$ and $\pi(C_{\sigma})$
intersect only at $\pi(i)$ and $\pi(\infty)$.  It follows that the images of $\pi(C_1)$ and $\pi(C_{\sigma})$ cover
the entire real locus.  The picture is thus:
\begin{displaymath}
\begin{xy}
(-10, 0)*{\ss \infty}; (9, 0)*{\ss i};
(0, 7)*{\ss C_{\sigma}}; (0, -7)*{\ss C_1};
(-7, 0)*{\bullet}; (7, 0)*{\circ};
{\ar@/^2.5ex/@{=}; (-7, 0); (7, 0)};
{\ar@/_2.5ex/@{-}; (-7, 0); (7, 0)};
\end{xy}
\end{displaymath}
In this picture, we have indicated the weights of the two edges, which can be computed by taking the pairing
of the two cusps on each curve.  (For $C_1$ the two cusps are 0 and $\infty$, which pair to 1, while for $C_{\sigma}$
they are $\pm 1$, which pair to 2.)

It is instructive to visualize walking around this circle while at the same time moving in the upper half-plane.
Start at $\infty$ in the upper half-plane, which is the parabolic vertex in the above graph.  As we move down the
imaginary axis towards $i$, we approach the elliptic vertex along the weight one edge.  Of course, we reach $i$
when we reach the elliptic vertex.  If we continue to move down the imaginary axis after reaching $i$ then we
start to move backwards along the weight one edge, reaching the parabolic vertex when we hit 0.  To move along the
weight two edge, we must turn at $i$ and move along the unit circle (say clockwise).  If we follow this to the real
axis we reach 1, which is the parabolic vertex.  However, this picture is not completely satisfying since we have
returned to a different cusp.  Instead, one can travel clockwise from $i$ to $e^{2\pi i/6}$, the elliptic point of
order 3, and then turn and move up the line with real part $\tfrac{1}{2}$.  This leaves $C_{\sigma}$ but moves onto
an equivalent curve.  Traveling all the way up, we return to $\infty$.

\subsection{The curve $X^+(N)$}
\label{ss:ex1}

Let $\Gamma=\Gamma^+(N)$ denote the real congruence group $(\Gamma(N), c_0)$ and let $X=X^+(N)$ denote the
corresponding quotient.  We have $G=\{\pm 1\}$.  The space $X$ parameterizes pairs $(E, i)$ where $E$
is an elliptic curve and $i$ is an isomorphism of $E[N]$ with $(\Z/N\Z) \oplus \mu_N$ under which the Weil pairing
corresponds to the standard $\mu_N$-valued pairing on the target.  In what follows, we analyze the structure of the
graph $\Xi=\Xi_{\Gamma}$.  Note that for $N \ne 1$ the group $\Gamma$ has no elliptic elements.  We assume $N>1$
in what follows.

Suppose that $r=0$, i.e., $N$ is odd, and $N \ne 1$.  The real cusps are then represented by the vectors $(a, 0)$ and
$(0, a)$, with $a$ coprime to $N$.  There are thus $\phi(N)$ real cusps.  The local structure of $\Xi$ at the vertex
$(a, 0)$ is given as follows:
\begin{displaymath}
\begin{xy}
(-35, 4)*{\ss (0, 4b)}; (-25, -4)*{\ss (a/2, 0)}; (-15, 4)*{\ss (0, 2b)}; (-5, -4)*{\ss (a, 0)};
(5, 4)*{\ss (0, b)}; (15, -4)*{\ss (2a, 0)}; (25, 4)*{\ss (0, b/2)}; (35, -4)*{\ss (4a, 0)};
(-35, 0)*{\bullet}; (-25, 0)*{\bullet}; (-15, 0)*{\bullet}; (-5, 0)*{\bullet};
(35, 0)*{\bullet}; (25, 0)*{\bullet}; (15, 0)*{\bullet}; (5, 0)*{\bullet};
{\ar@{..}; (-45, 0); (-35, 0)};
{\ar@{=}; (-35, 0); (-25, 0)};
{\ar@{-}; (-25, 0); (-15, 0)};
{\ar@{=}; (-15, 0); (-5, 0)};
{\ar@{-}; (-5, 0); (5, 0)};
{\ar@{=}; (5, 0); (15, 0)};
{\ar@{-}; (15, 0); (25, 0)};
{\ar@{=}; (25, 0); (35, 0)};
{\ar@{..}; (35, 0); (45, 0)};
\end{xy}
\end{displaymath}
where $ab=1$ in $\Z/N\Z$.  We thus see that every cycle of $\Xi$ contains a vertex of the form $(\ast, 0)$ and that
$(a, 0)$ and $(a', 0)$ belong to the same cycle if and only if $a'$ is of the form $\pm 2^n a$.  It
follows that there are $\psi(N)$ cycles.

Now suppose that $r=1$, i.e., 2 divides $N$ exactly.  If $N=2$ then there are two real cusps in a single cycle.
Thus assume $N \ne 2$, i.e., $N' \ne 1$.  The real cusps are then represented by the
vectors $(a, 0)$ and $(0, a)$, with $a$ coprime to $N$, and the vectors $(a, t)$ and
$(t, a)$, with $a$ coprime to $N'$.  There are $3\phi(N)$ real cusps.  Every cycle of $\Xi$ has six cusps, and
is of the form
\begin{displaymath}
\begin{xy}
(10, 0)*{\bullet}; (5, 8.66)*{\bullet}; (-5, 8.66)*{\bullet};
(-10, 0)*{\bullet}; (-5, -8.66)*{\bullet}; (5, -8.66)*{\bullet};
(14.5, 0)*{\ss (a, t)}; (5, 11.66)*{\ss (t, eb)}; (-5, 11.66)*{\ss (ea, t)};
(-14.5, 0)*{\ss (t, b)}; (-6, -11.66)*{\ss (a, 0)}; (6, -11.66)*{\ss (0, b)};
{\ar@{-}; (10, 0); (5, 8.66)};
{\ar@{-}; (5, 8.66); (-5, 8.66)};
{\ar@{-}; (-5, 8.66); (-10, 0)};
{\ar@{-}; (-10, 0); (-5, -8.66)};
{\ar@{-}; (-5, -8.66); (5, -8.66)};
{\ar@{-}; (5, -8.66); (10, 0)};
\end{xy}
\end{displaymath}
with $a$ coprime to $N$, $ab=1$ and $e=1-t$.  Note that $e^2=e$ and $t^2=t$.  There are $\tfrac{1}{2} \phi(N)$
cycles.

Finally suppose that $r>1$, i.e., $N$ is divisible by 4.  The real cusps are then represented by the vectors
$(a, 0)$, $(0, a)$, $(a, t)$ and $(t, a)$, with $a$ coprime to $N$.  There are thus $2\phi(N)$ real
cusps.  Every cycle of $\Xi$ has exactly four cusps and is of the form
\begin{displaymath}
\begin{xy}
(-12, -5)*{\ss (a, 0)}; (12, -5)*{\ss (0, b)}; (11.5, 5)*{\ss (a, t)}; (-11.5, 5)*{\ss (t, b)};
(-7, -5)*{\bullet}; (-7, 5)*{\bullet}; (7, -5)*{\bullet}; (7, 5)*{\bullet};
{\ar@{-}; (-7, -5); (-7, 5)};
{\ar@{-}; (-7, 5); (7, 5)};
{\ar@{-}; (7, 5); (7, -5)};
{\ar@{-}; (7, -5); (-7, -5)};
\end{xy}
\end{displaymath}
where $ab=1$.  The number of cycles is $\tfrac{1}{2} \phi(N)$.

The results are summarized in the following proposition:

\begin{proposition}
Let $N>1$ be an integer.  Then $X^+(N)$ contains no elliptic points.  The number of real cusps on $X^+(N)$ is
given by
\begin{displaymath}
\phi(N) \times
\begin{cases}
1 & \textrm{if $N$ is odd} \\
3 & \textrm{if $2 \parallel N$ but $N \ne 2$} \\
2 & \textrm{if $4 \mid N$ or $N=2$.}
\end{cases}
\end{displaymath}
The number of real components of $X^+(N)$ is given by
\begin{displaymath}
\begin{cases}
\psi(N) & \textrm{if $N$ is odd} \\
\tfrac{1}{2} \phi(N) & \textrm{if $N \ne 2$ is even} \\
1 & \textrm{if $N=2$.}
\end{cases}
\end{displaymath}
In all cases, each component has the same structure.
\end{proposition}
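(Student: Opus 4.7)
The plan is to apply Theorem~\ref{xi-ident} to identify $\Xi^{\dag}$ with $\Xi_{G_0}$, where $G_0 = \{\pm 1\} \subset \SL_2(\Z/N\Z)$ is the image of $\Gamma(N)$ and $C = C_0$, and then read off the structure of $\Xi_{G_0}$ directly from the definition in \S \ref{ss:xi}. The smallness of $G_0$ makes the geodesic condition very rigid, and this is what drives the explicit formulas.

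I would first dispose of elliptic points: $\Gamma(N) \subset \PSL_2(\Z)$ is torsion-free for all $N \ge 2$ (a classical fact), so $X^+(N)$ has no elliptic points; correspondingly one checks that no two geodesics of $\Xi_{G_0}$ intersect in the sense of Lemma~\ref{intersect}, because the required order-two $\sigma$ would have to lie in $\{\pm 1\}$, which it does not. Next I would enumerate parabolic vertices: since both elements of $G_0$ are admissible, a basis vector $x = (u, v) \in (\Z/N\Z)^2$ is a parabolic vertex iff $C_0 x = \pm x$, i.e., $2u = 0$ or $2v = 0$. Splitting into the cases $r = 0$, $r = 1$, and $r \ge 2$ and counting basis vectors of the allowed forms modulo $x \sim -x$ produces the stated cusp counts.

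For the edges, condition~(c) of \S \ref{ss:xi} with $g \in \{\pm 1\}$ reduces to the explicit linear identity $Cg - 1 = w' \langle -, x \rangle y$, which pins down $y$ from $x$ and the weight $w \in \{1, 2\}$. By Theorem~\ref{xithm} (sharpened by Proposition~\ref{xithm-odd} on the odd part), $\Xi_{G_0}$ is a disjoint union of cycles with each parabolic vertex of valence two. For odd $N$, a short matrix computation shows that the weight-one edge from $(a, 0)$ leads to $(0, a^{-1})$ and the weight-two edge from $(0, a^{-1})$ leads to $(2a, 0)$; tracing the alternating pattern, a cycle visits exactly the $\langle -1, 2 \rangle$-orbit of $a$ in $(\Z/N\Z)^\times$, yielding $\psi(N)$ components. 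For even $N$ the analogous trace recovers the six- or four-vertex cycle pictures displayed in the text and the $\tfrac{1}{2}\phi(N)$ count; the statement ``each component has the same structure'' then follows from the transitive $(\Z/N\Z)^\times$-action on the parabolic vertices by coordinate scaling, together with the fact that the $\langle -1, 2\rangle$-action on $(\Z/N\Z)^\times$ is free so that all cycles have equal length.

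The main obstacle is the edge computation for $r \ge 1$: with $2$ a zero-divisor, the auxiliary element $z$ in a geodesic $[x, y; z]$ is not uniquely determined by $x$ and $y$, the formula for the partner $y$ involves inverting $w' a$ which can fail to be a unit, and one must separate weight-one and weight-two edges while verifying that each candidate partner really is a basis vector of the required form. Once the local picture at one vertex of each type is pinned down, however, Theorem~\ref{xithm} forces the rest of each cycle, so the whole analysis reduces to a bounded amount of elementary matrix computation modulo $N$, plus separate inspection of the degenerate case $N = 2$ where $\pm 1$ coincide.
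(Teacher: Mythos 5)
Your proposal is correct and follows essentially the same route as the paper: identify $\Xi_{\Gamma}$ with the graph $\Xi_G$ for $G=\{\pm 1\}\subset \SL_2(\Z/N\Z)$ via Theorems~\ref{xi-ident} and~\ref{invimg}, note $\Gamma(N)$ is torsion-free for $N>1$, enumerate the parabolic vertices $(u,v)$ with $2u=0$ or $2v=0$, and trace the alternating weight-$1$/weight-$2$ edges from condition~(c) in the three cases $r=0$, $r=1$, $r\ge 2$. Your local edge computation (weight one from $(a,0)$ to $(0,a^{-1})$, weight two to $(0,2a^{-1})$) matches the paper's displayed cycle structure, and the resulting counts $\psi(N)$, $\tfrac{1}{2}\phi(N)$, and $1$ agree.
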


\subsection{The curve $X^-(N)$}
\label{ss:ex2}

Let $c_1$ denote the complex conjugation of $\mf{h}$ given by $z \mapsto 1/\ol{z}$.  Let $\Gamma=\Gamma^-(N)$ denote
the real congruence group $(\Gamma(N), c_1)$ and let $X=X^-(N)$ denote the
corresponding quotient.  It is not difficult to see that every real form of $\Gamma(N)$ is
equivalent to either $\Gamma^+(N)$ or $\Gamma^-(N)$, and that these two are equivalent to each other if and only if $N$
is odd.  The space $X^-(N)$ parameterizes pairs $(E, i)$ where $E$ is an elliptic curve and $i$ is an isomorphism of
$E[N]$ with the extension of $\Z/N\Z$ by $\mu_N$ corresponding under K\"ummer theory to $-1$ (in
$\R^{\times}/(\R^{\times})^N$ or even $\Q^{\times}/(\Q^{\times})^N$) under which the Weil
pairing corresponds to the standard pairing.  (The same description applies to $X^+(N)$, except one uses the
extension corresponding to $1 \in \R^{\times}/(\R^{\times})^N$.)  Of course, $\Gamma^-(N)$ has no elliptic points for
$N \ne 1$.  We assume $N>1$ in what follows.

When $r=0$ the real locus of $X^-(N)$ is isomorphic to that of $X^+(N)$.  Thus assume $r>0$.
The real cusps are represented by vectors of the form $(a, a)$ and $(a, -a)$ where $a$ is
prime to $N$.  When $N=2$ there is thus a single real cusp, and it has a self-edge of weight 2.  Now assume $N \ne 2$.
There are thus $\phi(N)$ real cusps.  Every cycle has two vertices and is of the form
\begin{displaymath}
\begin{xy}
(-12, 0)*{\ss (a, a)}; (13, 0)*{\ss (b, -b)};
(-7, 0)*{\bullet}; (7, 0)*{\bullet};
{\ar@/^2.5ex/@{=}; (-7, 0); (7, 0)};
{\ar@/_2.5ex/@{=}; (-7, 0); (7, 0)};
\end{xy}
\end{displaymath}
where $ab=1$.  There are thus $\tfrac{1}{2} \phi(N)$ cycles.

The results are summarized in the following proposition:

\begin{proposition}
Let $N>1$ be an integer.  Then $X^-(N)$ contains no elliptic points.  The number of real cusps on $X^-(N)$ is given
by $\phi(N)$.  The number of real components of $X^-(N)$ is given by
\begin{displaymath}
\begin{cases}
\psi(N) & \textrm{if $N$ is odd} \\
\tfrac{1}{2} \phi(N) & \textrm{if $N \ne 2$ is even} \\
1 & \textrm{if $N=2$.}
\end{cases}
\end{displaymath}
In all cases, each real components has the same structure.
\end{proposition}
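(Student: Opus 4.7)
The plan is to invoke Theorem~\ref{xi-ident} to identify $\Xi^{\dag} = X^-(N)(\R)$ with the graph $\Xi_G$, where $G$ is the image of $\Gamma(N) \cdot \{\pm 1\}$ in $\SL_2(\Z/N\Z)$ (so $G = \{\pm 1\}$) and $C = \mat{0}{1}{1}{0}$ is the matrix associated to $c_1(z) = 1/\ol{z}$. The absence of elliptic points is immediate since $\Gamma(N)$ has no elliptic elements for $N > 1$. When $N$ is odd, the statement follows from the preceding subsection together with the noted equivalence of $\Gamma^-(N)$ and $\Gamma^+(N)$ as real Fuchsian groups, so only the even case requires genuine work.

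For even $N > 2$, I would first enumerate parabolic vertices. A basis vector $x \in (\Z/N\Z)^2$ is a parabolic vertex of $\wt{\Xi}$ iff $Cgx = x$ for some admissible $g \in G = \{\pm 1\}$; since $g = \pm 1$ is automatically admissible and $C^2 = 1$, this amounts to $Cx = \pm x$. The solutions are $x = (a,a)$ and $x = (a,-a)$ with $a \in (\Z/N\Z)^\times$. After passing to $U/\{\pm 1\}$ and quotienting by the trivial $G$-action, one obtains $\phi(N)$ parabolic vertices, giving the cusp count.

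Next I would enumerate the geodesics. Since $\langle (a,a),(b,b) \rangle = 0$, $\langle (a,-a),(b,-b) \rangle = 0$, and $\langle (a,a),(b,-b) \rangle = -2ab$ is always even, only weight-$2$ geodesics occur, and they must connect a type-$(a,a)$ vertex to a type-$(b,-b)$ one. Imposing $\langle x,y \rangle = \pm 2$ forces $y$ equivalent (mod $\{\pm 1\}$) to $(-a^{-1}, a^{-1})$. A direct calculation confirms that the map $u \mapsto u + \langle u,x\rangle y$ simply equals $C$, so condition~(c) of \S\ref{ss:xi} holds with $g = 1 \in G$. The critical combinatorial step is to show that between the fixed pair $x$ and $y$ there are \emph{exactly two} geodesics, corresponding to the two admissible values $z$ and $z + (t,t)$ of the third coordinate, where $(t,t)$ is a nontrivial $2$-torsion element of $U$ orthogonal to $x$. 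Since $\rho^4 = -1$, the group $G$ acts trivially on $\ms{T}/\langle \rho^2 \rangle$, so inequivalence reduces to checking the $z$-coordinate shift is not absorbed by the $\rho^2$-involution $[x,y;z] \mapsto [y,-x;z-x]$. The two edges thus produce a digon on $\{(a,a),(-a^{-1},a^{-1})\}$, partitioning the $\phi(N)$ cusps into $\tfrac{1}{2}\phi(N)$ cycles.

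Finally the $N = 2$ case is handled separately: there is a single parabolic vertex $(1,1) = (1,-1)$, only a weight-$2$ self-edge can occur (the self-pairing is $0 \equiv 2 \pmod{2}$), and the two candidate $z$-coordinates collapse into a single $\rho^2$-orbit, yielding a single loop, hence one component. The main obstacle is the verification in the even case that no geodesics beyond the two described exist, which amounts to ruling out shifts of $z$ by the remaining $2$-torsion vectors $(t,0)$ and $(0,t)$ that are \emph{not} orthogonal to $x$ and hence fail condition~(a) of \S\ref{ss:xi}; everything else reduces to the bookkeeping of units modulo $N$.
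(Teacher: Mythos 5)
Your approach is the same as the paper's: reduce via Theorem~\ref{xi-ident} and Theorem~\ref{invimg} to computing $\Xi_G$ for $G=\{\pm 1\}$ with $C=\mat{0}{1}{1}{0}$, handle odd $N$ by the equivalence with $\Gamma^+(N)$, and for even $N$ enumerate the vertices $(a,a)$, $(a,-a)$ and show each cycle is a digon (resp.\ a single weight-two loop for $N=2$). The vertex count, the analysis of the two values $z$, $z+(t,t)$ of the third coordinate, and the $\rho^2$/sign bookkeeping are all correct.

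One step is stated imprecisely enough to be worth fixing. You claim that imposing $\langle x,y\rangle=\pm 2$ with $x=(a,a)$, $y=(b,-b)$ forces $y\sim(-a^{-1},a^{-1})$. It does not: the equation $-2ab=2$ in $\Z/N\Z$ only gives $ab\in\{-1,-1+t\}$, so for even $N\ge 6$ there is a second candidate neighbour $(-a^{-1}+t,\,a^{-1}+t)$, which is a genuinely distinct parabolic vertex. What eliminates it is condition~(c): the map $u\mapsto u+\langle u,x\rangle y$ equals $C$ exactly when $ab=-1$, and one checks that neither $g=1$ nor $g=-1$ satisfies (c) when $ab=-1+t$. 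Since you do verify (c) for the surviving candidate, your framework catches this, but as written the uniqueness of the neighbour is asserted for the wrong reason, and getting it wrong would change the cycle structure (longer cycles instead of digons) and hence the component count. Two smaller points: you should also note that a valid $z$ actually exists (the system $2z=x+y$, $\langle x,z\rangle=\langle z,y\rangle=1$ is consistent because both coordinates of $x+y$ are even and the two pairing conditions are compatible; alternatively, valence two is forced by Theorem~\ref{xithm}), and for $N=4$ the coincidence $-2=2$ admits extra sign choices of representatives, which are again ruled out by condition~(c).
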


Note that $X^+(N)$ and $X^-(N)$ have the same number of real components, even though their real components have
different structure.

\subsection{The curve $X_1(N)$}
\label{ss:ex3}

Let $\Gamma=\Gamma_1(N)$ denote the real congruence group $(\Gamma_1(N), c_0)$ and let $X=X_1(N)$ be the corresponding
quotient.  The group $G$ is the subgroup of $\SL_2(\Z/N\Z)$ consisting of matrices of the form
\begin{displaymath}
\mat{\pm 1}{\ast}{}{\pm 1}.
\end{displaymath}
The space $X$ parameterizes pairs $(E, P)$ where $E$ is an elliptic curve and $P$ is a point on $E$ of
exact order $N$.  The group $\Gamma$ has no even order elliptic elements for $N>2$.

Suppose first that $r=0$.  The real cusps of $\Gamma$ are represented by vectors of the form $(a, 0)$ and $(0, a)$,
with $a$ prime to $N$.  This representation is unique, except for the action of $\pm 1$.  It follows that there are
$\phi(N)$ real cusps.  Clearly, $(a, 0)$ is connected in $\Xi_G$ to both $(0, b)$ and $(0, 2b)$, where $b=a^{-1}$.
The description of $\Xi$ is thus exactly the same as it was for $\Gamma(N)$ (either form).

Now suppose that $r=1$.  When $N=2$ there are two real cusps and one real elliptic point of even order in a single
component.  The picture is:
\begin{displaymath}
\begin{xy}
(-7, -7)*{\ss (0, 1)}; (7, -7)*{\ss (1,0)};
(-7, -5)*{\bullet}; (7, -5)*{\bullet}; (0, 5)*{\circ};
{\ar@{-}; (-7, -5); (7, -5)};
{\ar@{=}; (7, -5); (0, 5)};
{\ar@{-}; (-7, -5); (0, 5)};
\end{xy}
\end{displaymath}
Now suppose $N \ne 2$, so that there are no elliptic vertices.  The real cusps are represented by
vectors of the form $(a, 0)$, $(0, a)$, $(1, 2a)$ and $(a, t)$ with $a$ prime to $N$.
Replacing $a$ by $-a$ in each of the four forms yields an equivalent parabolic vertex, and this operation captures
all equivalences.  There are thus $\tfrac{1}{2} \phi(N)$ cusps of each kind, for a total of $2 \phi(N)$ real cusps.
Let $\epsilon=2+t$, so $\epsilon=1$ modulo 2 and $\epsilon=2$ modulo $N'$.  The local picture at $(a, 0)$ is then
\begin{displaymath}
\begin{xy}
(-20, 4)*{\ss (a, 0)}; (-10, -4)*{\ss (0, b)}; (0, 4)*{\ss (a,t)};
(10, -4)*{\ss (1,2\epsilon^{-1} b)}; (20, 4)*{\ss (\epsilon a, 0)};
(-20, 0)*{\bullet}; (-10, 0)*{\bullet}; (0, 0)*{\bullet}; (10, 0)*{\bullet}; (20, 0)*{\bullet};
{\ar@{::}; (-30, 0); (-20, 0)};
{\ar@{-}; (-20, 0); (-10, 0)};
{\ar@{-}; (-10, 0); (0, 0)};
{\ar@{-}; (0, 0); (10, 0)};
{\ar@{=}; (10, 0); (20, 0)};
{\ar@{..}; (20, 0); (30, 0)};
\end{xy}
\end{displaymath}
Here $ab=1$.  We thus see that $(a, 0)$ is connected by a chain of length 4, involving each type of cusp exactly once,
to $(\epsilon a, 0)$.  It follows that the number of cycles is the cardinality of $(\Z/N\Z)^{\times}/\langle -1,
\epsilon \rangle$, which coincides with $\psi(N')$.

Finally, suppose that $r \ge 2$.  When $N=4$, there are three real cusps and one cycle, as follows:
\begin{displaymath}
\begin{xy}
(-7, -8)*{\ss (0,1)}; (7, -8)*{\ss (1,0)}; (0, 8)*{\ss (1,2)};
(-7, -5)*{\bullet}; (7, -5)*{\bullet}; (0, 5)*{\bullet};
{\ar@{-}; (-7, -5); (7, -5)};
{\ar@{=}; (7, -5); (0, 5)};
{\ar@{-}; (-7, -5); (0, 5)};
\end{xy}
\end{displaymath}
Now suppose $N \ne 4$.  The real cusps of $\Gamma$ are represented by vectors of the form $(a, 0)$, $(0, a)$, $(1, 2a)$
and $(a, t)$ with $a$ prime to $N$.  There are $\tfrac{1}{2} \phi(N)$ cusps of each of the
first and second forms and $\tfrac{1}{4} \phi(N)$ cusps of each of the third and forth forms, for a total of
$\tfrac{3}{2} \phi(N)$ real cusps.  Every cycle of $\Xi$ contains exactly six cusps, and is of the form
\begin{displaymath}
\begin{xy}
(10, 0)*{\bullet}; (5, 8.66)*{\bullet}; (-5, 8.66)*{\bullet};
(-10, 0)*{\bullet}; (-5, -8.66)*{\bullet}; (5, -8.66)*{\bullet};
(14, 0)*{\ss (a, 0)}; (5, 11.66)*{\ss (0, b)}; (-5, 11.66)*{\ss (a, t)};
(-16, 0)*{\ss (0, b+t)}; (-6, -11.66)*{\ss (a+t, 0)}; (6, -11.66)*{\ss (1, 2b)};
{\ar@{-}; (10, 0); (5, 8.66)};
{\ar@{-}; (5, 8.66); (-5, 8.66)};
{\ar@{-}; (-5, 8.66); (-10, 0)};
{\ar@{-}; (-10, 0); (-5, -8.66)};
{\ar@{=}; (-5, -8.66); (5, -8.66)};
{\ar@{=}; (5, -8.66); (10, 0)};
\end{xy}
\end{displaymath}
where $ab=1$.  It follows that there are $\tfrac{1}{4} \phi(N)$ cycles.

The above results are summarized in the following proposition:

\begin{proposition}
Let $N \ge 1$ be an integer, let $\Gamma=\Gamma_1(N)$ and let $c=c_0$.  Write $N=2^rN'$ with $N'$ odd.  Then
$X_{\Gamma}$ contains no even order elliptic points if $N>2$, and exactly one even order real elliptic point if $N$ is
1 or 2.  The number of real cusps on $X_{\Gamma}$ is given by
\begin{displaymath}
\phi(N) \times \begin{cases}
1 & \textrm{if $r=0$} \\
2 & \textrm{if $r=1$} \\
\tfrac{3}{2} & \textrm{if $r \ge 2$.}
\end{cases}
\end{displaymath}
The number of real components of $X_{\Gamma}$ is given by
\begin{displaymath}
\begin{cases}
\psi(N') & \textrm{if $r=0$ or $r=1$} \\
\tfrac{1}{4} \phi(N) & \textrm{if $r \ge 2$ and $N \ne 4$} \\
1 & \textrm{if $N=4$.}
\end{cases}
\end{displaymath}
In all cases, each component has the same structure.
\end{proposition}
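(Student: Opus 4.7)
My plan is to apply Theorem~\ref{xi-ident}: via its corollary, $X_{\Gamma}(\R)$ is homeomorphic to the graph $\Xi_G$, where $G \subset \SL_2(\Z/N\Z)$ is the image of $\Gamma_1(N)$, namely the subgroup of matrices $\left(\begin{smallmatrix} \pm 1 & \ast \\ 0 & \pm 1 \end{smallmatrix}\right)$, and the complex-conjugation matrix is $C_0 = \operatorname{diag}(1,-1)$. Everything then reduces to a direct enumeration of the vertices and edges of $\Xi_G$, organized by $r = v_2(N)$. Theorem~\ref{xithm} (applied to $\Z/N\Z = \Z/2^r\Z \times \Z/N'\Z$) guarantees throughout that $\Xi_G$ is a disjoint union of cycles, so it suffices to describe the local structure at a typical vertex and then count.

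First I would dispose of the claim about real elliptic points: for $N > 2$, $\Gamma_1(N) \subset \Gamma_0(N)$, so Proposition~\ref{realell} immediately gives no real elliptic points of even order, while for $N = 1$ or $N = 2$ the class of $i$ is such a point and a direct check confirms uniqueness. Next I would enumerate the real cusps. The admissible elements of $G$ are exactly $\pm\left(\begin{smallmatrix} 1 & b \\ 0 & 1 \end{smallmatrix}\right)$ (those with $a = d$), and solving $C_0 g x = x$ for such $g$ and a basis vector $x = (p,q)$ forces either $2q = 0$ or $q$ to be a unit. This produces the representative classes $(p,0)$, $(0,p)$, $(1,2a)$, and (when $N$ is even) $(p,t)$, with $p$ and $a$ prime to $N$; since $G$ acts on $U/\{\pm 1\}$ by preserving $q$ up to sign and shifting $p$ by multiples of $q$, collapsing these into $G$-orbits is routine bookkeeping and produces the stated cusp totals. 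For edges I would use the explicit description of geodesics in \S\ref{ss:xi}: a weight-$w$ edge connects $x$ to $y$ exactly when $\langle x,y\rangle = \pm w$ and the prescribed endomorphism $u \mapsto Cu + w'\langle u, x\rangle Cy$ lies in $G$. Writing out the weight-one and weight-two neighbors of a typical cusp yields precisely the local pictures displayed in the statement.

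The main obstacle is the intermediate case $r = 1$. There I must introduce the element $\epsilon = 2 + t \in (\Z/N\Z)^\times$, which equals $1$ mod $2$ and $2$ mod $N'$; verify that walking the four-step cycle through the four cusp types acts on the first coordinate as multiplication by $\epsilon$; and conclude that the number of cycles equals $|(\Z/N\Z)^\times / \langle -1, \epsilon \rangle| = \psi(N')$. For $r = 0$ an analogous but simpler computation gives a $\psi(N)$ count via the action of $\langle -1, 2 \rangle$ on $(\Z/N\Z)^\times$, mirroring \S\ref{ss:ex1}; for $r \geq 2$ the generic six-vertex cycle exhibited in the text contains one cusp of each form $(a,0), (0,b), (a,t), (0,b+t), (a+t,0), (1,2b)$ with $ab = 1$, so dividing the $\tfrac{3}{2}\phi(N)$ real cusps by $6$ gives $\tfrac{1}{4}\phi(N)$ cycles. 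Finally, the degenerate cases $N = 2$ and $N = 4$ require separate inspection, since the distinct parabolic representatives in the generic picture collapse; I would draw the resulting small graphs by hand.
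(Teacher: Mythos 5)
Your plan is essentially the paper's own argument in \S\ref{ss:ex3}: identify $X_{\Gamma}(\R)$ with $\Xi_G$ via Theorem~\ref{xi-ident} and its corollary, list the real cusps in the four forms $(a,0)$, $(0,a)$, $(1,2a)$, $(a,t)$, read off the local edge structure, and count cycles using $\epsilon=2+t$ when $r=1$ and the six-vertex hexagon when $r\ge 2$, with $N=2,4$ treated by hand. Two small corrections. First, your dichotomy ``$C_0gx=x$ forces $2q=0$ or $q$ a unit'' is incomplete: taking $g=-\left(\begin{smallmatrix}1&b\\0&1\end{smallmatrix}\right)$ the condition is $2p+bq=0$ for some $b$, which (given that $(p,q)$ is a basis vector) forces $\gcd(q,N)\in\{1,2\}$; the case $\gcd(q,N)=2$ is exactly what produces the vertices $(1,2a)$ that you correctly include in your list, so the trichotomy should be $2q=0$, $q$ a unit, or $\gcd(q,N)=2$. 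Second, Proposition~\ref{realell} only excludes \emph{real} even-order elliptic points, whereas the statement asserts there are none at all for $N>2$; for that you need the standard observation that an even-order elliptic element of $\PSL_2(\Z)$ lifts to a trace-zero matrix in $\SL_2(\Z)$, which cannot be congruent to a unipotent matrix mod $N$ when $N>2$ (only the real case matters for the component count, so this does not affect the rest of the argument).
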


\subsection{The curve $X_0(N)$}
\label{ss:ex4}

Let $\Gamma=\Gamma_0(N)$ denote the real congruence group $(\Gamma_0(N), c_0)$ and let $X=X_0(N)$ be the corresponding
quotient.  The group $G$ is the subgroup of $\SL_2(\Z/N\Z)$ consisting of upper triangular matrices.  The space $X$
parameterizes elliptic curves together with a cyclic subgroup of order $N$.  The set of cusps of $\Gamma$ is identified
with $\P^1(\Z/N\Z)$.  The action of $C$ on $\P^1(\Z/N\Z)$ is multiplication by $-1$.  The space $X$ can have
elliptic points, but as shown in Proposition~\ref{realell}, for $N>2$ none of them are real.

Write $G_N$ for the group $G$ discussed above and $\Xi_N$ for the corresponding graph.  If $N=p_1^{e_1} \cdots
p_n^{e_n}$ then $G_N=G_{p_1^{e_1}} \times \cdots \times G_{p_n^{e_n}}$, and each $G_{p_i^{e_i}}$ contains $-1$.
Thus $\Xi_N=\Xi_{p_1^{e_1}} \gtimes \cdots \gtimes \Xi_{p_n^{e_n}}$, where $\gtimes$ is the product discussed in
\S \ref{ss:modgr}.  The graphs $\Xi_{p^n}$ are as follows:
\begin{displaymath}
\begin{xy}
(0, -12)*{\Xi_{p^e}};
(-9, 0)*{\ss 0}; (10, 0)*{\ss \infty};
(-7, 0)*{\bullet}; (7, 0)*{\bullet};
{\ar@/^2.5ex/@{=}; (-7, 0); (7, 0)};
{\ar@/_2.5ex/@{-}; (-7, 0); (7, 0)};
\end{xy}
\hskip 8ex
\begin{xy}
(0, -12)*{\Xi_2};
(-7, -7)*{\ss 0}; (7, -7)*{\ss \infty};
(-7, -5)*{\bullet}; (7, -5)*{\bullet}; (0, 5)*{\circ};
{\ar@{-}; (-7, -5); (7, -5)};
{\ar@{=}; (7, -5); (0, 5)};
{\ar@{-}; (-7, -5); (0, 5)};
\end{xy}
\hskip 8ex
\begin{xy}
(0, -12)*{\Xi_4};
(-7, -7)*{\ss 0}; (7, -7)*{\ss \infty}; (0, 7.5)*{\ss x};
(-7, -5)*{\bullet}; (7, -5)*{\bullet}; (0, 5)*{\bullet};
{\ar@{-}; (-7, -5); (7, -5)};
{\ar@{=}; (7, -5); (0, 5)};
{\ar@{-}; (-7, -5); (0, 5)};
\end{xy}
\hskip 8ex
\begin{xy}
(0, -12)*{\Xi_{2^r}};
(-7, -7)*{\ss 0}; (7, -7)*{\ss \infty}; (-7, 7.5)*{\ss x}; (7, 7.5)*{\ss y};
(-7, -5)*{\bullet}; (7, -5)*{\bullet}; (-7, 5)*{\bullet}; (7, 5)*{\bullet};
{\ar@{-}; (-7, -5); (7, -5)};
{\ar@{=}; (7, -5); (7, 5)};
{\ar@{=}; (7, 5); (-7, 5)};
{\ar@{-}; (-7, 5); (-7, -5)};
\end{xy}
\end{displaymath}
Here $p$ is an odd prime, $e \ge 1$ and $r>2$ and $x=[1:2]$ and $y=[1:2^{r-1}]$.  Let $\Xi_{\rm odd}$ denote the
leftmost graph, so that any $\Xi_{p^e}$ with $p$ odd is isomorphic to $\Xi_{\rm odd}$.  Now,
$\Xi_{\rm odd} \ast \Xi_{\rm odd}$ is isomorphic to $\Xi_{\rm odd} \amalg \Xi_{\rm odd}$.  It follows that if
$N$ is an odd integer with $n$ distinct prime factors, then $\Xi_N$ is isomorphic to a disjoint union of
$2^{n-1}$ copies of $\Xi_{\rm odd}$.  In particular, $\Xi_N$ has $2^n$ vertices (all parabolic) and $2^{n-1}$
cycles.  Now, if $N=2^rN'$ with $r>0$ and $N'$ odd with $n$ distinct prime factors then $\Xi_N$ is isomorphic to a
disjoint union of $2^{n-1}$ copies of $\Xi_{2^r} \ast \Xi_{\rm odd}$.  These products are as follows:
\begin{displaymath}
\begin{xy}
(-7, -8)*{\ss (0, 0)}; (7, -8)*{\ss (0, \infty)}; (-7, 8)*{\ss (\infty, \infty)}; (7, 8)*{\ss (\infty, 0)};
(-7, -5)*{\bullet}; (7, -5)*{\bullet}; (-7, 5)*{\bullet}; (7, 5)*{\bullet};
{\ar@{-}; (-7, -5); (7, -5)};
{\ar@{-}; (7, -5); (7, 5)};
{\ar@{=}; (7, 5); (-7, 5)};
{\ar@{-}; (-7, 5); (-7, -5)};
(0, -17)*{\Xi_{\rm odd} \gtimes \Xi_2};
\end{xy}
\hskip 12ex
\begin{xy}
(7, 4.04)*{\bullet}; (0, 8.08)*{\bullet}; (-7, 4.04)*{\bullet};
(-7, -4.04)*{\bullet}; (0, -8.08)*{\bullet}; (7, -4.04)*{\bullet};
(0, -11)*{\ss (0,0)}; (13, -5)*{\ss (\infty, \infty)}; (11.5, 5)*{\ss (x,0)};
(0, 11)*{\ss (0, \infty)}; (-12, 5)*{\ss (\infty, 0)}; (-12, -5)*{\ss (x, \infty)};
{\ar@{-}; (7, 4.04); (0, 8.08)};
{\ar@{-}; (0, 8.08); (-7, 4.04)};
{\ar@{=}; (-7, 4.04); (-7, -4.04)};
{\ar@{-}; (-7, -4.04); (0, -8.08)};
{\ar@{-}; (0, -8.08); (7, -4.04)};
{\ar@{=}; (7, -4.04); (7, 4.04)};
(0, -17)*{\Xi_{\rm odd} \gtimes \Xi_4};
\end{xy}
\hskip 12ex
\begin{xy}
(-21, 5)*{\bullet}; (-7, 5)*{\bullet}; (7, 5)*{\bullet}; (21, 5)*{\bullet};
(-21, -5)*{\bullet}; (-7, -5)*{\bullet}; (7, -5)*{\bullet}; (21, -5)*{\bullet};
(-21, -8)*{\ss (0, 0)}; (-7, -8)*{\ss (\infty, \infty)}; (-7, 8)*{\ss (y, 0)}; (-21, 8)*{\ss (x, \infty)};
(7, -8)*{\ss (0, \infty)}; (21, -8)*{\ss (\infty, 0)}; (21, 8)*{\ss (y, \infty)}; (7, 8)*{\ss (x, 0)};
{\ar@{-}; (-21, -5); (-7, -5)};
{\ar@{=}; (-7, -5); (-7, 5)};
{\ar@{=}; (-7, 5); (-21, 5)};
{\ar@{-}; (-21, 5); (-21, -5)};
{\ar@{-}; (21, -5); (7, -5)};
{\ar@{-}; (7, -5); (7, 5)};
{\ar@{=}; (7, 5); (21, 5)};
{\ar@{=}; (21, 5); (21, -5)};
(0, -17)*{\Xi_{\rm odd} \gtimes \Xi_{2^r}};
\end{xy}
\end{displaymath}
In particular, $\Xi_N$ has $2^{n+1}$, $3 \cdot 2^n$ or $2^{n+2}$ vertices, depending on if $r=1$, $r=2$ or $r \ge 3$
and $2^{n-1}$ or $2^n$ cycles, depending on if $r=1,2$ or $r\ge 3$.  These results are summarized in the following
proposition:

\begin{proposition}
Let $N \ge 1$ be an integer, write $N=2^rN'$, with $N'$ odd, and let $n$ be the number of distinct prime factors of
$N'$.  The space $X_0(N)$ has a real elliptic point only if $N$ is 1 or 2, in which case it has exactly 1.  The number
of real cusps on $X_0(N)$ is given by
\begin{displaymath}
2^n \times \begin{cases}
1 & \textrm{if $r=0$} \\
2 & \textrm{if $r=1$} \\
3 & \textrm{if $r=2$} \\
4 & \textrm{if $r \ge 3$}
\end{cases}
\end{displaymath}
The number of real components of $X_0(N)$ is 1 if $N$ is a power of 2 and is given by
\begin{displaymath}
2^{n-1} \times \begin{cases}
1 & \textrm{if $r \le 2$} \\
2 & \textrm{if $r \ge 3$}
\end{cases}
\end{displaymath}
otherwise.  In all cases, each component has the same structure.
\end{proposition}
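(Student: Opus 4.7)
The plan is to reduce the problem to the purely combinatorial graph $\Xi_{G_N}$ via Theorem~\ref{xi-ident}, decompose it as a product of prime-power pieces using the theorem of \S\ref{ss:xiprod}, and then read off the counts from the prime-power graphs already tabulated immediately above the statement.

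For the claim about even-order real elliptic points, Proposition~\ref{realell} applies whenever $N>2$ (as $\Gamma_0(N) \subseteq \Gamma_0(N)$) and rules them out. For $N=1$ and $N=2$ one reads the single elliptic vertex directly off the tabulated graphs $\Xi_1$ (the full modular group example at the start of \S\ref{s:5}) and $\Xi_2$.

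For the cusp and component counts, write $N = 2^r p_1^{e_1} \cdots p_n^{e_n}$ with the $p_i$ distinct odd primes. The Chinese remainder theorem identifies $G_N$ with $G_{2^r} \times G_{p_1^{e_1}} \times \cdots \times G_{p_n^{e_n}}$; every factor contains $-1$, and each odd-level factor yields a regular modular graph by Proposition~\ref{xithm-odd}, so iterated application of the product theorem of \S\ref{ss:xiprod} gives
$$\Xi_N \;\cong\; \Xi_{2^r} \gtimes \Xi_{p_1^{e_1}} \gtimes \cdots \gtimes \Xi_{p_n^{e_n}},$$
with the first factor deleted if $r=0$. From the tabulation, every $\Xi_{p_i^{e_i}}$ is the common graph $\Xi_{\mathrm{odd}}$. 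A one-step enumeration of equal-weight edge pairs shows $\Xi_{\mathrm{odd}} \gtimes \Xi_{\mathrm{odd}} \cong \Xi_{\mathrm{odd}} \amalg \Xi_{\mathrm{odd}}$, and since $\gtimes$ distributes over $\amalg$, induction gives that the $n$-fold $\gtimes$-product of $\Xi_{\mathrm{odd}}$ is a disjoint union of $2^{n-1}$ copies of $\Xi_{\mathrm{odd}}$ (for $n \ge 1$). Hence for $n \ge 1$ the graph $\Xi_N$ is a disjoint union of $2^{n-1}$ isomorphic copies of $\Xi_{\mathrm{odd}} \gtimes \Xi_{2^r}$ (or of $\Xi_{\mathrm{odd}}$ when $r=0$), while for $n=0$ it equals $\Xi_{2^r}$. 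All four building blocks are drawn explicitly just above the statement; counting parabolic vertices and cycles in each picture delivers the required formulas. The uniformity of the decomposition immediately yields the final assertion that each component has the same structure.

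The main obstacle is the prerequisite verification of the four tabulated graphs $\Xi_{p^e}$ (odd $p$), $\Xi_2$, $\Xi_4$ and $\Xi_{2^r}$ for $r>2$, which underlies the whole argument. This is a direct but delicate case-by-case computation using the definition in \S\ref{ss:xi}: one enumerates the basis vectors of $(\Z/p^e\Z)^2$ fixed (modulo $\pm 1$) by some admissible $Cg$ in the upper triangular subgroup $G_{p^e}$, identifies the admissible triples $[x,y;z]$ satisfying conditions (a)--(c), sorts them modulo $G_{p^e}$-equivalence, and checks that the resulting edges realize exactly the graphs drawn. The $2$-power cases are the most subtle because of the unique non-zero 2-torsion element $t$, the presence of an elliptic vertex for $N=2$, and the collapsed picture for $N=4$. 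Once these four graphs are established, the rest of the proof is a purely formal manipulation with the product $\gtimes$.
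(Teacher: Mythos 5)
Your proposal is correct and follows essentially the same route as the paper: reduce to $\Xi_{G_N}$, split $G_N$ as a product over prime powers, invoke the product theorem of \S\ref{ss:xiprod} together with $\Xi_{\rm odd} \gtimes \Xi_{\rm odd} \cong \Xi_{\rm odd} \amalg \Xi_{\rm odd}$ to get $2^{n-1}$ copies of $\Xi_{\rm odd} \gtimes \Xi_{2^r}$, handle elliptic points via Proposition~\ref{realell}, and read the counts off the four explicit prime-power graphs. The paper likewise states those four graphs without writing out their verification, so your level of detail matches its proof.
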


\subsection{The curve $X_{\spl}(N)$}
\label{ss:ex5}

Let $G_N$ be the normalizer of the diagonal subgroup of $\SL_2(\Z/N\Z)$.  If $N$ is a prime power then $G_N$
consists of matrices of the form
\begin{displaymath}
\mat{a}{}{}{a^{-1}}, \qquad \mat{}{a}{-a^{-1}}{},
\end{displaymath}
with $a$ a unit of $\Z/p^e \Z$.  Let $\Gamma_{\spl}(N)$ denote
the inverse image of $G_N$ in $\SL_2(\Z)$, as well as the real congruence group $(\Gamma_{\spl}(N), c_0)$, and let
$X_{\spl}(N)$ denote the corresponding quotient.  Let $\Xi_N$ denote the graph corresponding to $G_N$.  If
$N=\prod p_i^{e_i}$ then $G=\prod G_{p_i^{e_i}}$, and so the graph
$\Xi_N$ decomposes into a product as well.  Thus it suffices to analyze $\Xi_{p^e}$ for prime $p$.

First suppose that $p$ is an odd prime.  Let $v=(0, 1)$, and for a unit $a$ of $\Z/p^e \Z$ let $u_a=(1, a)$.  Then
every parabolic vertex of $\Xi$ is represented by either $v$ of $u_a$.  The basis vectors $u_a$ and $u_{-a}$
represent the same parabolic vertex, and there are no other equivalences.  We thus find that there are
$1+\tfrac{1}{2} \phi(p^e)$ parabolic vertices.  The graphs for $p^e=3$ and $p^e=5$ are as follows:
\begin{displaymath}
\begin{xy}
(-7, -7)*{\ss v}; (7, 7.5)*{\ss u_1};
(-7, -5)*{\bullet}; (7, -5)*{\circ}; (-7, 5)*{\circ}; (7, 5)*{\bullet};
{\ar@{-}; (-7, -5); (7, -5)};
{\ar@{=}; (7, -5); (7, 5)};
{\ar@{-}; (7, 5); (-7, 5)};
{\ar@{=}; (-7, 5); (-7, -5)};
\end{xy}
\hskip 12ex
\begin{xy}
(10, 0)*{\circ}; (5, 8.66)*{\bullet}; (-5, 8.66)*{\circ};
(-10, 0)*{\bullet}; (-5, -8.66)*{\circ}; (5, -8.66)*{\bullet};
(5, 11.66)*{\ss u_1}; (5, -11.66)*{\ss u_2}; (-12, 0)*{\ss v}; 
{\ar@{-}; (10, 0); (5, 8.66)};
{\ar@{=}; (5, 8.66); (-5, 8.66)};
{\ar@{-}; (-5, 8.66); (-10, 0)};
{\ar@{=}; (-10, 0); (-5, -8.66)};
{\ar@{-}; (-5, -8.66); (5, -8.66)};
{\ar@{=}; (5, -8.66); (10, 0)};
\end{xy}
\end{displaymath}
Suppose now that $p^e>5$.  Let $a$ be a unit of $\Z/p^e \Z$ and let $b$ be its inverse.  If
$a^2 \ne \pm 1, \pm \tfrac{1}{4}$ then the three vertices $u_a$, $u_b$ and $u_{b/4}$ are inequivalent, and the local
picture at $u_a$ is as follows:
\begin{displaymath}
\begin{xy}
(-10, 0)*{\bullet}; (0, 0)*{\bullet}; (10, 0)*{\bullet};
(-10, 3)*{\ss u_{b/4}}; (0, 3)*{\ss u_a}; (10, 3)*{\ss u_b};
{\ar@{-}; (-10, 0); (0, 0)};
{\ar@{=}; (0, 0); (10, 0)};
{\ar@{::}; (-10, 0); (-20, 0)};
{\ar@{..}; (10, 0); (20, 0)};
\end{xy}
\end{displaymath}
The local picture near $u_1$ is as follows:
\begin{displaymath}
\begin{xy}
(-30, 0)*{\bullet}; (-20, 0)*{\bullet}; (-10, 0)*{\circ};
(0, 0)*{\bullet}; (10, 0)*{\circ}; (20, 0)*{\bullet}; (30, 0)*{\bullet};
(-30, 3)*{\ss u_{1/4}}; (-20, 3)*{\ss u_1}; (0, 3)*{\ss v}; (20, 3)*{\ss u_{1/2}}; (30, 3)*{\ss u_2};
{\ar@{-}; (-30, 0); (-20, 0)};
{\ar@{=}; (-20, 0); (-10, 0)};
{\ar@{-}; (-10, 0); (0, 0)};
{\ar@{=}; (0, 0); (10, 0)};
{\ar@{-}; (10, 0); (20, 0)};
{\ar@{=}; (20, 0); (30, 0)};
{\ar@{::}; (-30, 0); (-40, 0)};
{\ar@{..}; (30, 0); (40, 0)};
\end{xy}
\end{displaymath}
The above vertices are distinct unless $p^e$ if 7 or 9, in which case the two end vertices are identified.  Now
suppose that $a^2=-1$.  Such an element exists if and only if $p$ is 1 modulo 4.  The local picture is then:
\begin{displaymath}
\begin{xy}
(-20, 0)*{\bullet}; (-10, 0)*{\bullet}; (0, 0)*{\circ}; (10, 0)*{\bullet}; (20, 0)*{\bullet};
(-20, 3)*{\ss u_{a/4}}; (-10, 3)*{\ss u_a}; (10, 3)*{\ss u_{a/2}}; (20, 3)*{\ss u_{2a}};
{\ar@{-}; (-20, 0); (-10, 0)};
{\ar@{=}; (-10, 0); (0, 0)};
{\ar@{-}; (0, 0); (10, 0)};
{\ar@{=}; (10, 0); (20, 0)};
{\ar@{::}; (-20, 0); (-30, 0)};
{\ar@{..}; (20, 0); (30, 0)};
\end{xy}
\end{displaymath}
The above vertices are distinct (note $p^e$ cannot be 7 or 9 since $p$ is 1 modulo 4).
The total number of real elliptic points is 3 or 1 according to whether $p$ is 1 or 3 modulo 4.  In contrast to
previous cases, not all cycles of $\Xi$ have the same structure:  there are either one or two special cycles (those
containing the elliptic points).  If $p$ is 3 modulo 4, there is one cycle with elliptic points.  If $p$ is 1 modulo 4
then all three elliptic points belong to the same cycle if $\sqrt{-1}$ is a power of 2 (or, equivalently, if the
multiplicative order of 2 is divisible by 4), and otherwise belong to two distinct cycles.  Let $\sim$ be the
equivalence relation on $(\Z/p^e \Z)^{\times}$ given by $x \sim y$ if $x=\pm 4^n y^{\pm 1}$.  Let $\epsilon$ be 1
if the multiplicative order or 2 is divisible by 4.  Then the number of cycles of $\Xi$ is the cardinality of
$(\Z/p^e \Z)^{\times}/\sim$ minus $\epsilon$.  (This formula is valid even when $p^e$ is 3 or 5.)

Now suppose that $p=2$, and let $2^r$ be the prime power in question.  The graph for $r=1$ is as follows:
\begin{displaymath}
\begin{xy}
(-7, -7)*{\ss (1,0)}; (7, -7)*{\ss (1,1)};
(-7, -5)*{\bullet}; (7, -5)*{\bullet}; (0, 5)*{\circ};
{\ar@{-}; (-7, -5); (7, -5)};
{\ar@{=}; (7, -5); (0, 5)};
{\ar@{-}; (-7, -5); (0, 5)};
\end{xy}
\end{displaymath}
Now suppose that $r>1$.  Let $v=(1,0)$ and $v'=(1, t)$.  For a unit $a$ of $\Z/2^r\Z$, let $u_a=(1, a)$.  Then
every parabolic vertex is represented by $v$, $v'$ or some $u_a$.  The basis vectors $u_a$ and $u_{-a}$ are
equivalent.  There are no other equivalences.  We thus find that there are $2+\tfrac{1}{2} \phi(2^r)$ parabolic
vertices.  Let $a$ be unit of $\Z/2^r \Z$. and let $b$ be its inverse  If $a$ is not $\pm 1$ or $\pm 1+t$ then $u_a$
and $u_b$ are inequivalent and connected by two weight two edges:
\begin{displaymath}
\begin{xy}
(-10, 0)*{\ss u_a}; (10, 0)*{\ss u_b};
(-7, 0)*{\bullet}; (7, 0)*{\bullet};
{\ar@/^2.5ex/@{=}; (-7, 0); (7, 0)};
{\ar@/_2.5ex/@{=}; (-7, 0); (7, 0)};
\end{xy}
\end{displaymath}
If $r>1$ then the vertex $u_{1+t}$ belongs to a loop of weight 2.  (When $r=1$, the vertex $u_{1+t}$ is equivalent
to $u_1$.)  The remaining vertices form a 5-cycle:
\begin{displaymath}
\begin{xy}
(6.66, 2.16)*{\circ}; (0, 7)*{\bullet}; (-6.66, 2.16)*{\circ}; (-4.11, -5.66)*{\bullet}; (4.11, -5.66)*{\bullet};
(0, 10)*{\ss u_1};  (-7.61, -5.66)*{\ss v}; (7.61, -5.66)*{\ss v'};
{\ar@{=}; (6.66, 2.12); (0, 7)};
{\ar@{=}; (0, 7); (-6.66, 2.12)};
{\ar@{-}; (-6.66, 2.12); (-4.11, -5.66)};
{\ar@{-}; (-4.11, -5.66); (4.11, -5.66)};
{\ar@{-}; (4.11, -5.66); (6.66, 2.12)};
\end{xy}
\end{displaymath}
We thus find that there are 2 elliptic vertices.  If $r=2$ there is a single cycle, the above 5-cycle.  If $r>2$ then
there are $1+\tfrac{1}{4} \phi(2^r)$ cycles.  As was the case for odd $p$, not all cycles have the same structure, but
there are at most two special cycles.

To determine $\Xi_N$ we must now take the product of the $\Xi_{p^{e_i}}$.  This is somewhat involved, and we have
not worked out the answer theoretically.  We have, however, written a program that computes these graphs;
it is available at \cite{Snowden}.  We mention two computational findings here.  First, up to $N=4000$, no real
component of $X_{\spl}(N)$ has more than 18 elliptic points of even order; the maximum 18 is attained several times,
first at $N=255$ where there is a cycle with 30 parabolic vertices and 18 elliptic vertices.  Does this bound hold
for all $N$?

Before mentioning the second computational finding, first note that for any real congruence group of odd level any
component has an even number of vertices.  We have seen several real components of even level with three vertices,
and $X_{\spl}(2^r)$ has a component with five vertices when $r>1$.  It is thus natural to wonder if one can have
real components containing other odd numbers of vertices.  Indeed, this is the case.  For instance, $X_{\spl}(10)$
has a single real component, with nine vertices, while $X_{\spl}(26)$ has a single real component, with 17 vertices.
For $N \le 4000$, the largest component with an odd number of vertices occurs on $X_{\spl}(3994)$ and has 2001
vertices.  It seems likely that with larger values of $N$ one will find arbitrarily large components with an odd
number of vertices.

\subsection{A Type 1b class}
\label{ss:ex6}

So far, we have only seen essentially one example of a loop:  it occurred in the graph of $\Gamma_{\spl}(2^r)$ for
$r>2$, and had weight two.  We now give a simpler example, which has weight one.  Let $g$ be the matrix
\begin{displaymath}
\mat{1}{1}{1}{}
\end{displaymath}
in $\SL_2(\Z/2\Z)$.  Then $g$ has order 3.  Let $\Gamma$ be the subgroup of $\SL_2(\Z)$ consisting of matrices
which reduce modulo 2 to 1, $g$ or $g^2$, and let $c=c_0$.  Then $(\Gamma, c)$ is a real congruence group.  The
only element of $\Gamma$ of finite even order is $-1$.  Indeed, if $\gamma$ is an element of even order then
its reduction modulo 2 is the identity, and it therefore belongs to $\Gamma(2)$; thus $\gamma=\pm 1$, as claimed.
Thus $X_{\Gamma}$ has no elliptic points.  The group generated by $g$ acts transitively on the non-zero vectors
in $(\Z/2\Z)^2$, and so $X_{\Gamma}$ has a single cusp, which is real.  It follows that $\Xi_{\Gamma}$ has a single
parabolic vertex connected to itself.  We thus see that the twisted conjugacy class of the identity element has
Type 1b.  This class always has weight one.

\subsection{A twisted form of $X_0(N)$}
\label{ss:ex7}

Let $\Gamma=\Gamma_0(N)$.  The matrix
\begin{displaymath}
\tau=\mat{}{-\sqrt{N}^{-1}}{\sqrt{N}}{}
\end{displaymath}
belongs to $\SL_2(\R)$, is admissible with respect to $c_0$ and normalizes $\Gamma$.  It follows that $c=c_0 \tau$
is a complex conjugation under which $\Gamma$ is stable.  Thus $(\Gamma, c)$ is a real Fuchsian group, and a twisted
form of $X_0(N)$; however, it is \emph{not} a real congruence group.  Explicitly, $c$ is given by $z \mapsto z
=(N \ol{z})^{-1}$.  A matrix
\begin{displaymath}
\mat{a}{b}{c}{d}
\end{displaymath}
is admissible with respect to $c$ if and only if $c=-Nb$.

\begin{proposition}
Let $\Gamma$ be as above, and let $\gamma \in \Gamma$ be admissible for $c$.  Then $C_{\gamma}$ contains a cusp
if and only if $N$ is a square.
\end{proposition}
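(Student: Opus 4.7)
The plan is to compute the two endpoints of the semicircle $\ol{C}_\gamma$ on $\R\P^1$ explicitly and to observe that they are rational precisely when $\sqrt{N}$ is rational, i.e., when $N$ is a perfect square; since the cusps of $\Gamma_0(N) \subset \SL_2(\Z)$ are exactly the points of $\Q\P^1$, this will give the biconditional.

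First I would record the matrix associated to $c$:
\begin{displaymath}
C = C_0 \tau = \mat{0}{-1/\sqrt N}{-\sqrt N}{0}.
\end{displaymath}
Writing $\gamma=\mat{a}{b}{c}{d} \in \Gamma_0(N)$ with $N \mid c$, the condition $\tr(C\gamma)=0$---equivalent to admissibility, as in the proof of Proposition~\ref{conj:fix}---simplifies to $c = -Nb$; combined with $\det\gamma = 1$, this yields $ad + Nb^2 = 1$.

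Next I would derive the equation of $\ol{C}_\gamma$ by solving $cz = \gamma z$, i.e., $(N\ol{z})^{-1} = (az+b)/(cz+d)$, directly. Clearing denominators and substituting $c = -Nb$ yields $a(x^2+y^2) + 2bx - d/N = 0$ for $z = x+iy$. When $a \ne 0$, this is a circle centered at $-b/a$ of radius $1/(|a|\sqrt N)$, with endpoints on $\R\P^1$ at $-b/a \pm 1/(a\sqrt N)$. Since $a$, $b$, $d$ are integers and $a \ne 0$, rationality of either endpoint is equivalent to rationality of $1/\sqrt N$, which holds iff $N$ is a perfect square; both endpoints are rational together or irrational together, so $C_\gamma$ contains a cusp iff $N$ is a square.

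Finally I would handle the degenerate case $a = 0$: then $ad + Nb^2 = 1$ forces $Nb^2 = 1$, so $N = 1$ (a square) and $b = \pm 1$, and the equation reduces to a vertical line with endpoints $d/(2b) \in \Q$ and $\infty$---both cusps---so the equivalence still holds trivially. There is no serious obstacle in this argument; the essential point is simply that the $\sqrt N$ introduced by $\tau$ cannot cancel from the endpoint formula, which is why the square condition on $N$ emerges directly. (As a consistency check on the easy direction, when $N$ is a square one sees that $\gamma = I$ gives $\ol{C}_I = \{|z|^2 = 1/N\}$, whose endpoints $\pm 1/\sqrt N$ are rational.)
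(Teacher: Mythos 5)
Your proof is correct and follows essentially the same route as the paper: both compute the admissibility condition $c=-Nb$ and the endpoints $-\tfrac{b}{a}\pm\tfrac{1}{a\sqrt{N}}$ of $\ol{C}_\gamma$ on $\R\P^1$, and conclude by noting these are rational iff $N$ is a square. Your treatment is slightly more complete in that you also verify the degenerate case $a=0$ (which forces $N=1$), a case the paper's endpoint formula silently excludes.
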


\begin{proof}
Let
\begin{displaymath}
\gamma=\mat{a}{b}{-Nb}{d}.
\end{displaymath}
Then the limit points of $C_{\gamma}$ on $\R\P^1$ are
\begin{displaymath}
-\frac{b}{a} \pm \frac{1}{a \sqrt{N}}.
\end{displaymath}
These points are rational if and only if $N$ is a square.
\end{proof}

\begin{proposition}
If $N$ is divisible by 4 or a prime congruent to 3 mod 4 but is not a square then every admissible element of
$\Gamma$ is of Type 3.
\end{proposition}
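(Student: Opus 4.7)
The plan is to use the classification of admissible elements into Types~1--4 from \S\ref{ss:class} and rule out the three non-Type-3 possibilities under the given hypothesis on $N$. Recall that Types 1, 2, 4 are characterized respectively by $Z_\gamma$ being trivial, cyclic of order~2, or infinite dihedral, while Type~3 is exactly the case $Z_\gamma \cong \Z$.

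First I would dispose of Types 1 and 2 by the previous proposition: since $N$ is not a square, no $C_\gamma$ contains a cusp. By Lemma~\ref{lem:cusp2}, if $Z_\gamma$ were finite then $C_\gamma$ would contain two cusps; so $Z_\gamma$ must be infinite, ruling out Types 1 and 2. This leaves only Types~3 and~4 as possibilities. By Lemma~\ref{lem:z}, the only infinite option besides $\Z$ is the infinite dihedral group, which has an element of order~2. By Lemma~\ref{lem:ell}, such an element corresponds to a point $z \in C_\gamma^\circ$ which is elliptic of even order in $\Gamma$, i.e., an order~2 element of $\Gamma = \Gamma_0(N)$ fixing it. Thus Type~4 is ruled out as soon as we know $\Gamma_0(N)$ has no element of order~2 (in $\PSL_2$).

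The key number-theoretic input is therefore the classical fact: $\Gamma_0(N)$ contains an element of order~2 if and only if $-1$ is a square modulo $N$. This follows from the observation that an order~2 element, lifted to $\SL_2(\Z)$, has the form $\left(\begin{smallmatrix} a & b \\ c & -a \end{smallmatrix}\right)$ with $a^2 + bc = -1$, and the condition $c \equiv 0 \pmod{N}$ (from $\gamma \in \Gamma_0(N)$) forces $a^2 \equiv -1 \pmod{N}$; conversely any such $a$ together with $b = -(a^2+1)/N$, $c=N$ produces an element. Under the hypothesis that $N$ is divisible by~$4$ or by a prime $p \equiv 3 \pmod 4$, $-1$ is not a square modulo $N$ (since it is not a square modulo~$4$, respectively modulo such~$p$), so no such element exists.

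I expect essentially no obstacle: everything reduces to routine invocations of the earlier lemmas together with the standard arithmetic of $-1$ modulo $N$. The only point requiring any care is to note that ``even order'' elliptic means order exactly~$2$ in our setting (since $\PSL_2(\Z)$-stabilizers are cyclic of order~$1$, $2$, or $3$), so the obstruction we need to rule out is precisely the existence of order~$2$ elements in $\Gamma_0(N)$, which is the content of the $-1$-square criterion above.
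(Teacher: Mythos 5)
Your proof is correct and follows essentially the same route as the paper: the "not a square" hypothesis together with the preceding proposition excludes cusps on $C_\gamma$ (hence Types 1 and 2), and the congruence condition on $N$ forces $\Gamma_0(N)$ to have no order-two elements, hence no even order elliptic points (excluding Types 2 and 4). The paper's own proof is a single sentence asserting the absence of even order elliptic points, leaving the cusp half implicit; you have simply spelled out both halves, including the standard $a^2\equiv -1 \pmod N$ criterion.
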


\begin{proof}
The conditions guarantee that $\Gamma$ lacks even order elliptic points.
\end{proof}

\begin{proposition}
If $N=5$ then $\Gamma$ has two admissible twisted conjugacy classes, both of Type~4a.
\end{proposition}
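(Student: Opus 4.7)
The plan is to combine Corollary~\ref{cor:count} with a topological input coming from the genus of $X_0(5)$.

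First I would show that $X_{\Gamma}$ has exactly two special points. Since $5$ is not a square, the preceding proposition gives that no $C_{\gamma}$ contains a cusp, so there are no real cusps and no admissible classes of Type~1 or Type~2. The two $\Gamma_0(5)$-orbits of order-$2$ elliptic points are represented by $z_1=(2+i)/5$ and $z_2=(-2+i)/5$; both satisfy $|z_i|^2=1/5$, so they lie on the $c$-fixed semicircle $\{|z|=1/\sqrt{5}\}$ (where $c(z)=(5\bar z)^{-1}$), and hence project to real special points of $X_{\Gamma}$. By Corollary~\ref{cor:count} this gives exactly two admissible twisted conjugacy classes of Type~1, 2, or~4, and both are in fact of Type~4.

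It remains to rule out Type~3 classes and to distinguish between subtypes 4a and 4b. Both are handled at once by the standard fact that $X_0(5)$ has genus zero: as a Riemann surface $X_0(5)\cong\P^1_{\C}$, and since $c$ has fixed points the real locus $X_{\Gamma}(\R)$ is homeomorphic to $S^1$ (equivalently, Harnack's bound gives at most $g+1=1$ real component). By Theorem~\ref{s2thm}, $\Xi^{\dag}$ is then a single cycle, and since it contains both special points it must be the $2$-cycle on $\{[z_1],[z_2]\}$. In particular, no component of $\Xi^{\dag}$ is disjoint from the special set (no Type~3 class), and each of the two edges joins the distinct vertices $[z_1]$ and $[z_2]$ rather than forming a loop, so both are of Type~4a.

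The main obstacle is the Type~3 exclusion: Corollary~\ref{cor:count} only counts Types~1, 2, and~4, so a priori there could be additional Type~3 classes corresponding to real components missing the special set. The genus-zero input disposes of this cleanly. A more hands-on alternative would be to exhibit representatives and verify directly — $C_1=\{|z|=1/\sqrt{5}\}$ passes through both $z_1$ and $z_2$, and for the order-$4$ stabilizer $\sigma$ of $z_1$ in $\Gamma_0(5)$, $C_{\sigma}$ is the semicircle of radius $1/(2\sqrt{5})$ centered at $1/2$, passing through $z_1$ and the $\Gamma$-translate $z_2+1$; this confirms $[1]$ and $[\sigma]$ are distinct Type~4a classes, but would still leave Type~3 to be handled by a separate argument.
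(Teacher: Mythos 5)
Your proof is correct and follows essentially the same route as the paper: the paper likewise observes that the two order-two elliptic points $\tfrac{1}{5}(\pm 2+i)$ lie on $C_1=\{|z|^2=1/5\}$ and are therefore real, that there are no real cusps since $5$ is not a square, and that genus $0$ plus a real point forces a single real component, from which the two Type~4a classes follow. You have merely made explicit the bookkeeping via Corollary~\ref{cor:count} and Theorem~\ref{s2thm} that the paper leaves implicit.
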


\begin{proof}
The curve $X_{\Gamma}$ has exactly two elliptic points of order two, represented by $\tfrac{1}{5}(\pm 2+i)$.  Both
of these points have modulus $1/5$.  The curve $C_g$, with $g=1$, is exactly $\vert z \vert^2=1/5$.  Thus both elliptic
points are real.  As $X_{\Gamma}$ has genus 0 and a real point, it has one real component.  We have seen that there are
no real cusps.  This completes the proof.
\end{proof}

\begin{proposition}
If $N=2$ then $\Gamma$ has a single admissible twisted conjugacy class of Type~4b.
\end{proposition}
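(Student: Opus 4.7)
The plan is to apply Theorem~\ref{s2thm} to identify $X_{\Gamma}(\R)$ with the graph $\Xi_{\Gamma}$ and then argue purely combinatorially. The first step is to show that $X_0(2)(\R)$ is a single circle: since $X_0(2)$ has genus zero, it suffices to exhibit one real point. I would take the unique even-order elliptic point $z_0=(1+i)/2$ of $\Gamma_0(2)$ and verify directly that $c(z_0)=1/(2\ol{z_0})=1/(1-i)=(1+i)/2=z_0$. This makes $X_0(2)$ isomorphic to $\P^1_{\R}$ as a real variety, so its real locus is a single circle.

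The second step is to enumerate the special points on $X_0(2)(\R)$. The first proposition of this subsection applies since $N=2$ is not a square, so no curve $C_{\gamma}$ contains a cusp; in particular, no cusp of $X_0(2)$ is real. On the other hand, $\Gamma_0(2)$ has a single orbit of even-order elliptic points (standard), represented by $z_0$, which we have just seen is real. Therefore $\Xi_{\Gamma}$ has exactly one vertex, corresponding to $z_0$. Combined with the first step, $\Xi_{\Gamma}$ is a circle containing exactly one vertex, so it consists of a single edge, necessarily a loop at $z_0$. In particular, $\Gamma$ has exactly one admissible twisted conjugacy class.

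The final step is to identify the type. Types 1 and 2 are ruled out because the corresponding edges would contain cusps on $C_{\gamma}$, while no $C_{\gamma}$ contains a cusp. Type 3 is ruled out because the unique edge of $\Xi_{\Gamma}$ passes through the special point $z_0$, whereas Type 3 edges contain no special points. Hence the class has Type 4; and since the edge forms a loop, the two elliptic endpoints of $C_{\gamma}/Z_{\gamma}$ are $\Gamma$-equivalent, which is precisely the condition defining Type 4b (as opposed to Type 4a). The main work lies in the two verifications above; the real content of the argument is the genus-zero observation and the invocation of the first proposition of this subsection to suppress cusps.
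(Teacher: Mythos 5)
Your proposal is correct and follows essentially the same route as the paper: genus zero plus a real point (the unique order-two elliptic point) gives a single circle, the earlier proposition rules out real cusps, and the resulting one-vertex, one-loop graph forces a single class of Type~4b. The only cosmetic difference is that you verify $c(z_0)=z_0$ by direct computation where the paper deduces reality of the elliptic point from its uniqueness.
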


\begin{proof}
The curve $X_{\Gamma}$ has a single elliptic point of order two.  Since the elliptic point is unique, it must be real.
As $X_{\Gamma}$ has genus 0 and has a real point, it is isomorphic (over $\R$) to $\P^1$; thus $X_{\Gamma}$ has a
single real component.  This completes the proof.
\end{proof}

\newpage
\appendix
\section{Tables}

In this appendix we give tables of real components for certain modular curves.  In the tables, $g$ denotes the genus
of the curve in question, $\pi_0$ the number of real components, $p$ the number of real cusps and $e$ the number of
real elliptic points of even order.  In all but one case, there are no elliptic points when the level is greater
than two, and we omit $e$.

\mbox{}\vfill
\begin{center}
\small
\begin{tabular}{|rrrr||rrrr||rrrr||rrrr|}
\hline
$N$ & $g$ & $\pi_0$ & $p$ &   $N$ & $g$ & $\pi_0$ & $p$ &   $N$ & $g$ & $\pi_0$ & $p$ &   $N$ & $g$ & $\pi_0$ & $p$ \\
\hline
  1 &  0 & 1 &  1 &   31 &  2 & 1 &  2 &   61 &  4 & 1 &  2 &   91 &  7 & 2 &  4 \\
  2 &  0 & 1 &  2 &   32 &  1 & 1 &  4 &   62 &  7 & 1 &  4 &   92 & 10 & 1 &  6 \\
  3 &  0 & 1 &  2 &   33 &  3 & 2 &  4 &   63 &  5 & 2 &  4 &   93 &  9 & 2 &  4 \\
  4 &  0 & 1 &  3 &   34 &  3 & 1 &  4 &   64 &  3 & 1 &  4 &   94 & 11 & 1 &  4 \\
  5 &  0 & 1 &  2 &   35 &  3 & 2 &  4 &   65 &  5 & 2 &  4 &   95 &  9 & 2 &  4 \\
  6 &  0 & 1 &  4 &   36 &  1 & 1 &  6 &   66 &  9 & 2 &  8 &   96 &  9 & 2 &  8 \\
  7 &  0 & 1 &  2 &   37 &  2 & 1 &  2 &   67 &  5 & 1 &  2 &   97 &  7 & 1 &  2 \\
  8 &  0 & 1 &  4 &   38 &  4 & 1 &  4 &   68 &  7 & 1 &  6 &   98 &  7 & 1 &  4 \\
  9 &  0 & 1 &  2 &   39 &  3 & 2 &  4 &   69 &  7 & 2 &  4 &   99 &  9 & 2 &  4 \\
 10 &  0 & 1 &  4 &   40 &  3 & 2 &  8 &   70 &  9 & 2 &  8 &  100 &  7 & 1 &  6 \\
 11 &  1 & 1 &  2 &   41 &  3 & 1 &  2 &   71 &  6 & 1 &  2 &  101 &  8 & 1 &  2 \\
 12 &  0 & 1 &  6 &   42 &  5 & 2 &  8 &   72 &  5 & 2 &  8 &  102 & 15 & 2 &  8 \\
 13 &  0 & 1 &  2 &   43 &  3 & 1 &  2 &   73 &  5 & 1 &  2 &  103 &  8 & 1 &  2 \\
 14 &  1 & 1 &  4 &   44 &  4 & 1 &  6 &   74 &  8 & 1 &  4 &  104 & 11 & 2 &  8 \\
 15 &  1 & 2 &  4 &   45 &  3 & 2 &  4 &   75 &  5 & 2 &  4 &  105 & 13 & 4 &  8 \\
 16 &  0 & 1 &  4 &   46 &  5 & 1 &  4 &   76 &  8 & 1 &  6 &  106 & 12 & 1 &  4 \\
 17 &  1 & 1 &  2 &   47 &  4 & 1 &  2 &   77 &  7 & 2 &  4 &  107 &  9 & 1 &  2 \\
 18 &  0 & 1 &  4 &   48 &  3 & 2 &  8 &   78 & 11 & 2 &  8 &  108 & 10 & 1 &  6 \\
 19 &  1 & 1 &  2 &   49 &  1 & 1 &  2 &   79 &  6 & 1 &  2 &  109 &  8 & 1 &  2 \\
 20 &  1 & 1 &  6 &   50 &  2 & 1 &  4 &   80 &  7 & 2 &  8 &  110 & 15 & 2 &  8 \\
 21 &  1 & 2 &  4 &   51 &  5 & 2 &  4 &   81 &  4 & 1 &  2 &  111 & 11 & 2 &  4 \\
 22 &  2 & 1 &  4 &   52 &  5 & 1 &  6 &   82 &  9 & 1 &  4 &  112 & 11 & 2 &  8 \\
 23 &  2 & 1 &  2 &   53 &  4 & 1 &  2 &   83 &  7 & 1 &  2 &  113 &  9 & 1 &  2 \\
 24 &  1 & 2 &  8 &   54 &  4 & 1 &  4 &   84 & 11 & 2 & 12 &  114 & 17 & 2 &  8 \\
 25 &  0 & 1 &  2 &   55 &  5 & 2 &  4 &   85 &  7 & 2 &  4 &  115 & 11 & 2 &  4 \\
 26 &  2 & 1 &  4 &   56 &  5 & 2 &  8 &   86 & 10 & 1 &  4 &  116 & 13 & 1 &  6 \\
 27 &  1 & 1 &  2 &   57 &  5 & 2 &  4 &   87 &  9 & 2 &  4 &  117 & 11 & 2 &  4 \\
 28 &  2 & 1 &  6 &   58 &  6 & 1 &  4 &   88 &  9 & 2 &  8 &  118 & 14 & 1 &  4 \\
 29 &  2 & 1 &  2 &   59 &  5 & 1 &  2 &   89 &  7 & 1 &  2 &  119 & 11 & 2 &  4 \\
 30 &  3 & 2 &  8 &   60 &  7 & 2 & 12 &   90 & 11 & 2 &  8 &  120 & 17 & 4 & 16 \\
\hline
\end{tabular}
\normalsize\vskip.5\baselineskip
The curve $X_0(N)$.  See \S \ref{ss:ex4} for details.
\end{center}
\vfill

\newpage

\mbox{}\vfill

\begin{center}
\small
\begin{tabular}{|rrrrr||rrrrr||rrrrr||rrrrr|}
\hline
$N$ & $g$ & $\pi_0$ & $p_+$ & $p_-$ &   $N$ & $g$ & $\pi_0$ & $p_+$ & $p_-$ &   $N$ & $g$ & $\pi_0$ & $p_+$ & $p_-$ &   $N$ & $g$ & $\pi_0$ & $p_+$ & $p_-$ \\
\hline
 1 &  0 &  1 &  1 &  1 &   16 &   81 &  4 & 16 &  8 &   31 & 1001 &  3 & 30 & 30 &   46 & 2641 & 11 & 66 & 22 \\
 2 &  0 &  1 &  2 &  1 &   17 &  133 &  2 & 16 & 16 &   32 &  833 &  8 & 32 & 16 &   47 & 3773 &  1 & 46 & 46 \\
 3 &  0 &  1 &  2 &  2 &   18 &  109 &  3 & 18 &  6 &   33 & 1081 &  2 & 20 & 20 &   48 & 2689 &  8 & 32 & 16 \\
 4 &  0 &  1 &  4 &  2 &   19 &  196 &  1 & 18 & 18 &   34 & 1009 &  8 & 48 & 16 &   49 & 4215 &  1 & 42 & 42 \\
 5 &  0 &  1 &  4 &  4 &   20 &  169 &  4 & 16 &  8 &   35 & 1393 &  1 & 24 & 24 &   50 & 3301 & 10 & 60 & 20 \\
 6 &  1 &  1 &  6 &  2 &   21 &  241 &  1 & 12 & 12 &   36 & 1081 &  6 & 24 & 12 &   51 & 4321 &  2 & 32 & 32 \\
 7 &  3 &  1 &  6 &  6 &   22 &  241 &  5 & 30 & 10 &   37 & 1768 &  1 & 36 & 36 &   52 & 3865 & 12 & 48 & 24 \\
 8 &  5 &  2 &  8 &  4 &   23 &  375 &  1 & 22 & 22 &   38 & 1441 &  9 & 54 & 18 &   53 & 5500 &  1 & 52 & 52 \\
 9 & 10 &  1 &  6 &  6 &   24 &  289 &  4 & 16 &  8 &   39 & 1849 &  1 & 24 & 24 &   54 & 3889 &  9 & 54 & 18 \\
10 & 13 &  2 & 12 &  4 &   25 &  476 &  1 & 20 & 20 &   40 & 1633 &  8 & 32 & 16 &   55 & 5881 &  1 & 40 & 40 \\
11 & 26 &  1 & 10 & 10 &   26 &  421 &  6 & 36 & 12 &   41 & 2451 &  2 & 40 & 40 &   56 & 4801 & 12 & 48 & 24 \\
12 & 25 &  2 &  8 &  4 &   27 &  568 &  1 & 18 & 18 &   42 & 1729 &  6 & 36 & 12 &   57 & 6121 &  2 & 36 & 36 \\
13 & 50 &  1 & 12 & 12 &   28 &  529 &  6 & 24 & 12 &   43 & 2850 &  3 & 42 & 42 &   58 & 5461 & 14 & 84 & 28 \\
14 & 49 &  3 & 18 &  6 &   29 &  806 &  1 & 28 & 28 &   44 & 2281 & 10 & 40 & 20 &   59 & 7686 &  1 & 58 & 58 \\
15 & 73 &  1 &  8 &  8 &   30 &  577 &  4 & 24 &  8 &   45 & 2809 &  1 & 24 & 24 &   60 & 5185 &  8 & 32 & 16 \\
\hline
\end{tabular}
\normalsize\vskip.5\baselineskip
\parbox{5in}{The curves $X^{\pm}(N)$.  The two curves have the same genus and number of real components.  The column
$p_{\pm}$ indicates the number of real cusps on $X^{\pm}(N)$.  See \S \ref{ss:ex1} and \S \ref{ss:ex2} for details.}
\end{center}

\vskip2\baselineskip

\begin{center}
\small
\begin{tabular}{|rrrr||rrrr||rrrr||rrrr|}
\hline
$N$ & $g$ & $\pi_0$ & $p$ &   $N$ & $g$ & $\pi_0$ & $p$ &   $N$ & $g$ & $\pi_0$ & $p$ &   $N$ & $g$ & $\pi_0$ & $p$ \\
\hline
 1 &  0 & 1 &  1 &   26 & 10 & 1 & 24 &   51 &  65 & 2 & 32 &    76 & 136 &  9 & 54 \\
 2 &  0 & 1 &  2 &   27 & 13 & 1 & 18 &   52 &  55 & 6 & 36 &    77 & 181 &  1 & 60 \\
 3 &  0 & 1 &  2 &   28 & 10 & 3 & 18 &   53 &  92 & 1 & 52 &    78 & 121 &  1 & 48 \\
 4 &  0 & 1 &  3 &   29 & 22 & 1 & 28 &   54 &  52 & 1 & 36 &    79 & 222 &  1 & 78 \\
 5 &  0 & 1 &  4 &   30 &  9 & 1 & 16 &   55 &  81 & 1 & 40 &    80 & 137 &  8 & 48 \\
 6 &  0 & 1 &  4 &   31 & 26 & 3 & 30 &   56 &  61 & 6 & 36 &    81 & 190 &  1 & 54 \\
 7 &  0 & 1 &  6 &   32 & 17 & 4 & 24 &   57 &  85 & 2 & 36 &    82 & 171 &  2 & 80 \\
 8 &  0 & 1 &  6 &   33 & 21 & 2 & 20 &   58 &  78 & 1 & 56 &    83 & 247 &  1 & 82 \\
 9 &  0 & 1 &  6 &   34 & 21 & 2 & 32 &   59 & 117 & 1 & 58 &    84 & 133 &  6 & 36 \\
10 &  0 & 1 &  8 &   35 & 25 & 1 & 24 &   60 &  57 & 4 & 24 &    85 & 225 &  4 & 64 \\
11 &  1 & 1 & 10 &   36 & 17 & 3 & 18 &   61 & 126 & 1 & 60 &    86 & 190 &  3 & 84 \\
12 &  0 & 1 &  6 &   37 & 40 & 1 & 36 &   62 &  91 & 3 & 60 &    87 & 225 &  1 & 56 \\
13 &  2 & 1 & 12 &   38 & 28 & 1 & 36 &   63 &  97 & 3 & 36 &    88 & 181 & 10 & 60 \\
14 &  1 & 1 & 12 &   39 & 33 & 1 & 24 &   64 &  93 & 8 & 48 &    89 & 287 &  4 & 88 \\
15 &  1 & 1 &  8 &   40 & 25 & 4 & 24 &   65 & 121 & 4 & 48 &    90 & 153 &  1 & 48 \\
16 &  2 & 2 & 12 &   41 & 51 & 2 & 40 &   66 &  81 & 2 & 40 &    91 & 265 &  3 & 72 \\
17 &  5 & 2 & 16 &   42 & 25 & 1 & 24 &   67 & 155 & 1 & 66 &    92 & 210 & 11 & 66 \\
18 &  2 & 1 & 12 &   43 & 57 & 3 & 42 &   68 & 105 & 8 & 48 &    93 & 261 &  3 & 60 \\
19 &  7 & 1 & 18 &   44 & 36 & 5 & 30 &   69 & 133 & 1 & 44 &    94 & 231 &  1 & 92 \\
20 &  3 & 2 & 12 &   45 & 41 & 1 & 24 &   70 &  97 & 1 & 48 &    95 & 289 &  1 & 72 \\
21 &  5 & 1 & 12 &   46 & 45 & 1 & 44 &   71 & 176 & 1 & 70 &    96 & 193 &  8 & 48 \\
22 &  6 & 1 & 20 &   47 & 70 & 1 & 46 &   72 &  97 & 6 & 36 &    97 & 345 &  2 & 96 \\
23 & 12 & 1 & 22 &   48 & 37 & 4 & 24 &   73 & 187 & 4 & 72 &    98 & 235 &  1 & 84 \\
24 &  5 & 2 & 12 &   49 & 69 & 1 & 42 &   74 & 136 & 1 & 72 &    99 & 281 &  2 & 60 \\
25 & 12 & 1 & 20 &   50 & 48 & 1 & 40 &   75 & 145 & 1 & 40 &   100 & 231 & 10 & 60 \\
\hline
\end{tabular}
\normalsize\vskip.5\baselineskip
The curve $X_1(N)$.  See \S \ref{ss:ex3} for details.
\end{center}

\vfill

\newpage

\mbox{}\vfill

\begin{center}
\small
\begin{tabular}{|rrrrr||rrrrr||rrrrr||rrrrr|}
\hline
$N$ & $g$ & $\pi_0$ & $p$ & $e$ &  $N$ & $g$ & $\pi_0$ & $p$ & $e$ &  $N$ & $g$ & $\pi_0$ & $p$ & $e$ &  $N$ & $g$ & $\pi_0$ & $p$ & $e$ \\
\hline
 1 &  0 &  1 &  1 &  1 &   26 & 15 &  1 & 14 &  3 &   51 &  64 &  3 & 18 &  6 &   76 & 171 & 10 & 30 &  4 \\
 2 &  0 &  1 &  2 &  1 &   27 & 28 &  1 & 10 &  2 &   52 &  78 &  7 & 21 &  6 &   77 & 137 &  2 & 24 &  4 \\
 3 &  0 &  1 &  2 &  2 &   28 & 21 &  4 & 12 &  4 &   53 & 100 &  1 & 27 &  3 &   78 & 120 &  1 & 28 &  6 \\
 4 &  0 &  1 &  3 &  2 &   29 & 26 &  1 & 15 &  3 &   54 & 100 &  1 & 20 &  2 &   79 & 233 &  1 & 40 &  2 \\
 5 &  0 &  1 &  3 &  3 &   30 & 16 &  1 & 12 &  6 &   55 &  70 &  1 & 18 &  6 &   80 & 217 &  9 & 18 &  6 \\
 6 &  0 &  1 &  4 &  2 &   31 & 30 &  2 & 16 &  2 &   56 &  97 &  7 & 16 &  4 &   81 & 325 &  1 & 28 &  2 \\
 7 &  0 &  1 &  4 &  2 &   32 & 49 &  5 & 10 &  2 &   57 &  81 &  2 & 20 &  4 &   82 & 190 &  2 & 42 &  3 \\
 8 &  1 &  2 &  4 &  2 &   33 & 25 &  2 & 12 &  4 &   58 &  91 &  1 & 30 &  3 &   83 & 260 &  1 & 42 &  2 \\
 9 &  1 &  1 &  4 &  2 &   34 & 28 &  2 & 18 &  3 &   59 & 126 &  1 & 30 &  2 &   84 & 153 &  8 & 24 &  8 \\
10 &  1 &  1 &  6 &  3 &   35 & 27 &  1 & 12 &  6 &   60 &  79 &  6 & 18 & 12 &   85 & 172 &  4 & 27 &  9 \\
11 &  2 &  1 &  6 &  2 &   36 & 43 &  4 & 12 &  4 &   61 & 135 &  1 & 31 &  3 &   86 & 210 &  2 & 44 &  2 \\
12 &  3 &  2 &  6 &  4 &   37 & 45 &  1 & 19 &  3 &   62 & 105 &  2 & 32 &  2 &   87 & 196 &  1 & 30 &  6 \\
13 &  2 &  1 &  7 &  3 &   38 & 36 &  1 & 20 &  2 &   63 & 109 &  3 & 16 &  4 &   88 & 241 & 10 & 24 &  4 \\
14 &  3 &  1 &  8 &  2 &   39 & 36 &  1 & 14 &  6 &   64 & 225 &  9 & 18 &  2 &   89 & 301 &  3 & 45 &  3 \\
15 &  4 &  1 &  6 &  6 &   40 & 49 &  6 & 12 &  6 &   65 &  99 &  3 & 21 &  9 &   90 & 181 &  1 & 24 &  6 \\
16 &  9 &  3 &  6 &  2 &   41 & 57 &  2 & 21 &  3 &   66 &  85 &  2 & 24 &  4 &   91 & 192 &  3 & 28 &  6 \\
17 &  7 &  2 &  9 &  3 &   42 & 33 &  2 & 16 &  4 &   67 & 165 &  1 & 34 &  2 &   92 & 253 & 12 & 36 &  4 \\
18 &  7 &  1 &  8 &  2 &   43 & 63 &  2 & 22 &  2 &   68 & 136 &  9 & 27 &  6 &   93 & 225 &  4 & 32 &  4 \\
19 &  9 &  1 & 10 &  2 &   44 & 55 &  6 & 18 &  4 &   69 & 121 &  2 & 24 &  4 &   94 & 253 &  1 & 48 &  2 \\
20 & 10 &  3 &  9 &  6 &   45 & 55 &  1 & 12 &  6 &   70 &  91 &  1 & 24 &  6 &   95 & 216 &  1 & 30 &  6 \\
21 &  9 &  2 &  8 &  4 &   46 & 55 &  1 & 24 &  2 &   71 & 187 &  1 & 36 &  2 &   96 & 353 &  8 & 18 &  4 \\
22 & 10 &  1 & 12 &  2 &   47 & 77 &  1 & 24 &  2 &   72 & 193 &  7 & 16 &  4 &   97 & 360 &  2 & 49 &  3 \\
23 & 15 &  1 & 12 &  2 &   48 & 81 &  4 & 10 &  4 &   73 & 198 &  3 & 37 &  3 &   98 & 309 &  1 & 44 &  2 \\
24 & 17 &  2 &  6 &  4 &   49 & 94 &  1 & 22 &  2 &   74 & 153 &  1 & 38 &  3 &   99 & 271 &  2 & 24 &  4 \\
25 & 22 &  1 & 11 &  3 &   50 & 77 &  1 & 22 &  3 &   75 & 168 &  1 & 22 &  6 &  100 & 348 & 11 & 33 &  6 \\
\hline
\end{tabular}
\normalsize\vskip.5\baselineskip
The curve $X_{\spl}(N)$.  See \S \ref{ss:ex5} for details.
\end{center}

\vskip2\baselineskip

\noindent{\small
The genus of $X_{\spl}(N)$:  According to \cite[Prop.~1.40]{Shimura}, the genus of $X_{\spl}(N)$ is given by
\begin{displaymath}
g=1+\frac{\mu}{12}-\frac{\nu_2}{4}-\frac{\nu_3}{3}-\frac{\nu_{\infty}}{2},
\end{displaymath}
where $\mu$ is the index of $\Gamma_{\spl}(N)$ in $\SL_2(\Z)$ and $\nu_2$, $\nu_3$ and $\nu_{\infty}$ are the number
of equivalence classes of even order elliptic points, odd order elliptic points and cusps.  We do not know a reference
in the literature containing formulas for these quantities, so we give some here.  The index is given by
\begin{displaymath}
\mu=N^2 \prod_{p \mid N} \left( \frac{1+p^{-1}}{2} \right).
\end{displaymath}
The functions $\nu_2$, $\nu_3$ and $\nu_{\infty}$ are multiplicative functions of $N$.  On prime powers, we have
\begin{displaymath}
\nu_{\infty}(p^e)=\begin{cases}
p^{e-1} \left(\frac{p+1}{2} \right) & \textrm{if $p^e \ne 2$} \\
2 & \textrm{if $p^e=2$.}
\end{cases}
\qquad \textrm{and} \qquad
\nu_2(p^e)=\begin{cases}
p^{e-1} \left( \frac{p-1}{2} \right)+1 & \textrm{if $p=1 \pmod{4}$} \\
p^{e-1} \left( \frac{p+1}{2} \right) & \textrm{if $p=3 \pmod{4}$} \\
p^{e-1} & \textrm{if $p=2$.}
\end{cases}
\end{displaymath}
The value of $\nu_3(p^e)$ is 1 if $p$ is 1 modulo 3 and 0 otherwise (assuming $e>0$).
}

\vfill


\begin{thebibliography}{[GH]}

\bibitem[GH]{GrossHarris}
B.~H.~Gross and J.~Harris, \emph{Real algebraic curves}, Ann.\ Scient.\ \'Ecole Norm.\ Sup.\ (4) {\bf 14} (1981),
157--182.

\bibitem[Sh]{Shimura}
G.~Shimura, \emph{Introduction to the arithmetic theory of automorphic forms},  Pub.\ Math.\ Soc.\ Japan 11, I.~Shoten
and Princeton U.\ Press, 1971.

\bibitem[Sh2]{Shimura2}
G.~Shimura, \emph{On the real points of an arithmetic quotient of a bounded symmetric domain},
Math.\ Ann.\ {\bf 215} (1975), 135--164.

\bibitem[Sn]{Snowden}
A.~Snowden, \url{http://math.mit.edu/~asnowden/papers/xsplit.php}.

\bibitem[St]{Stein}
W.~Stein, \emph{Component Groups of $J_0(N)(\R)$ and $J_1(N)(\R)$},
\url{http://wstein.org/Tables/real_tamagawa/}, accessed on December 18, 2010.

\end{thebibliography}
\end{document}